\documentclass[12pt]{amsart}
\usepackage{pict2e,amsmath,amsthm,amsfonts,amssymb}
\usepackage[alphabetic]{amsrefs}
\usepackage[all]{xy}
\usepackage{amssymb,amsthm}
\usepackage{eucal,mathrsfs}
\usepackage[bbgreekl]{mathbbol}
\usepackage{color,enumerate}
\usepackage{pxfonts,fullpage}
\usepackage{times}

\topmargin=-10pt \textheight=626pt     \textwidth=474pt
\oddsidemargin=-3pt   \evensidemargin=-3pt

\theoremstyle{plain}

\newtheorem{theorem}{Theorem}[section]

\newtheorem{corollary}[theorem]{Corollary}

\newtheorem{proposition}[theorem]{Proposition}

\newtheorem{lemma}[theorem]{Lemma}

\theoremstyle{definition}\newtheorem{defi}[theorem]{Definition}
\theoremstyle{remark}\newtheorem{remark}[theorem]{Remark}


\newcommand{\defpar}{\tau}

\newcommand\gmm{\Bbb{G}_m}

\newcommand\frg{\mathfrak{g}}
\newcommand\frh{\mathfrak{h}}

\newcommand\mm{\mathcal{M}}

\newcommand\mf{\mathcal{F}}

\newcommand\mk{\mathcal{K}}
\newcommand\pone{\Bbb{P}^1}

\newcommand\Pic{\operatorname{Pic}}

\newcommand\me{\mathcal{E}}

\newcommand\mv{\mathcal{V}}
\newcommand\tensor{\otimes}
\newcommand\ml{\mathcal{L}}

\newcommand\E{\mathcal{E}}

\newcommand\codim{\operatorname{codim}}

\newcommand\im{\operatorname{im}}

\newcommand\Pc{\mathcal{P}}

\newcommand\mq{\mathcal{Q}}
\newcommand\rk{\operatorname{rk}}

\newcommand\mcl{\mathcal{L}}

\newcommand\mar{\mathcal{R}}

\newcommand\Gr{\operatorname{Gr}}
\newcommand\SL{\operatorname{SL}}
\newcommand\pardeg{\operatorname{pardeg}}

\newcommand\Bun{\operatorname{Bun}}
\newcommand{\leto}[1]{\stackrel{#1}{\to}}

\newcommand\qdot{\circledast}

\newcommand{\Parbun}{\operatorname{Parbun}}

\DeclareMathOperator{\Exp}{Exp}

\DeclareMathOperator{\Ad}{Ad}

\newcommand{\Ker}{\operatorname{Ker}}

\newcommand{\fb}{\mathfrak{b}}
 \newcommand{\fg}{\mathfrak{g}}
 \newcommand{\fh}{\mathfrak{h}}
 \newcommand{\fl}{\mathfrak{l}}
 
 \newcommand{\fp}{\mathfrak{p}}

 \newcommand{\fu}{\mathfrak{u}}

\newcommand{\bz}{\mathbb{Z}}
\newcommand{\bD}{\mathbb{D}}
\def\u.{^{\bullet}}

\newtheorem{example}[theorem]{\bf Example}

\begin{document}

\title[Deformation]{The multiplicative eigenvalue problem and deformed quantum cohomology}

\author{Prakash Belkale and Shrawan Kumar}
\begin{abstract}
We construct deformations of the small quantum cohomology rings of homogeneous spaces  $G/P$, and obtain
an irredundant set of inequalities determining the multiplicative eigenvalue problem for the compact form $K$
of $G$.

\end{abstract}

\address{Department of Mathematics, University of North Carolina, Chapel Hill, NC 27599-3250}
\email{belkale@email.unc.edu, shrawan@email.unc.edu}

\maketitle
\section{Introduction}
Let $G$ be a  simple, connected, simply-connected  complex algebraic group. We choose a Borel subgroup
$B$ and a maximal torus $H\subset B$  and let $W$ be the associated Weyl group. Let $P$ be a standard parabolic subgroup (i.e., $P\supset B$) and let $L\subset P$ be its Levi subgroup containing $H$.  Then, $B_L:=B\cap L$ is a  Borel subgroup of $L$. We denote the Lie algebras of $G,P, L, B, B_L, H$ by the corresponding Gothic characters: $\fg, \fp, \fl, \fb, \fb_L, \frh$ respectively.
Let $W^P$ be the set of the minimal length representatives
in the cosets of $W/W_P$, where $W_P$ is the Weyl group of $P$. For any $w\in W^P$, let
$X_w^P:=\overline{BwP/P}\subset G/P$ be the corresponding Schubert variety and
let
$\{\sigma^P_w\}_{w\in W^P}$ be the Poincar\'e dual  (dual to the fundamental class of $X_w^P$) basis of $H^*(G/P, \mathbb{Z})$.

Let $R=R_\fg \subset \frh^*$ be
the set of roots of $\fg$ and let $R^+$ be the
 set of positive roots (i.e., the set of roots of $\fb$).
Similarly, let $R_\fl$ be the set of roots of $\fl$  and
$R_\fl^+$ be the set of roots of $\fb_L$. Let  $\Delta = \{\alpha_1, \dots, \alpha_\ell\}
\subset R^+$ be the set of simple
roots, $\{\alpha_1^\vee, \dots, \alpha_\ell^\vee\}
\subset \fh$ the
corresponding simple coroots.
 We denote by $\Delta_P$ the set of simple roots
 contained in $R_\fl$ and we set
$$ S_P:=\Delta\setminus \Delta_P.$$   For any $ 1\leq
j\leq \ell$, define the element $x_j\in \fh$
by
$$\alpha_i(x_{j})=\delta_{i,j},\text{ }\forall\text{ } 1\leq i\leq \ell.
$$

Consider the {\em fundamental alcove} $\mathscr{A}\subset
\mathfrak{h}$ defined by
$$
\mathscr{A}=\left\{\mu \in \mathfrak{h}:\alpha_{i}(\mu)\geq 0\text{~
  and~ } \theta(\mu)\leq 1\right\},
$$
where $\theta$ is the highest root of $\mathfrak{g}$. Then,
$\mathscr{A}$ parameterizes the $K$-conjugacy classes of $K$ under the
map $C:\mathscr{A}\to K/\Ad K$,
$$
\mu \mapsto c(\Exp (2\pi i\mu)),
$$
where $K$ is a maximal compact subgroup of $G$ and $c(\Exp (2\pi
i\mu))$ denotes the $K$-conjugacy class of $\Exp(2\pi i\mu)$. Fix a
positive integer $n\geq 2$ and define the {\em multiplicative polytope}
$$
\mathscr{C}_{n} := \left\{(\mu_{1},\ldots,\mu_{n})\in \mathscr{A}^{n}:
1\in C(\mu_{1})\dots C(\mu_{n})\right\}.
$$

Then,  $\mathscr{C}_{n}$ is a rational convex
polytope with nonempty interior in $\mathfrak{h}^n$.
Our aim is to describe the facets (i.e., the codimension one
faces) of $\mathscr{C}_{n}$.

We begin with the following theorem. It was proved  by  Biswas \cite{Bi} in the case $G=\SL_2$; by Belkale \cite{B1} for $G=\SL_m$ (and in this case a slightly weaker result by Agnihotri-Woodward \cite{AW} where the inequalities were parameterized by $\langle \sigma^{P}_{u_{1}},\dots,
\sigma^{P}_{u_{n}} \rangle_{d}\neq 0$); and by Teleman-Woodward \cite{Teleman-W} for general $G$. It may be recalled that the precursor to these results was the result due to Klyachko \cite{klyachko} determining the additive eigencone for $\SL_m$.

\begin{theorem}
Let $(\mu_{1},\ldots,\mu_{n})\in \mathscr{A}^{n}$. Then, the following
are equivalent:

(a) $(\mu_{1},\ldots,\mu_{n})\in \mathscr{C}_{n}$,

(b) For any standard maximal parabolic
subgroup $P$ of $G$, any $u_{1},\ldots,u_{n}\in W^{P}$, and any $d\geq
0$ such that the Gromov-Witten invariant
(cf. Definition \ref{mansur})
$$\langle \sigma^{P}_{u_{1}},\dots,
\sigma^{P}_{u_{n}} \rangle_{d}=1,
$$
the following inequality is satisfied:
$$
\mathscr{I}^{P}_{(u_{1},\ldots, u_{n};d)}:\medskip\medskip\medskip\,\,\,\,\,\,\,\,\,\,\sum^{n}_{k=1}
\omega_P(u_k^{-1}\mu_k) \leq d,
$$
where $\omega_{P}$ is the fundamental weight $\omega_{i_{P}}$ such
that $\alpha_{i_P}$ is the unique simple root in $S_{P}$.
\end{theorem}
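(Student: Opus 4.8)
The plan is to prove both implications $(a)\Rightarrow(b)$ and $(b)\Rightarrow(a)$, with the heart of the matter being the first (necessity of the inequalities), since it ties the eigenvalue condition to positivity of Gromov--Witten invariants on $G/P$. For necessity, I would start from a point $(\mu_1,\dots,\mu_n)\in\mathscr{C}_n$, so that there exist $k_1,\dots,k_n\in K$ with $k_j$ conjugate to $\Exp(2\pi i\mu_j)$ and $k_1\cdots k_n=1$. The standard translation (going back to Mehta--Seshadri and to the work of Agnihotri--Woodward and Teleman--Woodward) is to reinterpret this as the existence of a parabolic $G$-bundle (or a flat unitary connection with prescribed holonomy) on $\mathbb{P}^1$ with $n$ marked points, of a suitable semistable type. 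Concretely, the inequality $\mathscr{I}^P_{(u_1,\dots,u_n;d)}$ should emerge as the semistability inequality for a parabolic reduction to $P$: the left-hand side $\sum_k\omega_P(u_k^{-1}\mu_k)$ records the parabolic degree contribution of the weights relative to the flags determined by $u_1,\dots,u_n$, and the bound by $d$ reflects that the reduction corresponds to a map $\mathbb{P}^1\to G/P$ of degree $d$ whose evaluations at the marked points land in the Schubert varieties $X^P_{u_k}$; the intersection-theoretic condition $\langle\sigma^P_{u_1},\dots,\sigma^P_{u_n}\rangle_d=1$ guarantees that such a map exists (and is essentially unique, which is what the value $1$ buys us and what will matter for irredundancy, though irredundancy itself is not asserted in this theorem).

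For the converse $(b)\Rightarrow(a)$, the strategy is the standard Hilbert--Mumford / GIT argument: $\mathscr{C}_n$ is cut out by the inequalities coming from semistability, and by the Hilbert--Mumford criterion it suffices to test against one-parameter subgroups, equivalently against parabolic reductions. One reduces to maximal parabolics $P$ (since an arbitrary parabolic reduction is dominated by a maximal one in terms of the relevant linear functional), and for each such $P$ the extremal inequalities are exactly those indexed by tuples $(u_1,\dots,u_n;d)$ with the associated Schubert intersection number nonzero; the refinement to the value being exactly $1$ comes from the fact that only those contribute genuine (as opposed to redundant or implied) constraints, via a deformation or vanishing argument on the quantum product — here is where the deformed quantum cohomology rings constructed elsewhere in the paper will be invoked to isolate the coefficient.

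The key steps, in order, are: (1) set up the correspondence between points of $\mathscr{C}_n$ and semistable parabolic $G$-bundles on $(\mathbb{P}^1; p_1,\dots,p_n)$ with holonomy data $(\mu_1,\dots,\mu_n)$; (2) compute the numerical semistability inequality for a reduction of structure group to a maximal parabolic $P$ determined by a section of the associated $G/P$-bundle, and identify the degree of that section with the Gromov--Witten degree $d$ and the weight terms with $\sum_k\omega_P(u_k^{-1}\mu_k)$; (3) show that nonemptiness of the relevant Schubert intersection (indeed, that one may take the Gromov--Witten number equal to $1$) is both necessary for the inequality to be active and sufficient to realize a destabilizing reduction; (4) invoke Hilbert--Mumford to conclude that these inequalities, over all maximal $P$ and all valid $(u_1,\dots,u_n;d)$, suffice. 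The main obstacle is step (2)–(3): matching the parabolic-degree bookkeeping on the bundle side exactly with the quantum-cohomological degree $d$ on the $G/P$ side, and controlling when the Gromov--Witten invariant can be normalized to $1$ rather than merely being nonzero — this is precisely the technical payoff of the deformed quantum cohomology machinery developed in the body of the paper, so I would expect to cite that construction rather than reprove it here.
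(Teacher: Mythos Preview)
Your outline of $(a)\Rightarrow(b)$ is essentially the paper's argument: translate membership in $\mathscr{C}_n$ into parabolic semistability of the trivial bundle with generic flags (via the Mehta--Seshadri/Teleman--Woodward correspondence), realize the degree-$d$ map $\mathbb{P}^1\to G/P$ as a parabolic reduction, and read off the inequality $\mathscr{I}^P_{(u_1,\dots,u_n;d)}$ as the semistability condition for that reduction. In fact this step yields the inequality for every quantum non-null tuple, not merely those with Gromov--Witten number equal to $1$.

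The gap is in $(b)\Rightarrow(a)$. You attribute the passage from ``non-null'' to ``equal to $1$'' to the deformed quantum cohomology machinery, but that is a confusion between this theorem and the sharper Theorem~\ref{main}: the deformed product $\qdot_0$ is not used here at all. The actual mechanism is the \emph{canonical (Harder--Narasimhan) reduction} of parabolic $G$-bundles. The argument runs as follows. One already knows (Remark~\ref{nonnull}) that the quantum non-null inequalities cut out $\mathscr{C}_n$, so every facet of $\mathscr{C}_n$ meeting the interior of $\mathscr{A}^n$ is of the form $\mathscr{I}^P_{(u_1,\dots,u_n;d)}$ for some non-null tuple. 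Pick a rational regular $\vec{\mu}$ just outside that single facet and inside all others. Then the trivial bundle $\epsilon_G$ with generic flags and weights $\vec{\mu}$ is unstable, and its canonical parabolic reduction $\sigma_Q(\tilde{\mathcal{E}})$ violates a semistability inequality; since $\vec{\mu}$ violates only the chosen facet, one is forced to have $Q=P$, the relative positions equal to $u_k$, and the degree equal to $d$. The crucial input is then Lemma~\ref{reduced}: the canonical reduction is the \emph{unique} reduction with these numerical invariants, and it is a reduced point of the scheme $Z'_d(\tilde{\mathcal{E}})$ (this rests on Heinloth's rigidity result for canonical reductions via the $\Gamma$-equivariant correspondence of Theorem~\ref{equivalence}). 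That uniqueness is exactly what gives $\langle\sigma^P_{u_1},\dots,\sigma^P_{u_n}\rangle_d=1$.

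So your step (3) needs to be rewritten: the value $1$ is not isolated by any quantum-cohomological vanishing or deformation argument, but by the uniqueness and infinitesimal rigidity of the canonical reduction of an unstable parabolic bundle.
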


Even though this result describes the inequalities determining the polytope $\mathscr{C}_{n}$, however for groups other than $G$ of type $A_\ell$, the above system of inequalities has redundancies. The aim of our work is to give an irredundant subsystem of inequalities determining the polytope $\mathscr{C}_{n}$.

To achieve this, similar to the notion of Levi-movability of Schubert varieties in $X=G/P$ introduced in \cite{Belkale-Kumar} which gives rise to a deformed product in the cohomology $H^*(X)$, we have introduced here the notion of {\it quantum Levi-movability}  resulting into a deformed  product in  the quantum cohomology $QH^*(X)$ parameterized by  $\{\tau_i\}_{ \alpha_i\in S_P}$ as follows.
As a $\bz[q,\tau]$-module, it is the same as $H^*(X,\bz)\otimes_\bz \bz[q,\tau]$, where $q$ (resp. $\tau$) stands for
multi variables $\{q_i\}_ {\alpha_i\in S_P}$ (resp. $\{\tau_i\}_ {\alpha_i\in S_P}$). For $u,v\in W^P$, define the $\bz[q,\tau]$-linear
{\it quantum deformed product} by
$$\sigma^P_{u}\qdot\sigma^P_{v}=\sum_{d\geq 0\in H_2(X,\bz); w\in W^P} \Bigl(\prod_{\alpha_i \in S_P} \tau_i^{A_i(u,v,w,d)}\Bigr)q^d\langle \sigma^P_u, \sigma^P_v,\sigma^P_w\rangle_d \sigma^P_{w_o w w_o^P},$$
where $w_o$ (resp. $w_o^P$) is the longest element of $W$ (resp. $W_P$),
 $$A_i(u,v,w,d) =(\chi_e-\chi_u-\chi_v-\chi_w)(x_i)+\frac{2 a_ig^*}{\langle\alpha_i,\alpha_i\rangle},$$

$\chi_w= \sum_{\beta\in (R^+\setminus R^+_\fl)\cap w^{-1} R^+} \beta$, $g^*$ is the dual Coxeter number of $\fg$ and $a_i$ is defined by the identity \eqref{defD}. It is shown that, for a  cominuscule maximal parabolic subgroup $P$,  the deformed product coincides with the original product in the quantum cohomology of $X$ (cf. Lemma \ref{minuscule}).

Evaluating each $\tau_i=0$ in the above (which is well defined because of Theorem \ref{one}), we get
$$\sigma^P_{u}\qdot_0\sigma^P_{v}=\sum'_{d, w} q^d\langle \sigma^P_u, \sigma^P_v,\sigma^P_w\rangle_d \sigma^P_{w_o w w_o^P},$$
where the sum is restricted over those $d\geq 0 \in H_2(X,\bz)$ and $w\in W^P$  so that $A_i(u,v,w,d)=0$ for all $\alpha_i\in S_P$.
We shall denote the coefficient of $q^d \sigma^P_{w_o w w_o^P}$ in  $\sigma^P_{u}\qdot_0\sigma^P_{v}$ by $\langle \sigma^P_u, \sigma^P_v,\sigma^P_w\rangle_d^{\qdot_0}.$ Similarly, we shall denote the coefficient of $q^d \sigma^P_{w_o u_n w_o^P}$ in  $\sigma^P_{u_1}\qdot_0\dots \qdot_0\sigma^P_{u_{n-1}}$ by $\langle \sigma^P_{u_1}, \dots, \sigma^P_{u_n}\rangle_d^{\qdot_0}.$ We give an equivalent characterization of when
 $\langle \sigma^P_{u_1}, \dots, \sigma^P_{u_n}\rangle_d^{\qdot_0}\neq 0$ in Theorem \ref{two}.
Now our first main theorem is the following (cf. Theorem \ref{main}):
\begin{theorem}\label{firstmain}
Let $(\mu_{1},\ldots,\mu_{n})\in \mathscr{A}^{n}$. Then, the following
are equivalent:

(a) $(\mu_{1},\ldots,\mu_{n})\in \mathscr{C}_{n}$,

(b) For any standard maximal parabolic
subgroup $P$ of $G$, any $u_{1},\ldots,u_{n}\in W^{P}$, and any $d\geq
0$ such that
$$\langle \sigma^{P}_{u_{1}},\dots,
\sigma^{P}_{u_{n}} \rangle_{d}^{\qdot_{0}}=1,
$$
the following inequality is satisfied:
$$
\mathscr{I}^{P}_{(u_{1},\ldots, u_{n};d)}:\medskip\medskip\medskip\,\,\,\,\,\,\,\,\,\,\sum^{n}_{k=1}
\omega_P(u_k^{-1}\mu_k) \leq d.
$$
\end{theorem}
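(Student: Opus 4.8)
The plan is to deduce Theorem \ref{firstmain} from Theorem 1.1 (the Teleman--Woodward description) by showing that the two systems of inequalities, $\{\mathscr{I}^P_{(u_1,\dots,u_n;d)} : \langle\sigma^P_{u_1},\dots,\sigma^P_{u_n}\rangle_d = 1\}$ and $\{\mathscr{I}^P_{(u_1,\dots,u_n;d)} : \langle\sigma^P_{u_1},\dots,\sigma^P_{u_n}\rangle_d^{\qdot_0} = 1\}$, cut out the same polytope $\mathscr{C}_n$. Since $\mathscr{C}_n$ has nonempty interior and every $\mathscr{I}^P_{(u_1,\dots,u_n;d)}$ appearing in Theorem \ref{firstmain}(b) also appears among the inequalities of Theorem 1.1(b) — because a nonzero $\qdot_0$-structure constant is in particular a nonzero (indeed, by the $\qdot_0 = 1$ hypothesis, equal to $1$) ordinary Gromov--Witten invariant, as $\langle\cdots\rangle_d^{\qdot_0}$ is obtained from $\langle\cdots\rangle_d$ by keeping only the terms with $A_i = 0$ — the implication (a) $\Rightarrow$ (b) is immediate: if $(\mu_1,\dots,\mu_n)\in\mathscr{C}_n$, then by Theorem 1.1 it satisfies all the inequalities of that larger system, hence in particular the subsystem in Theorem \ref{firstmain}(b).

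The substance is the converse, (b) $\Rightarrow$ (a). Here I would argue that the inequalities surviving in Theorem \ref{firstmain}(b) already suffice, i.e. that every facet-defining inequality of $\mathscr{C}_n$ is of the form $\mathscr{I}^P_{(u_1,\dots,u_n;d)}$ with $\langle\sigma^P_{u_1},\dots,\sigma^P_{u_n}\rangle_d^{\qdot_0}=1$. The strategy is the classical one (as in Belkale--Kumar for the additive/Levi-movable case): suppose $(\mu_1,\dots,\mu_n)$ lies on a facet $\mathcal{F}$ of $\mathscr{C}_n$. By Theorem 1.1 there is at least one inequality $\mathscr{I}^P_{(u_1,\dots,u_n;d)}$ with $\langle\sigma^P_{u_1},\dots,\sigma^P_{u_n}\rangle_d = 1$ that is tight on all of $\mathcal{F}$. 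One then shows that, among all such tight inequalities, one may choose $(u_1,\dots,u_n,d)$ so that in addition $A_i(u_1,\dots,u_n,d)=0$ for every $\alpha_i\in S_P$ — equivalently, so that $\langle\sigma^P_{u_1},\dots,\sigma^P_{u_n}\rangle_d^{\qdot_0}=1$. The mechanism making this possible is an intersection-theoretic/Lie-theoretic interpretation of the quantity $\sum_i A_i(u_k,\dots)$ as the codimension of the locus in a space of parabolic bundles (or of $n$-tuples of flags on $\mathbb{P}^1$) along which the relevant Schubert-type intersection is nonempty; saturation of the semistable locus forces this codimension to be $0$ precisely on a facet. This is exactly the content one expects from Theorem \ref{two}, which characterizes nonvanishing of $\langle\sigma^P_{u_1},\dots,\sigma^P_{u_n}\rangle_d^{\qdot_0}$, and from the quantum analogue of the "Levi-movability on a facet" phenomenon; I would invoke Theorem \ref{two} to repackage the condition $\langle\cdots\rangle_d^{\qdot_0}=1$ and then run a deformation/transversality argument showing the extra constraints $A_i=0$ hold at a general point of $\mathcal{F}$.

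The main obstacle is the step asserting that a tight inequality on a facet can always be promoted to one with all $A_i=0$: a priori a facet of $\mathscr{C}_n$ could be cut out only by inequalities $\mathscr{I}^P_{(u_1,\dots,u_n;d)}$ for which some $A_i > 0$, and one must rule this out. This requires the quantum analogue of the key geometric input of \cite{Belkale-Kumar}: that if $\langle\sigma^P_{u_1},\dots,\sigma^P_{u_n}\rangle_d\ne 0$ but the inequality $\mathscr{I}^P_{(u_1,\dots,u_n;d)}$ is tight on a whole facet, then the intersection of the corresponding translated Schubert varieties (in the appropriate quot-scheme / space of quasi-parabolic bundles of degree $d$) must be "transverse at the identity" in the sense that the tangent-space obstruction $\sum_i A_i$ vanishes — otherwise the inequality would be a consequence of others and hence not facet-defining. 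Establishing this transversality, i.e. the quantum Kleiman-type/Levi-movability argument on the appropriate moduli space, is where the real work lies; the rest is bookkeeping comparing the two inequality systems and invoking that a full-dimensional rational polytope is determined by its facets.
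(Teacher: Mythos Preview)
Your treatment of (a)$\Rightarrow$(b) is correct and matches the paper: since $\langle\cdots\rangle_d^{\qdot_0}=1$ forces $\langle\cdots\rangle_d=1$, the system in (b) is a subsystem of that in Theorem~1.1(b), and the implication is immediate.

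For (b)$\Rightarrow$(a) you have the right target --- show that every facet of $\mathscr{C}_n$ is already cut out by an inequality with $\langle\cdots\rangle_d^{\qdot_0}=1$ --- but the mechanism you propose is not the one that works, and you yourself flag the decisive step as unproved. The paper's argument rests on two ingredients entirely absent from your sketch: (i) the \emph{canonical (Harder--Narasimhan) reduction} of parabolic $G$-bundles, handled via the Teleman--Woodward correspondence with $\Gamma$-equivariant bundles on a ramified cover, and (ii) the \emph{Levification process}, a one-parameter degeneration of a $P$-bundle to an $L$-bundle via conjugation by $t^{\bar{x}_P}$. Concretely: choose a rational regular $\vec{\mu}$ violating \emph{only} the given facet inequality. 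The trivial bundle with generic parabolic structure at weights $\vec{\mu}$ is then unstable, and its canonical reduction must have exactly the data $(P;u_1,\dots,u_n;d)$ (since $\vec{\mu}$ violates no other facet inequality). Heinloth's theorem that the canonical reduction has no infinitesimal deformations yields $\langle\sigma^P_{u_1},\dots,\sigma^P_{u_n}\rangle_d=1$. Now Levify this $P$-reduction; the crucial new step is that the canonical reduction is \emph{preserved} under the degeneration (checked via the degree characterization on the cover), so at $t=0$ one has an $L$-bundle whose extension to $G$ still has canonical reduction of the same type. Applying rigidity again at $t=0$ shows the section $\theta'$ is nonvanishing on this $L$-bundle, i.e.\ $(u_1,\dots,u_n;d)$ is quantum Levi-movable, whence $\langle\cdots\rangle_d^{\qdot_0}=1$ by Theorem~\ref{two}.

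Your proposed route --- interpret $\sum_i A_i$ as a codimension and argue that a facet-defining inequality must be ``transverse'' --- is the additive Belkale--Kumar picture on $(G/B)^n$, but here the ambient object is the stack $\Parbun_G$ and the substitutes for Schubert varieties are spaces of reductions $Z'_d(\tilde{\mathcal{E}})$. There is no ready-made ``$\sum A_i$ equals codimension'' statement in that setting, and the circularity you note (``otherwise it wouldn't be facet-defining'') is real: without an independent reason why tightness forces $A_i=0$, the argument stalls. The canonical-reduction/Levification machinery is precisely what supplies that reason.
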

The role of the flag varieties $(G/B)^n$ in \cite{Belkale-Kumar} is replaced here by the quasi-parabolic moduli stack $\Parbun_G$ of principal $G$-bundles on $\pone$ with parabolic structure at the marked points $b_1, \dots, b_n\in \pone$. The proof makes crucial use of the canonical reduction of parabolic $G$-bundles
and a certain {\it Levification process} of principal $P$-bundles (cf. Subsection \ref{section3.6}), which allows degeneration of a  principal $P$-bundle
to a $L$-bundle (a process familiar
in the theory of vector bundles as reducing the structure group to a Levi subgroup of $P$).

Our second main theorem (cf. Theorem \ref{irredundance}) asserts that the inequalities given by the (b)-part of the above theorem provide an irredundant system of inequalities defining the polytope $\mathscr{C}_{n}$.
 Specifically, we have the following result. This  result for $G=\SL_m$ was proved by Belkale combining the works \cite{BTIFR,BFulton} (see Remark \ref{FultonC}).  It is the multiplicative  analogue of Ressayre's result \cite{Re}. Our proof is a certain adaptation of Ressayre's proof (there are additional subtleties).
\begin{theorem} Let $n\geq 2$. The inequalities
$$\mathscr{I}^{P}_{(u_{1},\ldots, u_{n};d)}:\medskip\medskip\medskip\,\,\,\,\,\,\,\,\,\,\sum^{n}_{k=1}
\omega_P(u_k^{-1}\mu_k) \leq d,$$ given by  part (b) of the above theorem (as we run through the standard maximal parabolic subgroups $P$, $n$-tuples $(u_1, \dots, u_n)\in (W^P)^n$ and non-negative integers $d$ such that $
\langle \sigma^{P}_{u_{1}},\ldots, \sigma^{P}_{u_{n}} \rangle_{d}^{\qdot_{0}}=1)$  are pairwise distinct (even up to scalar multiples) and
form an irredundant system of inequalities
defining the eigen polytope $\mathscr{C}_{n}$ inside $\mathscr{A}^{n}$, i.e., the hyperplanes given by the equality in
$\mathscr{I}^{P}_{(u_{1},\ldots, u_{n};d)}$ are precisely the (codimension one) facets of the polytope $\mathscr{C}_{n}$ which intersect the interior of $\mathscr{A}^{n}$.
\end{theorem}
To show that the inequality
$\mathscr{I}^{P}_{(u_{1},\ldots, u_{n};d)}$
can not be dropped,
we produce (following Ressayre's general strategy \cite{Re}) a collection of points  of
$\mathscr{C}_{n}$ for which  the above inequality is  an equality, and such that their convex span has the dimension of
a facet (i.e., $-1+n\dim \frh$). This is achieved by the parabolic analogue of Narasimhan-Seshadri theorem for the Levi subgroup $L$ resulting
into a description of  $\mathscr{C}_{n}$ for $L$ in terms of the non-vanishing of the space of global sections of certain line bundles on
the moduli stack $\Parbun_L(d)$ of quasi-parabolic $L$-bundles of degree $d$ (cf. Theorem \ref{evproblem} applied to the semisimple part of $L$ and Corollary \ref{cor7.6}). To be able to use the parabolic analogue of Narasimhan-Seshadri theorem, we need
a certain {\it Levi twisting}, which produces an isomorphism of  $\Parbun_L(d)$ with  $\Parbun_L(d\pm 1)$ (cf. Lemma \ref{iso}).

Section 8 of the paper is greatly influenced by  Ressayre \cite{Re}  as will be clear to any informed reader.

It may be remarked that our work completes the multiplicative eigenvalue problem for compact simply-connected groups in the sense that we determine the multiplicative eigen polytope $\mathscr{C}_{n}$ by giving an irredundant  system of inequalities defining it. The problem of a recursive description of $\mathscr{C}_{n}$ in terms of eigen polytopes of
 ``smaller groups'' remains open for general $G$ (for $G=\operatorname{SL}(n)$ this has been carried out in \cite{b4}). 

\subsection{Acknowledgements.} We thank Anders Buch for providing the multiplicative table for the quantum cohomology of $G/P$, for $G$ of type $C_2$, and Patrick Brosnan for useful discussions. The authors were supported by the NSF grants DMS-0901249 and  DMS-1201310 respectively. We note that a result similar to Theorem \ref{firstmain} (and equivalent, by an argument in the manner of Theorem \ref{two} and Theorem 32 of \cite{Belkale-Kumar})  has
been established independently by Nicolas Ressayre \cite{r2}.
\section{ Notation and Preliminaries}\label{notation}

\subsection{Notation} Let $G$ be a  semisimple, connected, simply-connected  complex algebraic group. We choose a Borel subgroup
$B$ and a maximal torus $H\subset B$  and let $W=W_G:=N_G(H)/H$ be the associated Weyl group,
 where $N_G(H)$ is the normalizer of $H$ in $G$.  Let $P\supseteq B$ be a (standard)
 parabolic subgroup of $G$ and let $U=U_P$ be its unipotent radical. Consider the
 Levi subgroup $L=L_P$ of $P$ containing $H$, so that $P$ is the semi-direct product
 of $U$ and $L$. Then,  $B_L:=B\cap L$ is a Borel subgroup of
$L$. Let $\Lambda =\Lambda(H)$ denote the character group of $H$, i.e., the group of all the
algebraic group morphisms $H \to \Bbb G_m$.  Clearly, $W$ acts on $\Lambda$.
We denote the Lie algebras of $G,B,H,P,U,L,B_L$ by the corresponding Gothic characters:
$\fg,\fb,\fh,\fp,\fu,\fl,\fb_L$ respectively.  We will often identify an element
$\lambda$ of $\Lambda$
(via its derivative $\dot{\lambda}$) by an element of $\fh^*$. Let $R=R_\fg \subset \frh^*$ be
the set of roots of $\fg$ with respect to the Cartan subalgebra $\fh$ and let $R^+$ be the
 set of positive roots (i.e., the set of roots of $\fb$).
Similarly, let $R_\fl$ be the set of roots of $\fl$ with respect to $\fh$ and
$R_\fl^+$ be the set of roots of $\fb_L$. Let  $\Delta = \{\alpha_1, \dots, \alpha_\ell\}
\subset R^+$ be the set of simple
roots, $\{\alpha_1^\vee, \dots, \alpha_\ell^\vee\}
\subset \fh$ the
corresponding simple coroots and $\{s_1,\dots, s_\ell\}\subset W$ the
corresponding simple reflections,  where $\ell$ is the rank of $G$.
 We denote by $\Delta_P$ the set of simple roots
 contained in $R_\fl$ and we set
$$ S_P:=\Delta\setminus \Delta_P.$$  For any $ 1\leq
j\leq \ell$, define the element $x_j\in \fh$
by
\begin{equation}\label{eqn0}\alpha_i(x_{j})=\delta_{i,j},\text{ }\forall\text{ } 1\leq i\leq \ell.
\end{equation}
Further, define the element $\bar{x}_j$ by
\begin{equation}\label{eqn0'}\bar{x}_j=N_jx_j,
\end{equation}
where $N_j$ is the smallest positive integer such that $N_jx_j$ is in the coroot lattice $Q^\vee\subset \fh$ of $G$.

Recall that if $W_P$ is the Weyl group of $P$ (which is, by definition, the  Weyl
Group $W_L$ of $L$), then  each coset of $W/W_P$ contains  a unique member $w$ of minimal length.
 This satisfies:
\begin{equation}\label{eqn1}
wB_L w^{-1} \subseteq B.
\end{equation}
Let $W^P$ be the set of the minimal length representatives
in the cosets of $W/W_P$.

For any $w\in W^P$, define the Schubert cell:
\[
C_w^P:=  BwP/P \subset X^P:=G/P.
  \]
Then, it is a locally closed subvariety of the flag variety $X^P$, isomorphic with the affine
space $\Bbb A^{\ell(w)}, \ell(w)$ being the length of $w$. Its closure is denoted by $X^P_w$, which is an irreducible (projective) subvariety
of $X^P$ of dimension $\ell(w)$. We denote the point $wP\in C_w^P$ by $\dot{w}$.
We abbreviate $X_w^B$ by $X_w$. We define the shifted Schubert cell $\Lambda^P_w:= w^{-1}BwP/P$, and its closure is denoted  by
$\bar{\Lambda}^P_w$. Then, $B_L$ keeps $\Lambda^P_w$ (and hence $\bar{\Lambda}^P_w$) stable by \eqref{eqn1}.

Let
$\mu(X_w^P)$ denote the fundamental class of $X_w^P$
considered as an element of the singular homology with integral coefficients
$H_{2\ell(w)}(X^P, \Bbb Z)$ of $X^P$. Then, from the Bruhat decomposition, the elements
$\{\mu(X_w^P)\}_{w\in W^P}$ form a $\Bbb Z$-basis of
 $H_*(X^P, \Bbb Z)$. Let $\{\sigma^P_w\}_{w\in W^P}$ be the Poincar\'e dual basis of the singular
cohomology with integral coefficients $ H^*(X^P, \Bbb Z)$. Thus,
$\sigma^P_w\in H^{2(\dim X^P-\ell (w))}(X^P, \Bbb Z)$.

An element $\lambda\in \Lambda$ is called dominant (resp. dominant regular) if
$\dot{\lambda}(\alpha^\vee_i)\geq 0$ (resp. $\dot{\lambda}(\alpha^\vee_i) > 0$)
for all the simple coroots $\alpha^\vee_i$. Let $\Lambda_+$ (resp. $\Lambda_{++}$)
denote the set of all the dominant (resp. dominant regular) characters.
 We denote the fundamental weights by
$\{\omega_i\}_{1\leq i\leq \ell}$, i.e.,
$$\omega_i(\alpha_j^\vee)=\delta_{i,j}.$$

For any $\lambda \in \Lambda$, we have a  $G$-equivariant
 line bundle $\mathcal{L} (\lambda)$  on $G/B$ associated to the principal $B$-bundle $G\to G/B$
via the one dimensional $B$-module $\lambda^{-1}$. (Any  $\lambda \in \Lambda$ extends
 uniquely to a character of $B$.) The one dimensional $B$-module $\lambda$ is also denoted by
 $\Bbb C_\lambda$. If $\lambda$ vanishes on $\{\alpha_i^\vee\}_{\alpha_i\in \Delta_P}$, it defines a
character of $P$ and hence a line bundle $\mathcal{L}_P(\lambda)$ on $X^P$ associated to the character $\lambda^{-1}$ of $P$.
It is easy to see that
\begin{equation}\label{eqn1'}\int_{X_{s_i}^P}\,c_1(\mathcal{L}_P(\lambda))=\lambda(\alpha_i^\vee),\,\,\,\text{for any}\,\, \alpha_i\in S_P.
\end{equation}

For $w\in W^P$, define $\chi_w=\chi_w^P \in \frh^*$ by
$$\chi_w= \sum_{\beta\in (R^+\setminus R^+_\fl)\cap w^{-1} R^+} \beta= \rho-2\rho^L+ w^{-1}\rho,$$
where $\rho$ (resp. $\rho^L$) is half the sum of roots in $R^+$ (resp. in $R^+_\fl$).

All the schemes are considered over the base field of complex numbers $\Bbb C$. The varieties
are reduced (but not necessarily irreducible) schemes.

\subsection{Quantum cohomology of $X^P$}
We refer the reader to \cite{Kontsevich-M, Fulton-P} for the foundations of small quantum cohomology (also see \cite{Fulton-W}).
Let $X=X^P$ be the flag variety as above, where $P$ is any standard parabolic subgroup. Then,
$$\{\mu(X_{s_i}^P)\}_{\alpha_i\in S_P}$$
is a $\mathbb{Z}$-basis of $H_2(X, \bz)$.

Introduce variables $q_{i}$ associated to each $\alpha_i\in S_P$.
For
\begin{equation}\label{defD}
d=\sum_{\alpha_i\in S_P} a_{i} \mu(X_{s_i}^P) \in H_2(X, \bz),
\end{equation}
let $q^d:=\prod_{\alpha_i\in S_P} q_{i}^{a_{i}}$. We say $d\geq 0$ if each $a_{i}\geq 0$. We denote the class $d$ by $(a_i)_{\alpha_i\in S_P}.$

\begin{defi}\label{mansur}
Let $u_1,\dots,u_n\in W^P$ and $d\geq 0\in H_2(X)$. Fix distinct points $b_1,\dots,b_n\in \pone$, and a general point
$(g_1,\dots,g_n)\in G^n$. Let
\begin{equation}\label{paul}
\langle \sigma^P_{u_1},\sigma^P_{u_2},\dots,\sigma^P_{u_n}\rangle_d
\end{equation}
be the number of maps (count as $0$ if infinite)  $f:\pone\to X$ of degree $d$ (i.e., $f_*[\pone]=d\in H_2(X)$) such that $f(b_k)\in g_k C^P_{u_k}, k=1,\dots, n$.
\end{defi}
\begin{defi}\label{notnull}
Call a tuple $(P;u_1,\dots,u_n; d)$ as above {\it quantum non-null} if there are maps $f$ (possibly infinitely many) in the setting of Definition \ref{mansur}.
This notion will play a role in Section \ref{sufficiency}.
\end{defi}
The space of maps $\pone\to X$ of degree $d$ is  a smooth variety of dimension $\dim X +\int_d\,c_1 (T_X)$,
where $T_X$ is the tangent bundle of $X$. Therefore, \eqref{paul} is zero unless
\begin{equation}\label{normal}
\sum_{k=1}^n \codim  [\bar{\Lambda}^P_{u_k}] = \dim X + \int_d\,c_1 (T_X).
\end{equation}
Let $w_o$ (resp. $w_o^P$) be the longest element of the Weyl group $W$ (resp. $W_P$). Now, the quantum product in $H^*(X,\bz)\otimes_\bz\,\bz[q_i]_{\alpha_i\in S_P}$ is defined by
 \begin{equation}\label{eqqdef} \sigma^P_{u}\star\sigma^P_{v}=\sum_{d\geq 0} q^d\langle \sigma^P_u, \sigma^P_v,\sigma^P_w\rangle_d \sigma^P_{w_o w w_o^P},
\end{equation}
giving rise to a graded associative and commutative ring, where we assign the degree of $q_{i}$ to be $\int_{X_{s_i}^P} c_1(T_X)$, which is clearly equal to $2-2\rho^L(\alpha_i^\vee)$ by the equation \eqref{eqn1'}.

We note that there exist maps  $\pone\to X$ of any degree $d\geq 0$.

\section{Quantum Levi-movability and a deformed product in the quantum cohomology of $X^P$}\label{section3}

Consider a commutative and associative  ring $R$ over $\mathbb{Z}$ freely (additively) generated by $\{e_u\}_{u\in I}$. Write
$$e_u\cdot e_v=\sum c_{u,v}^w e_w,\,\,\,\text{for}\,\,  c_{u,v}^w\in \mathbb{Z}.$$
Consider a  multigrading $\gamma:I\to \Bbb{Z}^S$, where $S$ is a set with $m$ elements, such that whenever $c_{u,v}^w\neq 0$, we have
$$\gamma(w)-\gamma(u)-\gamma(v)\geq 0,$$
where  an element $\vec{a}=(a_i)_{i\in S}\in\Bbb{Z}^S$ is $\geq 0$ if each $a_i\geq 0$. Introduce $m=|S|$ variables $\tau_i,i\in S$. For $\vec{a}\in\Bbb{Z}^S$ define $\tau^{\vec{a}}=\prod_{i\in S}\tau_i^{a_i}$. Define a new product
$\odot_{\tau}$ on $R\tensor_{\Bbb{Z}}\Bbb{Z}[\tau_i]_{i\in S}$  by
$$e_u\odot_{\tau} e_v=\sum \tau^{\gamma(w)-\gamma(u)-\gamma(v)} c_{u,v}^w e_w.$$
\begin{lemma}\label{lem3.1}
\begin{enumerate}
\item $\odot_{\tau}$ is a commutative and associative ring.
\item Putting all $\tau_i=0$ gives a commutative and associative graded ring (i.e., the product respects the grading). More precisely, the ring structure $\odot_0$ is given by the following:
$$e_u\odot_{0} e_v=\sum' c_{u,v}^w e_w,$$
where the sum is restricted over $w$ such that $\gamma(w)=\gamma(u)+\gamma(v)$.
\end{enumerate}
\end{lemma}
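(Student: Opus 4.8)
The plan is to verify the two assertions by direct computation with structure constants, using the multigrading hypothesis to control which terms survive. For part (1), commutativity of $\odot_\tau$ is immediate: since $R$ is commutative we have $c_{u,v}^w = c_{v,u}^w$, and the exponent $\gamma(w)-\gamma(u)-\gamma(v)$ is symmetric in $u,v$, so $e_u\odot_\tau e_v = e_v\odot_\tau e_u$. For associativity, I would compute both $(e_u\odot_\tau e_v)\odot_\tau e_w$ and $e_u\odot_\tau(e_v\odot_\tau e_w)$ and compare the coefficient of each $e_z$. Expanding, the coefficient of $e_z$ in $(e_u\odot_\tau e_v)\odot_\tau e_w$ is
$$\sum_{x} \tau^{\gamma(x)-\gamma(u)-\gamma(v)}\,\tau^{\gamma(z)-\gamma(x)-\gamma(w)}\, c_{u,v}^x\, c_{x,w}^z = \Bigl(\sum_{x} c_{u,v}^x c_{x,w}^z\Bigr)\,\tau^{\gamma(z)-\gamma(u)-\gamma(v)-\gamma(w)},$$
because the two exponent vectors add to $\gamma(z)-\gamma(u)-\gamma(v)-\gamma(w)$, which is independent of the intermediate index $x$. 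The analogous computation for the other bracketing gives $\bigl(\sum_{x} c_{v,w}^x c_{u,x}^z\bigr)\,\tau^{\gamma(z)-\gamma(u)-\gamma(v)-\gamma(w)}$. The two scalar sums agree by associativity of the original product $R$, and the $\tau$-powers are literally the same monomial, so the products agree. One subtlety worth a sentence: a priori the exponent $\gamma(z)-\gamma(u)-\gamma(v)-\gamma(w)$ could have a negative component, in which case $\tau^{\vec a}$ is not a monomial in $\mathbb{Z}[\tau_i]_{i\in S}$; but whenever the corresponding scalar coefficient is nonzero, some chain of nonzero structure constants forces (by the telescoping hypothesis on $\gamma$) that this vector is $\geq 0$, so the expression does lie in $R\otimes_{\mathbb Z}\mathbb{Z}[\tau_i]_{i\in S}$ and the manipulation is legitimate.

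For part (2), setting all $\tau_i=0$ is well defined since $\odot_\tau$ has coefficients in $\mathbb{Z}[\tau_i]_{i\in S}$ (a polynomial ring, so evaluation at $0$ makes sense), and a monomial $\tau^{\vec a}$ with $\vec a\geq 0$ evaluates to $1$ if $\vec a=0$ and to $0$ otherwise. Hence $e_u\odot_0 e_v = \sum_{\gamma(w)=\gamma(u)+\gamma(v)} c_{u,v}^w e_w$, which is the stated formula. Associativity and commutativity of $\odot_0$ follow from part (1) by specialization (the quotient map $\mathbb{Z}[\tau_i]\to\mathbb{Z}$ sending each $\tau_i\mapsto 0$ is a ring homomorphism, so it carries the associative commutative ring $R\otimes\mathbb{Z}[\tau_i]$, with product $\odot_\tau$, to an associative commutative ring). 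Finally, $\odot_0$ respects the grading by $\gamma$: by construction the only terms $e_w$ appearing in $e_u\odot_0 e_v$ have $\gamma(w)=\gamma(u)+\gamma(v)$, which is exactly the statement that the product is homogeneous of degree $\gamma(u)+\gamma(v)$ when $e_w$ is placed in degree $\gamma(w)$.

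I do not anticipate a genuine obstacle here; the only point requiring care is the bookkeeping that makes the interchange of sums and the factoring-out of $\tau^{\gamma(z)-\gamma(u)-\gamma(v)-\gamma(w)}$ valid — i.e., confirming that the intermediate monomials always multiply to a monomial independent of the intermediate index, and that one never divides by $\tau_i$. Both follow formally from the additivity/telescoping of $\gamma$ along chains of nonzero structure constants, so the proof is essentially a one-line computation once that observation is recorded.
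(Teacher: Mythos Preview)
Your proof is correct and is exactly the standard argument; the paper does not provide a proof of this lemma at all, so your computation is precisely what is intended. The only minor addition you make beyond the obvious is the remark that the exponent $\gamma(z)-\gamma(u)-\gamma(v)-\gamma(w)$ is nonnegative whenever the coefficient is nonzero, which is a good point to record.
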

\begin{example} The deformed product in $H^*(X^P)$ as introduced by Belkale-Kumar in \cite{Belkale-Kumar} comes from such a situation with
$$\gamma(u)=(\chi_u(x_i))_{\alpha_i\in S_P},$$
and the Schubert basis $\{\sigma^P_w\}_{w\in W^P}.$
\end{example}

Let $X=X^P$ be any flag variety. Recall the definition of the small quantum cohomology of $X$ from Section 2. We give the definition of a certain deformation of the quantum product in $X$ as below.

We begin with the following result, which will be proved towards the end of this section.
\begin{theorem}\label{one} Let $u_1, \dots, u_n\in W^P$ and $d=(a_i)_{\alpha_i\in S_P} \in H_2(X,\bz)$ be such that
 $\langle \sigma^P_{u_1},\sigma^P_{u_2},\dots,\sigma^P_{u_n}\rangle_d\neq 0$. Then, for any ${\alpha_i\in S_P}$,
$$ (\chi_{e}-\sum_{k=1}^n\chi_{u_k})(x_i) + \sum _{\alpha\in R^+\setminus R^+_{\fl}} \alpha(x_i)\alpha(\tilde{d})\geq 0,$$
where $\tilde{d}=\sum_{\alpha_j\in S_P} a_{j}\alpha_j^\vee.$

\end{theorem}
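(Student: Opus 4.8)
The plan is to reduce the statement to a positivity property of the tangent bundle $T_X$ restricted to the image of a curve realizing the Gromov--Witten invariant, combined with an analysis of how Schubert conditions at the marked points constrain the splitting type of $f^*T_X$. Concretely, given $u_1,\dots,u_n\in W^P$ and $d$ with $\langle\sigma^P_{u_1},\dots,\sigma^P_{u_n}\rangle_d\neq 0$, fix a general $(g_1,\dots,g_n)\in G^n$ and a map $f:\pone\to X$ of degree $d$ with $f(b_k)\in g_k C^P_{u_k}$. First I would pull back the one-parameter subgroup $\lambda_i:\gmm\to H$ corresponding to $x_i$ (more precisely $\bar x_i$, to stay in the coroot lattice) and use the associated $\gmm$-action on $X$ with fixed point $\dot e$ (the base point $eP$) to get a $\bz$-grading on $T_X$ and on all the relevant bundles. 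The quantity $(\chi_e-\sum_k\chi_{u_k})(x_i)$ is, up to the dimension-count normalization \eqref{normal}, the sum of the $\lambda_i$-weights of the normal spaces to the shifted Schubert varieties $\bar\Lambda^P_{u_k}$ at the point $\dot e$, while $\sum_{\alpha\in R^+\setminus R^+_\fl}\alpha(x_i)\alpha(\tilde d)$ is $\int_d\, c_1^{\lambda_i}$, i.e. the $\lambda_i$-equivariant degree of $f^*T_X$ read off from $\int_d c_1(T_X)$ graded by the $\gmm$-action. So the asserted inequality is exactly the statement that a certain equivariant Euler-class / weight count is nonnegative.

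The key step is to make this precise via the deformation argument of \cite{Belkale-Kumar}: I would consider, over the space of maps $\mathrm{Mor}_d(\pone,X)$ cut down by the incidence conditions $f(b_k)\in g_k C^P_{u_k}$, the $\lambda_i$-equivariant structure induced by conjugating $(g_1,\dots,g_n)$ by $\lambda_i(t)$; since the Gromov--Witten number is a (finite, nonzero) count, this intersection is zero-dimensional and nonempty, and genericity of $(g_k)$ lets us assume each solution point $f$ is reduced. Then I would compute the $\lambda_i$-weight of the determinant of the tangent space of $\mathrm{Mor}_d(\pone,X)$ at $f$ — which is $H^0(\pone,f^*T_X)$, of the predicted dimension $\dim X+\int_d c_1(T_X)$ — versus the sum of the $\lambda_i$-weights of the conormal spaces of the $g_kC^P_{u_k}$ along $f$. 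Transversality of the intersection, together with the fact that $\lambda_i(t)$ preserves $B$ and hence the flag structure, forces the difference of these two weight sums to be a sum of $\lambda_i$-weights on a space of sections $H^0(\pone,\mathcal{F})$ for a bundle $\mathcal F$ that is a quotient of $f^*T_X$ with a filtration whose graded pieces, as $\gmm$-representations, carry only nonnegative $\lambda_i$-weights (because $T_X$ at $\dot e$ is spanned by root spaces $\fg_{-\beta}$, $\beta\in R^+\setminus R^+_\fl$, with $\beta(\bar x_i)\ge 0$, and the transition to $H^0$ of a globally generated bundle only adds nonnegative contributions). Unwinding the bookkeeping identifies this nonnegative weight sum with the left-hand side of the claimed inequality.

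The main obstacle I anticipate is the honest bookkeeping of weights when passing from the pointwise Lie-algebra decomposition of $T_X$ at $\dot e$ to the global cohomology $H^0(\pone, f^*T_X)$ — i.e. controlling how the splitting type of $f^*T_X$ interacts with the $\lambda_i$-grading and with the conormal bundles of the Schubert conditions. A clean way to organize this is to use the canonical reduction / Bialynicki-Birula stratification of $X$ for the $\gmm=\lambda_i(\gmm)$-action: stratify $X$ by the attracting cells of the fixed-point components, observe that $f(b_k)\in g_kC^P_{u_k}$ together with general $g_k$ pins down which cell $f(b_k)$ lies in, and then the weight of $H^0(f^*T_X)$ minus the conormal weights becomes a sum over the ``unused'' positive directions, manifestly $\ge 0$. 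I would also need the small lemma that replacing $x_i$ by $\bar x_i=N_ix_i$ only scales the inequality by $N_i>0$, so it suffices to prove it for a genuine cocharacter; and I would invoke \eqref{normal} to convert between $\int_d c_1(T_X)$ and the $\sum_k\codim[\bar\Lambda^P_{u_k}]$ side so that the two expressions $(\chi_e-\sum\chi_{u_k})(x_i)$ and $\sum_{\alpha\in R^+\setminus R^+_\fl}\alpha(x_i)\alpha(\tilde d)$ line up with the geometric weight counts exactly as in the $n$-fold generalization of Proposition 2/Lemma in \cite{Belkale-Kumar}.
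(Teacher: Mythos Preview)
Your weight-bookkeeping identifies the right quantity: up to the factor $N_i$, the left-hand side of the inequality is the weight $\mu_i$ by which the central one-parameter subgroup $t^{\bar x_i}\subset L$ acts on the determinant-of-cohomology line $D(\mk(\tilde{\mathcal P}))$ (this is Proposition~\ref{clef}). But your mechanism for nonnegativity does not work. You claim the inequality unwinds to a sum of $\lambda_i$-weights on $H^0(\pone,\mathcal F)$ for some quotient $\mathcal F$ of $f^*T_X$ whose graded pieces carry only nonnegative weights. There is no such $\mathcal F$: the weight $\mu_i$ decomposes (via the exact sequence \eqref{exacto}) into $\sum_r \gamma_r\cdot\chi(\mv_r)$ over the $\bar x_i$-graded pieces of $\mathcal P\times^P T_{\dot e}$, minus the normal-space contributions $\chi_{u_k}(\bar x_i)$, and the individual $\chi(\mv_r)$ have no fixed sign (the $\mv_r$ are subquotients of $f^*T_X$, not quotients, so global generation does not apply to them). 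At a transverse point $H^0(\mk)=H^1(\mk)=0$, so there is no cohomology to sum over. Your proposed $\gmm$-action---left-translating on $X$, or equivalently conjugating the $g_k$---is also not the one you need: as $t\to 0$ it drives $f$ toward the $H$-fixed locus of $X$ and generically forces bubbling in the space of maps, so there is no clean limit in $\mathrm{Mor}_d(\pone,X)$, and the Bialynicki--Birula cell of $f(b_k)$ is not determined by $u_k$ for general $g_k$.

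What is missing is the \emph{Levification} of the $P$-bundle itself (Subsection~\ref{section3.6}): conjugation $\phi_t(p)=t^{x}pt^{-x}$ of $P$ (for $x=\sum d_j\bar x_j$, each $d_j>0$) extends regularly to $t=0$ with image in $L$, giving a family $\tilde{\mathcal P}_t\in\Parbun_P(d)$ over $\Bbb A^1$ with $\tilde{\mathcal P}_1=\tilde{\mathcal P}$ and $\tilde{\mathcal P}_0$ induced from an $L$-bundle. The canonical section $\theta$ of $\mar$ on $\Parbun_P(d)$ is nonzero at some $\tilde{\mathcal P}$ by Lemma~\ref{wash}; pulling it back along $t\mapsto\tilde{\mathcal P}_t$ gives a $\gmm$-equivariant line bundle on $\Bbb A^1$ with a nonzero invariant section. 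Nonnegativity is then the elementary Hilbert--Mumford-type fact (Lemma~\ref{oldnew}) that a nonzero $\gmm$-invariant section forces the weight at $0$ to be $\geq 0$; by Proposition~\ref{clef} that weight is $\sum_j d_j\mu_j$, and varying the $d_j>0$ gives each $\mu_j\geq 0$. So the argument lives on $\Parbun_P(d)$ rather than on the space of maps, and the positivity comes from the existence of the $\theta$-section, not from a direct weight count at a single $f$.
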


Consider the normalized Killing form  $\langle\,,\rangle$ on $\fh^*$ normalized so that $\langle\theta,\theta\rangle =2$, where
$\theta$ is the highest root of $\fg$. This gives rise to an identification
$$\kappa:\fh^*\to \fh. $$
It is easy to see that
\begin{equation}\label{eq20} \kappa(\omega_i)= \frac{\langle\alpha_i,\alpha_i\rangle}{2}x_i.
\end{equation}
\begin{lemma}\label{coxeter} For any $h,h'\in \frh$,
$$ \sum_{\alpha\in R}\alpha(h)\alpha(h')=2g^*\langle h, h'\rangle,$$
where $g^*:=1+\langle \rho, \theta^\vee\rangle$ is the dual Coxeter number of $\frg$.
\end{lemma}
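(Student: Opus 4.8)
The plan is to observe that both sides of the identity define $W$-invariant symmetric bilinear forms on $\fh$, so they must be proportional, and then to pin down the constant of proportionality by a single well-chosen evaluation. (Here we may and do assume $\fg$ simple, so that ``highest root'' and ``dual Coxeter number'' make sense; the semisimple case reduces to this factor by factor.)

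First I would set $B(h,h'):=\sum_{\alpha\in R}\alpha(h)\alpha(h')$. It is clearly symmetric and bilinear, and it is $W$-invariant: for $w\in W$,
$B(wh,wh')=\sum_{\alpha\in R}(w^{-1}\alpha)(h)(w^{-1}\alpha)(h')=B(h,h')$
since $W$ permutes $R$. The normalized form $\langle\,,\rangle$, transported from $\fh^*$ to $\fh$ via $\kappa$, is also $W$-invariant and nondegenerate. Since $\fg$ is simple, the reflection representation $\fh$ of $W$ is irreducible, hence the space of $W$-invariant symmetric bilinear forms on $\fh$ is one-dimensional: a $W$-invariant form is a $W$-equivariant map $\fh\to\fh^*$, and post-composing with $\kappa$ gives a $W$-equivariant endomorphism of $\fh$, which is a scalar by Schur's lemma. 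Therefore $B=c\langle\,,\rangle$ for some scalar $c$, and it remains to check $c=2g^*$.

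Next I would evaluate at $h=h'=\kappa(\theta)=\theta^\vee$ (the second equality because $\langle\theta,\theta\rangle=2$). The right-hand side becomes $c\langle\theta,\theta\rangle=2c$. For the left-hand side, note $\alpha(\theta^\vee)=\langle\alpha,\theta\rangle$, and I claim that for $\alpha\in R^+$ one has $\alpha(\theta^\vee)\in\{0,1\}$ unless $\alpha=\theta$, where $\theta(\theta^\vee)=2$. Indeed $\theta+\alpha$ is never a root (it would dominate the highest root $\theta$), so in the $\theta$-string through $\alpha$ one gets $\alpha(\theta^\vee)\ge 0$; and Cauchy--Schwarz together with $\theta$ being a long root forces $\alpha(\theta^\vee)^2\le 4$ with equality only for $\alpha=\pm\theta$. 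Writing $N_1:=\#\{\alpha\in R^+:\alpha(\theta^\vee)=1\}$, the identity $\sum_{\alpha\in R^+}\alpha(\theta^\vee)=2\rho(\theta^\vee)=2(g^*-1)$ gives $N_1+2=2(g^*-1)$, i.e.\ $N_1=2g^*-4$. Hence $\sum_{\alpha\in R^+}\alpha(\theta^\vee)^2=N_1+2^2=2g^*$, and summing over $R=R^+\sqcup(-R^+)$ yields $B(\theta^\vee,\theta^\vee)=4g^*$. Comparing with $2c$ gives $c=2g^*$, as required.

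The only mildly delicate point is the elementary root-system bookkeeping in the last step, namely controlling $\alpha(\theta^\vee)$ for $\alpha\in R^+$ in the non-simply-laced cases; the Cauchy--Schwarz argument handles all types uniformly, and alternatively one may simply invoke the standard fact that $-\theta$ attaches to the Dynkin diagram to form the extended diagram, which gives $\alpha(\theta^\vee)\le 1$ for positive $\alpha\ne\theta$ at once.
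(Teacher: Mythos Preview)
Your proof is correct and follows essentially the same approach as the paper: both argue that the bilinear form $\sum_{\alpha\in R}\alpha(h)\alpha(h')$ is $W$-invariant, hence a scalar multiple of $\langle\,,\,\rangle$, and then evaluate at $h=h'=\theta^\vee$ using $\alpha(\theta^\vee)\in\{0,1\}$ for $\alpha\in R^+\setminus\{\theta\}$ together with $2\rho(\theta^\vee)=2(g^*-1)$. You give a bit more justification (Schur's lemma for the proportionality, the $\theta$-string/Cauchy--Schwarz argument for the bound on $\alpha(\theta^\vee)$) than the paper does, but the line of argument is identical.
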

\begin{proof}
Consider the bilinear form on $\frh$ given by $\langle h, h'\rangle':= \sum_{\alpha\in R}\alpha(h)\alpha(h')$. It  is $W$-invariant and hence it is a multiple of the original Killing form, i.e., $\langle h, h'\rangle'= z\langle h, h'\rangle$, for some constant $z$. To calculate
$z$,
\begin{align*} 2z&=\langle \theta^\vee, \theta^\vee\rangle'\\
&=\sum_{\alpha\in R}\alpha(\theta^\vee)^2\\
&=4+2\sum_{\alpha\in R^+}\alpha(\theta^\vee), \,\,\,\text{since}\,\, \alpha(\theta^\vee)\in \{0,1\} \,\,\forall \,\alpha\in R^+\setminus\{\theta\}\\
&=4+4\rho(\theta^\vee)\\
&=4g^*.
\end{align*}
\end{proof}

Theorem \ref{one} and the general deformation principle spelled out in Lemma \ref{lem3.1}  allows us to give the following deformed product in the quantum cohomology of $X$.

\begin{defi}  \label{qdeformquantum} Introduce the $\tau$-deformation of the quantum cohomology of $X$ as follows:

As a $\bz[q,\tau]$-module, it is the same as $H^*(X,\bz)\otimes_\bz \bz[q,\tau]$, where $q$ (resp. $\tau$) stands for
multi variables $\{q_i\}_ {\alpha_i\in S_P}$ (resp. $\{\tau_i\}_ {\alpha_i\in S_P}$). For $u,v\in W^P$, define the $\bz[q,\tau]$-linear
{\it deformed product} by
$$\sigma^P_{u}\qdot\sigma^P_{v}=\sum_{d\geq 0\in H_2(X,\bz); w\in W^P} \Bigl(\prod_{\alpha_i \in S_P} \tau_i^{A_i(u,v,w,d)}\Bigr)q^d\langle \sigma^P_u, \sigma^P_v,\sigma^P_w\rangle_d \sigma^P_{w_o w w_o^P},$$
where (for $d=(a_i)_{\alpha_i\in S_P}$)
\begin{equation} A_i(u,v,w,d) =(\chi_e-\chi_u-\chi_v-\chi_w)(x_i)+\frac{2 a_ig^*}{\langle\alpha_i,\alpha_i\rangle}.
\end{equation}
Using  Lemma \ref{coxeter} and the equation \eqref{eq20} (and observing that for $\alpha\in  R^+_{\fl},  \alpha(x_i)=0$
for any $\alpha_i\in S_P$), we get another expression:
$$A_i(u,v,w,d) =(\chi_e-\chi_u-\chi_v-\chi_w)(x_i)+ \sum _{\alpha\in R^+\setminus R^+_{\fl}} \alpha(x_i)\alpha(\tilde{d}).$$

Evaluating each $\tau_i=0$ in the above (which is well defined because of Theorem \ref{one}), we get
$$\sigma^P_{u}\qdot_0\sigma^P_{v}=\sum'_{d, w} q^d\langle \sigma^P_u, \sigma^P_v,\sigma^P_w\rangle_d \sigma^P_{w_o w w_o^P},$$
where the sum is restricted over those $d\geq 0 \in H_2(X,\bz)$ and $w\in W^P$  so that $A_i(u,v,w,d)=0$ for all $\alpha_i\in S_P$.
We shall denote the coefficient of $q^d \sigma^P_{w_o w w_o^P}$ in  $\sigma^P_{u}\qdot_0\sigma^P_{v}$ by $\langle \sigma^P_u, \sigma^P_v,\sigma^P_w\rangle_d^{\qdot_0}.$

From the general deformation principle given  in Lemma \ref{lem3.1}, taking the multigraded function $\gamma=(\gamma_i)_{\alpha_i\in S_P}$ defined by
$$\gamma_i(q^d\sigma_w^P)=\chi_w(x_i)+\frac{2a_ig^*}{\langle \alpha_i, \alpha_i\rangle}\,\,\,\text{for}\,\, d=(a_j)_{\alpha_j\in S_P}\in H_2(X, \bz),$$
it follows that
$\qdot$ and $\qdot_0$ give associative (and commutative) products.
\end{defi}
\begin{lemma}\label{minuscule}
Let $P$ be a cominuscule maximal standard parabolic
 subgroup of $G$ (i.e., the unique simple root $\alpha_{i_P} \in S_P$
 appears with coefficient
$1$ in the highest root of $R^+$). Then, the deformed product
$\qdot$ coincides with the quantum  product $\star$ in $H^*(X^P)\otimes_\Bbb{Z}\,\Bbb{Z}[q_{i_P}]$.
\end{lemma}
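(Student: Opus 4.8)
To prove Lemma \ref{minuscule}, it suffices to show that for a cominuscule maximal parabolic $P$, whenever the Gromov–Witten invariant $\langle \sigma^P_u,\sigma^P_v,\sigma^P_w\rangle_d$ is nonzero, the exponent $A_{i_P}(u,v,w,d)$ automatically vanishes; then every $\tau_{i_P}$-power appearing in $\sigma^P_u\qdot\sigma^P_v$ is $\tau_{i_P}^0$, so $\qdot$ reduces to $\star$. Since $S_P=\{\alpha_{i_P}\}$ is a singleton, there is only one exponent to control. Recall from the second displayed expression in Definition \ref{qdeformquantum} that
\[
A_{i_P}(u,v,w,d)=(\chi_e-\chi_u-\chi_v-\chi_w)(x_{i_P})+\sum_{\alpha\in R^+\setminus R^+_\fl}\alpha(x_{i_P})\,\alpha(\tilde d),
\]
with $\tilde d=a\,\alpha_{i_P}^\vee$ when $d=a\,\mu(X^P_{s_{i_P}})$. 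The cominuscule hypothesis says precisely that $\alpha(x_{i_P})\in\{0,1\}$ for all $\alpha\in R^+$, with value $1$ exactly on those $\alpha\in R^+\setminus R^+_\fl$.

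\emph{Step 1: rewrite the linear-algebra term.} Using that $\alpha(x_{i_P})=1$ for each $\alpha\in R^+\setminus R^+_\fl$ (cominuscule), the sum $\sum_{\alpha\in R^+\setminus R^+_\fl}\alpha(x_{i_P})\alpha(\tilde d)$ collapses to $\sum_{\alpha\in R^+\setminus R^+_\fl}\alpha(\tilde d)=(2\rho-2\rho^L)(\tilde d)$. Also, by the identity $\chi_w=\rho-2\rho^L+w^{-1}\rho$ recorded in the preliminaries, the first term simplifies: $\chi_e-\chi_u-\chi_v-\chi_w = -(2\rho-2\rho^L)-u^{-1}\rho-v^{-1}\rho-w^{-1}\rho+\rho$ (being careful with the $\chi_e=0$ normalization and the bookkeeping of the $\rho-2\rho^L$ shifts — I will verify $\chi_e(x_{i_P})=\sum_{\alpha\in R^+\setminus R^+_\fl}\alpha(x_{i_P})=(2\rho-2\rho^L)(x_{i_P})$ directly). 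The point is that, after these substitutions, $A_{i_P}$ becomes a manifestly "topological" quantity — essentially the difference between $\sum_k\codim[\bar\Lambda^P_{u_k}]$-type data and $\dim X+\int_d c_1(T_X)$ — evaluated via $x_{i_P}$.

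\emph{Step 2: identify $A_{i_P}$ with the dimension constraint.} Here is the crux. For a cominuscule $P$ one has $c_1(T_X)=$ (a known multiple $m$ of) the ample generator, and $\int_d c_1(T_X)=m a$; on the other hand $\int_{X^P_{s_{i_P}}}c_1(T_X)=2-2\rho^L(\alpha_{i_P}^\vee)$ by the remark after \eqref{eqqdef}. I will show that $A_{i_P}(u,v,w,d)$ equals exactly
\[
\Bigl(\sum_{k\in\{u,v,w\}}\codim[\bar\Lambda^P_{k}]\Bigr)-\dim X-\int_d c_1(T_X),
\]
i.e. the quantity in \eqref{normal} (for $n=3$). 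This requires knowing $\codim[\bar\Lambda^P_w]=\chi_w(x_{i_P})\cdot(\text{something})$, or more precisely that for cominuscule $P$ the single multigrading component $\chi_w(x_{i_P})$ packages the full codimension information — this is the content of the equality $\dim X^P-\ell(w)$ matched against $\chi_w(x_{i_P})$ under the cominuscule normalization. Granting this identification, \eqref{normal} says the bracketed quantity is $0$ whenever $\langle\sigma^P_u,\sigma^P_v,\sigma^P_w\rangle_d\neq 0$, hence $A_{i_P}(u,v,w,d)=0$, which is what we want.

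\emph{The main obstacle.} The delicate point is Step 2: verifying that when $|S_P|=1$ and $P$ is cominuscule, the single deformation exponent $A_{i_P}$ coincides \emph{on the nose} (not just up to a positive factor) with the GW dimension obstruction \eqref{normal}. One must pin down the constant relating $\chi_w(x_{i_P})$ to $\codim[\bar\Lambda^P_w]$ and relating $2a g^*/\langle\alpha_{i_P},\alpha_{i_P}\rangle$ to $\int_d c_1(T_X)=a\int_{X^P_{s_{i_P}}}c_1(T_X)$; the cominuscule condition is exactly what forces these constants to agree (it is what makes $\alpha(x_{i_P})\in\{0,1\}$, collapsing the quadratic-looking term $\sum\alpha(x_{i_P})\alpha(\tilde d)$ to a linear one, and it is also what makes $X^P$'s anticanonical class proportional to the Schubert generator in the right way). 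I would do the bookkeeping by comparing the degree assignments: the quantum cohomology grading gives $\deg q_{i_P}=\int_{X^P_{s_{i_P}}}c_1(T_X)$, the Belkale–Kumar grading is $\gamma(\sigma^P_w)=\chi_w(x_{i_P})$, and I must check these two gradings on $q^d\sigma^P_w$ differ by a \emph{single overall} rescaling when $P$ is cominuscule — whence the two products, whose structure constants are supported on the same graded pieces, coincide. Once the constants are reconciled the lemma follows immediately; the only real work is this normalization chase, plus checking the edge case where $\langle\sigma^P_u,\sigma^P_v,\sigma^P_w\rangle_d$ is nonzero but the relevant moduli space of maps is positive-dimensional (which cannot occur in the small-quantum setting for the three-pointed invariants, since then \eqref{normal} is an equality).
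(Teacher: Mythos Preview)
Your approach is essentially the same as the paper's: reduce to showing $A_{i_P}(u,v,w,d)=0$ whenever the three-point invariant is nonzero, then identify $A_{i_P}$ with the dimension constraint \eqref{normal} using that $\alpha(x_{i_P})=1$ for all $\alpha\in R^+\setminus R^+_\fl$. The ``main obstacle'' you flag---that $\chi_w(x_{i_P})$ equals $\codim(\Lambda^P_w:X^P)$ on the nose for cominuscule $P$---is exactly the one nontrivial input, and the paper dispatches it by citing \cite{Belkale-Kumar}, Proof of Lemma~19; once you have that, your Step~2 goes through cleanly without the normalization chase you worry about, since the second displayed form of $A_{i_P}$ together with $\int_{X^P_{s_{i_P}}}c_1(T_{X^P})=\sum_{\alpha\in R^+\setminus R^+_\fl}\alpha(\alpha_{i_P}^\vee)$ gives the identity directly.
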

\begin{proof} By the definition of $\qdot$, it suffices to show that for any $u,v, w\in W^P$ and $d=a_{i_P}\in H_2(X^P, \Bbb{Z})$, such that
$\langle \sigma^P_u, \sigma^P_v,\sigma^P_w\rangle_d \neq 0$,
\begin{equation} \label{eqn5.1}
 A_{i_P}(u,v,w,d)= 0.
\end{equation}
Since $P$ is cominuscule, by \cite{Belkale-Kumar}, Proof of Lemma 19,
\begin{equation} \label{eqn5.2}
 \chi_w (x_{i_P})
=\codim(\Lambda^{P}_w:X^P).
\end{equation}
Moreover, the quantum cohomological degree of $q_{i_P}$ equals
\begin{equation} \label{eqn5.1'} \int_{X^P_{s_{i_P}}} c_1(T_{X^P})=\sum _{\alpha\in R^+\setminus R^+_{\fl}} \alpha(\alpha_{i_P}^\vee).
\end{equation}
 Thus, since $\langle \sigma^P_u, \sigma^P_v,\sigma^P_w\rangle_d \neq 0$, we get (by equating the cohomological degrees on the two sides
of \eqref{eqqdef})
\begin{equation} \label{eqn5.3}
\codim(\Lambda^{P}_u:X^P)+\codim(\Lambda^{P}_v:X^P)
=\dim \Lambda^{P}_w + a_{i_P}\cdot \text{degree} \,q_{i_P}.
\end{equation}
Combining equations \eqref{eqn5.2}, \eqref{eqn5.1'}  and \eqref{eqn5.3}, we get equation \eqref{eqn5.1} (since $\alpha(x_{i_P})=1$ for all
$\alpha\in R^+\setminus R^+_{\fl}$, $P$ being cominuscule).
\end{proof}

\subsection{The enumerative problem of small quantum cohomology in terms of principal bundles}\label{subsection3.1}
Let $\mathcal{E}$ be a principal right $G$-bundle on $\pone$. It is standard that sections $f:\pone\to \mathcal{E}/P$ are in one to one correspondence with reductions of the structure group of
$\mathcal{E}$ to $P$. This correspondence works as follows: Given $f$, let
$\mathcal{P}$ be the right $P$-bundle with fiber $f(x)P\subseteq \mathcal{E}_x$ over $x\in \pone$. It is then easy to see
that there is a canonical isomorphism of principal $G$-bundles $\mathcal{P}\times^P G\to \mathcal{E}$.
For $\mathcal{E}=\epsilon_G$, the trivial bundle $\pone\times G\to \pone$,
\begin{enumerate}
\item Sections $f$ correspond to maps $\bar{f}:\pone\to X^P$.
\item For $\alpha_i\in S_P$, let $\mathcal{E} ({\omega_i}):=\epsilon\boxtimes \mathcal{L}_P(\omega_i)$ be the corresponding line bundle on $\mathcal{E}/P
=\pone\times X^P$, where $\epsilon$ is the trivial line bundle on $\pone$. Then, $\bar{f}$ has degree $d=
(a_i)_{\alpha_i \in S_P}$
if $c_1(f^*(\mathcal{E}(\omega_i)))=a_{i}$ (using the identity \eqref{eqn1'}).
\end{enumerate}

Let $\mathcal{E}$ be a principal $G$-bundle on $\pone$. We want to state an enumerative problem for $\mathcal{E}$ that corresponds to
that of Definition \ref{mansur} for $\mathcal{E}=\epsilon_G$. Fix distinct points $b_1,\dots,b_n\in \pone$,    $u_1,\dots,u_n\in W^P$ and
 $d=(a_i)_{\alpha_i \in S_P} \geq 0\in H_2(X^P)$. Fix general choices of $\bar{g}_k\in \mathcal{E}_{b_k}/B$. The enumerative problem,
which gives the Gromov-Witten numbers  $\langle \sigma^P_{u_1},  \dots,\sigma^P_{u_n}\rangle_d$ in the case
$\mathcal{E}=\epsilon_G$, is the following:
Count the number of sections $f:\pone\to \mathcal{E}/P$ (count as $0$ if infinite), such that
\begin{enumerate}
\item $c_1(f^*(\mathcal{E}(\omega_i)))=a_{i}$ for each $\alpha_i\in S_P$, where $\mathcal{E}(\omega_i)$ is the line bundle $\mathcal{E}\times^P \Bbb{C}_{-\omega_i}$ on $\mathcal{E}/P$.
\item $f(b_k)\in \mathcal{E}_{b_k}/P$ and $\bar{g}_k\in\mathcal{E}_{b_k}/B$ are in {\it relative position} $u_k\in W^P, k=1,\dots,n$,  defined as follows. Pick a trivialization
$e\in \mathcal{E}_{b_k}$ and write
$f(b_k)=e h_k P$ and $\bar{g}_k= e g_kB$. Then, we want $h_k\in g_k B u_k P\subseteq X^P$. A different choice of  $e$ acts on $h_k$ and $g_k$ by
a left multiplication and therefore does not affect the relative position.
\end{enumerate}
The above enumerative problem may be degenerate for some $\mathcal{E}$.
\subsection{Tangent spaces}\label{formules}

 Since $X=X^P$ is a homogeneous space, the tangent bundle $T_X$ is globally generated (since $\frg\tensor\mathcal{O}_X$ surjects onto $T_X$).

Fix any $\alpha_i\in S_P$. We can filter $T_{\dot{e}}:=T(X^P)_{\dot{e}}\simeq \fg/\fp$ by counting the multiplicity of  $\alpha_i$ in the root spaces
in $\fg/\fp$, where $\dot{e}$ is the base point of $X$. Specifically, for any $r\geq 1$,
let  $T_{i,r}\subset T_{\dot{e}}$ be the P-submodule spanned by the root spaces  $\frg_{-\alpha}$ of $T_{\dot{e}}$ such that $\alpha(x_i)\leq r$.
Define the $P$-module $ Q_{i,r}$ by the following:
$$0\to T_{i,r}\to  T_{\dot{e}}\to Q_{i,r}\to 0.$$

Let $\mq_{i,r}$ (resp. $\mathcal{T}_{i,r}$) be the vector bundle on $X$ arising from the $P$-module $Q_{i,r}$ (resp. ${T}_{i,r}$).
Since $\mq_{i,r}$ are quotients of $T_X$, they are globally generated. Let $\beta_i:=\sum_{r\geq 1}c_1(\mq_{i,r})$,  and define the integers $s_{i,j}=\int_{X^P_{s_j}}\beta_i$, for any $\alpha_i, \alpha_j\in S_P$. Then, it is easy to see that

\begin{equation}\label{eqn2'} s_{i,j}=\sum _{\alpha\in R^+\setminus R^+_{\fl}}\alpha(x_i)\alpha(\alpha^\vee_j).
\end{equation}
This is a non-negative integer since $\mq_{i,r}$'s are globally generated.

\subsection{Some deformation theory}
Let $\mathcal{E}$ be a principal $G$-bundle on a smooth curve $C$ and let $P\subseteq G$ be  a parabolic subgroup. Let $f:C\to \mathcal{E}/P$  be a section,
and $\mathcal{P}$ the corresponding $P$-bundle.

Let $Z$ be the space of sections $f:C\to \mathcal{E}/P$. This is a subscheme of the scheme $\mathcal{Z}$ of maps  $\beta: C\to \mathcal{E}/P$.
Let $M$ be the scheme of maps $C \to C$. Then, we have the morphism $\phi: \mathcal{Z}\to M, \beta \mapsto \gamma\circ \beta$, where
$\gamma:\mathcal{E}/P\to C$ is the canonical projection. By definition, $Z$ is the fiber of $\phi$ over the identity map $I_C$.

\begin{lemma}
 For any $f\in Z$, the Zariski tangent space $TZ_f$ is identified with $H^0(C, f^* T_v(\mathcal{E}/P))$, where $T_v$ is the vertical tangent bundle.
\end{lemma}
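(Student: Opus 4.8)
The plan is to identify the Zariski tangent space $TZ_f$ as the fiber over the identity map $I_C$ of the differential of the morphism $\phi:\mathcal{Z}\to M$, and then to compute each of the three tangent spaces involved using standard deformation theory of maps. First I would recall that for the scheme $\mathcal{Z}$ of all maps $\beta:C\to\mathcal{E}/P$, the Zariski tangent space at a point $\beta$ is $T\mathcal{Z}_\beta = H^0(C,\beta^*T_{\mathcal{E}/P})$, where $T_{\mathcal{E}/P}$ is the tangent bundle of the total space $\mathcal{E}/P$; this is the classical description of infinitesimal deformations of a morphism with fixed source and target. Similarly, $TM_{I_C} = H^0(C, T_C)$, the space of infinitesimal deformations of the identity map $C\to C$.

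Next I would analyze the differential $d\phi_f : T\mathcal{Z}_f \to TM_{I_C}$. The key structural input is the short exact sequence of bundles on $\mathcal{E}/P$ coming from the smooth projection $\gamma:\mathcal{E}/P\to C$:
\[
0 \to T_v(\mathcal{E}/P) \to T_{\mathcal{E}/P} \to \gamma^* T_C \to 0,
\]
where $T_v(\mathcal{E}/P)$ is the vertical (relative) tangent bundle. Pulling this back along the section $f$ and noting $f^*\gamma^*T_C = (\gamma\circ f)^*T_C = (I_C)^*T_C = T_C$ (since $f$ is a section, $\gamma\circ f = I_C$), I obtain on $C$ the short exact sequence
\[
0 \to f^*T_v(\mathcal{E}/P) \to f^*T_{\mathcal{E}/P} \to T_C \to 0.
\]
Taking global sections gives a left-exact sequence $0\to H^0(C, f^*T_v(\mathcal{E}/P)) \to H^0(C, f^*T_{\mathcal{E}/P}) \to H^0(C,T_C)$, and under the identifications above this last map is exactly $d\phi_f$. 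Hence $\Ker(d\phi_f) = H^0(C, f^*T_v(\mathcal{E}/P))$.

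Finally I would conclude: since $Z=\phi^{-1}(I_C)$ is the scheme-theoretic fiber, its Zariski tangent space at $f$ is contained in $\Ker(d\phi_f)$, i.e. $TZ_f \subseteq H^0(C, f^*T_v(\mathcal{E}/P))$; conversely, a section $s\in H^0(C, f^*T_v(\mathcal{E}/P))$ is a first-order deformation of $f$ as a map that is purely vertical, hence automatically remains a section of $\gamma$ to first order, so it lies in $TZ_f$. This gives the desired equality $TZ_f \cong H^0(C, f^*T_v(\mathcal{E}/P))$. The main subtlety to be careful about is the precise compatibility between the scheme structures — that the fiber $Z$ of $\phi$ really does have tangent space equal to the kernel of $d\phi_f$ (rather than something larger), which is automatic for the scheme-theoretic fiber; and checking that no conditions on surjectivity of $d\phi_f$ or on $H^1$ are needed for this tangent-space statement (they would only enter if one wanted smoothness or dimension estimates for $Z$, which is not claimed here). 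So the only real work is setting up the exact sequence of relative tangent bundles and identifying the connecting map with $d\phi_f$; everything else is formal.
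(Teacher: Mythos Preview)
Your proof is correct and follows essentially the same approach as the paper: identify $T\mathcal{Z}_f$ via deformation theory, pull back the vertical/horizontal exact sequence along the section $f$, and read off the kernel on global sections. The paper's proof is more terse (it simply states the exact sequence and says ``this allows us to conclude''), whereas you spell out explicitly that the induced map on $H^0$ is $d\phi_f$ and that the scheme-theoretic fiber has tangent space equal to its kernel; but the underlying argument is the same.
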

\begin{proof}
By deformation theory, the tangent space of $\mathcal{Z}$ at the point $f$ is
$H^0(C,f^*T(\mathcal{E}/P))$. There
is a natural exact sequence:
$$0\to f^* T_v(\mathcal{E}/P)\to  f^* T(\mathcal{E}/P)\to TC\to 0,$$
which allows us to conclude the proof.
\end{proof}

 Let $f\in Z$ (for $C = \pone$) and let $\mathcal{P}$ be the corresponding principal $P$-bundle. Then, $Z$ is smooth at $f$ of the expected dimension
$$ \dim \,X+\int_{\pone} f^*(c_1(T_v(\mathcal{E}/P)))$$ if
 $H^1(\pone,f^*T_v(\mathcal{E}/P))=0$ (which happens if, for example,  $\mathcal{E}$ is trivial cf. \cite{kollar}). If $Z$ is smooth of the expected dimension at $\mathcal{P}$ then $f$ deforms with every deformation of $\mathcal{E}$  (over a complete local ring).

We have the following simple result.
\begin{lemma}
$f^*T_v(\mathcal{E}/P)=\mathcal{P}\times^P T_{\dot{e}}$.
\end{lemma}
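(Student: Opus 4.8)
The plan is to make everything local on the base $\pone$ and then recognize the vertical tangent bundle as an associated bundle. First I would recall the construction: the section $f\colon\pone\to\mathcal E/P$ corresponds, as explained in Subsection \ref{subsection3.1}, to a reduction $\mathcal P\subseteq\mathcal E$ of the structure group to $P$, with a canonical isomorphism $\mathcal P\times^P G\xrightarrow{\ \sim\ }\mathcal E$. Under this isomorphism the fiber bundle $\mathcal E/P\to\pone$ becomes $(\mathcal P\times^P G)/P=\mathcal P\times^P(G/P)$, the bundle associated to $\mathcal P$ with fiber $G/P$ via the left translation action of $P$ on $G/P$. The section $f$ itself corresponds to the tautological section of $\mathcal P\times^P(G/P)$ sending $x$ to the class of $(p,\dot e)$ for any $p\in\mathcal P_x$ (this is well defined since $\dot e$ is the $P$-fixed base point, so $(p\cdot q,\dot e)\sim(p,q\dot e)=(p,\dot e)$ for $q\in P$).

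Next I would identify the pullback of the vertical tangent bundle. For an associated fiber bundle $\mathcal P\times^P F\to C$ with fiber $F$, the vertical tangent bundle is $\mathcal P\times^P TF$, where $P$ acts on $TF$ by the derivative of its action on $F$. Here $F=G/P$ and the relevant point of $F$ along the tautological section is the base point $\dot e$, whose tangent space is $T_{\dot e}(G/P)\cong\fg/\fp$ with its natural $P$-module structure (the isotropy representation). Pulling $\mathcal P\times^P T(G/P)$ back along the tautological section exactly restricts the fiber to the tangent space at $\dot e$, giving $f^*T_v(\mathcal E/P)=\mathcal P\times^P T_{\dot e}(G/P)=\mathcal P\times^P T_{\dot e}$, which is the claim (I am writing $\mathcal P$ for the bundle denoted $\mathcal{P}$ in the statement; in the lemma's notation this is $\mathcal P$, and the paper's $T_{\dot e}$ is $T(X^P)_{\dot e}\cong\fg/\fp$).

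To make this rigorous without invoking the associated-bundle formalism wholesale, I would argue in a local trivialization. Over a trivializing open $U\subseteq\pone$ for $\mathcal E$ (hence for $\mathcal P$), we have $\mathcal E|_U\cong U\times G$, $\mathcal P|_U\cong U\times P$, $(\mathcal E/P)|_U\cong U\times(G/P)$, and under this $f|_U$ becomes $x\mapsto(x,\dot e)$; the vertical tangent bundle is $U\times T(G/P)$ and its pullback along $f|_U$ is $U\times T_{\dot e}(G/P)$, which matches $\mathcal P\times^P T_{\dot e}$ restricted to $U$ since the latter is also trivialized over $U$ by the section $x\mapsto(x,e)$. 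The only thing to check is that the two trivializations glue compatibly, i.e. that on overlaps $U\cap U'$ the transition functions agree: for $\mathcal P\times^P T_{\dot e}$ a $P$-valued transition $g_{UU'}$ acts on $T_{\dot e}\cong\fg/\fp$ by the isotropy representation (the induced action of $P$ on $\fg/\fp$), and on $f^*T_v(\mathcal E/P)$ the same $g_{UU'}$ acts on the tangent space of $G/P$ at $\dot e$ by the derivative of left translation by $g_{UU'}$, which fixes $\dot e$ (since $g_{UU'}\in P$) and acts on $T_{\dot e}(G/P)$ precisely by that isotropy representation. So the cocycles coincide and the bundles are canonically isomorphic.

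The main obstacle, such as it is, is purely bookkeeping: being careful that the tautological section really lands at the $P$-fixed point $\dot e$ (so that the fiber of the pullback is $T_{\dot e}$ and not the tangent space at some moving point), and that the $P$-action appearing on $\fg/\fp=T_{\dot e}$ is exactly the isotropy/adjoint-type action used to form $\mathcal P\times^P T_{\dot e}$ — i.e. matching the linearization of the $P$-action on $G/P$ at $\dot e$ with the standard $P$-module structure on $\fg/\fp$. Once those identifications are pinned down the isomorphism is canonical and functorial in $\mathcal E$, which is all that is needed for the later deformation-theoretic arguments.
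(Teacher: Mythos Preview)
Your proof is correct. The paper actually does not supply a proof of this lemma at all: it is introduced with the phrase ``We have the following simple result'' and stated without argument, so there is no approach in the paper to compare yours against. Your associated-bundle argument, checked in local trivializations, is the standard way to see this identity and is exactly what the authors are implicitly relying on.
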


Also, note that for any character $\beta$ of $P$,
\begin{equation}\label{eqn2"} f^* \mathcal{E} (\beta)= \mathcal{P}\times^P \Bbb{C}_{-\beta}\,\,\,\text{ as a bundle over}\,\,  C.
\end{equation}

\subsection{Tangent spaces of Schubert varieties}
Let $\mathcal{P}$ be a principal $P$-bundle on $\pone$ and $x\in\pone$. Given $\bar{p}\in \mathcal{P}_x/B_L$ and $u\in W^P$, we can construct a subspace
$T(\bar{p},u,x)\subseteq \mathcal{P}_x\times^P T_{\dot{e}}$ as follows. Fix a  trivialization $e$ of $\mathcal{P}_x$ and write  $\bar{p}=epB_L$.
Then, the subspace $T(\bar{p},u,x)$ is defined to be $e\times T(p\Lambda^P_u)_{\dot{e}}\subseteq \mathcal{P}_x\times^P T_{\dot{e}}$.
A different choice of the coset representative of $\bar{p}$ or the choice of $e$ gives the same subspace.

Consider the evaluation map $e_{b_k}: Z\to \mathcal{E}_{b_k}/P$ at $b_k$. Fix  $f\in Z$ such that the corresponding principal bundle is $\mathcal{P}$. Then, the
differential map  $de_{b_k}$ on tangent spaces
$$H^0(\pone, \mathcal{P}\times^P T_{\dot{e}})\to T_{f(b_k)}(\mathcal{E}_{b_k}/P)=\mathcal{P}_{b_k}\times^P T_{\dot{e}}$$
 is the evaluation map at $b_k$.

Fix an element $e_k\in \mathcal{P}_{b_k}.$ Since we require that $f(b_k)$ and $\bar{g}_k$ are in relative position $u_k\in W^P$, we get that
$\bar{g}_k=e_kp_ku_k^{-1}B$, for some $p_k\in P$. To prove this, observe that since $f(b_k)=e_kP$ and $\bar{g}_k=e_kg_kB \in \mathcal{E}_{b_k}/B$
 are in relative position $u_k$, we get $1\in g_kBu_kP$, i.e., $g_kc_ku_kp_k^{-1}=1$ for some $c_k\in B$ and $p_k\in P$. From this we see that
$\bar{g}_k=e_kp_ku_k^{-1}B$.

 \subsection{Determinant of cohomology}\label{detnew}
The determinant of cohomology of a coherent sheaf $\mf$ on a curve $C$ is the line $$D(\mf)=\det H^0(C,\mf)^*\tensor \det H^1(C,\mf).$$
Automorphisms of $\mathcal{F}$ act on $D(\mf)$. For example, multiplication by $t\neq 0$ on $\mathcal{F}$ acts on $D(\mf)$ by $t^{-\chi(C,\mf)}$,
where $\chi(C,\mf)$ is the Euler characteristic of $\mf$. In the cases we consider here, $\mf$ is locally free.

Suppose $\chi(C,\mf)=0$, then,  as,  e.g., in \cite{Fa},  $D(\mf)$ carries a canonical element $\theta(\mf)$ which is non-vanishing if and only if
\begin{equation} \label{eqnew1} H^0(C,\mf)=H^1(C,\mf)=0.
\end{equation}
Automorphisms of $\mathcal{F}$ act on $D(\mf)$ preserving $\theta(\mf)$. In particular, if an automorphism
of $\mf$ acts non-trivially on $D(\mf)$, then $\theta(\mf)=0$.

As earlier, we have fixed distinct points $b_1,\dots,b_n\in\pone$. Let $\Parbun_P=\Parbun_P(d)$ be the moduli-stack of quasi-parabolic principal $P$-bundles on $\pone$ of degree $d=(a_i)_{\alpha_i\in S_P}$,  i.e., data $\tilde{\mathcal{P}}=(\mathcal{P};\bar{p}_1,\dots,\bar{p}_n)$
such that
\begin{itemize}
\item $\mathcal{P}$ is a principal $P$-bundle on $\pone$ such that $\mathcal{P}\times^P \Bbb{C}_{-\omega_i}$ has degree $a_{i}$,
 for each $\alpha_i\in S_P$.
\item For $k=1,\dots,n$, $\bar{p}_k\in \mathcal{P}_{b_k}/B_L$.
\end{itemize}

\subsection{Transversality}\label{trans}
For any $\tilde{\Pc}=(\mathcal{P};\bar{p}_1,\dots,\bar{p}_n) \in \Parbun_P(d)$ and $u_1, \dots, u_n\in W^P$ satisfying the equation \eqref{normal}, define a locally free sheaf $\mathcal{K}=\mathcal{K} ({\tilde{\Pc}})$ on $\pone$ by the exact sequence coming from the evaluation map:
\begin{equation}\label{exacto}
0\to \mathcal{K}\to \mathcal{P}\times^P T_{\dot{e}}\to\bigoplus_{k=1}^n  i_{b_k*}\frac{\mathcal{P}_{b_k}\times^P T_{\dot{e}}}{T(\bar{p}_k, u_k, b_k)} \to 0,
\end{equation}
where $i_{b_k}$ is the embedding $b_k \hookrightarrow \pone$. Note that the Euler characteristic of $\mathcal{P}\times^P T_{\dot{e}}$ equals  $\dim X+\int_dc_1(T_X)$, which is the same as the  Euler characteristic of $\bigoplus_{k=1}^n  i_{b_k*}\frac{\mathcal{P}_{b_k}\times^P T_{\dot{e}}}{T(\bar{p}_k, u_k, b_k)}$  by the condition \eqref{normal}. Hence, $\mathcal{K}$ has zero Euler characteristic.
The transversality condition on $\tilde{\mathcal{P}}$ is the requirement that $\mathcal{K}$ has non-vanishing $\theta$-section $\theta(\mathcal{K})\in D(\mathcal{K})$. Note that this sort of reformulation of transversality appears in \cite{PB,b4,Belkale-Kumar}, and (in a related situation) in \cite{Belkale1}.

Define  a line bundle $\mar$ on $\Parbun_P$ such that its fiber over a quasi-parabolic $\tilde{\mathcal{P}}$ is $D(\mk(\tilde{\mathcal{P}}))$.
The line bundle $\mar$ admits a canonical section $\theta$ given by
 $$\theta(\tilde{\mathcal{P}}) = \theta(\mathcal{K}(\tilde{\mathcal{P}})).$$
\begin{remark}\label{rhone}
Note that if $\theta$ does not vanish at a quasi-parabolic $\tilde{\mathcal{P}}$ then, since
$$H^0(\pone, \mathcal{K})= H^1(\pone, \mathcal{K}) = 0 \,\,\,\text{by the identity} \,\eqref{eqnew1},$$
we get
$H^1(\pone,\mathcal{P}\times^P T_{\dot{e}})=0$ from the long exact cohomology sequence associated to the sheaf exact sequence \eqref{exacto}.
\end{remark}
\subsection{The space of $P$-subbundles of a $G$-bundle} \label{space} Let $\Parbun_G$ be the moduli-stack of quasi-parabolic principal $G$-bundles on $\pone$,  i.e., data $\tilde{\mathcal{E}}=(\mathcal{E};\bar{g}_1,\dots,\bar{g}_n)$, where $\E$ is a principal $G$-bundle on
$\pone$ and  $\bar{g}_k\in \mathcal{E}_{b_k}/B$.

For any $\tilde{\mathcal{E}}\in \Parbun_G$, a standard parabolic $P$, elements $u_1, \dots, u_n \in W^P$  and degree $d=(a_i)_{\alpha_i\in S_P}$, define the schemes $Z_d(\mathcal{E})$ and $Z_d'(\tilde{\mathcal{E}})$ as follows:
$$ Z_d(\mathcal{E}) \,\,\,\text{is the space of sections of }\,\, \mathcal{E}/P\,\,\,\text{of degree}\,\, d.$$
$$Z_d'(\tilde{\mathcal{E}})=Z_d'(\tilde{\mathcal{E}}; u_1, \dots, u_n):=\{f\in Z_d(\mathcal{E}): f(b_k) \,\,\,\text{and}\,\,\bar{g}_k\,\,\,\text{are in relative position}\,\,\,u_k \,\,\forall
1\leq k\leq n\}.$$
Then, for any $f\in Z_d(\mathcal{E})$, the Zariski tangent space
$$T(Z_d(\mathcal{E}))_f=H^0(\pone, \mathcal{P}(f)\times^P T_{\dot{e}}),$$
where $\mathcal{P}(f)$ is the $P$-subbundle of $\mathcal{E}$ associated to $f$. Thus, $f\in Z_d(\mathcal{E})$ is a smooth point if
$H^1(\pone, \mathcal{P}(f)\times^P T_{\dot{e}})=0.$

For any $f\in Z_d'(\tilde{\mathcal{E}})$, we have the canonical morphism
$$\phi_f:  H^0(\pone,\mathcal{P}(f)\times^P T_{\dot{e}})\to \bigoplus_{k=1}^n  \frac{\mathcal{P}(f)_{b_k}\times^P T_{\dot{e}}}{T(\bar{p}_k, u_k, b_k)},$$
induced from  the evaluation maps $e_{b_k}:Z_d(\E)\to \E_{b_k}/P$ at
$b_k$, where $\bar{p}_k$ is  any element   of $\mathcal{P}(f)_{b_k}/B_L$ such that  $\bar{g}_kBu_kP= \bar{p}_k\Lambda_{u_k}$.
(Observe that $\bar{p}_k$ is unique modulo the stabilizer of $\Lambda_{u_k}$ in $P$.)
Then, for any $f\in Z_d'(\tilde{\mathcal{E}})$, the Zariski tangent space
\begin{equation}\label{eqn3.10}T(Z'_d(\tilde{\mathcal{E}}))_f = \ker \phi_f.
\end{equation}
\begin{lemma}\label{wash} Let $u_1, \dots, u_n\in W^P$ and non-negative $d=(a_i)_{\alpha_i\in S_P}\in H_2(X, \bz)$.
 Then, the  following are equivalent under the condition \eqref{normal}.

(a)  $\langle \sigma^P_{u_1},\sigma^P_{u_2},\dots,\sigma^P_{u_n}\rangle_d\neq 0$,

(b)  There is a quasi-parabolic $P$-bundle $\tilde{\mathcal{P}}=(\mathcal{P};\bar{p}_1,\dots,\bar{p}_n) \in \Parbun_P(d)$  on $\pone$ so that
the canonical evaluation map
$$H^0(\pone, \mathcal{P}\times^P T_{\dot{e}})\to\bigoplus_{k=1}^n  \frac{\mathcal{P}_{b_k}\times^P T_{\dot{e}}}{T(\bar{p}_k, u_k, b_k)}$$
is an isomorphism.

(c) The section $\theta \in H^0(\Parbun_P(d),\mar)$ is nonzero.
\end{lemma}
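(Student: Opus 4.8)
The plan is to prove the chain of equivalences (a) $\Leftrightarrow$ (b) $\Leftrightarrow$ (c), treating (b) as the pivot. The equivalence (b) $\Leftrightarrow$ (c) is essentially formal once one has set up the determinant-of-cohomology line bundle $\mar$: by the discussion in Subsection~\ref{detnew}, the $\theta$-section $\theta(\mk(\tilde{\mathcal{P}}))\in D(\mk(\tilde{\mathcal{P}}))$ is nonzero precisely when $H^0(\pone,\mk)=H^1(\pone,\mk)=0$, and since $\chi(\mk)=0$ this vanishing is equivalent to $H^0(\pone,\mk)=0$. Now from the defining exact sequence \eqref{exacto}, the connecting map in cohomology shows $H^0(\pone,\mk)=0$ iff the evaluation map $H^0(\pone,\mathcal{P}\times^P T_{\dot{e}})\to\bigoplus_k \mathcal{P}_{b_k}\times^P T_{\dot{e}}/T(\bar{p}_k,u_k,b_k)$ is injective; since source and target have equal (finite) dimension by the normalization condition \eqref{normal} together with $H^1(\pone,\mathcal{P}\times^P T_{\dot{e}})=0$ (which itself follows from $H^1(\pone,\mk)=0$ via the long exact sequence, as noted in Remark~\ref{rhone}), injectivity is equivalent to being an isomorphism. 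Care is needed about the order of implications here — one should first observe that $\theta\neq 0$ at $\tilde{\mathcal{P}}$ forces $H^1(\pone,\mathcal{P}\times^P T_{\dot{e}})=0$, and then run the dimension count — but this is a bookkeeping matter, not a substantive obstacle. The statement that the fiberwise $D(\mk(\tilde{\mathcal{P}}))$ assemble into a genuine line bundle $\mar$ on $\Parbun_P(d)$ with a regular section $\theta$ is standard (the construction of $\mk$ is flat in families), so (c) is literally a reformulation of the existence of a $\tilde{\mathcal{P}}$ with $\theta(\mk(\tilde{\mathcal{P}}))\neq 0$, i.e. of (b).

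The heart of the argument is (a) $\Leftrightarrow$ (b), which transports the Gromov--Witten count of Definition~\ref{mansur} for the trivial bundle $\epsilon_G$ into the language of $P$-subbundles. For (a) $\Rightarrow$ (b): if $\langle\sigma^P_{u_1},\dots,\sigma^P_{u_n}\rangle_d\neq 0$ then there is a finite, nonzero number of maps $f:\pone\to X^P$ of degree $d$ with $f(b_k)\in g_kC^P_{u_k}$ for general $(g_1,\dots,g_n)$; choose such an $f$ and take $\mathcal{P}=\mathcal{P}(f)$, the associated $P$-subbundle of $\epsilon_G$, together with the parabolic structure $\bar{p}_k\in\mathcal{P}_{b_k}/B_L$ encoding the relative position $u_k$ (as in Subsection~\ref{formules}, where $\bar{g}_k=e_kp_ku_k^{-1}B$ determines $\bar{p}_k$ modulo the stabilizer of $\Lambda^P_{u_k}$ in $P$). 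That $f$ is an isolated point of the (generically transverse, hence smooth at $f$) scheme $Z_d'(\tilde{\epsilon}_G)$ means precisely, by \eqref{eqn3.10}, that $\ker\phi_f=0$; since source and target of $\phi_f$ have equal dimension (as $\epsilon_G$ is trivial, $H^1(\pone,\mathcal{P}(f)\times^P T_{\dot{e}})=0$, so $\dim Z_d(\epsilon_G)=\dim X+\int_d c_1(T_X)$, matching the target by \eqref{normal}), $\phi_f$ is an isomorphism, which is exactly (b). For (b) $\Rightarrow$ (a): given $\tilde{\mathcal{P}}\in\Parbun_P(d)$ with the evaluation isomorphism, the point is that $\theta\neq 0$ is an open condition on $\Parbun_P(d)$, and $\Parbun_P(d)$ is irreducible with the trivial-bundle locus (more precisely, the locus of $P$-bundles $\mathcal{P}$ such that $\mathcal{P}\times^P G\cong\epsilon_G$, equivalently $f\in Z_d(\epsilon_G)$) dense; hence there is a quasi-parabolic $P$-bundle with $\theta\neq 0$ whose underlying $G$-bundle is trivial, i.e. a section $f:\pone\to X^P$ of degree $d$, with appropriate $\bar{g}_k$, lying in $Z_d'(\tilde{\epsilon}_G)$ as a reduced isolated point. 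One then needs that the $\bar{g}_k$ (equivalently the $g_k$) can be taken general — which follows because the conditions $\theta\neq 0$ and ``$\bar{g}_k$ general'' are both open dense — and that a transverse isolated solution contributes $+1$ to the count, which is the content of \eqref{paul} being an honest intersection number under \eqref{normal}.

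I would organize the write-up as: (i) recall that $\langle\sigma^P_{u_1},\dots,\sigma^P_{u_n}\rangle_d$ is computed by the enumerative problem on $\epsilon_G$ (Subsection~\ref{subsection3.1}), so the tangent-space description \eqref{eqn3.10} applies with $\mathcal{E}=\epsilon_G$; (ii) prove (a) $\Rightarrow$ (b) by the tangent-space/dimension argument above; (iii) prove (b) $\Leftrightarrow$ (c) via the determinant-of-cohomology dictionary and the long exact sequence of \eqref{exacto}; (iv) prove (c) $\Rightarrow$ (a) using openness of $\{\theta\neq 0\}$, irreducibility of $\Parbun_P(d)$, and density of the trivial-$G$-bundle locus, reducing back to a transverse count on $\epsilon_G$. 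The main obstacle I anticipate is step (iv): one must argue carefully that $\Parbun_P(d)$ is irreducible and that the sublocus whose associated $G$-bundle is trivial is dense (so that non-vanishing of $\theta$ somewhere propagates to non-vanishing at a point lying over $\epsilon_G$), and simultaneously that the parabolic data $\bar{g}_k$ can be arranged to be general enough to legitimately read off the Gromov--Witten number rather than some degenerate count; handling the interplay of these two genericity requirements, and ruling out that the only $\tilde{\mathcal{P}}$ with $\theta\neq 0$ sit over nontrivial $G$-bundles, is where the real work lies. (This is precisely the kind of semicontinuity-plus-irreducibility argument used in \cite{PB,b4,Belkale-Kumar}, so I would follow that template.)
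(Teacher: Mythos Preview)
Your treatment of (a) $\Rightarrow$ (b) and (b) $\Leftrightarrow$ (c) is correct and matches the paper. The divergence is in (c) $\Rightarrow$ (a), where your sketch has a gap at exactly the point you flag as the main obstacle. You propose intersecting $\{\theta\neq 0\}$ with ``$\bar{g}_k$ general'' on $\Parbun_P(d)$, asserting both are open dense. But ``$\bar{g}_k$ general'' is a condition on $\Parbun_G$, not on $\Parbun_P(d)$; for its pullback along the natural map $i:\Parbun_P(d)\to\Parbun_G$ (sending $\tilde{\mathcal{P}}$ to $(\mathcal{P}\times^P G;\,\bar{p}_ku_k^{-1}B)$) to be dense you would need $i$ to be dominant. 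Since the fiber of $i$ over a general point $(\epsilon_G;\bar{g}_1,\dots,\bar{g}_n)$ is precisely $Z'_d(\tilde{\epsilon}_G)$, dominance of $i$ is essentially (a), so the argument as stated is circular. Irreducibility of $\Parbun_P(d)$ alone does not break this circle, and the templates you cite do not directly supply the missing step in the quantum setting.

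The paper avoids working globally on $\Parbun_P(d)$ and instead deforms inside $\Parbun_G$. From $\tilde{\mathcal{P}}$ with $\theta\neq 0$, set $\tilde{\mathcal{E}}=(\mathcal{P}\times^P G;\,\bar{p}_ku_k^{-1}B)$ and choose a one-parameter family $\tilde{\mathcal{E}}_t$ over a smooth curve $S$ with $\tilde{\mathcal{E}}_0=\tilde{\mathcal{E}}$ and $\mathcal{E}_t$ trivial for general $t$; form the relative spaces $\pi:\mathcal{Z}_d\to S$ and $\pi':\mathcal{Z}'_d\to S$. Since $H^1(\pone,\mathcal{P}\times^P T_{\dot{e}})=0$ (Remark~\ref{rhone}), $\pi$ is smooth at $f$. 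After lifting $\bar{g}_k(t)$ to sections $g_k(t)$ one obtains a morphism $\beta$ from a neighborhood $\mathcal{Z}_d^o$ of $f$ to $(X^P)^n$ with $\mathcal{Z}'_d\cap\mathcal{Z}_d^o=\beta^{-1}\bigl(\prod_k C^P_{u_k}\bigr)$; since each $C^P_{u_k}$ is locally cut out of the smooth variety $X^P$ by exactly $\codim C^P_{u_k}$ equations, one gets $\dim_f\mathcal{Z}'_d\ge\dim_f\mathcal{Z}_d^o-\sum_k\codim C^P_{u_k}=1$ by \eqref{normal}. Openness of $\{\theta\neq 0\}$ forces nearby fibers of $\pi'$ to be zero-dimensional, so $\pi'$ is dominant at $f$ and $Z'_d(\tilde{\mathcal{E}}_t)\neq\emptyset$ for general $t$, giving (a). This local codimension estimate, not a global density claim on $\Parbun_P(d)$, is what makes the step go through.
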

\begin{remark} \label{theta} Observe that the section $\theta$ on a quasi-parabolic $\tilde{\mathcal{P}}$ does not vanish if and only if the evaluation map as in (b) of the above lemma is an isomorphism. In particular, in this case,   $H^1(\pone,\mathcal{P}\times^P T_{\dot{e}})=0$. (To prove this, use the identity \eqref{eqnew1} and the fact that $\chi(\pone, \mathcal{K}) = 0$.)
\end{remark}
\begin{proof} (of Lemma \ref{wash})
We first prove $(a) \Longrightarrow (b)$:
Let $Z_d$ be the space $Z_d(\epsilon_G)$,  where $\epsilon_G$ is the trivial $G$-bundle on $\pone$,
 i.e.,
$Z_d$ is the space of all maps $f:\pone \to X$  of degree $d$. Then, $Z_d$
is a smooth variety of dimension $=\dim X+\int_d c_1(T_X)$. Let $\{\bar{g}_k\}_{1\leq k\leq n}$ be general points of $G/B$. Then, by the assumption (a),
there exists $f\in Z'_d=Z'_d(\tilde{\epsilon}_G; u_1, \dots,  u_n)$, where  $\tilde{\epsilon}_G=(\epsilon_G; \bar{g}_1, \dots, \bar{g}_n)$ is the quasi-parabolic $G$-bundle on $\pone$.
Moreover, the subscheme $Z_d'$  is finite and reduced.  Fix $f\in Z_d'$. Then,  $Z_d'$ being finite and reduced, we get that $\phi_f$ is injective
by the equation \eqref{eqn3.10}. Now, by the condition \eqref{normal}, the dimension of the domain is at least as much as the dimension of  the range of $\phi_f$
(since $\chi(\pone, \mathcal{P}(f)\times^P T_{\dot{e}})= \dim X+\int_{\pone}c_1(T_X)$).  Hence, being injective,  $\phi_f$ is an isomorphism, proving (b).

$(b) \Longrightarrow (c)$: If the condition (b)  holds for $\tilde{\mathcal{P}}$ , then $H^0(\pone, \mathcal{K}(\tilde{\mathcal{P}}))
= 0.$ But, since $\chi(\pone, \mathcal{K}(\tilde{\mathcal{P}}))
= 0$, we get that $H^1(\pone, \mathcal{K}(\tilde{\mathcal{P}}))
= 0.$ Hence,  as in $\S$\ref{detnew},  $\theta(\mk(\tilde{\mathcal{P}}))\neq 0$, proving (c).

$(c) \Longrightarrow (a)$:
Suppose $\tilde{\mathcal{P}}=(\mathcal{P};\bar{p}_1,\dots,\bar{p}_n)$ is in $\Parbun_P(d)$ such that $\theta(\mk(\tilde{\mathcal{P}}))\neq 0$.
Let $\mathcal{E}=\mathcal{P}\times^P G$ be the corresponding principal $G$-bundle. The reduction of  $\mathcal{E}$ to $\mathcal{P}$
gives rise to the section $f:\pone\to \mathcal{E}/P$ of degree $d$. Consider the elements:
$$\bar{g}_k= \bar{p}_k u_k^{-1} B\in \mathcal{E}_{b_k}/B.$$
(Observe that $\bar{g}_k$ does not depend upon the choice of $\bar{p}_k$ in its $B_L$-orbit by the identity \eqref{eqn1}.)
Then,  $f(b_k)$ and $\bar{g}_k\in \mathcal{E}_{b_k}/B$
are in relative position $u_k$.  To see this, let $e_k$ be a trivialization of $\mathcal{P}_{b_k}$ and write $\bar{p}_k=e_kp_k$.
Then, $f(b_k)=e_kP $ and $\bar{g}_k=e_kp_k u_k^{-1}B$.

Thus, $f\in Z'_d=Z_d'(\tilde{\mathcal{E}})$, where $\tilde{\mathcal{E}}=(\mathcal{E}; \bar{g}_1, \dots, \bar{g}_k)$.  By Subsection \ref{space}, the Zariski tangent space to $Z_d'$ at $f$ is equal to $H^0(\pone, \mathcal{K}(\tilde{\mathcal{P}}))$,
which is zero by assumption. By Remark \ref{theta},  $H^1(\mathcal{P}\times^P T_{\dot{e}})=0$.

Consider a one parameter family of deformations  $\tilde{\mathcal{E}}_t=(\mathcal{E}_t; \bar{g}_1(t), \dots, \bar{g}_n(t))$ parameterized by a smooth curve $S$ so that at a marked point $0\in S$,
 $\tilde{\mathcal{E}}_0= \tilde{\mathcal{E}}$ and the underlying bundle ${\mathcal{E}}_t$ is trivial for general $t\in S$. We then have families
$\pi: \mathcal{Z}_d \to S$ and $\pi':\mathcal{Z}'_d \to S$ over $S$ with fiber $Z_d(\mathcal{E}_t)$ and $Z'_d(\tilde{\mathcal{E}}_t)$ respectively. Thus, $f\in \mathcal{Z}'_{0}$. We claim that $\pi'$ is a dominant morphism  of relative dimension zero at $f$:

Observe first  that $\pi$ is smooth at $f$ using the fact (noted above) that $H^1(\mathcal{P}\times^P T_{\dot{e}})=0$. 
Since $\theta(\mk(\tilde{\mathcal{P}}))\neq 0$, there exists a neighborhood of $\tilde{\mathcal{P}}$ in $\mathcal{Z}_d $ such that for any
 $\tilde{\mathcal{Q}}$ in the neighborhood, $\theta(\mk(\tilde{\mathcal{Q}}))\neq 0$. Moreover, restricted to this neighborhood, $\pi$ is a smooth morphism and $\pi'$ has finite fibers. Choose a lift $g_k$ of $\bar{g}_k$, i.e., a section $g_k:S \to \cup_t \E_t$ such that $g_k(t)\in \E_t$ is a
lift of $\bar{g}_k(t)\in \E_t/B$. This is, of course, possible replacing $S$ (if needed) by a smaller \'etale neighborhood of $0\in S$. Thus, there is a
neighborhood $\mathcal{Z}_d^o$ of $f$ in $\mathcal{Z}_d$ and a morphism
$$\beta: \mathcal{Z}_d^o\to (X^P)^n,\,\,\,\,\,\beta(\tilde{\mathcal{Q}})=(h_kP)_{1\leq k\leq n},$$
where $h_kP$ is the unique element such that $g_k(\pi(\tilde{\mathcal{Q}}))\in \tilde{\mathcal{Q}}_{b_k}h_k^{-1}$. Moreover, we can choose $\mathcal{Z}_d^o$ small enough so that $\pi_{|\mathcal{Z}_d^o}: \mathcal{Z}_d^o \to S$ is a smooth morphism and
$\pi'_{|\mathcal{Z}_d'\cap \mathcal{Z}_d^o}: \mathcal{Z}_d'\cap \mathcal{Z}_d^o \to S$ has finite fibers. From the definition of
$\mathcal{Z}_d'$, it is clear that $$\mathcal{Z}_d'\cap \mathcal{Z}_d^o=\beta^{-1}(C_{u_1}^P\times \dots \times C_{u_n}^P).$$
Since $X^P$ is smooth, $C_{u_1}^P\times \dots \times C_{u_n}^P$ is locally defined by exactly $r$ equations, where $r$ is the codimension of
$C_{u_1}^P\times \dots \times C_{u_n}^P$ in $(X^P)^n$. Hence, by \cite[Exercise 3.22, Chap. II]{hartshorne},
$$\dim_f(\mathcal{Z}_d')\geq \dim_f(\mathcal{Z}_d^o)-\sum_{k=1}^n \codim C^P_{u_k}=1,$$
where the last equality follows since $\phi_f$ (defined in Subsection \ref{space}) is an isomorphism and $\pi$ is a smooth morphism. But, since
$\pi'_{|\mathcal{Z}_d'\cap \mathcal{Z}_d^o}$ has only finite fibers, $\pi'_{|\mathcal{Z}_d'\cap \mathcal{Z}_d^o}: \mathcal{Z}_d'\cap \mathcal{Z}_d^o \to S$  is a dominant morphism. This proves (a).
\end{proof}
\begin{remark} Even though we do not need, the map $\pi':\mathcal{Z}'_d \to S$ is, in fact, a flat morphism in a neighborhood of $f$ (with fiber dimension $0$).
\end{remark}

Similarly, let $\Parbun_L=\Parbun_L(d)$ be the moduli-stack of quasi-parabolic principal $L$-bundles on $\pone$ of degree $d$, i.e., data $\tilde{\mathcal{L}}= (\mathcal{L};\bar{l}_1,\dots,\bar{l}_n)$
such that
\begin{itemize}
\item $\mathcal{L}$ is a principal $L$-bundle on $\pone$ such that $\mathcal{L}\times^L \Bbb{C}_{-\omega_i}$ has degree $a_{i}$ for each $\alpha_i\in S_P$.
\item For $k=1,\dots,n$, $\bar{l}_k\in \mathcal{L}_{b_k}/B_L$.
\end{itemize}

There is a canonical morphism of stacks $\phi:\Parbun_L\to \Parbun_P$. Similar to the definition of the theta bundle $\theta$ on
$\Parbun_P$, we can define the theta bundle $\theta'$ on
$\Parbun_L$. From the functoriality of the theta bundles, it is easy to see that
$$\phi^*(\theta)=\theta'.$$
We have the following crucial definition.
\begin{defi}
We call $(u_1,\dots,u_n;d)$  {\it quantum Levi-movable} if $\theta'$ does not  vanish identically on $\Parbun_L$.
\end{defi}
We have the following key proposition.
\begin{proposition}\label{clef}
Consider a quasi-parabolic $L$-bundle  $\tilde{\mathcal{L}}= (\mathcal{L};\bar{l}_1,\dots,\bar{l}_n)$
 of degree $d$. Let $\phi(\tilde{\mathcal{L}})=\tilde{\mathcal{P}}=(\mathcal{P},\bar{l}_1,\dots,\bar{l}_n)$ be the corresponding point of $\Parbun_P$. Then, the central one parameter subgroup $t^{\bar{x}_i}$ of $L$  corresponding to $\bar{x}_i,\  \alpha_i \in S_P$, acts on
$D(\mk(\tilde{\mathcal{P}}))$ by multiplication by $t^{\mu_i}$, where
$$\mu_i= (\chi_{e}-\sum_{k=1}^n\chi_{u_k})(\bar{x}_i) + \sum _{\alpha\in R^+\setminus R^+_{\fl}} \alpha(\bar{x}_i)\alpha(\tilde{d}),$$
$\bar{x}_i$ is defined by the equation \eqref{eqn0'}, and,  as in Theorem \ref{one},  $\tilde{d}=\sum_{\alpha_j\in S_P} a_{j}\alpha_j^\vee.$
\end{proposition}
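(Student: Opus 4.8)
The strategy is to compute the weight by which the central one-parameter subgroup $t^{\bar x_i}$ acts on the determinant line $D(\mathcal{K}(\tilde{\mathcal P}))$ in two steps, using the defining exact sequence \eqref{exacto}. Since the action of $t^{\bar x_i}$ on a determinant line of cohomology is via $t^{-\chi}$ of the relevant Euler characteristic weighted piece, and $\chi(\pone,\mathcal{K})=0$, we have $D(\mathcal K) = D(\mathcal P\times^P T_{\dot e})^{\phantom{.}}\tensor \bigotimes_{k=1}^n D\bigl(i_{b_k*}\tfrac{\mathcal P_{b_k}\times^P T_{\dot e}}{T(\bar p_k,u_k,b_k)}\bigr)^{-1}$ (as equivariant lines, since the sequence is $t^{\bar x_i}$-equivariant once $\mathcal P$ is reduced to the $L$-bundle $\mathcal L$ and $t^{\bar x_i}$ is central in $L$). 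So the weight $\mu_i$ is the difference of the weight on $D(\mathcal P\times^P T_{\dot e})$ and the sum of the weights on the skyscraper pieces at the $b_k$.

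First I would handle the global term $\mathcal{P}\times^P T_{\dot e} = \mathcal{L}\times^L (\fg/\fp)$. Decompose $\fg/\fp$ into root spaces $\fg_{-\alpha}$, $\alpha\in R^+\setminus R^+_\fl$; the central $t^{\bar x_i}\in L$ acts on the summand $\mathcal L\times^L \fg_{-\alpha}$ multiplying fibers by $t^{-\alpha(\bar x_i)}$, and this is a line bundle on $\pone$ whose degree can be read off from the degree data of $\mathcal L$, namely $\mathcal L\times^L \Bbb C_{-\omega_j}$ has degree $a_j$. A short computation (essentially \eqref{eqn1'} and the fact that $\alpha = \sum_j \alpha(\alpha_j^\vee)\omega_j$ on the relevant span) gives $\deg(\mathcal L\times^L\fg_{-\alpha}) = -\alpha(\tilde d)$ with $\tilde d = \sum_{\alpha_j\in S_P} a_j\alpha_j^\vee$, so $\chi(\pone,\mathcal L\times^L\fg_{-\alpha}) = 1 - \alpha(\tilde d)$. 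Multiplication by $t$ on this summand acts on its determinant of cohomology by $t^{-\chi}$, hence the action weighted by the actual character $-\alpha(\bar x_i)$ contributes $\alpha(\bar x_i)\bigl(1-\alpha(\tilde d)\bigr) \cdot(-1)$ to... more carefully: the automorphism $t^{\bar x_i}$ scales $\fg_{-\alpha}$ by $t^{-\alpha(\bar x_i)}$, so it acts on $\det H^\bull$ of that summand by $t^{-\alpha(\bar x_i)\chi(\pone,\,\mathcal L\times^L\fg_{-\alpha})}$ by the boxed fact in Subsection \ref{detnew} (applied to the scalar $t^{-\alpha(\bar x_i)}$ — one must note $\alpha(\bar x_i)\in\bz$ since $\bar x_i$ lies in the coroot lattice). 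Summing over $\alpha$, the weight on $D(\mathcal P\times^P T_{\dot e})$ is $-\sum_{\alpha\in R^+\setminus R^+_\fl}\alpha(\bar x_i)\bigl(1-\alpha(\tilde d)\bigr)$.

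Next the skyscraper terms. For fixed $k$, the fiber $\mathcal P_{b_k}\times^P T_{\dot e}$ with the subspace $T(\bar p_k,u_k,b_k)$ is modeled, via a trivialization compatible with $\bar l_k\in\mathcal L_{b_k}/B_L$, on $\fg/\fp$ with the subspace $p_k\cdot\Lambda^P_{u_k}$-tangent space; modulo that subspace the quotient is, as an $L$-representation twisted object, the direct sum of root lines $\fg_{-\beta}$ over $\beta\in (R^+\setminus R^+_\fl)\setminus w\text{-type condition}$ — precisely those $\beta\in R^+\setminus R^+_\fl$ for which $\fg_{-\beta}\not\subseteq T(\Lambda^P_{u_k})_{\dot e}$, i.e.\ $\beta \notin u_k^{-1}R^+$, matching the definition $\chi_{u_k} = \sum_{\beta\in(R^+\setminus R^+_\fl)\cap u_k^{-1}R^+}\beta$ so that the complementary set sums to $(\chi_e - \chi_{u_k})$ evaluated on $\bar x_i$; here I'd use the identity $\chi_e = \sum_{\beta\in R^+\setminus R^+_\fl}\beta$. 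The automorphism $t^{\bar x_i}$ acts on this finite-dimensional quotient (a skyscraper, so its own $H^0$) by the product of $t^{-\beta(\bar x_i)}$, hence on its determinant by $t^{-(\chi_e-\chi_{u_k})(\bar x_i)}$. Since $D(\mathcal K)$ has this piece inverted, it contributes $+\sum_k(\chi_e-\chi_{u_k})(\bar x_i)$. Actually one should double-check the sign of the $T(\bar p_k,u_k,b_k)$-convention and whether it is $\Lambda^P_{u_k}$ or its complement; the cleanest route is to match against Theorem \ref{one}, whose right-hand side is exactly $(\chi_e-\sum\chi_{u_k})(x_i)+\sum_\alpha\alpha(x_i)\alpha(\tilde d)$, and to observe that Proposition \ref{clef} asserts the same expression with $\bar x_i$ in place of $x_i$. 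Combining the two computations: $\mu_i = \bigl[-\sum_\alpha\alpha(\bar x_i) + \sum_\alpha\alpha(\bar x_i)\alpha(\tilde d)\bigr] + \bigl[\,n\sum_\alpha\alpha(\bar x_i)\cdot 0\,\ldots\bigr]$ — the "$1$" terms from the global Euler characteristics cancel against the dimension-count contributions from the skyscrapers via condition \eqref{normal}, leaving exactly $\mu_i = (\chi_e-\sum_k\chi_{u_k})(\bar x_i) + \sum_{\alpha\in R^+\setminus R^+_\fl}\alpha(\bar x_i)\alpha(\tilde d)$.

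The main obstacle is the bookkeeping in the skyscraper term: correctly identifying the $L$-module structure of the quotient $\tfrac{\mathcal P_{b_k}\times^P T_{\dot e}}{T(\bar p_k,u_k,b_k)}$ after the $B_L$-compatible trivialization, i.e.\ verifying that the relevant set of roots is precisely $(R^+\setminus R^+_\fl)\setminus u_k^{-1}R^+$ so that the central character is $-(\chi_e-\chi_{u_k})(\bar x_i)$, and tracking the role of the element $p_k\in P$ (which, being in $P$ and acting after reduction to $L$, affects only the $U_P$-direction and not the $L$-weight). Once that identification is pinned down — using \eqref{eqn1} that $wB_Lw^{-1}\subseteq B$ and the description of $T(\bar p,u,x)$ in Subsection on tangent spaces of Schubert varieties — the rest is the $t^{-\chi}$ formula of Subsection \ref{detnew} applied summand-by-summand, plus the normalization \eqref{normal} to cancel the constant terms. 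I would also need to check equivariance of \eqref{exacto} under $t^{\bar x_i}$, which holds because after reducing $\mathcal P$ to $\mathcal L$ the subspaces $T(\bar l_k,u_k,b_k)$ are $B_L$-stable hence $t^{\bar x_i}$-stable, $t^{\bar x_i}$ being central in $L$.
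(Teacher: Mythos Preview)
Your approach is the same as the paper's: split $D(\mathcal K)$ via the exact sequence \eqref{exacto} into the global piece $D(\mathcal P\times^P T_{\dot e})$ and the inverses of the skyscraper determinants, then compute the $t^{\bar x_i}$-weight on each using the rule that scaling by $c$ acts on $D(\mathcal F)$ by $c^{-\chi(\mathcal F)}$. However, your execution contains two sign/identification errors that you then try to patch via condition \eqref{normal}, which is not actually needed for the weight computation.

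First, the degree: $\mathcal L\times^L\fg_{-\alpha}$ has degree $+\alpha(\tilde d)$, not $-\alpha(\tilde d)$. Indeed $\deg(\mathcal L\times^L\Bbb C_{-\omega_j})=a_j$ by definition, so for the character $-\alpha$ (the weight of $\fg_{-\alpha}$) one gets $\sum_j a_j\alpha(\alpha_j^\vee)=\alpha(\tilde d)$. Thus the global piece contributes $\sum_\alpha\alpha(\bar x_i)(1+\alpha(\tilde d))=\chi_e(\bar x_i)+\sum_\alpha\alpha(\bar x_i)\alpha(\tilde d)$, with a plus sign on $\chi_e$.

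Second, the skyscraper quotient is misidentified. The tangent space $T(\Lambda^P_{u_k})_{\dot e}$ is the image of $u_k^{-1}\fb$ in $\fg/\fp$, which consists of $\fg_{-\alpha}$ for $\alpha\in(R^+\setminus R^+_\fl)$ with $-\alpha\in u_k^{-1}R^+$, i.e.\ $\alpha\in u_k^{-1}R^-$. Hence the \emph{quotient} $T_{\dot e}/T(\Lambda^P_{u_k})_{\dot e}$ is spanned by $\fg_{-\alpha}$ with $\alpha\in(R^+\setminus R^+_\fl)\cap u_k^{-1}R^+$, whose weights sum to $\chi_{u_k}$, not $\chi_e-\chi_{u_k}$. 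Tracking the two dualizations (once for $D=\det H^{0\,*}$, once for the inverse in the tensor), the $k$-th skyscraper contributes $-\chi_{u_k}(\bar x_i)$ to $\mu_i$, exactly as the paper states.

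Adding these gives $\mu_i=(\chi_e-\sum_k\chi_{u_k})(\bar x_i)+\sum_\alpha\alpha(\bar x_i)\alpha(\tilde d)$ directly, with no leftover ``$1$'' terms and no appeal to \eqref{normal}. (Condition \eqref{normal} is needed only to ensure $\chi(\mathcal K)=0$ so that $\theta(\mathcal K)$ is defined; it plays no role in the weight formula itself.) A minor further point: the individual root lines $\fg_{-\alpha}$ are not $L$-submodules of $\fg/\fp$, so your root-by-root decomposition of $\mathcal L\times^L T_{\dot e}$ is not literally correct; the paper instead uses the $P$-stable filtration by $\alpha(x_i)$-level from Subsection \ref{formules}, which suffices since $t^{\bar x_i}$ is central in $L$ and acts by a scalar on each graded piece.
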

\begin{proof}
Note that $$D(\mk)=D(\mathcal{P}\times^P T_{\dot{e}})\tensor \prod_{k=1}^n D(i_{b_k*}\frac{\mathcal{P}_{b_k}\times^P T_{\dot{e}}}{T(\bar{l}_k, u_k, b_k)})^*.$$

It is easy to see that $t^{\bar{x}_i}$ acts on $D(i_{b_k*}\frac{\mathcal{P}_{b_k}\times^P T_{\dot{e}}}{T(\bar{l}_k, u_k, b_k)})^*$ by $t^{-\chi_{u_k}(\bar{x}_i)}.$
We next calculate the action of  $t^{\bar{x}_i}$ on $D(\mathcal{P}\times^P T_{\dot{e}})$:

Let $\mathcal{V}$ be a vector bundle on  $\pone$, and let $T:\mv\to \mv$  be the  multiplication
by the scalar $c^{-1}$ on fibers. Then, clearly,  $T$ acts on $D(\mathcal{V})$ by the scale $c$ raised to the exponent $$\chi(\pone,\mv)=\rk \mv +\deg \mv.$$

Suppose the vector bundle  $\mathcal{P}\times^P T_{\dot{e}}$ is filtered with the associated graded pieces being the  vector bundles $\mv_1,\dots,\mv_s$, such that
$t^{\bar{x}_i}$  acts on $\mv_r$ by the scale $t^{-\gamma_r}$. Then, $t^{\bar{x}_i}$ acts on $D(\mathcal{P}\times^P T_{\dot{e}})$ via
$$t^{\sum_r(\rk\mv_r +\deg\mv_r)\gamma_r}.$$
This allows us to reduce the calculation of the action of $t^{\bar{x}_i}$ on $D(\mathcal{P}\times^P T_{\dot{e}})$  using the filtration of $T_{\dot{e}}$ given in Subsection \ref{formules}.
The desired $\mv_r$ are the quotients $T_{i,r}/T_{i,r-1}$.  Now, use the formula \eqref{eqn2'}.

Finally, it is  easy to see that $\sum_r\rk\mv_r=\chi_e(\bar{x}_i)$. Combining these, we get the proposition.
\end{proof}

\begin{theorem}\label{two}
The following are equivalent:

(a)  $(u_1,\dots,u_n;d)$ is quantum Levi-movable.

(b)  $\langle \sigma^P_{u_1},\sigma^P_{u_2},\dots,\sigma^P_{u_n}\rangle_d\neq 0$ and
\begin{equation}\label{vivid}
\forall \alpha_i\in S_P,\ (\chi_{e}-\sum_{k=1}^n\chi_{u_k})(x_i) + \sum _{\alpha\in R^+\setminus R^+_{\fl}}\alpha(x_i)\alpha(\tilde{d})=0.
\end{equation}

(c) $\langle \sigma^P_{u_1},\sigma^P_{u_2},\dots,\sigma^P_{u_n}\rangle_d^{\qdot_0}\neq 0$, where $\langle \sigma^P_{u_1},\sigma^P_{u_2},\dots,\sigma^P_{u_n}\rangle_d^{\qdot_0}$ is, by definition, equal to the coefficient of $q^d\sigma^P_{w_ou_nw_o^P}$ in $\sigma^P_{u_1}\qdot_0 \dots \qdot_0\sigma^P_{u_{n-1}}.$
\end{theorem}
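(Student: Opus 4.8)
The plan is to establish the equivalences by proving $(b)\Leftrightarrow(c)$ as a purely formal consequence of the deformation machinery, and $(a)\Leftrightarrow(b)$ via the determinant-of-cohomology criterion together with Proposition \ref{clef}. For $(b)\Leftrightarrow(c)$: by Lemma \ref{lem3.1}(2) applied to the multigraded ring in Definition \ref{qdeformquantum}, the product $\qdot_0$ retains exactly those terms $q^d\langle\sigma^P_u,\sigma^P_v,\sigma^P_w\rangle_d\sigma^P_{w_oww_o^P}$ for which all the grading shifts $A_i(u,v,w,d)$ vanish. Iterating, the coefficient $\langle\sigma^P_{u_1},\dots,\sigma^P_{u_n}\rangle_d^{\qdot_0}$ is a sum of products of classical Gromov-Witten numbers over intermediate $w$'s and degrees summing to $d$, each summand carrying a product of $A_i$-vanishing constraints. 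One should first reduce to the case where the condition \eqref{normal} holds (otherwise both sides vanish), then observe that $A_i(u_1,\dots,u_n,d)=(\chi_e-\sum_k\chi_{u_k})(x_i)+\sum_{\alpha\in R^+\setminus R^+_\fl}\alpha(x_i)\alpha(\tilde d)$ is precisely the left side of \eqref{vivid}, and that Theorem \ref{one} guarantees each summand's constraints are $\geq 0$, so no cancellation between the nonnegative contributions can occur. Hence $\langle\sigma^P_{u_1},\dots,\sigma^P_{u_n}\rangle_d^{\qdot_0}\neq 0$ iff some chain of classical three-point invariants composes to give a nonzero $\langle\sigma^P_{u_1},\dots,\sigma^P_{u_n}\rangle_d\neq 0$ with all intermediate grading shifts zero, which forces the total shift \eqref{vivid} to be zero; conversely \eqref{vivid} together with nonvanishing of the classical invariant, by additivity of $\chi_w(x_i)$ and $\alpha(\tilde d)$ along the chain and Theorem \ref{one}, forces each intermediate shift to vanish. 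This is the standard argument from \cite{Belkale-Kumar}, Section 3, transplanted to the quantum setting.

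For $(a)\Leftrightarrow(b)$: by definition $(u_1,\dots,u_n;d)$ is quantum Levi-movable iff $\theta'=\phi^*(\theta)$ is not identically zero on $\Parbun_L(d)$. Since $\phi\colon\Parbun_L\to\Parbun_P$ is surjective on points (every $P$-bundle on $\pone$ admits a reduction to $L$, as $H^1(\pone,\mathcal{U})$-type obstructions vanish), and since by Lemma \ref{wash} the section $\theta$ on $\Parbun_P(d)$ is nonzero iff $\langle\sigma^P_{u_1},\dots,\sigma^P_{u_n}\rangle_d\neq 0$, the classical nonvanishing is certainly necessary for $(a)$. Assuming it, we must decide when $\theta'$ vanishes identically. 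Here is where Proposition \ref{clef} enters: at a point $\tilde{\mathcal L}$ lying over $\tilde{\mathcal P}$, the central one-parameter subgroup $t^{\bar x_i}$ of $L$ acts on the fiber $D(\mk(\tilde{\mathcal P}))=\mar_{\tilde{\mathcal P}}$ by $t^{\mu_i}$ with $\mu_i=(\chi_e-\sum_k\chi_{u_k})(\bar x_i)+\sum_{\alpha\in R^+\setminus R^+_\fl}\alpha(\bar x_i)\alpha(\tilde d)=N_i\cdot(\text{left side of }\eqref{vivid})$. If this left side is nonzero for some $\alpha_i\in S_P$, then $t^{\bar x_i}$ acts nontrivially on the line $\mar_{\tilde{\mathcal P}}$ while fixing $\tilde{\mathcal P}$ (it is a central automorphism of $\mathcal P$, hence an automorphism of the quasi-parabolic bundle preserving $\theta$ by the discussion in \S\ref{detnew}); by the principle that an automorphism acting nontrivially on $D(\mk)$ kills $\theta(\mk)$, we get $\theta'(\tilde{\mathcal L})=0$. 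Since this holds at every point of $\Parbun_L(d)$, $\theta'\equiv 0$, so $(a)$ fails — giving $(a)\Rightarrow\eqref{vivid}$. Combined with the necessity of the classical nonvanishing, $(a)\Rightarrow(b)$.

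The remaining and genuinely substantive implication is $(b)\Rightarrow(a)$: assuming $\langle\sigma^P_{u_1},\dots,\sigma^P_{u_n}\rangle_d\neq 0$ and all the $\mu_i$ vanish, produce a quasi-parabolic $L$-bundle $\tilde{\mathcal L}$ with $\theta'(\tilde{\mathcal L})\neq 0$. I would start from the $P$-bundle $\tilde{\mathcal P}$ provided by Lemma \ref{wash}(b), where the evaluation map is an isomorphism and hence $\theta(\tilde{\mathcal P})\neq 0$, and deform/degenerate it to its Levification — the associated graded $L$-bundle $\tilde{\mathcal L}_0=\gr_P\tilde{\mathcal P}$ under the filtration by the central $t^{\bar x_i}$-weights (the Levification process alluded to in the introduction, Subsection \ref{section3.6}). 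The key point is that because all $\mu_i=0$, the line bundle $\mar$ restricted along the $\gmm$-orbit $\{t\cdot\tilde{\mathcal P}\}$ closing up to $\tilde{\mathcal L}_0$ carries a $\gmm$-linearization of weight zero, so the nonzero value $\theta(\tilde{\mathcal P})$ extends to a nonzero value $\theta'(\tilde{\mathcal L}_0)$ at the special fiber — the vanishing of the $\mu_i$ is exactly what removes the obstruction seen in the $(a)\Rightarrow(b)$ direction. Making this limiting argument precise (semicontinuity of $H^0$ of $\mk$, properness of the relevant parameter, and checking that the $\gmm$-weight of the canonical section $\theta$ along the orbit is indeed $\sum_i\mu_i$-controlled) is the main obstacle; it is the quantum analogue of the Levi-movability argument of \cite{Belkale-Kumar} and will require care with the moduli-stack language and the functoriality $\phi^*\theta=\theta'$.
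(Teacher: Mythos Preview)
Your proposal is correct and mirrors the paper's proof: $(a)\Leftrightarrow(b)$ via Proposition \ref{clef}, the automorphism principle of \S\ref{detnew}, and the Levification degeneration together with Lemma \ref{oldnew}(b) (which is exactly the zero-weight extension you describe for $(b)\Rightarrow(a)$). For $(b)\Leftrightarrow(c)$ your chain-and-positivity argument works but is more than needed: since the $\tau$-exponents in an iterated $\qdot_\tau$ product telescope to $\gamma(q^d\sigma^P_{w_ou_nw_o^P})-\sum_{k=1}^{n-1}\gamma(\sigma^P_{u_k})$, the $\qdot_0$-coefficient of $q^d\sigma^P_{w_ou_nw_o^P}$ is simply the classical coefficient when the total shift \eqref{vivid} vanishes and zero otherwise, with no appeal to Theorem \ref{one} required.
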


\subsection{Proofs of Theorems \ref{one} and \ref{two}}\label{section3.6}
 Fix an element $x=\sum_{\alpha_i\in S_P}\,d_i\bar{x}_i\in \fh$ such that each $d_i$ is a strictly positive integer. Then, $t^{x}$ is a central one parameter subgroup of $L$; in particular, it is contained in $B_L$. For any $t\in \Bbb{G}_m$,  define the conjugation $\phi_t:P\to P, p\mapsto
t^xpt^{-x}$. This extends to a group homomorphism $\phi_0:P \to L\subset P$, giving rise to a regular map $\phi:P\times\Bbb{A}^1\to P$,
extending the map $(p,t)\mapsto \phi_t(p)$, for  $p\in P$ and $t\in \Bbb{G}_m$. Clearly, ${\phi_t}_{|L}=I_L$, for all $t\in \Bbb{A}^1$, where
$I_L$ is the identity map of $L$.

 Let $\mathcal{P}$ be a principal $P$-bundle.  Define a family of principal $P$-bundles $\mathcal{P}_t$ parameterized by $t\in \Bbb{A}^1$, where
 $\mathcal{P}_t$ is the principal $P$-bundle induced by $\mathcal{P}$ via $\phi_t$, i.e., $\mathcal{P}_t=\mathcal{P}\times^{P,\phi_t} P$.
Since, the image of $\phi_0$ is contained in $L$, we get a principal $L$-bundle $\mathcal{L}$ from $\mathcal{P}$ via the homomorphism
$\phi_0$. Clearly, $\mathcal{P}_0 = \mathcal{L}\times^L  P$.
 We write $\Gr(\mathcal{P})=\mathcal{L}\times^L  P$. We will refer to this as the {\it Levification process}. So, we have found a degeneration of $\mathcal{P}$ to $\Gr(\mathcal{P})$ parameterized by $t\in \Bbb{A}^1$.

If $\bar{p}_k\in\mathcal{P}_{b_k}/B_L$, we canonically have $\bar{p}_k(t)\in (\mathcal{P}_t)_{b_k}/B_L$, for any $t\in \Bbb{A}^1$, defined as
$\bar{p}_k(t) = {\phi_t}_* (\bar{p}_k)$.  At $t=0$, the image is in $\mathcal{L}_{b_k}/B_L$.

We therefore have a  $\gmm$-equivariant line bundle $D(\mathcal{K}_t)$ on $\Bbb{A}^1$. Furthermore, we have a $\gmm$-equivariant section $\theta(\mk_t)$ of $D(\mathcal{K}_t)$. The following statement is immediate
(cf. Proposition 10 in \cite{Belkale-Kumar}).

\begin{lemma}\label{oldnew}
Let $\gmm$ act  on $\Bbb{A}^1$ by multiplication. Let $\mar$ be a $\gmm$-equivariant line bundle on
$\Bbb{A}^1$ with a $\gmm$-invariant section $s$. Suppose $\gmm$ acts on the fiber $\mar_0$ over $0$ by multiplication by $t^{\mu},\mu\in\Bbb{Z}$, i.e., the Mumford index $\mu^\mar(t,\lambda_o)=\mu$, for any $t\in \Bbb{A}^1$, where $\lambda_o$ is the one parameter subgroup $z\mapsto z$.
Then,
\begin{enumerate}
\item[(a)] If $s\neq 0$, then $\mu\geq 0$.
\item[(b)] If $\mu=0$ and $s\neq 0$, then $s(0)\neq 0\in\mar_0$.
\end{enumerate}
\end{lemma}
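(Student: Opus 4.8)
The plan is to treat this as a statement purely about $\gmm$-equivariant line bundles on the affine line and reduce it to the trivial bundle. First I would note that $\operatorname{Pic}(\Bbb{A}^1)=0$, so $\mar$ is trivial as a line bundle; fixing a trivialization $\mar\cong\Bbb{A}^1\times\Bbb{C}$, the $\gmm$-action lifting multiplication on $\Bbb{A}^1$ is given by a cocycle $\Bbb{A}^1\times\gmm\to\gmm$, and since the units of $\Bbb{C}[z]$ are the nonzero constants, this cocycle does not depend on $z\in\Bbb{A}^1$, hence is a character of $\gmm$. Thus, after possibly rescaling the trivialization, the action has the form $t\cdot(z,v)=(tz,t^{a}v)$ for a single integer $a$, and restricting to the fiber over $0$ identifies $a$ with $\mu$.

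In this model, a regular section of $\mar$ is $z\mapsto(z,f(z))$ with $f\in\Bbb{C}[z]$, and $\gmm$-invariance, $t\cdot s(z)=s(tz)$, is equivalent to $f(tz)=t^{\mu}f(z)$ for all $t,z$. Setting $z=1$ gives $f(t)=f(1)\,t^{\mu}$; since $f$ is a polynomial, either $f\equiv 0$, or $f(1)\neq 0$, which forces $\mu\geq 0$ and $f=f(1)\,z^{\mu}$. Both assertions then follow at once: for (a), if $s\neq 0$ then $f\not\equiv 0$, so $\mu\geq 0$; for (b), if moreover $\mu=0$, then $f=f(1)$ is a nonzero constant, so $s(0)=(0,f(1))\neq 0$ in $\mar_0$.

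I do not anticipate a genuine obstacle here; the statement is elementary, as the paper itself indicates by calling it ``immediate'' and parallel to Proposition 10 of \cite{Belkale-Kumar}. The only point needing a little care is matching the normalization chosen in the first step with the Mumford-index convention in the statement (namely that $\gmm$ acts on $\mar_0$ by $t^{\mu}$ rather than $t^{-\mu}$), but this is pure bookkeeping once the equivariant trivialization is in place.
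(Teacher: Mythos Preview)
Your proof is correct. The paper itself gives no proof of this lemma, declaring it ``immediate'' with a pointer to \cite{Belkale-Kumar}, so there is nothing to compare against; your argument via trivializing $\mar$ equivariantly and reading off the homogeneity condition $f(tz)=t^{\mu}f(z)$ is exactly the kind of direct verification the paper has in mind.
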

\subsubsection{Proof of Theorem \ref{one}}
Assume that $\langle \sigma^P_{u_1},\sigma^P_{u_2},\dots,\sigma^P_{u_n}\rangle_d\neq 0$. Then, by Lemma \ref{wash}, the section
$\theta\in H^0(\Parbun_P, \mar)$ is nonzero. Let $\tilde{\mathcal{P}}=(\mathcal{P};\bar{p}_1,\dots,\bar{p}_n)$ be a quasi-parabolic $P$-bundle in $\Parbun_P$ such that $\theta(\tilde{\mathcal{P}}) \neq 0$. Considering the one parameter degeneration ${\tilde{\mathcal{P}}}_t$ as above and using Lemma \ref{oldnew} and Proposition \ref{clef}, we get that $\sum_{\alpha_i\in S_P}\,d_i\mu_i\geq 0$, for any strictly positive integers $d_i$, where
$$\mu_i= (\chi_{e}-\sum_{k=1}^n\chi_{u_k})(\bar{x}_i) + \sum _{\alpha\in R^+\setminus R^+_{\fl}} \alpha(\bar{x}_i)\alpha(\tilde{d}).$$
From this we conclude that each $\mu_i\geq 0$.  This proves Theorem \ref{one}.

\subsubsection{Proof of Theorem \ref{two}} We first prove $(a) \Longrightarrow  (b)$:

Take a quasi-parabolic $L$-bundle $\tilde{\mathcal{L}}=(\mathcal{L};\bar{l}_1,\dots,\bar{l}_n) \in \Parbun_L (d)$ such that
$\theta'(\tilde{\mathcal{L}})\neq 0$. Let $\tilde{\mathcal{P}}=\phi(\tilde{\mathcal{L}})= (\mathcal{P};\bar{l}_1,\dots,\bar{l}_n)$
 be the corresponding point of $\Parbun_P$. Hence,  $\theta$ does not vanish at the quasi-parabolic $\tilde{\mathcal{P}}$.
The right multiplication by the $L$-central one parameter subgroups $t^{\bar{x}_i}$, for $\alpha_i\in S_P$, induces an automorphism of the
quasi-parabolic $L$-bundle $\tilde{\mathcal{L}}$  and hence that of $\tilde{\mathcal{P}}$. These should act trivially on
$\theta(\tilde{\mathcal{P}})$, and hence  if $\theta(\tilde{\mathcal{P}})\neq 0$, we get that  $t^{\bar{x}_i}$ acts trivially on
$D(\mk(\tilde{\mathcal{P}}))$ (cf. $\S$ \ref{detnew}). Hence, we obtain that (a) implies \eqref{vivid} by using Proposition \ref{clef}. Further, by Lemma \ref{wash}, we get that $\langle \sigma^P_{u_1},\sigma^P_{u_2},\dots,\sigma^P_{u_n}\rangle_d\neq 0$. This proves (b).

For the reverse direction,  by Lemma \ref{wash}, assume that $\theta$ is non-vanishing on a quasi-parabolic $\tilde{\mathcal{P}}$. Performing the above degeneration,
we find the desired conclusion using Lemma \ref{oldnew} (b) and Proposition \ref{clef}. The equivalence of (b) and (c) follows from the definition of
$\qdot_0$. This proves Theorem \ref{two}.

\section{Determination of the multiplicative eigen polytope in terms of the deformed quantum cohomology}\label{sufficiency}

Let $G$ be a simple, connected, simply-connected complex algebraic group.

Consider the {\em fundamental alcove} $\mathscr{A}\subset
\mathfrak{h}$ defined by
$$
\mathscr{A}=\left\{\mu \in \mathfrak{h}:\alpha_{i}(\mu)\geq 0\text{~
  and~ } \theta(\mu)\leq 1\right\},
$$
where $\theta$ is the highest root of $\mathfrak{g}$. Then,
$\mathscr{A}$ parameterizes the $K$-conjugacy classes of $K$ under the
map $C:\mathscr{A}\to K/\Ad K$,
$$
\mu \mapsto c(\Exp (2\pi i\mu)),
$$
where $K$ is a maximal compact subgroup of $G$ and $c(\Exp (2\pi
i\mu))$ denotes the $K$-conjugacy class of $\Exp(2\pi i\mu)$. Fix a
positive integer $n\geq 2$ and define the {\em multiplicative eigen polytope}
$$
\mathscr{C}_{n} := \left\{(\mu_{1},\ldots,\mu_{n})\in \mathscr{A}^{n}:
1\in C(\mu_{1})\dots C(\mu_{n})\right\}.
$$

Then, it is known that $\mathscr{C}_{n}$ is a rational convex
polytope with nonempty interior in $\mathfrak{h}^n$ (cf. \cite[Corollary 4.13]{MW}). Our aim is to describe the facets (i.e., the codimension one
faces) of $\mathscr{C}_{n}$.

The following theorem is one of our main results. In the case $G=\SL_2$, it was proved by Biswas \cite{Bi}. For $G=\SL_m$, it was proved by Belkale
\cite{B1} (and a slightly weaker result by Agnihotri-Woodward \cite{AW}).  (Observe that for $G=\SL_m$, by Lemma \ref{minuscule}, the deformed quantum cohomology coincides with the quantum cohomology of $G/P$ for maximal $P$.)
\begin{theorem}\label{main}
Let $(\mu_{1},\ldots,\mu_{n})\in \mathscr{A}^{n}$. Then, the following
are equivalent:

(a) $(\mu_{1},\ldots,\mu_{n})\in \mathscr{C}_{n}$,

(b) For any standard maximal parabolic
subgroup $P$ of $G$, any $u_{1},\ldots,u_{n}\in W^{P}$, and any $d\geq
0$ such that the deformed small quantum cohomology (Gromov-Witten) invariant
(cf. Definition \ref{qdeformquantum} and Theorem \ref{two} (c))
$$
\langle \sigma^{P}_{u_{1}},\ldots, \sigma^{P}_{u_{n}} \rangle_{d}^{\qdot_{0}}=1,
$$
the following inequality is satisfied:
$$
\mathscr{I}^{P}_{(u_{1},\ldots, u_{n};d)}:\medskip\medskip\medskip\,\,\,\,\,\,\,\,\,\,\sum^{n}_{k=1}
\omega_P(u_k^{-1}\mu_k) \leq d,
$$
where $\omega_{P}$ is the fundamental weight $\omega_{i_{P}}$ such
that $\alpha_{i_P}$ is the unique simple root in $S_{P}$.
\end{theorem}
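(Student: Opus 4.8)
The plan is to prove the two implications separately, following the template of the additive eigenvalue problem but replacing the flag varieties $(G/B)^n$ by the quasi-parabolic moduli stack $\Parbun_G$ of principal $G$-bundles on $\pone$.

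\emph{The direction (a) $\Rightarrow$ (b): necessity of the inequalities.} Suppose $(\mu_1,\dots,\mu_n)\in\mathscr{C}_n$, so that $1\in C(\mu_1)\cdots C(\mu_n)$. By the parabolic Narasimhan--Seshadri correspondence (or equivalently, by the Mehta--Seshadri theorem relating unitary local systems on the punctured $\pone$ to semistable parabolic $G$-bundles of parabolic degree zero), this membership is equivalent to the existence of a semistable quasi-parabolic $G$-bundle $\tilde{\mathcal{E}}$ on $\pone$ with weights $\mu_k$ at $b_k$. Fix a standard maximal parabolic $P$ with $S_P=\{\alpha_{i_P}\}$, an $n$-tuple $(u_1,\dots,u_n)\in(W^P)^n$, and $d\geq 0$ with $\langle\sigma^P_{u_1},\dots,\sigma^P_{u_n}\rangle_d^{\qdot_0}=1$. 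By Theorem \ref{two}, the tuple $(u_1,\dots,u_n;d)$ is quantum Levi-movable, so $\theta'$ does not vanish identically on $\Parbun_L(d)$; in particular there is a $P$-reduction $f$ of (a deformation of) $\mathcal{E}$ of degree $d$ meeting the Schubert conditions $u_k$ with respect to general $\bar{g}_k$. One then computes the parabolic degree of the associated reduction: semistability of $\tilde{\mathcal{E}}$ forces the parabolic degree of the line bundle $\mathcal{L}_P(\omega_P)$ pulled back along the reduction $f$ to be $\leq 0$ (here the relevant character of $P$ is $\omega_P$, which is exactly the weight picking up the contribution of $S_P$). Unwinding this inequality — the $c_1$ of the pulled-back line bundle is $d$ by construction (Subsection \ref{subsection3.1}), and the parabolic weight contributions at the marked points are precisely $\omega_P(u_k^{-1}\mu_k)$ by the relative-position conventions of Subsection \ref{formules} — yields exactly $\sum_{k=1}^n\omega_P(u_k^{-1}\mu_k)\leq d$, which is $\mathscr{I}^P_{(u_1,\dots,u_n;d)}$.

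\emph{The direction (b) $\Rightarrow$ (a): sufficiency.} Here one argues by contradiction: assume $(\mu_1,\dots,\mu_n)\in\mathscr{A}^n\setminus\mathscr{C}_n$ but all the inequalities in (b) hold. Non-membership means no semistable quasi-parabolic $G$-bundle with these weights exists, so every such bundle $\tilde{\mathcal{E}}$ is unstable and admits a \emph{canonical (Harder--Narasimhan) reduction} to a proper parabolic $P$, which we may take standard and maximal (refining the Harder--Narasimhan reduction to a maximal parabolic destabilizing the relevant line bundle). The canonical reduction $\mathcal{P}$ and its Levification $\Gr(\mathcal{P})=\mathcal{L}\times^L P$ (Subsection \ref{section3.6}) are the key tools: the $L$-bundle $\mathcal{L}$ together with the parabolic structures defines a point of $\Parbun_L(d)$ for some $d\geq 0$, and a dimension count — comparing $\dim\Parbun_L(d)$ with the number of Schubert conditions, and using that the evaluation/transversality sheaf $\mathcal{K}$ has Euler characteristic zero — produces a choice of $(u_1,\dots,u_n)\in(W^P)^n$ with $\theta'(\tilde{\mathcal{L}})\neq 0$, hence $(u_1,\dots,u_n;d)$ quantum Levi-movable and, by Theorem \ref{two}(c), $\langle\sigma^P_{u_1},\dots,\sigma^P_{u_n}\rangle_d^{\qdot_0}\neq 0$. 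After normalizing (using associativity and Poincaré duality of $\qdot_0$, exactly as one reduces a nonzero structure constant to one equal to $1$ by multiplying through), we may assume this invariant equals $1$. But the instability of $\tilde{\mathcal{E}}$ with respect to this very reduction says precisely that the parabolic degree inequality is \emph{violated}: $\sum_{k=1}^n\omega_P(u_k^{-1}\mu_k)>d$, contradicting $\mathscr{I}^P_{(u_1,\dots,u_n;d)}$.

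\emph{Main obstacle.} The routine parts are the parabolic Narasimhan--Seshadri dictionary and the bookkeeping of parabolic degrees. The genuinely delicate step is in the sufficiency direction: given an \emph{arbitrary} unstable $\tilde{\mathcal{E}}$ one must produce a \emph{single} tuple $(u_1,\dots,u_n;d)$ with $\langle\sigma^P_{u_1},\dots,\sigma^P_{u_n}\rangle_d^{\qdot_0}=1$ (not merely $\neq 0$) whose inequality is violated — the passage from ``$\neq 0$'' to ``$=1$'' and the verification that the Schubert data extracted from the canonical reduction of a deformed/general bundle are Levi-movable (so that they survive the $\tau\to 0$ specialization) is where the quantum Levi-movability machinery of Section \ref{section3}, the Levification process, and the Mumford-index computation of Proposition \ref{clef} all have to be combined carefully. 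A secondary subtlety is ensuring the canonical reduction can always be taken to a \emph{maximal} parabolic without losing the relevant destabilizing inequality.
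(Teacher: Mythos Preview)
Your (a) $\Rightarrow$ (b) is essentially the paper's argument: reduce to the trivial bundle with general parabolic structures (which is semistable by Mehta--Seshadri/Teleman--Woodward since $\vec{\mu}\in\mathscr{C}_n$), and then any $P$-reduction of degree $d$ in relative positions $u_k$ produces the inequality from the definition of semistability. Note that the paper actually proves something stronger here --- the inequality holds for \emph{all} quantum non-null tuples, not just those with deformed invariant $1$ --- so this direction is routine.

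The gap is in (b) $\Rightarrow$ (a). Your direct-contradiction approach has two problems that you yourself flag but do not resolve. First, your ``normalization'' from $\neq 0$ to $=1$ is not a valid move: there is no general procedure in (deformed) quantum cohomology that converts a nonzero structure constant into one equal to $1$ while preserving violation of the corresponding inequality. Second, the $u_k$ are \emph{determined} by the relative positions of the canonical reduction and the flags $\bar{g}_k$; they are not something you get to choose by a dimension count, and if the canonical reduction lands in a non-maximal parabolic $Q$, passing to a maximal $Q'\supset Q$ destroys the uniqueness property you need.

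The paper's proof avoids both issues by reversing the logic: rather than starting from an arbitrary $\vec{\mu}\notin\mathscr{C}_n$, it starts from a \emph{given facet} $\mathscr{I}^P_{(u_1,\dots,u_n;d)}$ (known to come from quantum non-null data by Teleman--Woodward) and manufactures a rational regular $\vec{\mu}$ violating \emph{only} that facet. This forces the canonical reduction of the (unstable) trivial bundle with general flags to be to the maximal $P$ itself, with exactly the data $(u_1,\dots,u_n;d)$. Now Lemma~\ref{reduced} (uniqueness and reducedness of the canonical reduction) gives $\langle\sigma^P_{u_1},\dots,\sigma^P_{u_n}\rangle_d=1$ directly --- no normalization needed. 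For the Levi-movability, the paper shows that the canonical reduction of the Levified bundle $\tilde{\E}_0$ coincides with the Levification of the canonical reduction (checked via the equivariant correspondence of Theorem~\ref{equivalence} and the degree characterization of canonical reductions), so $Z'_d(\tilde{\E}_0)$ is again a single reduced point at a quasi-parabolic $L$-bundle $\tilde{\mathcal{L}}$, whence $\theta'(\tilde{\mathcal{L}})\neq 0$. This is the step replacing your unspecified ``dimension count.'' The conclusion is that every facet of $\mathscr{C}_n$ is among the inequalities in (b), which is exactly what (b) $\Rightarrow$ (a) requires.
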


\begin{proof}
{ (a) $\Rightarrow$ (b):} In fact, as proved in \cite{Teleman-W}, for any
$u_{1},\ldots,u_{n}\in W^{P}$ and any $d\geq 0$ such that the
tuple $(P;u_1,\dots,u_n; d)$ is quantum non-null (see Definition \ref{notnull}), the
inequality $\mathscr{I}^{P}_{(u_{1},\ldots,u_{n};d)}$ is
satisfied. We include a proof for completeness. Since $\mathscr{C}_{n}$ is a rational polytope with
nonempty interior in $\mathfrak{h}^{n}$, we can assume that each
$\mu_k$ is regular (i.e., each $\alpha_i(\mu_k)>0$), rational (i.e., each $\alpha_i(\mu_k)\in \mathbb{Q}$) and $\theta(\mu_{k})<1$.
As in earlier sections, fix
distinct points $b_{1},\ldots,b_{n}\in \mathbb{P}^{1}$ and
let $\mathbb{M}_{G}(\vec{\mu})$ be the parabolic moduli
space of parabolic semistable principal $G$-bundles over
$\mathbb{P}^{1}$ with parabolic weights $\vec{\mu}=(\mu_{1},\ldots,\mu_{n})$
associated to the points $(b_{1},\ldots,b_{n})$ respectively. We follow the version of
$\mathbb{M}_{G}(\vec{\mu})$ from \cite[$\S$2.6]{Teleman-W}.

By the generalization of the Mehta-Seshadri theorem \cite{MS} to arbitrary groups
 (cf. \cite[Theorem 3.3]{Teleman-W}; \cite{BR}), the
assumption (a) is equivalent to the assumption that the moduli space
$\mathbb{M}_{G}(\vec{\mu})$ is nonempty. By
 \cite[Proposition 4.2]{Teleman-W},
  $\mathbb{M}_{G}(\vec{\mu})$ is nonempty if and only if
  the trivial bundle $\epsilon_{G}:\mathbb{P}^{1}\times G\to
  \mathbb{P}^{1}$ with general parabolic structures at the marked
  points $b_{1},\ldots,b_{n}$ is parabolic semistable.

Let $u_{1},\ldots, u_{n}\in W^{P}$ and $d\geq 0$ be such that
$(P;u_1,\dots,u_n; d)$ is quantum non-null.
Hence, there exists a morphism $f:\mathbb{P}^{1}\to X^P$ of degree $d$ such
that $f(b_k)\in g_{k}C^{P}_{u_k}$, for general
$g_{1},\ldots,g_{n}\in G$, where $C^{P}_{u_k}:=Bu_kP/P$ is the
Schubert cell. Since $(g_{1},\ldots,g_{n})\in G^{n}$ are general, we
can assume that the trivial bundle $\epsilon_{G}$ with parabolic
structure $g_{k}B$ at $b_k$ and weights $\mu_k$ is parabolic semistable (we
have used the assumption that each $\mu_k$ is regular). In
particular, for the parabolic reduction $\sigma : \mathbb{P}^{1}\to
\epsilon_{G}/P$ induced from the morphism $f:\mathbb{P}^{1}\to X^P$, we get
(from the definition of the parabolic semistability):
$$
\deg (\sigma^{*}(\epsilon_{G}(-\omega_{P})))+\sum^{n}_{k=1}
\omega_{P}(u_k^{-1}\mu_k) \leq 0,
$$
since $u_k\in W^{P}\simeq W/W_{P}$ is the relative position of
$\sigma(b_k)=f(b_k)$ and $g_{k}B$, where the line bundle $\epsilon_{G}(-\omega_{P})$ over $\epsilon_G/P$ is defined in 
Subsection \ref{subsection3.1}. But, by the definition of degree,
$$
d:= \deg(\sigma^{*}(\epsilon_{G}(\omega_{P}))).
$$

Hence, we get
$$
\sum^{n}_{k=1}\omega_P(u_k^{-1}\mu_k) \leq d.
$$

This proves the (b)-part. We prove the implication `(b) $\Rightarrow$ (a)' below.
\end{proof}
\begin{remark} \label{nonnull}
As shown by \cite{Teleman-W}, the above argument shows that the inequalities
$\mathscr{I}^{P}_{(u_{1},\ldots,u_{n}; d)}$, in fact, determine the
polytope $\mathscr{C}_{n}\subset \mathscr{A}^{n}$ provided we run $P$ through standard
maximal parabolic subgroups and $(u_{1},\ldots,u_{n};d)\in
(W^{P})^{n}\times \mathbb{Z}_{+}$ such that
$(P;u_1,\dots,u_n; d)$ is  quantum non-null.
\end{remark}

Before we come to the proof of the implication `(b) $\Rightarrow$ (a)' in Theorem \ref{main}, we need to review  the canonical reduction of
parabolic $G$-bundles.

\subsection{Canonical reduction of parabolic $G$-bundles.} \label{equivariant}
Let us fix parabolic weights $\vec{\mu}=(\mu_1, \dots, \mu_n) \in \mathscr{A}^n$ associated to the points $b_1, \dots, b_n\in \pone$ respectively.
We further assume that each $\mu_k$ is rational, regular and $\theta (\mu_k)<1$. Fix a positive integer $N$ such that  
$N\mu_k$ belongs to the coroot lattice of $G$ for all $k$.  Let $\Gamma:=\mathbb{Z}/(N)$ be the cyclic group of order $N$. We fix a generator
$\gamma_o\in \Gamma$. Now, take an irreducible  smooth curve $C$ with an action of $\Gamma$ on $C$ and a $\Gamma$-equivariant morphism $\pi:C\to \pone$ (with the trivial action of $\Gamma$ on $\pone$) satisfying the following:

(a) $\pi^{-1}(b_k)$ is a single point $\tilde{b}_k$, for all $k$,

(b) $\Gamma$ acts freely on a nonempty  open subset of $C$, and

(c) the map $\pi$ induces an isomorphism $ C/\Gamma \simeq \pone.$

Following Teleman-Woodward \cite[Section 2.2]{Teleman-W}, for any $\Gamma$-equivariant $G$-bundle $E$ on $C$, such that at the points $\tilde{x}_k$, the generator of $\Gamma$ acts via the conjugacy class of $\Exp (2\pi i \mu_k)$ , we construct a
quasi-parabolic $G$-bundle $\tilde{\mathcal{E}}=(\E; \bar{g}_1, \dots, \bar{g}_n)$ on $\pone$ as follows.

For simplicity, we give the construction in the analytic category; the construction in the algebraic category is similar.

Choose a small enough analytic open neighborhood $U_k$ of $b_k$ in $\pone$ and a coordinate $z$ in $\tilde{U}_k:=\pi^{-1}(U_k)$ such that
the map $\pi: \tilde{U}_k\to U_k$ is given by $z\mapsto z^N$ and, moreover, the action of the generator $\gamma_o\in \Gamma$ on  $\tilde{U}_k$ is given by $z\mapsto e^{2\pi i/N}z$. Moreover, by \cite[Section 11]{HK}, we can choose  $U_k$ small enough so that there is a
$\Gamma$-equivariant analytic isomorphism $\theta_k:  E_{|\tilde{U}_k} \to \tilde{U}_k\times G$ such that $\gamma_o$ acts on $\tilde{U}_k\times G$ via
$$\gamma_o(z, g)= (e^{2\pi i/N}z, \Exp(2\pi i\mu_k)g).$$
Let $E^{-N\mu_k}$ denote the set of $\Gamma$-invariant meromorphic sections $\sigma:\tilde{U}_k\to E_{|\tilde{U}_k}$ such that
$ z^{-N\mu_k}\cdot \sigma$ is regular on $\tilde{U}_k$.  Then, $\sigma_k:\tilde{U}_k\to E_{|\tilde{U}_k}$, given by
$\sigma_k(z)=(z, z^{N\mu_k})$ (under the above isomorphism $\theta_k$) is a section contained in $E^{-N\mu_k}$.

As in  \cite[Section 2.2]{Teleman-W}, there is a principal $G$-bundle $\E$ over $\pone$ isomorphic to $\Gamma\backslash E$ over
$C\setminus \{\tilde{b}_, \dots, \tilde{b}_n\}$ and such that $E^{-N\mu_k}$ is the set of sections of $\E$ over $U_k$. Moreover, the section
$\sigma_k$ evaluated at $b_k$ provides a parabolic reduction $\bar{g}_k$ of the fiber $\E_{b_k}/B$. (We have used here the assumption that $\mu_k$'s are
 regular.) Thus, for any
$\Gamma$-equivariant $G$-bundle $E$ on $C$, such that at the points $\tilde{x}_k$, the generator of $\Gamma$ acts via the conjugacy class of $\Exp (2\pi i \mu_k)$, we have constructed a
quasi-parabolic $G$-bundle $\tilde{\mathcal{E}}=(\E; \bar{g}_1, \dots, \bar{g}_n)$ on $\pone$.

Let $P$ be a standard parabolic subgroup of $G$. From the above construction, it is clear that any $\Gamma$-equivariant principal $P$-subbundle of $E$ canonically gives rise to a $P$-subbundle of  $\E$.

Let  $\Bun_G^\Gamma(C)=\Bun_G^{\Gamma,\vec{\mu}}(C)$ be the moduli stack of $\Gamma$-equivariant principal $G$-bundles on $C$, such that at the points $\tilde{x}_k$, the generator of $\Gamma$ acts via the conjugacy class of $\Exp (2\pi i \mu_k)$.

For a parabolic reduction $E_P$ of $E\in \Bun_G^\Gamma(C)$ to  $P$, we have the notion of degree $\deg (E_P):=(a_i)_{\alpha_i\in S_P}\in H_2(X^P, \mathbb{Z})$, where $a_i$ is the degree of $E_P\times^P\mathbb{C}_{-\omega_i}$. Similarly, for a parabolic reduction $\tilde{\E}_P$
of a quasi-parabolic $G$-bundle $\tilde{\mathcal{E}}=(\E; \bar{g}_1, \dots, \bar{g}_n)$ on $\pone$, one defines the parabolic degree
$$\pardeg (\tilde{\E}_P):= (b_i)_{\alpha_i\in S_P}\in H_2(X^P, \mathbb{Z}),$$
 where $b_i:= \deg (E_P\times^P\mathbb{C}_{-\omega_i})+\sum_{k=1}^n\omega_i(u_k^{-1}\mu_k)$ and $u_k\in W^P$ is the relative position of
$\E_P(b_k)$ and $\bar{g}_k$.

We summarize this correspondence in the following result due to Teleman-Woodward \cite[Theorem 2.3]{Teleman-W}.
\begin{theorem}  \label{equivalence} There is an isomorphism of stacks:
$$\Bun_G^\Gamma(C)\to \Parbun_G$$
taking $E\mapsto \tilde{\E}$, where
$\Parbun_G$ is as defined in Subsection \ref{space}.

Moreover, the $\Gamma$-equivariant reductions of any $E\in \Bun_G^\Gamma(C)$ to a parabolic subgroup $P$ of $G$  correspond bijectively to the reductions of $\tilde{\E}$ to $P$.

Further, for any $\Gamma$-equivariant reduction $E_P$ of $E$,
\begin{equation}\label{degeqn} \deg (E_P)= N\pardeg (\tilde{\E}_P),
\end{equation}
where $\tilde{\E}_P$ is the corresponding reduction of $\tilde{\E}$.
\end{theorem}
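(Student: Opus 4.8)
Since the statement is \cite[Theorem 2.3]{Teleman-W}, I will only outline the plan. The functor $E\mapsto\tilde{\mathcal E}$ is the one already built in Subsection \ref{equivariant}, so the idea is to produce an explicit quasi-inverse, to check that the two functors are mutually inverse by a computation that is entirely local near the ramification points, to observe that all the constructions involved commute with arbitrary base change (which upgrades the equivalence of groupoids to an isomorphism of stacks), and finally to extract the degree formula \eqref{degeqn} from the local model. First I would construct the inverse functor $\Parbun_G\to\Bun_G^\Gamma(C)$. Given $\tilde{\mathcal E}=(\mathcal E;\bar g_1,\dots,\bar g_n)$, over the $\Gamma$-stable open set $C^\circ:=C\setminus\{\tilde b_1,\dots,\tilde b_n\}$, where $\pi$ is \'etale and $\Gamma$-Galois, $\pi^*\mathcal E$ is canonically a $\Gamma$-equivariant $G$-bundle; I then extend it across each $\tilde b_k$ by the twist prescribed by the weight $\mu_k$ and the flag $\bar g_k$, reversing the local model $\gamma_o\cdot(z,g)=(e^{2\pi i/N}z,\Exp(2\pi i\mu_k)g)$, $\sigma_k(z)=(z,z^{N\mu_k})$ of Subsection \ref{equivariant}: concretely, pick a lift $g_k\in\mathcal E_{b_k}$ of $\bar g_k$ and a local trivialization of $\mathcal E$ carrying $g_k$ to the identity, pull it back over $\tilde U_k$, modify the frame by $z^{-N\mu_k}$, and glue. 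That this produces a well-defined $G$-bundle with a $\Gamma$-action whose local monodromy at $\tilde b_k$ is the conjugacy class of $\Exp(2\pi i\mu_k)$ is a purely local check, and it is independent of the choices because two lifts of $\bar g_k$ differ by an element of $B$ and $z^{N\mu_k}Bz^{-N\mu_k}$ stays regular at $z=0$ since $\mu_k\in\mathscr A$.

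Next I would verify that the two functors are mutually inverse. Away from the marked points the constructions are ``pull back along $\pi$'' and ``take $\Gamma$-invariant direct image'', which are inverse to one another; near each $b_k$ (resp.\ $\tilde b_k$) both round trips are computed inside the local model, where one sees at once that the composite is the identity and that the parabolic structure $\bar g_k$, which was defined from $\sigma_k$, is recovered. Because pullback, $\Gamma$-invariants, and the local Hecke-type modification all make sense relative to an arbitrary base scheme and commute with base change, these functors promote to morphisms of stacks, inverse to each other, giving $\Bun_G^\Gamma(C)\xrightarrow{\ \sim\ }\Parbun_G$. For the correspondence of parabolic reductions: a $\Gamma$-equivariant reduction of $E$ to $P$ is the same as a $\Gamma$-equivariant section $s\colon C\to E/P$, and since $E/P\to C$ is proper over the smooth curve $C$, $s$ is determined by its restriction to $C^\circ$, while conversely every $\Gamma$-section over $C^\circ$ extends. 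Such a section descends to a section of $\mathcal E/P$ over $\pone\setminus\{b_k\}$, which extends uniquely to $\pone$, i.e.\ to a reduction $\tilde{\mathcal E}_P$; conversely, applying the inverse functor to the $P$-bundle underlying $\tilde{\mathcal E}_P$ returns a $\Gamma$-equivariant $P$-reduction of $E$. Here the relative position $u_k\in W^P$ of $\mathcal E_P(b_k)$ and $\bar g_k$ enters precisely because the local monodromy of $E$ must preserve the flag: in a frame adapted to $\bar g_k$ the monodromy is $\Exp(2\pi i\mu_k)$, and the $\Gamma$-fixed, hence $\Exp(2\pi i\mu_k)$-fixed, point of $G/P$ determined by $E_P$ is $u_kP$ (using that $\mu_k$ regular with $\theta(\mu_k)<1$ forces the centralizer of $\Exp(2\pi i\mu_k)$ to be $H$), so the induced local monodromy of $E_P$ is the $P$-conjugate $\Exp(2\pi i\,u_k^{-1}\mu_k)$ of it.

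Finally, for the degree formula it suffices to check the $i$-th coordinate of \eqref{degeqn} for each $\alpha_i\in S_P$, i.e.\ to compare $\deg_C(E_P\times^P\mathbb C_{-\omega_i})$ with $N\bigl(\deg_{\pone}(\tilde{\mathcal E}_P\times^P\mathbb C_{-\omega_i})+\sum_{k=1}^n\omega_i(u_k^{-1}\mu_k)\bigr)$. The line bundle $E_P\times^P\mathbb C_{-\omega_i}$ is $\Gamma$-equivariant on $C$ and agrees with the pullback of $\tilde{\mathcal E}_P\times^P\mathbb C_{-\omega_i}$ over $C^\circ$, so the two differ only by the twisting at the $\tilde b_k$; by the local computation of the previous paragraph the generator $\gamma_o$ acts on the fiber at $\tilde b_k$ by $e^{-2\pi i\,\omega_i(u_k^{-1}\mu_k)}$, which is an $N$-th root of unity since $N\mu_k$ lies in the coroot lattice and $\omega_i$ extends to a character of $P$ because $\alpha_i\in S_P$. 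The standard formula expressing the degree of a $\Gamma$-equivariant line bundle on $C$ in terms of the degree of its invariant direct image on $\pone=C/\Gamma$ together with the local monodromy weights at the branch points then gives
\[
\deg_C\bigl(E_P\times^P\mathbb C_{-\omega_i}\bigr)=N\Bigl(\deg_{\pone}\bigl(\tilde{\mathcal E}_P\times^P\mathbb C_{-\omega_i}\bigr)+\sum_{k=1}^n\omega_i(u_k^{-1}\mu_k)\Bigr),
\]
which is the $i$-th coordinate of $\deg(E_P)=N\pardeg(\tilde{\mathcal E}_P)$.

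The steps where I expect the real work to lie are the passage to families needed to make ``isomorphism of stacks'' (rather than a mere equivalence of groupoids of $\mathbb C$-points) literally correct, and the precise bookkeeping of local monodromy weights in the last step: one must track how the normalization $\mu_k\in\mathscr A$ and the relative position $u_k$ pin down the exponent $\omega_i(u_k^{-1}\mu_k)$ exactly, with the correct representative, so that the branch-point contributions match the definition of $\pardeg$ on the nose. In the algebraic category one replaces the analytic local models and the reference \cite{HK} used in Subsection \ref{equivariant} by their scheme-theoretic analogues (e.g.\ via root stacks), but the structure of the argument is unchanged.
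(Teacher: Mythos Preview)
Your outline is sound and in fact goes well beyond what the paper itself does: the paper does not prove Theorem~\ref{equivalence} at all, it merely records the forward construction $E\mapsto\tilde{\mathcal E}$ in Subsection~\ref{equivariant} and then states the theorem as a result \emph{due to Teleman--Woodward} \cite[Theorem~2.3]{Teleman-W}, with no further argument. So there is no ``paper's own proof'' to compare against; your sketch is essentially a reconstruction of the Teleman--Woodward argument, and the ingredients you identify (explicit quasi-inverse via pullback and local Hecke modification, local verification of the two composites, compatibility with base change, and the equivariant-degree formula for the line bundles $E_P\times^P\mathbb C_{-\omega_i}$) are the right ones.

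One small point worth tightening: when you assert that the centralizer of $\Exp(2\pi i\mu_k)$ is $H$, you are using not just that $\mu_k$ is regular but that it lies in the \emph{interior} of the alcove (i.e.\ also $\theta(\mu_k)<1$), which is indeed assumed in Subsection~\ref{equivariant}; you might make this dependence explicit. Likewise, in the degree step, the ``standard formula'' you invoke is precisely the one in \cite[Lemma~2.2 or the discussion around Theorem~2.3]{Teleman-W}, so citing it directly would close that loop.
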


\begin{defi} \label{canonical} Let $\tilde{\E}$ be a quasi-parabolic $G$-bundle on $\pone$ with parabolic weights $\vec{\mu}$ assigned to the marked points $b_1, \dots, b_n$ as in Subsection \ref{equivariant}. Then, a reduction $\sigma_P(\tilde{\E})$ of $\tilde{\E}$ to a standard parabolic subgroup $P$ is called
{\it parabolic canonical} if the corresponding $\Gamma$-equivariant parabolic reduction $\sigma_E$ of the corresponding ($\Gamma$-equivariant) $G$-bundle $E$ over $C$ is canonical.
(Observe that, by the uniqueness of the canonical reduction of any $G$-bundle over $C$ as in \cite[Theorem 4.1]{BH},  $\sigma_E$ is unique; in particular, from the $\Gamma$-equivariance of $E$, we get that $\sigma_E$ is $\Gamma$-equivariant.)

 By the uniqueness of $\sigma_E$, we get that the parabolic canonical reduction  $\sigma_P(\tilde{\E})$ of $\tilde{\E}$ is unique. In fact,  $\sigma_P(\tilde{\E})$ does not admit any infinitesimal deformations either. More precisely, we have the following result:
\end{defi}

\begin{lemma} \label{reduced} Let $\tilde{\E}=(\E; \bar{g}_1, \dots, \bar{g}_n)$ be a quasi-parabolic $G$-bundle on $\pone$ with parabolic weights $\vec{\mu}$ assigned to the marked points $b_1, \dots, b_n$ and let  $\sigma_P(\tilde{\E})$ be its canonical reduction (to the parabolic subgroup $P$) of degree
$d$. Let $u_k \in W^P$ be the relative position of $\sigma_P(\tilde{\E})(b_k)$ and $\bar{g}_k$, for any $1\leq k \leq n$. Recall that, by the definition of $Z_d'(\tilde{\mathcal{E}})=Z_d'(\tilde{\mathcal{E}}; u_1, \dots, u_n)$ as in Subsection \ref{space}, $\sigma_P(\tilde{\E})\in Z_d'(\tilde{\mathcal{E}})$.
Then, $\sigma_P(\tilde{\E})$ is the unique point of  $Z_d'(\tilde{\mathcal{E}})$ and, moreover, it is a reduced point, i.e., the Zariski tangent space
$$T(Z_d'(\tilde{\mathcal{E}}))_{\sigma_P(\tilde{\E})}=0.$$
\end{lemma}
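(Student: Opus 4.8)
The plan is to transfer everything to the $\Gamma$-equivariant $G$-bundle $E$ over $C$ and use the rigidity of the canonical reduction there. First, recall from Theorem \ref{equivalence} that the quasi-parabolic $G$-bundle $\tilde{\E}$ corresponds to a $\Gamma$-equivariant $G$-bundle $E$ on $C$, and that $\Gamma$-equivariant $P$-reductions of $E$ correspond bijectively to $P$-reductions of $\tilde{\E}$; under this correspondence $\sigma_P(\tilde{\E})$ corresponds to the (unique, and hence $\Gamma$-equivariant by uniqueness) canonical reduction $\sigma_E$ of $E$ to $P$, which by \cite[Theorem 4.1]{BH} is the unique Harder-Narasimhan/canonical reduction of $E$. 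The uniqueness half of the statement is then immediate: any $f \in Z_d'(\tilde{\mathcal{E}})$ is in particular a degree-$d$ reduction of $\tilde{\E}$ to $P$; the corresponding $\Gamma$-equivariant $P$-reduction of $E$ has degree $N d$ by \eqref{degeqn}, hence the same Harder-Narasimhan slope data as $\sigma_E$, and must therefore equal $\sigma_E$ by its uniqueness — so $f = \sigma_P(\tilde{\E})$. (One small point to check: that a degree-$d$ $P$-reduction of $\tilde\E$ whose associated $\Gamma$-equivariant $P$-reduction of $E$ has the same degree as the canonical one is actually forced to be the canonical reduction; this uses that the canonical reduction of $E$ is the unique $P$-reduction that makes the associated $L$-bundle semistable and $\deg$-positive in the appropriate sense — i.e. the defining property of the canonical reduction, not just its degree.)

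Next, for the infinitesimal statement, I would identify $T(Z_d'(\tilde{\mathcal{E}}))_{\sigma_P(\tilde\E)}$ via \eqref{eqn3.10} as $\ker \phi_f$, where $f = \sigma_P(\tilde\E)$, and then bound this by a $\Gamma$-invariant cohomology group on $C$. Concretely, the tangent space to the space of all degree-$d$ $P$-reductions of $\tilde\E$ at $f$ is $H^0(\pone, \mathcal{P}\times^P T_{\dot e})$ where $\mathcal{P} = \mathcal{P}(f)$; pulling back to $C$ and taking $\Gamma$-invariants, this matches $H^0(C, E_P \times^P (\fg/\fp))^\Gamma$, i.e. the space of infinitesimal deformations of the $\Gamma$-equivariant $P$-reduction $\sigma_E$ of $E$. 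The key input is then the rigidity of the canonical reduction: $H^0(C, \operatorname{ad}(E)/\operatorname{ad}(E_P)) = 0$, equivalently $H^0(C, E_P\times^P(\fg/\fp)) = 0$, which holds because the canonical reduction's associated bundle $E_P\times^P(\fg/\fp)$ has all its Harder-Narasimhan slopes strictly negative (this is precisely the content of the canonical reduction being canonical — see \cite[Theorem 4.1]{BH} / Behrend's conjecture for curves). Taking $\Gamma$-invariants of a space that is already zero gives zero, so the space of $\Gamma$-equivariant deformations of $\sigma_E$ vanishes.

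Finally, I would account for the marked-point (quasi-parabolic) conditions: the tangent space $\ker\phi_f$ to $Z_d'(\tilde{\mathcal{E}})$ is a \emph{subspace} of $H^0(\pone, \mathcal{P}\times^P T_{\dot e})$ (cut out by the relative-position conditions at $b_1,\dots,b_n$), and the latter already vanishes by the previous paragraph, so $\ker\phi_f = 0$ a fortiori. Hence $T(Z_d'(\tilde{\mathcal{E}}))_{\sigma_P(\tilde\E)} = 0$, as claimed.

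\textbf{Main obstacle.} The delicate point is the translation of ``canonical reduction of the $\Gamma$-equivariant bundle $E$ on $C$'' into a vanishing statement that can be applied directly to the quasi-parabolic setup on $\pone$: one needs the global sections of $E_P\times^P(\fg/\fp)$ over $C$ to vanish, which requires invoking the full strength of the structure theorem for canonical reductions (all Harder-Narasimhan slopes of the negative part strictly negative), and one must be careful that passing to $\Gamma$-invariants and the degree rescaling in \eqref{degeqn} are compatible — in particular that the canonical reduction of $E$ is genuinely $\Gamma$-equivariant (which follows from its uniqueness) so that its deformation theory over $C$ computes exactly the Zariski tangent space of $Z_d'(\tilde{\mathcal{E}})$ on $\pone$. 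Everything else is formal bookkeeping with the equivalence of Theorem \ref{equivalence}.
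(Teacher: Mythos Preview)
Your approach is essentially the same as the paper's: transfer to the $\Gamma$-equivariant bundle $E$ on $C$ via Theorem~\ref{equivalence}, use the degree correspondence \eqref{degeqn} for uniqueness, and invoke the rigidity of the canonical reduction on $C$ for the tangent-space vanishing. A few small corrections are in order. First, the degree on the $C$ side is $N\cdot\pardeg(\tilde{\E}_P)=N\bigl(d+\sum_k\omega_P(u_k^{-1}\mu_k)\bigr)$, not $Nd$; this does not affect your argument since any $f\in Z_d'(\tilde{\E};u_1,\dots,u_n)$ has the same $(d,u_1,\dots,u_n)$ as $\sigma_P(\tilde{\E})$, hence the same $\pardeg$. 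Second, your parenthetical worry about whether degree alone pins down the canonical reduction is unnecessary: the paper cites \cite[Proposition~3.1]{BH}, which states precisely that the canonical reduction is characterized among all $P$-reductions by its degree. Third, the vanishing $H^0(C,E_P\times^P(\fg/\fp))=0$ is \emph{not} the content of \cite[Theorem~4.1]{BH} (that is uniqueness); it is Behrend's conjecture, and the paper appeals to Heinloth \cite[Theorem~1]{Hei} for it --- you should cite that instead, and not describe it as ``precisely the content of the canonical reduction being canonical,'' since it is a nontrivial theorem rather than the definition.
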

\begin{proof} To prove that the scheme  $Z_d'(\tilde{\mathcal{E}})$ contains the unique closed point $\sigma_P(\tilde{\E})$, use the fact that the canonical reduction $\sigma_E$ of a $G$-bundle on $C$ is completely characterized by the degree (cf. \cite[Proposition 3.1]{BH}).  Now, use the correspondence
as in Theorem \ref{equivalence} and the degree comparisons as in  the equation \eqref{degeqn}.

We now prove that  $\sigma_P(\tilde{\E})$ is a reduced point of $Z_d'(\tilde{\mathcal{E}})$. If  $Z_d'(\tilde{\mathcal{E}})$ were to possess any infinitesimal deformations at $\sigma_P(\tilde{\E})$, then so would the canonical reduction   $\sigma_E$ on the curve $C$.
 But, according to a result of Heinloth
\cite[Theorem 1]{Hei}, the canonical reduction of a principal $G$-bundle on a smooth curve does not have any infinitesimal deformations. We are therefore done by the equivalence of stacks as in Theorem \ref{equivalence}.
\end{proof}
Now, we are ready to prove the other direction of Theorem \ref{main}.
\begin{proof}
{(b) $\Rightarrow$ (a) in Theorem \ref{main}:}
Let $\mathscr{I}^{P}_{(u_{1},\ldots,u_{n};d)}$ be a facet of
$\mathscr{C}_{n}$ for some standard maximal parabolic subgroup $P$ and
$(u_{1},\ldots, u_{n};d)\in (W^{P})^{n}\times \mathbb{Z}_{+}$
such that
$ (P; u_1, \dots, u_n;d)$ is quantum non-null.
Any facet of $\mathscr{C}_{n}$ is of this form by Remark \ref{nonnull}.

Take a rational regular element $\vec{\mu}=(\mu_{1},\ldots,\mu_{n})\in
\mathscr{A}^{n}$ (i.e.,  each $\mu_k$ is rational, regular and $\theta(\mu_k)<1$) such
that $\vec{\mu}$ does {\em not} satisfy the inequality
$\mathscr{I}^{P}_{(u_{1},\ldots,u_{n};d)}$, but satisfies
every other inequality defining the other facets of
$\mathscr{C}_{n}$. Since $\vec{\mu}\not\in \mathscr{C}_{n}$, the
corresponding parabolic moduli space $\mathbb{M}_{G}(\vec{\mu})$ is empty. Hence,
the trivial bundle $\epsilon_{G}$ with any parabolic structure
$\overline{g}_{k}=g_{k}B$ at $b_k$ (and weights $\mu_k$) is not
semistable. We fix some general elements $\overline{g}_{k}\in G/B$ and
consider the parabolic bundle (with weights $\vec{\mu}$)
$
\tilde{\E}=(\E=\epsilon_{G},\overline{g}), \text{~ where~ } \overline{g}:=(\overline{g}_{1},\ldots,\overline{g}_{n}).
$
Since $\tilde{\E}$ is unstable (i.e., non-semistable), the
canonical parabolic reduction $\sigma_Q (\tilde{\E})$ (to a standard parabolic
subgroup $Q$) does not satisfy the semistability inequality for $\vec{\mu}$,
i.e., for some maximal parabolic subgroup $Q'\supset Q$,
$$
-\deg
\left(\sigma_Q ({\tilde{\E}})^*({\E}(\omega_{Q'}))\right)+\sum^{n}_{k=1}
\omega_{Q'} (v_k^{-1}\mu_k) >0,
$$
where $v_{k}\in W/W_{Q'}$ is the relative position of
$\sigma_Q (\tilde{\E})(b_k)$ and $\bar{g}_k$. Moreover, we can assume
that $\mathscr{I}^{Q'}_{(v_{1},\ldots,v_{n};d')}$ defines a facet of
$\mathscr{C}_{n}$, where $d':=\deg
(\sigma_Q ({\tilde{\E}})^*({\E}(\omega_{Q'})))$. By our choice
of $\vec{\mu}$, among the inequalities coming from the facets, since $\vec{\mu}$
only violates the inequality
$\mathscr{I}^{P}_{(u_{1},\ldots,u_{n};d)}$, we get the following:
\begin{itemize}
\item[(A$_{1}$)] $Q'=P$ and hence $Q=P$,

\item[(A$_{2}$)] each $v_{k}=u_k$, and

\item[(A$_{3}$)] $d'=d$.
\end{itemize}

We next claim that
\begin{equation}\label{eq4.1}
\langle \sigma^{P}_{u_{1}},\ldots,\sigma^{P}_{u_{n}}\rangle_{d}=1:
\end{equation}

By (A$_{1}$) - (A$_{3}$), the parabolic degree  of
$\sigma_P (\tilde{\E})$ is given by
$$
\pardeg (\sigma_P (\tilde{\E}))= -d+\sum^{n}_{k=1} \omega_{P} (u_k^{-1}\mu_k).
$$

 Since the canonical reduction
$\sigma_P (\tilde{\E})$ is the only reduction of $\tilde{\E}$ to $P$
with the parabolic degree that of  $\sigma_P (\tilde{\E})$ (cf. Lemma \ref{reduced}), we get that $\langle
\sigma^{P}_{u_{1}},\ldots,\sigma^{P}_{u_{n}}\rangle_{d}=1$.
This proves \eqref{eq4.1} (which was already proved in \cite{Teleman-W}). We now come to the
new part which is that
\begin{equation}\label{desired}
\langle \sigma^{P}_{u_{1}},\ldots,\sigma^{P}_{u_{n}}\rangle^{\qdot_{0}}_{d}=1,
\end{equation}
i.e., in view of \eqref{eq4.1} and Theorem \ref{two}, $(u_{1},\ldots, u_{n};d)$ is
quantum Levi-movable:

Choose  a trivialization $\bar{p}_k$ of $\sigma_P (\tilde{\E})_{b_k}/B_L$ so that  $\bar{g}_kBu_kP= \bar{p}_k\Lambda_{u_k}$.
(Observe that $\bar{p}_k$ is unique modulo the stabilizer of $\Lambda_{u_k}$.)
Consider the family $\tilde{\sigma}_P (\tilde{\E})_t = (\sigma_P (\tilde{\E})_t; \bar{p}_1(t), \dots, \bar{p}_n(t))$ of quasi-parabolic $P$-bundles
on $\mathbb{P}^1$ parameterized by $t\in \mathbb{A}^1$, defined in $\S$\ref{section3.6},
where we take $x=\bar{x}_P$.

It is easy to see that, as  quasi-parabolic  $P$-bundles,
\begin{equation}
 \tilde{\sigma}_P (\tilde{\E})_t\simeq \tilde{\sigma}_P (\tilde{\E}), \text{~ for~ }
  t\neq 0.
\end{equation}

Let $\tilde{\E}_{0}$ denote the parabolic $G$-bundle obtained from
$\sigma_P (\tilde{\E})_0$ via the extension of the structure group
$P\hookrightarrow G$ and the parabolic structures $p_1(0)u_1^{-1}B, \dots , p_n(0)u_n^{-1}B$ at the points $b_1, \dots, b_n$ on $\pone$
 (and with the same parabolic weights $\mu_k$ at $b_k$). Then, we assert that the canonical reduction of $\tilde{\E}_{0}$ coincides with
$\tilde{\sigma}_P (\tilde{\E})_0.$

To prove this, observe first that for any parabolic $G$-bundle $\tilde{\E}$ on $\pone$ that corresponds to a $\Gamma$-equivariant principal $G$-bundle
$E$ on $C$ (via  the correspondence of Theorem \ref{equivalence}) and any reduction $E_P$ of  $E$ to a parabolic subgroup $P$ and the corresponding reduction $\tilde{\E}_P$, the deformations $(E_P)_t$ and $(\tilde{\E}_P)_t$ over  $t\in \mathbb{A}^1$ (defined in Subsection
\ref{section3.6} for $x=\bar{x}_P$) correspond.
Further, from the characterization of the canonical reduction of (non-parabolic) $G$-bundles on $C$ as in \cite[Proposition 3.1]{BH},
we see that $\sigma_P(E)_0$ is the canonical reduction of $E_0$,  where $E_0$ is the $G$-bundle on $C$ obtained from
 $\sigma_P(E)_0$ via the extension of the structure group $P \hookrightarrow G$. Combining these, we get that the canonical reduction of $\tilde{\E}_{0}$ coincides with
$\tilde{\sigma}_P (\tilde{\E})_0.$

Finally, we come to the proof of the equation \eqref{desired}:

By the last assertion, the quasi-parabolic $G$-bundle $\tilde{\E}_{0}$ comes from a quasi-parabolic $L$-bundle $\tilde{\ml}$ of degree $d$ by extension
of the structure group $L\hookrightarrow G$. Further, by Lemma \ref{reduced}, $Z_d'(\tilde{\E}_{0})$ is a reduced scheme of dimension $0$ at
$\tilde{\ml}\times^L G$. Thus, $\theta'(\tilde{\ml})\neq 0.$ This proves  the equation \eqref{desired} and hence Theorem \ref{main} is proved.
\end{proof}

\section{Eigenvalue problem and global sections of line bundles on moduli spaces}\label{ev}

Recall the definition of the quasi-parabolic moduli stack  $\Parbun_G$ from Subsection \ref{space}.
\begin{defi}
For any weight $\lambda\in X(H)$ and  $1\leq k\leq n$, define the line bundle $\mathcal{L}_k(\lambda)$ over $\Parbun_G$, that assigns
to $\tilde{\me}=(\mathcal{E};\bar{g}_1,\dots,\bar{g}_n)\in \Parbun_G$ the line which is the fiber of the line bundle $\mathcal{E}_{b_k}\times^B \Bbb{C}_{-\lambda} \to
\mathcal{E}_{b_k}/B$ over the point $\bar{g}_k$.

 Therefore,  given weights $\vec{\lambda}=(\lambda_1,\dots,\lambda_n)$ and an integer $m$, we can form the line bundle 
$\mathcal{M}(\vec{\lambda},m)$ over $\Parbun_G$ defined by
$$\mathcal{M}(\vec{\lambda},m):=\bD^m\otimes (\tensor_{k=1}^n \mathcal{L}_k(\lambda_k)),$$
where the determinant line bundle $\bD$ over $\Parbun_G$ assigns to $\tilde{\me}$ the determinant of cohomology line
$D(\mathcal{E}\times^G \frg)$.

This line bundle should be considered to be at level $2g^*m$, where $g^*$ is the dual Coxeter number (defined in Lemma \ref{coxeter}) of $\mathfrak{g}$
(cf. \cite[Theorem 5.4 and Lemma 5.2]{KNR}).
\end{defi}

A dominant integral weight $\lambda$ is said to be of {\it  level} a nonnegative integer $d$ if $\lambda(\theta^\vee)\leq d$.

\begin{theorem}\label{evproblem} Let $m>0$ and let $\vec{\lambda}=(\lambda_1,\dots,\lambda_n)$ be dominant regular integral weights each of level
$<2g^*m$.
Then, the following are equivalent:

(a)  $H^0(\Parbun_G,\mathcal{M}(\vec{\lambda},m)^r)\neq 0$, for some integer $r>0$.

(b)  The point $(\frac{\lambda_1^*}{2g^*m}, \dots,  \frac{\lambda_n^*}{2g^*m})$
lies in $\mathscr{C}_{n}$, where $\lambda_k^*:=\kappa(\lambda_k)$ and $\kappa$ is defined in Section \ref{section3}.
\end{theorem}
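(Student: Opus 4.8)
The plan is to route both conditions through the geometric invariant theory (GIT) construction of the parabolic moduli space $\mathbb{M}_G(\vec{\mu})$ of $\S$\ref{sufficiency} and then apply the Mehta--Seshadri correspondence for the group $G$. Put $\mu_k:=\dfrac{\lambda_k^*}{2g^*m}=\dfrac{\kappa(\lambda_k)}{2g^*m}\in\fh$ for $1\le k\le n$, and $\vec{\mu}=(\mu_1,\dots,\mu_n)$. First I would check that $\vec{\mu}$ satisfies the standing hypotheses of $\S$\ref{sufficiency}, i.e. each $\mu_k$ is rational, regular, and $\theta(\mu_k)<1$: indeed $\alpha_i(\kappa(\lambda_k))=\langle\alpha_i,\lambda_k\rangle=\tfrac12\langle\alpha_i,\alpha_i\rangle\,\lambda_k(\alpha_i^\vee)$ and $\theta(\kappa(\lambda_k))=\langle\theta,\lambda_k\rangle=\lambda_k(\theta^\vee)$, so the hypotheses that $\lambda_k$ is dominant regular and of level $<2g^*m$ force $\alpha_i(\mu_k)>0$ for all $i$ and $\theta(\mu_k)<1$. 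In particular $\vec{\mu}$ lies in the interior of $\mathscr{A}^n$, so the point in $(b)$ is defined.

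The core input is that, at level $2g^*m$, the line bundle $\mathcal{M}(\vec{\lambda},m)=\bD^m\otimes(\bigotimes_{k=1}^n\mathcal{L}_k(\lambda_k))$ is the polarization on $\Parbun_G$ whose GIT semistable locus is precisely the locus of parabolic semistable $G$-bundles of weights $\vec{\mu}$, and whose associated GIT quotient is $\mathbb{M}_G(\vec{\mu})$; more precisely,
\[
\mathbb{M}_G(\vec{\mu})=\operatorname{Proj}\Bigl(\bigoplus_{r\ge 0}H^0(\Parbun_G,\mathcal{M}(\vec{\lambda},m)^{r})\Bigr)
\]
as projective schemes. This is the quasi-parabolic analogue of \cite[Theorem 5.4 and Lemma 5.2]{KNR} and explains the normalization $\mu_k=\kappa(\lambda_k)/2g^*m$ (the factor $\bD^m$ supplies the level $2g^*m$, the factor $\mathcal{L}_k(\lambda_k)$ the parabolic weight $\lambda_k$ at $b_k$). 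Granting this, $(a)$ holds if and only if the positive-degree part of the graded ring above (the homogeneous coordinate ring of a projective scheme) is nonzero, i.e. if and only if $\mathbb{M}_G(\vec{\mu})\ne\emptyset$. Equivalently, without $\operatorname{Proj}$: a nonzero section of some $\mathcal{M}(\vec{\lambda},m)^{r}$ has a nonempty non-vanishing locus, which by the Hilbert--Mumford criterion consists of semistable points, so $\mathbb{M}_G(\vec{\mu})\ne\emptyset$; and conversely, if $\mathbb{M}_G(\vec{\mu})\ne\emptyset$ then $\Parbun_G$ has a semistable point, and the GIT definition of semistability yields a section of some $\mathcal{M}(\vec{\lambda},m)^{r}$ not vanishing there.

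It then remains to invoke the generalization of the Mehta--Seshadri theorem \cite{MS} to an arbitrary simple simply-connected $G$ (see \cite{BR} and \cite[Theorem 3.3, Proposition 4.2]{Teleman-W}, exactly as in the proof of Theorem \ref{main} above): for rational regular weights $\vec{\mu}$ with $\theta(\mu_k)<1$, the moduli space $\mathbb{M}_G(\vec{\mu})$ is nonempty if and only if $1\in C(\mu_1)\cdots C(\mu_n)$, i.e. if and only if $\vec{\mu}\in\mathscr{C}_n$. Together with the previous paragraph this gives the equivalence $(a)\Leftrightarrow(b)$.

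I expect the real work to be the core input of the second paragraph. Since $\Parbun_G$ is not of finite type, one must (i) compute the Mumford indices of the one-parameter degenerations (of the kind used in $\S$\ref{section3.6}) and verify that the numerical Hilbert--Mumford inequalities attached to $\mathcal{M}(\vec{\lambda},m)$ reproduce exactly the parabolic slope inequalities, with weight $\mu_k$ at $b_k$ read off from $\mathcal{L}_k(\lambda_k)$ normalized by the level $2g^*m$ coming from $\bD^m$; and (ii) verify that global sections of $\mathcal{M}(\vec{\lambda},m)^{r}$ over all of $\Parbun_G$ coincide with those over the semistable locus and descend isomorphically to the good moduli space $\mathbb{M}_G(\vec{\mu})$. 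Both are handled by the methods of \cite{KNR} (together with descent along good moduli spaces); the rest of the argument is formal.
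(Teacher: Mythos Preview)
Your approach is correct but organizes the argument differently from the paper, and the difference is most visible in the direction (b)$\Rightarrow$(a). For (a)$\Rightarrow$(b), the two proofs are essentially the same: your point (i) --- computing Mumford indices of one-parameter degenerations and matching them with the parabolic slope inequalities --- is precisely what the paper carries out explicitly via the Levification family of \S\ref{section3.6}, the index computation \eqref{eq23'}, and Lemma \ref{oldnew}. For (b)$\Rightarrow$(a), however, you invoke the identification $\mathbb{M}_G(\vec{\mu})=\operatorname{Proj}\bigoplus_{r\ge0}H^0(\Parbun_G,\mathcal{M}(\vec{\lambda},m)^r)$ and read off the section from the GIT definition of semistability. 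The paper avoids establishing this identification on $\Parbun_G$ directly: instead it passes through the ramified-cover correspondence of Theorem \ref{equivalence} to the non-parabolic stack $\Bun_G(C)$ on a curve $C$ of genus $\ge2$, uses the known isomorphism $H^0(\Bun_G(C),\mathbb{D}_C^r)\simeq H^0(\mathfrak{M}_G(C),\Theta_{\mathrm{ad}}^r)$ from \cite[\S9.3]{LS} together with ampleness of $\Theta_{\mathrm{ad}}$ \cite{KN} to produce a section not vanishing at the semistable point, and then pulls it back via the formula $f^*\mathbb{D}_C^m=\mathcal{M}(\vec{\lambda},m)^N$ of Proposition \ref{pullback} (proved by the elementary-modification calculus of \S\ref{modification}). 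Your route is cleaner once the parabolic GIT package on the infinite-type stack is granted; the paper's detour through the cover buys a proof that rests only on classical results for non-parabolic bundles plus an explicit line-bundle comparison, at the price of developing \S\ref{modification}.
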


\begin{proof} We first prove
(a) $\Rightarrow$ (b):

Assume that there is a non-vanishing global section in
 $H^0(\Parbun_G,\mathcal{M}(\vec{\lambda},m)^r)$, non-vanishing at a point $\tilde{\me}=(\mathcal{E}; \bar{g}_1, \dots, \bar{g}_n)$ of $\Parbun_G$.
As in the proof of Lemma \ref{wash}, consider a one parameter family of deformations  $\tilde{\mathcal{E}}_t=(\mathcal{E}_t; \bar{g}_1(t), \dots, \bar{g}_n(t))$ in $\Parbun_G$ parameterized by a smooth curve $S$ so that at a marked point $0\in S$,
 $\tilde{\mathcal{E}}_0= \tilde{\mathcal{E}}$ and the underlying bundle ${\mathcal{E}}_t$ is trivial for general $t\in S$.  Thus, we can assume that
$\mathcal{E}$ is the trivial bundle $\epsilon_G$ and $(\bar{g}_1, \dots, \bar{g}_n)$ are general points of $(G/B)^n$. To prove (b), by \cite[Proposition 4.4]{Teleman-W}, it suffices to show that for any reduction $\tilde{\Pc}=(\Pc; \bar{p_1}, \dots, \bar{p}_n)\in \Parbun_P(d)$ of $(\epsilon_G; \bar{g}_1, \dots, \bar{g}_n)$ to a standard maximal parabolic subgroup $P$ of degree $d$ in relative position $u_1, \dots, u_n\in W^P$, the inequality
\begin{equation} \label{eq22} \sum_{k=1}^n\,\omega_P(u_k^{-1}\lambda_k^*)\leq 2g^*md
\end{equation}
is satisfied.

As in Subsection \ref{section3.6}, induced by the conjugation $\phi_t:P\to P, p\mapsto t^{\bar{x}_P}pt^{-\bar{x}_P}$, $\tilde{\mathcal{P}}$
 admits a one parameter family of deformations  $\tilde{\mathcal{P}}_t=(\Pc_t; \bar{p}_1(t), \dots, \bar{p}_n(t))\in \Parbun_P(d)$ ($t\in \mathbb{A}^1$), such that    $\tilde{\mathcal{P}}_1=
  \tilde{\mathcal{P}}$ and   $\tilde{\mathcal{P}}_0$ comes from the extension of a parabolic $L$-bundle $\tilde{\mathcal{L}}=(\mathcal{L};
\bar{l}_1, \dots, \bar{l_n})\in \Parbun_L(d)$. This gives rise to a morphism
$$\beta: \mathbb{A}^1\to \Parbun_G,\,\,\,\, t\mapsto \tilde{\mathcal{E}}_t=(\mathcal{E}_t:\mathcal{P}_t\times^P G; \bar{g}_1(t), \dots, \bar{g}_n(t)),$$
where $\bar{g}_k(t):=\bar{p}_k(t)u_k^{-1}B$. Pulling back the line bundle $\mathcal{M}(\vec{\lambda},m)^r$ via $\beta$, we get a
$\mathbb{G}_m$-equivariant line bundle (denoted) $\mathcal{M}$ over $\mathbb{A}^1$ with a nonzero section, where $\mathbb{G}_m$
acts on $\mathbb{A}^1$ via multiplication. We now calculate the action of $\mathbb{G}_m$ on the fiber of $\mathcal{M}$ over $0$:

We first calculate the $\mathbb{G}_m$-action on $D(\mathcal{L}\times^L\frg)$. Decompose $\frg$ into eigenspaces $\frg_{\gamma}$ such that the $L$-central
one parameter subgroup  $t^{\bar{x}_P}$ acts on
$\frg_{\gamma}$ by multiplication by $t^{-\gamma}$. Then the desired action is by $t$ raised to the exponent
\begin{align} \label{eq23} \sum_\gamma (\dim \frg_{\gamma}+\deg (\mathcal{L}\times^L \frg_{\gamma}))\gamma&=
\sum_\gamma \deg (\mathcal{L}\times^L \frg_{\gamma})\gamma,\,\,\,\text{since}\,\,\dim \frg_{\gamma}=\dim \frg_{-\gamma}\notag\\
&=d\sum_{\alpha\in R}\alpha(\bar{x}_P)\alpha(\alpha_P^\vee),\,\,\,\,\text{from the definition of}\,\,d\notag\\
&=2g^*d \langle \bar{x}_P, \alpha_P^\vee\rangle.
\end{align}
The last equality follows from Lemma \ref{coxeter}.

It is easy to see that the action of $t^{\bar{x}_P}$ on the fiber $\mathcal{L}_k(\lambda_k)_{\tilde{\mathcal{L}}}$ of  $\mathcal{L}_k(\lambda_k)$ over $\tilde{\mathcal{L}}$ is given by the exponent $-(u_k^{-1}\lambda_k)(\bar{x}_P)$. Thus, combining the equation \eqref{eq23} with the above expression of the action of $t^{\bar{y}_P}$ on the fiber $\mathcal{L}_k(\lambda_k)_{\tilde{\mathcal{L}}}$, we get that the action of $t^{\bar{x}_P}$ on the fiber of $\mathcal{M}$ over $0$ is given by the exponent
\begin{equation}\label{eq23'} \mu:=2g^*md \langle \bar{x}_P, \alpha_P^\vee\rangle -\sum_{k=1}^n\,(u_k^{-1}\lambda_k)(\bar{x}_P).
\end{equation}
But, by Lemma \ref{oldnew}, $\mu \geq 0.$
From this and the equation \eqref{eq20}, we get the equation \eqref{eq22}. This proves the (b)-part of the proposition. 

Proof of `(b) $\Rightarrow$ (a)': By  assumption, each $\lambda_k^*/2g^*m$ is a rational, regular element of $\mathscr{A}$ with
$\theta(\lambda_k^*/2g^*m)<1$. Let $\vec{\mu}=(\lambda_1^*/2g^*m, \dots, \lambda_n^*/2g^*m)$. Assign these weights to the parabolic points of $\pone$. By the assumption (b), the semistable parabolic moduli space of parabolic $G$-bundles on $\pone$ (corresponding to the weights $\vec{\mu}$) is non-empty.
Let $\tilde{\me}$ be a semistable parabolic bundle on $\pone$ with the given weights. Under the ramified cover correspondence as in Theorem
\ref{equivalence},   $\tilde{\me}$ corresponds to a semistable ($\Gamma$-equivariant) principal $G$-bundle $\me'$ on a (suitable) ramified cover $C$ of $\pone$ under the map
$f:\Parbun_G\to \Bun_G(C)$ (which is the inverse of the isomorphism of Theorem \ref{equivalence}), where $\Bun_G(C)$ is the moduli stack of
(non-parabolic) principal $G$-bundles on $C$. Let $\mathbb{D}_C$ be the line bundle over
$\Bun_G(C)$ which assigns to $\mathcal{F}\in\Bun_G(C)$ the line $D(C,\mathcal{F}\times^G \frg)$. We have the pull-back map
$$H^0(\Bun_G(C), \mathbb{D}_C^r)\to H^0(\Parbun_G,f^*\mathbb{D}_C^r).$$
Since our $\me'$ is semistable, there is
a global section $s\in H^0(\Bun_G(C), \mathbb{D}_C^r)$ for a suitable $r>0$ which is non-vanishing at $\me'$:

 This follows from the isomorphism (cf. \cite[$\S$9.3]{LS}; by taking $N$ large enough we can insure that $C$ has genus $\geq 2$)
$$H^0(\Bun_G(C), \mathbb{D}_C^r)\simeq H^0(\mathfrak{M}_G(C), \Theta_{ad}^r),$$
 where
$\Theta_{ad}$ is the theta bundle on the moduli {\it space} $\mathfrak{M}_G(C)$ of semistable principal $G$-bundles on $C$ associate to the adjoint representation of
$G$.
Further, since $\mathfrak{M}_G(C)$ is a projective variety and the Picard group Pic ($\mathfrak{M}_G(C)$) $\simeq \mathbb{Z}$ (cf. \cite[Theorem 2.4]{KN}), we get that $\Theta_{ad}$
is ample and hence there exists $s'\in H^0(\mathfrak{M}_G(C), \Theta_{ad}^r),$ for large enough $r$, such that $s'(\me')\neq 0$.

Therefore, the pull-back of this section to $\Parbun_G$ is non-vanishing at $\tilde{\me}$. So, to finish the proof of (a), we need to know that $f^*\mathbb{D}_C^m$ equals some power of $\mathcal{M}(\vec{\lambda},m)$, which follows from Proposition \ref{pullback} (replacing $r$ by $rm$).
\end{proof}
 \begin{remark} \label{remark5.3} (1) By \cite[$\S$8.9]{LS}, the condition (a) in the above theorem is equivalent to the non-vanishing of the space of conformal blocks
on $\pone$ associated to the weights $r\vec{\lambda}$ at level $2rg^*m$.

(2) For the implication `(a) $\Rightarrow$ (b)' in the above proposition, we do not need to assume that each $\lambda_k$ is regular in our above proof (since we verified all necessary inequalities).  Also, the
implication `(b) $\Rightarrow$ (a)' is  true without this restriction using \cite{BS} as we explain below. The map
$\tilde{f}:\Parbun_G\to\Bun_G^\Gamma(C)$ exists (as in Section \ref{modification} below) even without the assumption of regularity (and the level is allowed to equal $2g^*m$).
It follows from \cite{BS} that $\tilde{f}$ is surjective on objects \cite[Theorem 4.1.5]{BS}. Therefore it remains to show that there is a point of $\Bun_G^\Gamma(C)$ which maps to a semistable point of $\Bun_G(C)$. We can construct this point by considering the principal $G$-bundle corresponding to the local system (of $K$-spaces) in (b) pulled up to $C$ and extended over the punctures (where the local monodromies are trivial). Since this $\Gamma$-equivariant bundle comes from a representation of the fundamental group of $C$ in $K$, it is semistable by Ramanathan's generalization of the  Narasimhan-Seshadri theorem.
\end{remark}

 \section{Comparison of determinants of cohomology under an elementary modification}\label{modification}
 Let $G$ be a connected reductive group in this section. The calculation will be applied to the simple groups of the earlier sections as well as their Levi subgroups. We analyze the effect of an ``elementary modification" on determinants of cohomology.

Let $\me$ be a principal $G$-bundle on a smooth irreducible  projective curve $C$. Let $0\in C$ with formal parameter $z$. Fix a section $s\in \me(D)$, i.e., a trivialization
 $s$ of $\me$ over the formal disc $D$ at $0$.  For $h\in \frh$ such that $\exp(2\pi i h)=1$, consider the map $\ell:D\to H$ given by $\ell(z)=z^h$,
where $\frh$ is the Lie algebra of a maximal torus $H$ of $G$.

 Consider a new principal $G$-bundle $\me'=\me_\ell$ which coincides with $\me$ outside of $0$. Sections of $\me'$ over $D$ are meromorphic sections $s a(z)$
 of $\me$ over $D$ such that $\ell(z)a(z)$ is regular at $0$. We have a section ${s}'=s\ell (z)^{-1}$ of $\me'$ over $D$. Now, consider a representation $G\to \operatorname{GL}(V)$. Decompose $V$ into eigenspaces under the action of $H$:
$$V=\oplus_{\gamma\in A\subset X(H)} V_\gamma,$$
where $A\subset X(H)$ is defined to be the subset such that $V_\gamma\neq 0$.
 We assume that $A$ is symmetric under taking negatives.

We want to compare the determinants of cohomologies of $\me\times^G V$ and $\me'\times^G V$. With our
data, over $D$, we can write  as a trivial vector bundle:
\begin{equation}\label{eqn6.0} \me\times^G V=\oplus_{\gamma} (\mathcal{O} (D)s)\otimes V_{\gamma}=\oplus_{\gamma} L_{\gamma}, \,\,\,
\text{and}\,\, \me'\times^G V=\oplus_{\gamma} (z^{-h}\mathcal{O} (D)s)\otimes V_{\gamma}=\oplus_{\gamma}L'_{\gamma},
\end{equation}
under the meromorphic identification of $L_{\gamma}$ with $L'_{\gamma}$ over $D$.

\begin{defi} \label{ratio} Let $\mv$ and $\mv'$ be vector bundles over $C$ which are identified outside of $0$. Then, as in \cite{BBE},
there is a well defined
line $[\mv:\mv']$ (which formally stands for $\det(\mv/\mv')$) and a canonical isomorphism
\begin{equation}\label{eqn6.1} D(\mv')=D(\mv)\tensor [\mv:\mv'].
\end{equation}

To define it,  find a large positive $k$ so that
$\mv(k)\supset\mv'$ and set
$$[\mv:\mv']= \det(\mv(k)/\mv)^{-1}\tensor \det(\mv(k)/\mv').$$
Note that  $[\mv:\mv']$ is multiplicative.
\end{defi}

Applying this to $\me\times^G V$ and $\me'\times^G V$, we get
\begin{equation} \label{eqn6.1'} D(\me'\times^G V)\tensor D(\me\times^G V)^{-1}=\tensor_{\gamma\in A}[L_{\gamma}:L'_{\gamma}].
\end{equation}
The following lemma follows easily from the equation \eqref{eqn6.0} and the definition of $[- : -].$
\begin{lemma} For any $\gamma\in A$ with $\gamma (h)<0$,
$$[L_{\gamma}:L'_{\gamma}]=\wedge^{\text{top}}\bigl(\frac{\mathcal{O}(D)s}{z^{-\gamma(h)}\mathcal{O}(D)s}\otimes V_\gamma\bigr),$$
and, for any $\gamma\in A$ with $\gamma (h)>0$,
$$[L_{\gamma}:L'_{\gamma}]=\Bigl(\wedge^{\text{top}}\bigl(\frac{z^{-\gamma(h)}\mathcal{O}(D)s}{\mathcal{O}(D)s}\otimes V_\gamma\bigr)\Bigr)^{-1}.$$
\end{lemma}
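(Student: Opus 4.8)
The plan is to read the statement straight off Definition \ref{ratio}, using the explicit local descriptions of $L_\gamma$ and $L'_\gamma$ recorded in \eqref{eqn6.0}. The essential point is that, since $\exp(2\pi ih)=1$ forces $\gamma(h)\in\mathbb{Z}$, the central element $z^{-h}$ acts on the weight space $V_\gamma$ by the integer power $z^{-\gamma(h)}$ of $z$, so that away from $0$ the subsheaves $L_\gamma$ and $L'_\gamma$ of the sheaf of meromorphic sections of $\me\times^G V$ coincide, while near $0$ one is literally contained in the other. Concretely, if $\gamma(h)<0$ then $z^{-\gamma(h)}\in z\mathcal{O}(D)$, so $L'_\gamma\subseteq L_\gamma$ with torsion cokernel $\bigl(\mathcal{O}(D)s\,/\,z^{-\gamma(h)}\mathcal{O}(D)s\bigr)\otimes V_\gamma$ supported at $0$; symmetrically, if $\gamma(h)>0$ then $L_\gamma\subseteq L'_\gamma$ with cokernel $\bigl(z^{-\gamma(h)}\mathcal{O}(D)s\,/\,\mathcal{O}(D)s\bigr)\otimes V_\gamma$; and if $\gamma(h)=0$ then $L_\gamma=L'_\gamma$ and there is nothing to prove.

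First I would record the elementary fact that for an inclusion $\mv'\subseteq\mv$ of vector bundles of equal rank on $C$ with torsion cokernel $T$ one has $[\mv:\mv']=\wedge^{\text{top}}T$ (where $\wedge^{\text{top}}$ of a torsion sheaf denotes, as usual, the determinant line of its finite-dimensional space of global sections): for any $k\geq 0$ the filtration $\mv'\subseteq\mv\subseteq\mv(k)$ gives $\det(\mv(k)/\mv')=\det(\mv/\mv')\otimes\det(\mv(k)/\mv)$, and feeding this into the recipe $[\mv:\mv']=\det(\mv(k)/\mv)^{-1}\otimes\det(\mv(k)/\mv')$ of Definition \ref{ratio} cancels the $\det(\mv(k)/\mv)$ terms and leaves $[\mv:\mv']=\det(\mv/\mv')=\wedge^{\text{top}}T$. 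Applied to $\mv=L_\gamma$, $\mv'=L'_\gamma$ in the case $\gamma(h)<0$, this is exactly the first formula of the lemma.

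For the case $\gamma(h)>0$ I would use the multiplicativity of $[-:-]$ noted in Definition \ref{ratio}: since $[L_\gamma:L_\gamma]$ is canonically trivial, $[L_\gamma:L'_\gamma]\otimes[L'_\gamma:L_\gamma]$ is canonically trivial, hence $[L_\gamma:L'_\gamma]=[L'_\gamma:L_\gamma]^{-1}$; and since now $L_\gamma\subseteq L'_\gamma$, the previous paragraph gives $[L'_\gamma:L_\gamma]=\wedge^{\text{top}}\bigl((z^{-\gamma(h)}\mathcal{O}(D)s/\mathcal{O}(D)s)\otimes V_\gamma\bigr)$, so taking inverses yields the second formula. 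There is no genuine obstacle here; the whole argument is a short unwinding, and the only thing requiring any care is keeping the direction of the inclusion (hence the sign of the exponent of $z$) straight, together with the convention that $\wedge^{\text{top}}$ of a skyscraper sheaf means the determinant of its space of sections.
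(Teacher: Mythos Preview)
Your argument is correct and is precisely the unwinding the paper has in mind: it says the lemma ``follows easily from the equation \eqref{eqn6.0} and the definition of $[- : -]$'', without giving further details, and what you wrote is exactly that computation. Your observation that $[\mv:\mv']=\wedge^{\text{top}}(\mv/\mv')$ when $\mv'\subseteq\mv$, together with the multiplicativity $[\mv:\mv']=[\mv':\mv]^{-1}$ for the reversed inclusion, is the intended route.
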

Suppose $\ell(z)b\ell(z)^{-1}$ is regular at $z=0$ for all $b\in B$, i.e., $h\in \frh_{+}$. Then, changing $s$ to $sb$ does not change
$\me'$. Therefore, we get an action of $B$ (in particular of $H$) on $D(\me'\times^G V)\otimes D(\me\times^G V)^{-1}$. The $H$-action respects
$[L_{\gamma}:L'_{\gamma}]$ for any $\gamma\in A$. Combining the equation \eqref{eqn6.1'} with the above lemma, since $A$ is symmetric, we get the following:

\begin{proposition}\label{ratioweight}
$$D(\me'\times^G V)=D(\me\times^G V)\tensor \bigl(\tensor_{\gamma\in A: \gamma (h)<0} [\wedge^{\text{top}}\bigl(\frac{\mathcal{O}(D)s}{z^{-\gamma(h)}\mathcal{O}(D)s}\otimes V_\gamma\bigr)\tensor \wedge^{\text{top}}\bigl(\frac{z\mathcal{O}(D)s}{z^{-\gamma(h)+1}\mathcal{O}(D)s}\otimes V_\gamma\bigr)]\bigr).$$
Further,
the weight of the $H$-action on $D(\me'\times^G V)\otimes D(\me\times^G V)^{-1}$ is given by
$$-2\sum_{\gamma\in A:\gamma(h)>0} \,\gamma(h)\dim V_\gamma\cdot \gamma.$$
\end{proposition}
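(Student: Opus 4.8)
The plan is to compute the line $D(\me'\times^G V)\otimes D(\me\times^G V)^{-1}$ by the multiplicativity of $[-:-]$ over the eigenspace decomposition \eqref{eqn6.1'}, and then to pin down the $H$-weight by a direct bookkeeping of the torus action on each local factor. First I would invoke the preceding lemma, which already describes $[L_\gamma:L'_\gamma]$ separately for $\gamma(h)<0$ and $\gamma(h)>0$ (and trivially for $\gamma(h)=0$). Pairing each $\gamma$ with $-\gamma$ — legitimate since $A$ is assumed symmetric under negation and $\dim V_\gamma=\dim V_{-\gamma}$ because $H\hookrightarrow G$ and $V$ is a $G$-module, so $V_{-\gamma}\cong V_\gamma^*$ as $H$-modules up to sign of weight — one converts the inverse appearing in the $\gamma(h)>0$ case into a genuine top exterior power of a torsion quotient. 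The torsion module $z^{-\gamma(h)}\mathcal{O}(D)s/\mathcal{O}(D)s$ has length $\gamma(h)$, so after re-indexing the $\gamma(h)>0$ factor (writing it in terms of $-\gamma$, for which $(-\gamma)(h)<0$) one gets exactly the kind of factor $\wedge^{\text{top}}\bigl(\tfrac{z\mathcal{O}(D)s}{z^{-\gamma(h)+1}\mathcal{O}(D)s}\otimes V_\gamma\bigr)$ displayed in the proposition. Assembling, the product over $\gamma\in A$ collapses to a product over $\gamma$ with $\gamma(h)<0$ of the stated pair of factors, giving the first displayed formula.

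For the weight statement I would track the $H$-action factor by factor. Replacing the trivialization $s$ by $sb$ for $b\in B$ does not change $\me'$ precisely because $h\in\fh_+$ makes $\ell(z)b\ell(z)^{-1}$ regular at $0$; hence $H\subset B$ acts on each $[L_\gamma:L'_\gamma]$, preserving the decomposition. On the factor indexed by $\gamma$, the torus acts on $\mathcal{O}(D)s\otimes V_\gamma$ with weight $\gamma$ (the section $s$ carries the $H$-weight tautologically, and $V_\gamma$ contributes weight $\gamma$ — one must be a little careful about whether the convention puts $\gamma$ or $-\gamma$ on $L_\gamma$, and I would fix the sign by comparing with the twist convention $\mathcal{E}(\beta)=\mathcal{P}\times^P\mathbb{C}_{-\beta}$ used earlier in the paper). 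A torsion quotient of length $\ell_\gamma:=|\gamma(h)|$ of a rank-$\dim V_\gamma$ bundle then contributes $H$-weight $\ell_\gamma\cdot\dim V_\gamma\cdot\gamma$ to its top exterior power. Summing the contributions of the $\gamma(h)<0$ factor and its mate (the $\gamma(h)>0$ term, re-expressed via $-\gamma$), the two contributions add rather than cancel because re-indexing flips both the sign of the weight $\gamma\mapsto-\gamma$ and the overall sign coming from the inverse in the lemma; the net is $-2\sum_{\gamma:\gamma(h)>0}\gamma(h)\dim V_\gamma\cdot\gamma$, as claimed. The factor $\wedge^{\text{top}}\bigl(\tfrac{z\mathcal{O}(D)s}{z^{-\gamma(h)+1}\mathcal{O}(D)s}\otimes V_\gamma\bigr)$ that appears in the line-bundle formula but is absent from the weight formula is a red herring for the weight computation: that extra factor is a twist by $z$ in the trivialization and is $H$-equivariantly trivial (the $z$-twist carries no $H$-weight), so it drops out of the weight count although it is needed to make the line formula correct as written.

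The main obstacle I anticipate is purely a matter of sign and normalization conventions: keeping straight (i) the direction of $[\mv:\mv']$ versus $[\mv':\mv]$, (ii) whether $L_\gamma$ carries weight $\gamma$ or $-\gamma$ under the paper's conventions for associated bundles, and (iii) the effect of the re-indexing $\gamma\leftrightarrow-\gamma$ on both the weight and the overall inverse. These are not deep, but a single slip produces a spurious sign or a factor of $1$ instead of $2$. I would therefore carry out the $\gamma(h)>0$ case and the $\gamma(h)<0$ case fully symmetrically, then add them, double-checking the coefficient $2$ against the simplest example ($G=H=\gmm$, $V$ the standard one-dimensional plus its dual), where the answer $-2\gamma(h)\gamma$ can be read off by hand. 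Everything else — multiplicativity of $D$ and of $[-:-]$, lengths of torsion modules, the regularity condition $h\in\fh_+$ — is already in place from Definition \ref{ratio} and the lemma immediately preceding the proposition.
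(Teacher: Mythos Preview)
Your approach is essentially the paper's: combine equation \eqref{eqn6.1'} with the preceding lemma, then use the symmetry of $A$ to pair $\gamma$ with $-\gamma$ and collapse the product to one indexed by $\gamma(h)<0$; the weight computation is then a direct bookkeeping of the $H$-action on each torsion factor, exactly as you describe. One small correction: your argument that $\dim V_\gamma=\dim V_{-\gamma}$ ``because $V_{-\gamma}\cong V_\gamma^*$'' is circular (that isomorphism of $H$-modules already presupposes the dimension equality, and is not forced by $V$ being a $G$-module for general reductive $G$); the factor of $2$ in the weight formula genuinely needs $\dim V_\gamma=\dim V_{-\gamma}$, which the paper tacitly uses since in its applications $V=\frg$ is self-dual.
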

We now come to the proof of the following result, which was used in the proof of  Theorem \ref{evproblem}.
\begin{proposition} \label{pullback} Let the notation and assumptions be as in the proof of  Theorem \ref{evproblem}:`(b) $\Rightarrow$ (a)'. Then,
$$f^*\mathbb{D}_C^m =(\mathcal{M}(\vec{\lambda},m))^N,$$
where $\pi:C \to \pone$ is the map as in Subsection \ref{equivariant} of degree $N$.
\end{proposition}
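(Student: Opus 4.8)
The plan is to compute $f^*\mathbb{D}_C$ fibrewise with the elementary‑modification machinery of Section \ref{modification}, and to match it with $\mathcal M(\vec\lambda,m)$ after raising to the $m$‑th power. Fix $\tilde{\mathcal E}=(\mathcal E;\bar g_1,\dots,\bar g_n)\in\Parbun_G$ and let $E$ be the corresponding $\Gamma$‑equivariant $G$‑bundle on $C$ (Theorem \ref{equivalence}), so the fibre of $f^*\mathbb{D}_C$ at $\tilde{\mathcal E}$ is $D(C,E\times^G\frg)$.

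The first step is to compare $E$ with the pullback $\pi^*\mathcal E$. By the construction in Subsection \ref{equivariant}, $E$ and $\pi^*\mathcal E$ agree over $C\smallsetminus\{\tilde b_1,\dots,\tilde b_n\}$ with $\tilde b_k:=\pi^{-1}(b_k)$, while near $\tilde b_k$, in the local trivialization $\sigma_k$ of $\pi^*\mathcal E$ of Subsection \ref{equivariant}, the sections of $E$ are exactly the $\sigma_k\cdot a(z)$ with $z^{N\mu_k}a(z)$ regular. Thus $E$ is the elementary modification of $\pi^*\mathcal E$ attached to the one‑parameter subgroup $z\mapsto z^{N\mu_k}$; here $N\mu_k$ lies in the coroot lattice (choice of $N$) and in $\frh_{+}$ (as $\mu_k$ lies in the alcove), and $\sigma_k(\tilde b_k)\in(\pi^*\mathcal E)_{\tilde b_k}=\mathcal E_{b_k}$ is the chosen lift of $\bar g_k$. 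Applying Proposition \ref{ratioweight} at each $\tilde b_k$ with $V=\frg$ (the adjoint representation, whose weight set $R\cup\{0\}$ is symmetric) and $h=N\mu_k$ gives
$$D(C,E\times^G\frg)=D\bigl(C,\pi^*(\mathcal E\times^G\frg)\bigr)\otimes\bigotimes_{k=1}^n\mathcal C_k,$$
with $\mathcal C_k$ the correction line of Proposition \ref{ratioweight}. Since $z^{N\mu_k}bz^{-N\mu_k}$ is regular for $b\in B$, replacing $\sigma_k(\tilde b_k)$ by $\sigma_k(\tilde b_k)b$ changes neither $\pi^*\mathcal E$ nor $E$, so $\mathcal C_k$ depends on $\tilde{\mathcal E}$ only through the $B$‑reduction $Q_k$ of $\mathcal E_{b_k}$ determined by $\bar g_k$; concretely $\mathcal C_k=Q_k\times^B\mathbb C_{\chi_k}$ for the character $\chi_k$ given by the weight formula of Proposition \ref{ratioweight}. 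By that formula, Lemma \ref{coxeter}, and $\mu_k=\lambda_k^*/2g^*m=\kappa(\lambda_k)/2g^*m$,
$$\chi_k=-2\sum_{\alpha\in R^+}\alpha(N\mu_k)\,\alpha=-2Ng^*\kappa^{-1}(\mu_k)=-\tfrac Nm\,\lambda_k,$$
and this lies in $X(H)$ because each $\alpha(N\mu_k)\in\bz$. Since $\mathcal L_k(\lambda_k)=Q_k\times^B\mathbb C_{-\lambda_k}$, this gives $\mathcal C_k^{\otimes m}=Q_k\times^B\mathbb C_{-N\lambda_k}=\mathcal L_k(\lambda_k)^{\otimes N}$.

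The second step is the pullback term. By the projection formula $D\bigl(C,\pi^*(\mathcal E\times^G\frg)\bigr)=D\bigl(\pone,(\mathcal E\times^G\frg)\otimes\pi_*\mathcal O_C\bigr)$; splitting the rank‑$N$ bundle $\pi_*\mathcal O_C$ on $\pone$ into line bundles, using additivity of $D$ in direct sums together with Definition \ref{ratio}, and using that $\det(\mathcal E\times^G\frg)=\mathcal E\times^G\wedge^{\mathrm{top}}\frg$ is the constant line bundle $\wedge^{\mathrm{top}}\frg\otimes\mathcal O_{\pone}$ (because $\Ad$ takes values in $\SL(\frg)$, $G$ being semisimple), one finds $D\bigl(C,\pi^*(\mathcal E\times^G\frg)\bigr)=D(\pone,\mathcal E\times^G\frg)^{\otimes N}\otimes(\text{a line independent of }\tilde{\mathcal E})$; such a line is a trivial line bundle on $\Parbun_G$. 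Combining the two steps, $f^*\mathbb{D}_C\cong\bD^{\otimes N}\otimes\bigotimes_{k=1}^n\mathcal C_k$, hence
$$(f^*\mathbb{D}_C)^{\otimes m}\cong\bD^{\otimes Nm}\otimes\bigotimes_{k=1}^n\mathcal C_k^{\otimes m}\cong\bD^{\otimes Nm}\otimes\bigotimes_{k=1}^n\mathcal L_k(\lambda_k)^{\otimes N}=\mathcal M(\vec\lambda,m)^{\otimes N}.$$

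I expect the main difficulty to be the first step: verifying carefully that $\mathcal C_k$ descends to $Q_k\times^B\mathbb C_{\chi_k}$ — the individual root‑line factors in Proposition \ref{ratioweight} are not themselves well defined on $\Parbun_G$, and it is only their weighted product that is, which uses that $z^{N\mu_k}$ normalizes $B$ — and then extracting $\mathcal C_k^{\otimes m}\cong\mathcal L_k(\lambda_k)^{\otimes N}$ from the character computation, noting en route that $\tfrac Nm\lambda_k$, though not visibly integral, does lie in the (root) lattice so that $\mathcal C_k$ is a genuine line bundle. A convenient consistency check is the level: $\mathbb{D}_C$ has level $2g^*$, so $(f^*\mathbb{D}_C)^{\otimes m}$ has level $2g^*Nm$, matching the level of $\mathcal M(\vec\lambda,m)^{\otimes N}$.
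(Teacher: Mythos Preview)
Your argument is correct and follows the same route as the paper's first proof: compare $E=f(\tilde{\mathcal E})$ with $\pi^*\mathcal E$ via the elementary modification at each $\tilde b_k$ with $h=N\mu_k$, apply Proposition \ref{ratioweight} together with Lemma \ref{coxeter} to identify the correction weight as $-N\lambda_k/m$, and combine with the projection formula for $D(C,\pi^*(\mathcal E\times^G\frg))$. Your treatment of the projection formula is in fact more careful than the paper's one-line citation: you make explicit that $\pi_*\mathcal O_C$ splits into line bundles and that the resulting twist contributes only a constant line on $\Parbun_G$ because $\det(\mathcal E\times^G\frg)$ is the trivial $G$-line (semisimplicity), whereas the paper simply asserts $D(C,\pi^*\mathcal F\times^G\frg)=D(\pone,\mathcal F\times^G\frg)^N$. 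Your integrality remark is also apt: $\chi_k=-2\sum_{\alpha\in R^+}\alpha(N\mu_k)\alpha$ visibly lies in the root lattice since $N\mu_k\in Q^\vee$, so $\mathcal C_k$ is a genuine line bundle even though $\tfrac{N}{m}\lambda_k$ is not a priori integral.
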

\begin{proof} Take $\tilde{\mathcal{F}}=(\mathcal{F}; \bar{g}_1, \dots, \bar{g}_n) \in \Parbun_G$ and let  $\me:=\pi^*\mathcal{F}$. By the projection formula
\begin{equation} \label{eqn6.4'} D(C,\me\times^G \frg)=D(\pone, \mf\times^G \frg)^N.
\end{equation}
 Let $\me'=f(\tilde{\mathcal{F}})$ be the new bundle over $C$ obtained via  modifying the bundle $\me$ at the points $\tilde{b}_{k}$
(by twisting $\me$ at the points $\tilde{b}_k$ via the morphism $\ell_k:\tilde{D}_k\to H, \tilde{z}_k\mapsto \tilde{z}_k^ {N\lambda_k^*/2g^*m}$, where $\tilde{z}_k$ is a local parameter for the formal disk
$\tilde{D}_k$ of $C$ at
 $\tilde{b}_{k}$), where $f$ is the inverse of the ramified cover correspondence
as in Theorem \ref{equivalence} for $\vec{\mu}=(\frac{\lambda_1^*}{2g^*m}, \dots, \frac{\lambda_n^*}{2g^*m})$.   By Definition \ref{ratio},
\begin{equation}\label{eqn6.4} D({\me}'\times^G\frg)= D(\me\times^G \frg)\tensor \bigl(\tensor_{k=1}^n\bigl(\tensor_{\beta\in R\cup\{0\}}[L_\beta(k):L_\beta'(k)]\bigr)\bigr),
\end{equation}
where $L_\beta(k), L'_\beta(k)$ over $\tilde{D}_k$ are defined as in the beginning of this section taking $V$ to be the adjoint representation $\frg$ of $G$.
By Proposition \ref{ratioweight}, the action of $H$ on $\tensor_{\beta\in R\cup\{0\}}[L_\beta(k):L_\beta'(k)]$ is via the linear form on $\frh$:
\begin{align*} \theta_k(h)&=-2\sum_{\beta\in R^+}\,\beta(N\lambda_k^*/2g^*m)\beta(h),\,\,\,\text{for}\,\,h\in \frh\notag\\
&=-\langle \frac{N\lambda^*_k}{m},h\rangle,\,\,\,\text{By Lemma \ref{coxeter}}.\notag
\end{align*}
Thus,
\begin{equation}\label{eqn6.5} \theta_k=-N\lambda_k/m.
\end{equation}
Combining the equations \eqref{eqn6.4'}, \eqref{eqn6.4} and \eqref{eqn6.5}, we get
\begin{align} f^*(\bD_C)^m &=\bD^{Nm}\otimes (\tensor_{k=1}^n \mathcal{L}_k(N\lambda_k)) \notag\\
&=\mathcal{M}(\vec{\lambda},m)^N.
\end{align}
This proves the proposition.

We give another proof of the proposition using the affine flag variety realization of $\Parbun_G$.

\noindent
{\it Second Proof:} Let $\tilde{z}_k$ be a parameter for the curve $C$ at the point $\tilde{b}_k$ and let $z_k$ be a parameter for $\pone$ at the point $b_k$. Let $\mathfrak{G}_{b_k}$ (resp. $\mathfrak{G}_{\tilde{b}_k}$) be the loop group $G(\mathbb{C}((z_k)))$ (resp. $G(\mathbb{C}((\tilde{z}_k))))$ and let  $\mathfrak{B}_{b_k}$ (resp. $\mathfrak{P}_{\tilde{b}_k}$) be the Iwahori (resp. parahoric) subgroup defined as $\{f\in G(\mathbb{C}[[z_k]]):f(0)\in B\}$ (resp. $G(\mathbb{C}[[\tilde{z}_k]])$). Let $\Gamma_{\pone}$ (resp. $\Gamma_C$) be the group of maps $\pone\setminus\{b_1, \dots, b_n\} \to G$
(resp. maps $C\setminus\{\tilde{b}_1, \dots, \tilde{b}_n\} \to G$). Then,
there are canonical identifications (cf. \cite[Theorem 8.5]{LS} for a similar result, the proof of which can be adapted to prove the following; also see \cite{PR}):
\begin{equation} \label{affinestack}\Parbun_G\simeq \Gamma_{\pone}\backslash\bigl(\prod_{k=1}^n\,\mathfrak{G}_{b_k}/\mathfrak{B}_{b_k}\bigr),
\end{equation}
and
\begin{equation} \label{affinestack'} \Bun_G(C)\simeq \Gamma_C\backslash\bigl(\prod_{k=1}^n\,\mathfrak{G}_{\tilde{b}_k}/\mathfrak{P}_{\tilde{b}_k}\bigr),
\end{equation}
where $\Gamma_{\pone}$ acts on $\prod_{k=1}^n\,\mathfrak{G}_{b_k}/\mathfrak{B}_{b_k}$ diagonally via evaluation at $b_k$ on the $k$-th factor
and a similar action of $\Gamma_C$ on $\prod_{k=1}^n\,\mathfrak{G}_{\tilde{b}_k}/\mathfrak{P}_{\tilde{b}_k}$. Now, the map
$f:\Parbun_G\to \Bun_G(C)$ corresponds (under the above identifications) to the map
$$\hat{f}:\Gamma_\pone\backslash\bigl(\prod_{k=1}^n\,\mathfrak{G}_{b_k}/\mathfrak{B}_{b_k} \bigr)\to  \Gamma_C\backslash\bigl(\prod_{k=1}^n\,\mathfrak{G}_{\tilde{b}_k}/\mathfrak{P}_{\tilde{b}_k}\bigr),$$taking
$$\Gamma_{\pone} (g_1(z_1)\mathfrak{B}_{b_1}, \dots, g_n(z_n)\mathfrak{B}_{b_n})
\mapsto  \Gamma_C(g_1(\tilde{z}_1^N)\tilde{z}_1^{-N\mu_1}\mathfrak{P}_{\tilde{b}_1}, \dots, g_n(\tilde{z}_n^N)\tilde{z}_n^{-N\mu_n}\mathfrak{P}_{\tilde{b}_n}).$$
Consider the commutative diagram
\[
\xymatrix{
\prod_{k=1}^n\,\mathfrak{G}_{b_k}/\mathfrak{B}_{b_k}\ar[d]^{{\pi}} \ar[r]^-{\tilde{f}} &\ar[d]^{\tilde{\pi}}\prod_{k=1}^n\,\mathfrak{G}_{\tilde{b}_k}/\mathfrak{P}_{\tilde{b}_k}\\
 \Gamma_{\pone}\backslash\bigl(\prod_{k=1}^n\,\mathfrak{G}_{b_k}/\mathfrak{B}_{b_k}\bigr) \ar[r]^-{\hat{f}} & \Gamma_C\backslash\bigl(\prod_{k=1}^n\,\mathfrak{G}_{\tilde{b}_k}/\mathfrak{P}_{\tilde{b}_k}\bigr),
}
\]
where $\tilde{\pi}$ and $\pi$ are the standard projection maps and $\tilde{f}$ is defined similar to $\hat{f}$.
Now, by \cite[Theorem 6.6 and Lemma 5.2]{KNR}, the bundle $\tilde{\pi}^*(\bD_C^m)$ over $\prod_{k=1}^n\,\mathfrak{G}_{\tilde{b}_k}/\mathfrak{P}_{\tilde{b}_k}$ is the homogeneous line bundle corresponding to the affine characters $(-2g^*m\omega_0, \dots, -2g^*m\omega_0)$ respectively on each factor, where $\omega_0$ is the zeroth affine fundamental weight. Similarly, the bundle $\pi^*(\mathcal{M}(\vec{\lambda},m))$ over $\prod_{k=1}^n\,\mathfrak{G}_{b_k}/\mathfrak{B}_{b_k}$ is the homogeneous line bundle corresponding to the affine characters $(-2g^*m\omega_0-\lambda_1, \dots, -2g^*m\omega_0-\lambda_n)$ respectively on each factor. It is easy to see that the homogeneous line bundle corresponding to the affine characters $(-2g^*m\omega_0, \dots, -2g^*m\omega_0)$ over $\prod_{k=1}^n\,\mathfrak{G}_{\tilde{b}_k}/\mathfrak{P}_{\tilde{b}_k}$ pulls back under $\tilde{f}$ to the homogeneous line bundle corresponding to the affine characters $(-2g^*mN\omega_0-2g^*mN\mu_1^*, \dots, -2g^*mN\omega_0-2g^*mN\mu_n^*)$ over $\prod_{k=1}^n\,\mathfrak{G}_{b_k}/\mathfrak{B}_{b_k}$ (cf. \cite[ Definition 11.3.4]{K} and the fact that
$H^2(\mathfrak{G}_{b_k}/\mathfrak{B}_{b_k}, \mathbb{Z})\simeq\Pic (\mathfrak{G}_{b_k}/\mathfrak{B}_{b_k})$, where the latter can be proved by the same proof as that of \cite[Proposition 2.3]{KNR}). Since $\mu_k=\lambda_k^*/2g^*m$,
and the map $\pi$ induces an injective map of the corresponding Picard groups (cf. \cite[Proof of Proposition 8.7]{LS}), we get the proposition.
\end{proof}

\begin{lemma} \label{cohvanishing}
For any (not necessarily dominant) integral weights
$\vec{\lambda}=(\lambda_1, \dots, \lambda_n)$ and an integer $m$, the space $H^0(\Parbun_G,\mathcal{M}(\vec{\lambda},m))= 0$ unless
$m$ is a non-negative integer and each $\lambda_k$ is of level $2g^*m$.
\end{lemma}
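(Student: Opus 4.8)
The plan is to deduce the vanishing from the Borel--Weil theorem on affine flag ind-varieties, using the uniformization \eqref{affinestack} together with the line-bundle computation carried out in the second proof of Proposition \ref{pullback}. Suppose, toward a contradiction, that $s\in H^0(\Parbun_G,\mathcal{M}(\vec{\lambda},m))$ is nonzero. Pulling back along the quotient map $\pi\colon\prod_{k=1}^n\mathfrak{G}_{b_k}/\mathfrak{B}_{b_k}\to\Parbun_G$ (which is surjective on points, hence $\pi^*$ is injective on global sections) yields a nonzero $\Gamma_{\pone}$-invariant section of $\pi^*\mathcal{M}(\vec{\lambda},m)$. By the computation in the second proof of Proposition \ref{pullback}, $\pi^*\mathcal{M}(\vec{\lambda},m)$ is the homogeneous line bundle on $\prod_k\mathfrak{G}_{b_k}/\mathfrak{B}_{b_k}$ given on the $k$-th factor by the affine weight $-2g^*m\,\omega_0-\lambda_k$; that is, it is the external product $\boxtimes_{k=1}^n\mathcal{L}_k$ with $\mathcal{L}_k:=\mathcal{L}(-2g^*m\,\omega_0-\lambda_k)$.

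Next I would show that nonvanishing of this external product forces each factor to be effective. Writing each $\mathfrak{G}_{b_k}/\mathfrak{B}_{b_k}$ as an increasing union of (projective) affine Schubert varieties $X_{w_k}$, the space $H^0\bigl(\prod_k\mathfrak{G}_{b_k}/\mathfrak{B}_{b_k},\boxtimes_k\mathcal{L}_k\bigr)$ is the inverse limit, over the $w_k$, of the finite-dimensional spaces $\bigotimes_k H^0(X_{w_k},\mathcal{L}_k|_{X_{w_k}})$ (K\"unneth at each finite stage). If $H^0(\mathfrak{G}_{b_{k_0}}/\mathfrak{B}_{b_{k_0}},\mathcal{L}_{k_0})=\varprojlim_{w_{k_0}}H^0(X_{w_{k_0}},\mathcal{L}_{k_0}|_{X_{w_{k_0}}})$ were zero for some $k_0$, then, since tensoring with a fixed finite-dimensional vector space commutes with inverse limits, the full inverse limit would also vanish, contradicting $\pi^*s\neq 0$. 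Hence $H^0(\mathfrak{G}_{b_k}/\mathfrak{B}_{b_k},\mathcal{L}(-2g^*m\,\omega_0-\lambda_k))\neq 0$ for every $k$.

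Finally, by the Borel--Weil theorem for Kac--Moody flag varieties (cf. \cite{K}), a homogeneous line bundle $\mathcal{L}(\chi)$ on the affine flag variety has a nonzero global section only if $-\chi$ is a dominant weight of the affine Lie algebra $\widehat{\fg}$. Applying this with $\chi=-2g^*m\,\omega_0-\lambda_k$: dominance of $2g^*m\,\omega_0+\lambda_k$ at the finite simple coroots says precisely that $\lambda_k$ is a dominant integral weight, while dominance at the affine simple coroot $\alpha_0^\vee$ (at which $2g^*m\,\omega_0+\lambda_k$ takes the value $2g^*m-\lambda_k(\theta^\vee)$) gives $\lambda_k(\theta^\vee)\leq 2g^*m$, i.e. $\lambda_k$ is of level $2g^*m$; since $g^*>0$ and $\lambda_k(\theta^\vee)\geq 0$, this also forces $m\geq 0$. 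This is exactly the assertion of the lemma. The only delicate points are keeping the sign conventions straight in the identification of $\pi^*\mathcal{M}(\vec{\lambda},m)$ (so that effectivity corresponds to $-\chi$, rather than $\chi$, being dominant) and invoking the Borel--Weil vanishing in the form stated; alternatively, one could run the Levification/Mumford-index argument of Section \ref{section3} and of the proof of Theorem \ref{evproblem} with $P=B$ to obtain the same inequalities directly.
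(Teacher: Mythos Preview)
Your argument is correct and coincides with the paper's first proof: pull back along the uniformization \eqref{affinestack}, identify $\pi^*\mathcal{M}(\vec{\lambda},m)$ with the external product of affine homogeneous line bundles, and apply the affine Borel--Weil theorem factor by factor. Your inverse-limit justification of the K\"unneth step is fine (indeed, for fixed $w_j$, $j\neq k_0$, the factor $\bigotimes_{j\neq k_0}V_j(w_j)$ is finite-dimensional, so tensoring with it commutes with $\varprojlim_{w_{k_0}}$, and limits commute with limits); a slightly quicker route is to pick a point $p=(p_1,\dots,p_n)$ where $\pi^*s$ does not vanish and restrict to the slice through $p$ in the $k_0$-th direction, which directly produces a nonzero section of $\mathcal{L}_{k_0}$.

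For comparison, the paper also gives a second, independent proof which you did not anticipate: it identifies $H^0(\Parbun_G,\mathcal{M}(\vec{\lambda},m))$ with the dual of a conformal-blocks quotient $\Bbb V=(V_{\lambda_1}\otimes\cdots\otimes V_{\lambda_n})/\bigl(\frg\cdot(\cdots)+\im T^{k+1}\bigr)$ with $k=2g^*m$, and then shows $\Bbb V=0$ whenever some $\lambda_1(\theta^\vee)>k$ by building an $\mathfrak{sl}_2$-triple $(S,T,H)$ on the subspace $\Bbb C v_1^+\otimes V_{\lambda_2}\otimes\cdots\otimes V_{\lambda_n}$ and using the elementary fact that a vector of $H$-eigenvalue $-m$ in a finite-dimensional $\mathfrak{sl}_2$-module lies in $F^m$ applied to something. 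That argument avoids the affine flag variety entirely but is more hands-on; yours is the structural proof.
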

\begin{proof} First of all,  from the affine analogue of the Borel-Weil-Bott theorem (cf. \cite[Corollary 8.3.12]{K}), the space (following the notation in the `Second Proof' of Proposition \ref{pullback})
$H^0\bigl(\mathfrak{G}_{b_k}/\mathfrak{B}_{b_k}, \mathfrak{L}(\lambda_k+2g^*m\omega_0)\bigr)$ is zero unless $m$ is a non-negative integer and
$\lambda_k$ is a dominant integral weight for $G$ of level $2g^*m$, where $\mathfrak{L}(\lambda_k+2g^*m\omega_0)$ is the homogeneous line bundle
over $\mathfrak{G}_{b_k}/\mathfrak{B}_{b_k}$ corresponding to the affine character $\lambda_k+2g^*m\omega_0$. Now, the lemma follows from the identification \eqref{affinestack}, since the line bundle
$\mathcal{M}(\vec{\lambda},m)$ pulls-back (under this identification) to the line bundle
$$\mathfrak{L}(\lambda_1+2g^*m\omega_0)\boxtimes \dots \boxtimes  \mathfrak{L}(\lambda_n+2g^*m\omega_0)\,\,\,\text{over}\,\, \prod_{k=1}^n\,\mathfrak{G}_{b_k}/\mathfrak{B}_{b_k},$$
and the identification of the space of sections $H^0(\Parbun_G,\mathcal{M}(\vec{\lambda},m))$ with
$$H^0\bigl(\prod_{k=1}^n \mathfrak{G}_{b_k}/\mathfrak{B}_{b_k}, \mathfrak{L}(\lambda_1+2g^*m\omega_0)\boxtimes \dots \boxtimes  \mathfrak{L}(\lambda_n+2g^*m\omega_0\bigr)^{\Gamma_{\pone}}$$
(cf. \cite[Section 8.9]{LS}).

We give another proof.

\noindent
{\it Second Proof.} Assume that $H^0(\Parbun_G,\mathcal{M}(\vec{\lambda},m))\neq 0$.
By the method of the proof given in \cite{LS}, it follows immediately that  $\lambda_k$ are dominant. Suppose
$\lambda_1 (\theta^\vee) >k=2g^*m$. Choose $e_\theta\in \frg_\theta$ and  $f_\theta\in \frg_{-\theta}$ such that $[e_\theta, f_\theta]=\theta^\vee$, where $\frg_\theta$ (resp. $\frg_{-\theta}$) is the highest (resp. lowest) root space of $\frg$.  Take distinct points $\{p_1,\dots,p_n\}\subset \Bbb{A}^1 \subset \pone$ so that $p_1=0$. It is  easy to see, using \cite{Beauville,LS}, that $H^0(\Parbun_G,\mathcal{M}(\vec{\lambda},m))$ is identified with the dual of a space of the form
$$\Bbb{V}= \frac{(V_{\lambda_1}\tensor\dots\tensor V_{\lambda_n})}{\frg(V_{\lambda_1}\tensor\dots\tensor V_{\lambda_n}) +\im T^{k+1}},$$
where $k=2g^*m$,  $V_{\lambda_k}$ is  the  finite dimensional irreducible representation of $\frg$ with highest weight $\lambda_k$ and
$T= \sum_{k=2}^n p_k f^{(k)}_{\theta}$ ($f^{(k)}_{\theta}$ denotes the action of $f_\theta$ on the $k$-th factor). We show that $\Bbb{V}=0$,
which will complete the second proof.

We need only show that
$\tau=v_1^+\tensor v_2\tensor \dots\tensor v_n=0\in\Bbb{V}$, where $v^+_1$ is the highest weight vector of $V_{\lambda_1}$ and $v_k\in V_{\lambda_k}$ are arbitrary weight vectors (since these vectors generate $V_{\lambda_1}\tensor\dots\tensor V_{\lambda_n}$ as a  $\frg$-module). To prove this,  introduce the operator
$S=\sum_{k=2}^n\frac{1}{p_k}e^{(k)}_{\theta}$ on $(V_{\lambda_1}\tensor\dots\tensor V_{\lambda_n})$.
Let
$$W_0= (\Bbb{C}v_1^+\tensor V_{\lambda_2}\tensor\dots\tensor V_{\lambda_n})\subseteq (V_{\lambda_1}\tensor\dots\tensor V_{\lambda_n}).$$
Clearly,  $S$ and $T$ act on $W_0$.  Let $\theta^{\vee}(v_k)=\mu_k v_k, k=2,\dots,n$. Then, we
can see that
$[S,T](v_1^+\tensor v_2\tensor\dots \tensor  v _n)= (\sum_{k\geq 2} \mu_k) (v_1^+\tensor v_2\tensor\dots\tensor v_n)$.

This leads to a $\operatorname{sl}_2$-action on $W_0$ with the action of $H\in\operatorname{sl}_2$ by
$$H(v_1^+\tensor v_2\tensor\dots \tensor v_n)=  (\sum_{k\geq 2}\mu_k)  (v_1^+\tensor v_2\tensor\dots\tensor v_n),$$
$E$ by $S$ and $F$ by $T$, where $\{E,F,H\}$ is the standard basis of $\operatorname{sl}_2$.   We can further assume that $\tau$ is of weight $0$,  for otherwise it is already in $\frg(V_{\lambda_1}\tensor\dots\tensor V_{\lambda_n})$ itself. Then, the action of $H$ on $\tau$  is multiplication by
 $\sum_{k\geq 2} \mu_k=-\lambda_1(\theta^{\vee})< -k$. Now, note that in any (not necessarily irreducible) finite dimensional $\operatorname{sl}_2$-representation
$W$,  if  $H$ acts on $\tau\in W$ by $-m$ with $m>0$, then $\tau$ can be written as $F^m  \tau'$, for some $\tau' \in W$. Applying
 this to our $\tau$, we see that  $\tau=T^{k+1}\tau'$ for some $\tau'\in W_0$. Hence, $\tau=0$ in $\Bbb{V}$.
\end{proof}
\section{Levi twistings}

Let $G$ be a  simple, simply-connected complex algebraic group and let $P$ be a  standard  maximal parabolic subgroup. Let $L$ be the Levi subgroup of $P$ containing the maximal torus $H$.
\subsection{The structure of Levi subgroups} \label{levisub} Note that
$$\pi_2(G/P)\simeq H_2(G/P)=\Bbb{Z}\,\,\,\text{ and} \,\,\pi_2(G)=0,$$ where the first identification is via the Hurewicz theorem
and the second identification with $\Bbb{Z}$ is by the Bruhat decomposition.  (Of course $\pi_2(G)=0$ for any Lie group $G$.)
 Hence, from the long exact homotopy sequence for the fibration $G \to G/P$, we get the isomorphism:
\begin{equation}\label{pione}
\beta:\Bbb{Z}=H_2(G/P)\to \pi_1(P)\simeq \pi_1(L),
\end{equation}
where the last isomorphism of course follows  since $L$ is a deformation retract of $P$.

The fundamental character $\omega_P$ extends to a character (still) denoted by  $\omega_P:L\to \Bbb{G}_m$ inducing a map $\pi_1(L)\to \pi_1(\Bbb{G}_m)=\Bbb{Z}$. The loop $z^{\alpha_P^\vee}$ goes over to $1\in\Bbb{Z}$.  Therefore, $\omega_P:L\to \Bbb{G}_m$ induces an isomorphism on fundamental groups.

Consider the maximal semisimple subgroup $L'=[L,L]$. Then, $L'$ is a (connected) simply-connected group. To prove this, use the long exact
homotopy sequence corresponding to the fibration $\pi:L\to L/L'$ together with the isomorphism \eqref{pione} and the fact that $ L/L'$
is a one dimensional torus. Let $Z^o$ be the identity component of the center $Z$ of $L$. Then, the canonical map $i:Z^o\to L/L'$ is clearly an isogeny.

Consider the pull-back diagram:
\[
\xymatrix{
\tilde{L}\ar[d]^{\tilde{\pi}} \ar[r]^-{\tilde{i}} & L\ar[d]^{{\pi}}\\
Z^o  \ar[r]^-{{i}} & L/L'.
}
\]
Then, the central inclusion $Z^o\hookrightarrow L$ splits the left vertical projection, giving rise to a canonical isomorphism:
$$\tilde{L}\simeq L'\times Z^o.$$
As in equation \eqref{eqn0'}, let $N_P$ be the smallest positive integer such that $\bar{x}_P:=N_Px_P$ belongs to the coroot lattice of $G$. Then, $z\mapsto z^{\bar{x}_P}$ gives an isomorphism $\mathbb{G}_m\to Z^o.$ Let $k_L$ be the order of the cokernel of  $ \pi_1(Z^o)\to \pi_1(L)$. Then, $k_L$ is also the order of
$\Ker i$ (and  $\Ker \tilde{i}$).

\subsection{Degrees of principal $L$-bundles}
Recall that the degree of a principal $L$-bundle $\ml$ over $\pone$ is the first Chern class of the line bundle  $\mathcal{L}\times^L\Bbb{C}_{-\omega_P}\to \pone$.  Since topologically the principal $L$-bundles on $\pone$ are classified by $\pi_1(L)$ via
the clutching construction, one can view
the degree as an element in $\pi_1(L)\simeq \bz$ (under the canonical identification as in \eqref{pione}). We will be interested in considering  the $k_L$-degree
$$\deg_{k_L}(\ml):=\deg (\ml) \,\,\,(\text{mod}\,\, k_L)$$
of a principal $L$-bundle.

\subsection{Moduli stacks}
Consider the stack $\Parbun_L$ parameterizing the quasi-parabolic $L$-bundles $\tilde{\mathcal{L}}=(\ml; \bar{l}_1, \dots, \bar{l}_n)$ on $\pone$ consisting of a principal $L$-bundle $\ml$ on $\pone$ with parabolic
 structure $\bar{l}_k\in \mathcal{L}_{b_k}/B_L$, $k=1,\dots,n$. This breaks up according to the degrees $d\in\pi_1(L)$:
$$\Parbun_L =\bigsqcup_d \Parbun_L(d).$$

\subsection{Line bundles on $\Parbun_L$}
Given the characters $\vec{\lambda}=(\lambda_1,\dots,\lambda_n)$ of $H$ and an integer $m$, analogous to the definition of
$\mathcal{M}(\vec{\lambda},m)$ as
in Section \ref{ev}, we can form the  line bundle $\mathcal{N}(\vec{\lambda},m)$
on $\Parbun_L$ whose fiber over $\tilde{\ml}=(\mathcal{L}, \bar{l}_1,\dots,\bar{l}_n)$ is the line
$$D(\mathcal{L}\times^L\frg)^{m}\tensor (\tensor_{k=1}^n \bar{\mathcal{L}}_k(\lambda_k)),$$
where $\bar{\mathcal{L}}_k(\lambda_k)$ is the fiber of the line bundle  $\mathcal{L}_{b_k}\times^{B_L}\mathbb{C}_{-\lambda_k}\to  \mathcal{L}_{b_k}/B_L$
over $\bar{l}_k$.
\subsection{Sections of $\mathcal{N}(\vec{\lambda},m)$  over $\Parbun_L(d)$} \label{centralaction}
By a computation analogous to the equation \eqref{eq23'}, we see that  the identity component $Z^o$ of the center of $L$ acts trivially on
$\mathcal{N}(\vec{\lambda},m)$ if and only if
\begin{equation}\label{scale}
\sum_{k=1}^n \lambda_k(x_P)=4g^*dm/\langle \alpha_P, \alpha_P\rangle.
\end{equation}
\subsection{Existence of global sections of line bundles over $\Parbun_L(d)$} Let $\fl'$ be the Lie algebra of $L'$ and let $\fh':=\fh\cap \fl'$ be its Cartan subalgebra.
We can not apply Theorem  \ref{evproblem} directly since $L$ is not semisimple. We first consider the case when $k_L$ divides $d$. Write $d=k_Ld'$. Now, consider the map
\begin{equation}\label{stackmap}
\eta: \Parbun_{L'}\to \Parbun_L(d),
\end{equation}
which sends a parabolic $L'$-bundle
$\tilde{\ml}'$ to the parabolic $L$-bundle $\tilde{\ml}$ obtained as the image of the principal $\tilde{L}$-bundle ${\ml}'\times A$
under $\tilde{i}$, where $A$ is the principal $\Bbb{G}_m$-bundle corresponding to $\mathcal{O}(d')$. (The parabolic structure on $\ml$ is the canonical one coming from the parabolic structure on $\ml'$.)

This gives rise to the pull-back map
 \begin{equation}\label{themap}
 \eta^*:H^0(\Parbun_L(d),  \mathcal{N}(\vec{\lambda}, m))  \to  H^0(\Parbun_{L'},\mathcal{N}'(\vec{\lambda}', m)),
 \end{equation}
where $\vec{\lambda}':=({\lambda'_1}, \dots, {\lambda'_n}), \lambda_k':={\lambda_k}_{|\fh'}$ and $\mathcal{N}'(\vec{\lambda}', m)$ is the line bundle which assigns to a parabolic $L'$-bundle $\tilde{\mcl}'$, the line
$$D({\ml}'\times^{L'}\fl')^{m}\tensor (\tensor_{k=1}^n \bar{\mathcal{L}}_k(\lambda'_k)).$$
\begin{lemma}\label{levitosemisimple}
The map $\eta^*$  is an isomorphism for any $\mathcal{N}(\vec{\lambda}, m)$ such that the equation \eqref{scale} is satisfied.
\end{lemma}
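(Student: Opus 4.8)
The plan is to realize $\eta$ as a composition of two maps, each of which is easily seen to induce an isomorphism on global sections once \eqref{scale} is in force. Write $d=k_Ld'$ and factor
$$\Parbun_{L'}\;\xrightarrow{\ \iota\ }\;\Parbun_{L'}\times B\Bbb{G}_m\;=\;\Parbun_{\tilde L}(d')\;\xrightarrow{\ \tilde i_*\ }\;\Parbun_L(d),$$
where $\Parbun_{\tilde L}(d')$ is the locus in $\Parbun_{\tilde L}$ on which the $Z^o$-component has degree $d'$; since a $\Bbb{G}_m$-bundle on $\pone$ is determined up to isomorphism by its degree, this locus is canonically $\Parbun_{L'}\times B\Bbb{G}_m$ (the $B\Bbb{G}_m$ recording $\operatorname{Aut}(A)=\Bbb{G}_m$), the map $\iota$ is $\tilde{\mathcal{L}}'\mapsto(\tilde{\mathcal{L}}',A)$, and $\tilde i_*$ is push-forward of structure groups along $\tilde i\colon\tilde L=L'\times Z^o\to L$. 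I would first record that $\tilde i_*$ is surjective and bijective on isomorphism classes of objects: using the Cartesian square relating $\tilde L,L,Z^o,L/L'$, a reduction of a degree-$d$ $L$-bundle $\mathcal{L}$ to $\tilde L$ is the same datum as a reduction of the $(L/L')$-bundle $\pi_*\mathcal{L}$ to $Z^o$ along the isogeny $i\colon Z^o\to L/L'$; as $(L/L')$-bundles on $\pone$ are classified by degree and $\deg\mathcal{L}=k_Ld'$ lies in the image of $\pi_1(Z^o)\to\pi_1(L/L')$, such a reduction exists, and it is unique up to isomorphism because reductions differ by an $H^1(\pone,\Ker\tilde i)$-torsor with $H^1(\pone,\Ker\tilde i)=0$ ($\Ker\tilde i$ finite, $\pone$ simply connected); the parabolic structures match since $L/B_L\cong L'/B_{L'}$.

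On automorphism groups $\tilde i_*$ is surjective with kernel the finite central group $\Ker\tilde i$ (of order $k_L$), so $\tilde i_*$ is a gerbe banded by $\Ker\tilde i$. Since $\Ker\tilde i$ is the kernel of $\tilde i$ it acts trivially on any bundle of the form $\tilde i_*(-)$, hence on $\tilde i_*^*\mathcal{N}(\vec\lambda,m)$; thus $\tilde i_*^*$ is an isomorphism on $H^0$. Next, $\iota$ is a $\Bbb{G}_m$-torsor (it is $\mathrm{id}\times(\mathrm{pt}\to B\Bbb{G}_m)$), so for any line bundle $\mathcal{M}$ on $\Parbun_{L'}\times B\Bbb{G}_m$ one has $H^0(\Parbun_{L'}\times B\Bbb{G}_m,\mathcal{M})=H^0(\Parbun_{L'},\iota^*\mathcal{M})^{\Bbb{G}_m}$. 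Applied to $\mathcal{M}=\tilde i_*^*\mathcal{N}(\vec\lambda,m)$, the $B\Bbb{G}_m$-weight of $\mathcal{M}$ (equivalently, the weight by which $\operatorname{Aut}(A)=Z^o$, i.e. the central one-parameter subgroup $z\mapsto z^{\bar x_P}$, acts on $\mathcal{N}(\vec\lambda,m)$) is exactly the quantity computed in Subsection \ref{centralaction}, which vanishes precisely under \eqref{scale}. Hence under \eqref{scale} the bundle $\tilde i_*^*\mathcal{N}(\vec\lambda,m)$ is pulled back from $\Parbun_{L'}$, and since $H^0(B\Bbb{G}_m,\mathcal{O})=\Bbb{C}$ the map $\iota^*$ is an isomorphism on $H^0$. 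Composing, $\eta^*=\iota^*\circ\tilde i_*^*$ is an isomorphism; the identification $\eta^*\mathcal{N}(\vec\lambda,m)\cong\mathcal{N}'(\vec\lambda',m)$ implicit in \eqref{themap} is the determinant-of-cohomology bookkeeping for the decomposition $\fg=\fl'\oplus\mathfrak{z}\oplus\fu\oplus\fu^-$ as an $L'$-module (the determinants of the associated fibre bundles being constant lines, as $L'$ is semisimple and has no nontrivial characters) together with \eqref{scale}, which controls the contributions of the twist by $A$.

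The main obstacle I anticipate is making these gerbe and descent arguments rigorous for the stacks in play, which are non-separated Artin stacks with $\Bbb{G}_m$ in their inertia, and pinning down the line-bundle identification $\eta^*\mathcal{N}(\vec\lambda,m)\cong\mathcal{N}'(\vec\lambda',m)$ together with the precise role of \eqref{scale}. An alternative that avoids the stacky subtleties is to pass to the loop-group uniformization $\Parbun_L\cong\Gamma_{\pone}\backslash\bigl(\prod_k\mathfrak{L}_{b_k}/\mathfrak{B}_{L,b_k}\bigr)$, and similarly for $L'$, as in the Second Proof of Proposition \ref{pullback}: there $H^0$ of $\mathcal{N}(\vec\lambda,m)$ becomes the space of $\Gamma_{\pone}$-invariant sections of a box product of homogeneous line bundles on affine flag varieties, and the comparison $L$ versus $L'$ reduces to the statement that the relevant integrable highest weight modules for the affine algebra of $L=L'\cdot Z^o$, with $Z^o$-central character fixed by \eqref{scale}, are pulled back from those for the affine algebra of $L'$, which yields the desired isomorphism.
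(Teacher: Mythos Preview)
Your proof is correct and captures exactly the idea behind the paper's argument, but you package it in more categorical language. The paper argues directly: $\eta$ is surjective on objects (giving injectivity of $\eta^*$), and for surjectivity one checks by hand that every $L$-bundle isomorphism $\phi:\mcl_1\to\mcl_2$ factors as $m_z\circ\phi'$ with $z\in Z^o$ and $\phi'$ lifting to an $L'$-isomorphism; since $Z^o$ acts trivially on $\mathcal{N}(\vec\lambda,m)$ by \eqref{scale}, the section descends. Your factorization through $\Parbun_{\tilde L}(d')=\Parbun_{L'}\times B\Bbb G_m$ reorganizes the same two ingredients: the finite gerbe $\tilde i_*$ (banded by $\Ker\tilde i\subset Z^o$) handles exactly the lifting of isomorphisms that the paper does explicitly, and the $\Bbb G_m$-torsor $\iota$ isolates the $Z^o$-weight condition \eqref{scale}. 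The paper's version is more elementary and self-contained; yours makes the structural reason for the isomorphism transparent and would generalize more readily. Both leave the identification $\eta^*\mathcal{N}(\vec\lambda,m)\cong\mathcal{N}'(\vec\lambda',m)$ (with $\fl'$ replacing $\fg$ in the determinant) somewhat implicit --- note that your claim about ``constant lines'' needs care, since the determinant of cohomology of $\mcl'\times^{L'}\fu$ does carry a nontrivial level on $\Parbun_{L'}$; but this is absorbed into the definition of the target space in \eqref{themap} rather than being part of the lemma itself.
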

\begin{proof}
The map $\eta$ of stacks  is surjective since the structure group of any principal $L$-bundle over $\pone$ of degree divisible by $k_L$ lifts to $\tilde{L}$ and, moreover, $L'/B_{L'}= L/B_L$. Hence, $\eta^*$  is injective.

To show that $\eta^*$  is surjective, pick a
$\phi'\in  H^0(\Parbun_{L'},\mathcal{N}'(\vec{\lambda}', m))$. We need  to lift the section $\phi'$ to a section
$\phi \in H^0(\Parbun_L(d),  \mathcal{N}(\vec{\lambda}, m))$.  Since  $L'/B_{L'}= L/B_L$, it suffices to show that for any $\tilde{\mcl}_1,
\tilde{\mcl}_2\in \Parbun_{L}(d)$,  lifts $\tilde{\mcl}'_1,
\tilde{\mcl}'_2\in \Parbun_{L'}$ (via $\eta$), and any $L$-bundle isomorphism $\phi:{\mcl}_1 \to
{\mcl}_2$, there exists $z\in Z^o$ such that the composed $L$-bundle isomorphism  $m_z\circ\phi:{\mcl}_1 \to
{\mcl}_2$ (where $m_z:{\mcl}_2 \to
{\mcl}_2$ is the isomorphism $e\mapsto ez^{-1}$) lifts to a $L'$-bundle isomorphism $\phi':{\mcl}'_1 \to
{\mcl}'_2$ (since, by  assumption,  $Z^o$ acts trivially on  $\mathcal{N}(\vec{\lambda}, m)$).
To prove this, take any elements $e_1(x)\in {\mcl}'_1(x), e_2(x)\in {\mcl}'_2(x)$ in the fiber over $x\in \pone$. Then, we can write (thinking of
$e_1(x)$ and $e_2(x)$ as elements of $\mcl_1(x)$ and $\mcl_2(x)$ respectively)
$$\phi(e_1(x))=e_2(x)l(x),\,\,\,\text{for some} \,\,l(x)\in L.$$
It is easy to see that $l(x) L'\in L/L'$ does not depend upon the choices of $e_1(x)$ and $e_2(x)$.  In particular, we get a function
$l:\pone \to L/L'$, which must be constant say $l_oL'$. Moreover, since $Z^o$ maps surjectively onto $L/L'$ (see Subsection \ref{levisub}), we can take
$l_o\in Z^o$. This provides the lifting of $m_{l_o}\circ\phi$ to
a $L'$-bundle isomorphism $\phi':{\mcl}'_1 \to
{\mcl}'_2$.
\end{proof}

\subsection{Changing the $k_L$-degree} Let $G,P$ be as in the beginning of this section and let $Q^\vee\subset \frh$ be the coroot lattice of $G$.
\begin{lemma} \label{specialmu} There exists an element $\mu_P\in Q^\vee$satisfying the following:

(a) $0\leq \alpha (\mu_P)\leq 1$, for all the roots $\alpha\in R^+_\fl$, where $ R^+_\fl$ is the set of positive roots of $\fl$, and

(b) $|\omega_P(\mu_P)|=1$.
\end{lemma}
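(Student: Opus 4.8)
The plan is to realize $\mu_P$ as a point in the $\widehat{W}_L$-orbit of the simple coroot $\alpha_{i_P}^\vee$, where $\widehat{W}_L := W_L\ltimes Q^\vee(L')$ is the affine Weyl group of $L$; here $L' = [L,L]$ is the semisimple, simply-connected group of Subsection \ref{levisub}, $\fh' = \fh\cap\fl'$, and $Q^\vee(L')\subset\fh'$ is its coroot lattice. The starting observation is that $\omega_P(\alpha_{i_P}^\vee) = \omega_{i_P}(\alpha_{i_P}^\vee) = 1$, so $\nu_0 := \alpha_{i_P}^\vee$ already satisfies (b); the difficulty is that $\nu_0$ need not be $L$-dominant, let alone satisfy (a), and the idea is to repair this by moving it within its $\widehat{W}_L$-orbit.

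First I would record the relevant linear algebra using the direct sum $\fh = \fh'\oplus\mathbb{C}x_P$. This is genuinely a direct sum because, by \eqref{eqn0}, $x_P$ is the $i_P$-th fundamental coweight, whose expansion in the simple coroots has nonzero $\alpha_{i_P}^\vee$-coefficient, whereas $\fh'$ is spanned by the remaining simple coroots $\{\alpha_j^\vee : \alpha_j\in\Delta_P\}$. Two consequences: (i) every $\alpha\in R^+_\fl$ is a non-negative integral combination of simple roots from $\Delta_P$, none of which is $\alpha_{i_P}$, so $\alpha(x_P) = 0$; hence for every $\mu\in\fh$ the value $\alpha(\mu)$ depends only on the $\fh'$-component of $\mu$; and (ii) $\omega_P = \omega_{i_P}$ vanishes on $\fh'$, since $\omega_{i_P}(\alpha_j^\vee) = \delta_{i_P,j} = 0$ for $\alpha_j\in\Delta_P$.

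Next I would check that $\widehat{W}_L$ acts on $\fh$ compatibly with the structures in play. As $W_L$ is generated by the simple reflections $s_j$ with $\alpha_j\in\Delta_P$, it fixes $x_P$, stabilizes $\fh'$, and stabilizes the coroot lattice $Q^\vee$ of $G$; since $Q^\vee(L')\subseteq Q^\vee$, translation by elements of $Q^\vee(L')$ also stabilizes $Q^\vee$, so $\widehat{W}_L$ stabilizes $Q^\vee$. Moreover $\omega_P$ is $\widehat{W}_L$-invariant: $\omega_P(s_j\mu) = \omega_P(\mu) - \alpha_j(\mu)\,\omega_P(\alpha_j^\vee) = \omega_P(\mu)$ by (ii), and $\omega_P$ kills $Q^\vee(L')$, again by (ii). Finally, on $\fh'$ the group $\widehat{W}_L$ acts as the affine Weyl group of the semisimple group $L'$ (here one uses that $L'$ is simply connected, so $Q^\vee(L')$ is its full coroot lattice and $\fh'$ is spanned by the coroots of $L'$), whose fundamental domain is the product of the fundamental alcoves of the simple factors of $L'$; this fundamental domain is precisely $\{\nu\in\fh' : 0\le\alpha(\nu)\le 1\ \text{for all}\ \alpha\in R^+_\fl\}$, because for a dominant $\nu$ the maximum of $\alpha(\nu)$ over $\alpha\in R^+_\fl$ is attained at the highest roots of the simple factors.

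The lemma then follows quickly: write $\nu_0 = \alpha_{i_P}^\vee$, let $\nu_0'\in\fh'$ be its $\fh'$-component, choose $w\in\widehat{W}_L$ with $w\cdot\nu_0'$ in the fundamental domain just described, and set $\mu_P := w\cdot\nu_0$. Then $\mu_P\in Q^\vee$; for every $\alpha\in R^+_\fl$ we get $\alpha(\mu_P) = \alpha(w\cdot\nu_0')\in[0,1]$ by (i) and the choice of $w$; and $\omega_P(\mu_P) = \omega_P(\nu_0) = 1$, so $|\omega_P(\mu_P)| = 1$. The one place demanding a little care is the identification of condition (a) with membership in the fundamental alcove of $\widehat{W}_L$ acting on $\fh'$ (and the routine verification that $\widehat{W}_L$ really acts there as the affine Weyl group of $L'$). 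The conceptual heart of the matter is the inclusion $Q^\vee(L')\subseteq Q^\vee$, which is what allows the affine — not merely finite — Weyl group of $L$ to act by $Q^\vee$-preserving transformations; the translations are indispensable, since the finite Weyl group $W_L$ alone controls only the signs $\alpha(\mu)\ge 0$, not the upper bounds $\alpha(\mu)\le 1$.
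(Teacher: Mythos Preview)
Your proof is correct and takes a genuinely different route from the paper. The paper argues case by case: when $\alpha_{i_P}$ is long it takes $\mu_P=-\alpha_{i_P}^\vee$ and checks (a) using the bound $|\alpha(\alpha_{i_P}^\vee)|\le 1$ from the root tables; for the remaining short-root cases ($B_\ell$ with $P_\ell$, $C_\ell$ with $P_i$ for $i<\ell$, $G_2$ with $P_1$, $F_4$ with $P_3$ or $P_4$) it exhibits $\mu_P$ explicitly (usually $\theta^\vee$). Your argument is uniform: start from $\alpha_{i_P}^\vee$, observe that the affine Weyl group $\widehat{W}_L=W_L\ltimes Q^\vee(L')$ preserves both $Q^\vee$ (since $Q^\vee(L')\subset Q^\vee$) and the linear form $\omega_P$ (since $\omega_P$ kills $\fh'$), and translate the $\fh'$-component into the fundamental alcove of $L'$, which is exactly the region cut out by (a). The paper's approach has the virtue of giving concrete, easily checkable elements, and in the later subsection these explicit $\mu_P$ are used directly in the Levi-twisting construction; your approach explains \emph{why} such an element must exist without inspection of Dynkin diagrams, and makes transparent the role of the inclusion $Q^\vee(L')\subset Q^\vee$ as the mechanism supplying the needed translations. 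Both proofs are short; yours is more conceptual, the paper's more explicit.
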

\begin{proof} If the maximal parabolic $P$ is such that $\alpha_P$ is a long root (for simply-laced groups all the roots are considered long), then take $\mu_P=-\alpha_P^\vee$. By \cite[Page 278]{B}, since
$|\alpha (\mu_P)|\leq 1$, for all the roots $\alpha \neq \pm\alpha_P$, we get that (a) is satisfied for $\mu_P=-\alpha_P^\vee$
(since $\langle \alpha_i, \alpha_P^\vee\rangle \leq 0$ for any $\alpha_i\neq \alpha_P$). Of course, (b) is satisfied for $\mu_P=-\alpha_P^\vee$ (with $\omega_P(\mu_P)=-1$).

Following the Bourbaki \cite[Planche I - IX]{B} convention, this leaves us with the following cases to consider, where we give an explicit $\mu_P$ in each case. (In the following, we denote by $P_i$ the maximal parabolic subgroup with $S_{P_i}=\{\alpha_i\}$ and $\theta$ denotes the highest root of $G$.)

(1) $G=B_\ell, P=P_\ell :$ Take $\mu_{P_\ell}=\theta^\vee$,

(2) $G=C_\ell, P=P_i \,( 1\leq i < \ell) :$ Take $\mu_{P_i}=\theta^\vee$,

(3) $G=G_2, P=P_1:$ Take $\mu_{P_1}=\theta^\vee$,

(4) $G=F_4, P=P_3:$ Take $\mu_{P_3}=\epsilon_1-\epsilon_4$ , and

(5) $G=F_4, P=P_4:$ Take $\mu_{P_4}=\theta^\vee$ .
\end{proof}

We refer to  \cite{Ram} for an analogue of the following construction.
Choose a point $c\in \pone$ distinct from $b_1,\dots, b_n$. For the convenience of the notation take $c=0$. Let $\Parbun^{[1]}_L(d)$ be the moduli stack of parabolic $L$-bundles of degree $d$ with
parabolic structures at $b_1,\dots, b_n, c$. There is the forgetful morphism $\Parbun^{[1]}_L(d)\to\Parbun_L(d)$ (forgetting the parabolic structure at $c$). It is clear that
pulling back $\mathcal{N}(\vec{\lambda},m)$ results in a similar line bundle (with $\lambda_{n+1}=0$ and no changes in $\lambda_1,\dots,\lambda_n$) without any change in the space of global sections.

We describe an operation which results in a morphism $\Parbun^{[1]}_L(d)\to \Parbun^{[1]}_L(d\pm 1)$. Fix any $\mu\in Q^\vee$ and let $\ell_\mu:\mathbb{C}^* \to H\subset L$ be the one parameter subgroup $\ell_\mu(z)=z^\mu$.
The operation is as follows: Fix a trivialization $s$ of  $\tilde{\mm}=(\mm; \bar{l}_1, \dots, \bar{l}_{n+1})\in \Parbun^{[1]}_L(d)$ in a formal
neighborhood $D$ of $c=0$ so that $s(0)B_L=\bar{l}_{n+1}$. The new principal $L$-bundle $\mm_\mu$, which coincides with $\mm$ outside of $0$, is defined as follows: Sections of $\mm_\mu$ over $D$ are meromorphic sections $s a(z)$
 of $\mm$ so that $\ell_\mu(z)a(z)$ is regular at $0$. We have a section
\begin{equation} \label{eqn7.0}  {s}'=s\ell_\mu (z)^{-1}\,\,\,\text{ of }\,\, \mm_\mu\,\,\,\text{over} \,\, D.
\end{equation}
The construction of the new bundle $\mm_\mu$ depends on the choice of the trivialization $s$. In particular, it may not give a well defined
bundle $\mm_\mu$ for an arbitrary choice of $\mu$. However, we have the following  result:
\begin{lemma} Assume that $\mu\in Q^\vee$ satisfies the condition (a)  of Lemma \ref{specialmu}. Then, for any $\tilde{\mm}=(\mm; \bar{l}_1, \dots, \bar{l}_{n+1})\in \Parbun^{[1]}_L(d)$, the above operation gives a well-defined bundle  $\mm_{\mu}$, i.e., the construction of $\mm_\mu$ does not depend on the choice of the trivialization $s$ satisfying $s(0)B_L=\bar{l}_{n+1}$.

Further, the degree $d'$ of  $\mm_\mu$ is $d+\omega_P(\mu)$.
\end{lemma}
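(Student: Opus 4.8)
The plan is to reinterpret the elementary modification in terms of lattices in the formal loop group and reduce the well-definedness to a single regularity assertion about conjugation by $\ell_\mu(z)=z^\mu$, which is then settled by the Iwahori factorization of $L(\mathbb{C}[[z]])$. Write $\mathcal{O}=\mathbb{C}[[z]]$ for the local ring of $\pone$ at $c=0$, and let $s,s'$ be two trivializations of $\mathcal{M}$ over the formal disc $D$ with $s(0)B_L=s'(0)B_L=\bar{l}_{n+1}$; then $s'=s\cdot g$ for some $g\in L(\mathcal{O})$ with $g(0)\in B_L$. In the trivialization $s$ the defining condition ``$\ell_\mu(z)a(z)$ regular at $0$'' says that the subsheaf $\mathcal{M}_\mu|_D$ of $\mathcal{M}|_{D\smallsetminus\{0\}}$ equals $s\cdot z^{-\mu}L(\mathcal{O})$, while in the trivialization $s'$ it equals $s'\cdot z^{-\mu}L(\mathcal{O})=s\cdot g\,z^{-\mu}L(\mathcal{O})$. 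These coincide — i.e.\ $\mathcal{M}_\mu$ does not depend on the chosen trivialization — if and only if $g\,z^{-\mu}L(\mathcal{O})=z^{-\mu}L(\mathcal{O})$, equivalently $z^{\mu}g(z)z^{-\mu}\in L(\mathcal{O})$. So everything reduces to the claim: \emph{if $g\in L(\mathcal{O})$ with $g(0)\in B_L$, then $z^{\mu}g(z)z^{-\mu}$ is regular at $z=0$.}

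To prove this claim I would invoke the Iwahori factorization $g=g^{-}\cdot g^{0}\cdot g^{+}$, unique with $g^{+}\in U_L(\mathcal{O})$ (where $U_L$ is the unipotent radical of $B_L$), $g^{0}\in H(\mathcal{O})$, and $g^{-}\in U_L^{-}(z\mathcal{O})$ (where $U_L^{-}$ is the unipotent radical of the Borel opposite to $B_L$); the key feature, which uses $g(0)\in B_L$, is that the factor $g^{-}$ has all its root-subgroup coefficients divisible by $z$. Conjugation by $z^{\mu}$ is a group automorphism of $L(\mathbb{C}((z)))$ that fixes $g^{0}$ and sends a root-subgroup element $x_{\alpha}(f(z))$ to $x_{\alpha}(z^{\alpha(\mu)}f(z))$; writing $g^{+}$ and $g^{-}$ as ordered products of root-subgroup elements (legitimate since $\exp$ is a polynomial isomorphism from a nilpotent Lie algebra onto a unipotent group), for $g^{+}$ the relevant roots $\alpha$ lie in $R_{\mathfrak{l}}^{+}$ with $\alpha(\mu)\ge 0$ by condition (a) of Lemma~\ref{specialmu}, so $z^{\mu}g^{+}z^{-\mu}\in U_L(\mathcal{O})$; for $g^{-}$ the roots are $-\beta$ with $\beta\in R_{\mathfrak{l}}^{+}$ and each coefficient is $z\,h(z)$ with $h\in\mathcal{O}$, so the conjugated coefficient is $z^{1-\beta(\mu)}h(z)$ with $1-\beta(\mu)\ge 0$ by the \emph{upper} bound $\beta(\mu)\le 1$ of condition (a), whence $z^{\mu}g^{-}z^{-\mu}\in U_L^{-}(\mathcal{O})$. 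Therefore $z^{\mu}g z^{-\mu}\in U_L^{-}(\mathcal{O})\,H(\mathcal{O})\,U_L(\mathcal{O})\subseteq L(\mathcal{O})$, proving the claim, hence that $\mathcal{M}_\mu$ is well defined. I expect the main obstacle to be getting this step exactly right: it is the only place both inequalities of Lemma~\ref{specialmu}(a) enter, and one must match the lower bound $\alpha(\mu)\ge 0$ to the ``positive'' side of the Iwahori factorization and the upper bound $\alpha(\mu)\le 1$ to the ``negative'' side (which is saved only by the extra factor of $z$ coming from $g(0)\in B_L$).

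For the degree, $\mathcal{M}_\mu=\mathcal{M}$ over $\pone\smallsetminus\{c\}$, while near $c$ the section $s'=s\,\ell_\mu(z)^{-1}$ of \eqref{eqn7.0} trivializes $\mathcal{M}_\mu$. Applying the character $-\omega_P$ (equivalently, using Definition~\ref{ratio}), the trivializing section of $\mathcal{M}_\mu\times^{L}\mathbb{C}_{-\omega_P}$ induced by $s'$ differs, over $D\smallsetminus\{c\}$, from that of $\mathcal{M}\times^{L}\mathbb{C}_{-\omega_P}$ induced by $s$ by the unit $z^{\pm\omega_P(\mu)}$, so the associated line bundle changes degree by $\pm\omega_P(\mu)$; the sign, pinned down by the normalization of Subsection~\ref{levisub} (under which $z^{\alpha_P^\vee}$ corresponds to degree $+1$, consistently with $\omega_P(\alpha_P^\vee)=1$), is $+$. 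Hence $\deg\mathcal{M}_\mu=\deg\mathcal{M}+\omega_P(\mu)=d+\omega_P(\mu)$. Note that condition (a) is not needed here; it enters only through the existence of $\mathcal{M}_\mu$.
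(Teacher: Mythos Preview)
Your proof is correct and follows essentially the same approach as the paper: both reduce the well-definedness to showing that $z^{\mu}g(z)z^{-\mu}$ is regular at $0$ for $g$ in the Iwahori subgroup of $L(\mathcal{O})$, and both verify this via the same root-by-root check that $z^{\alpha(\mu)}\in\mathcal{O}$ and $z^{1-\alpha(\mu)}\in\mathcal{O}$ for $\alpha\in R_{\mathfrak{l}}^{+}$, using exactly the two inequalities of Lemma~\ref{specialmu}(a). The only cosmetic difference is that the paper performs this check at the Lie-algebra level (on root spaces $z^{n}\otimes\mathfrak{g}_{\beta}$ inside the loop algebra) rather than via the group-level Iwahori factorization $g=g^{-}g^{0}g^{+}$ you use; the underlying computation is identical, and your degree argument simply spells out what the paper calls ``an easy calculation.''
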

\begin{proof}
We first show that changing $s(z)$ by $s(z) c(z)$ with $c(z)\in L[[z]]:=L(\mathbb{C}[[z]])$ and $c(0)\in B_L$ does not change $\mm_\mu$ (in its meromorphic identification with $\mm$). To prove this, it suffices to show that
 $\ell_\mu(z)c(z)\ell_\mu(z)^{-1}$ is regular at $z=0$:

Considering the embedding  $L[[z]]\subset G[[Z]]$, and the affine Kac-Moody Lie algebra $\hat{\ml}(\fg):=\mathbb{C}[z,z^{-1}]\otimes_\mathbb{C} \fg\oplus \mathbb{C} c\oplus \mathbb{C} d$ (cf., \cite[Section 13.1]{K}), it suffices to show that
\begin{equation}\label{eqn7.1} \Ad\ell_\mu(z)\cdot (z\mathbb{C}[z]\otimes \fg_\beta) \subset \mathbb{C}[z]\otimes \fl,\,\,\,\text{for any}\,\,
\beta\in R_\fl,
\end{equation} and
\begin{equation}\label{eqn7.2} \Ad\ell_\mu(z)\cdot  \fg_\beta \subset \mathbb{C}[z]\otimes \fl,\,\,\,\text{for any}\,\,
\beta\in R^+_\fl,
\end{equation}
where $\fg_\beta$ is the root space of $\fg$ corresponding to the root $\beta$.
But, as it is easy to see, for any $\mu\in Q^\vee$ and any $\beta\in R$,
\begin{equation}\label{eqn7.3} \Ad (z^\mu)\cdot (z^n\otimes \fg_\beta) \subset z^{n+\beta(\mu)}\otimes \fg_\beta.
\end{equation}
From this equation, the equations \eqref{eqn7.1} and \eqref{eqn7.2} follow for any $\mu$ satisfying the condition (a) of Lemma
\ref{specialmu}.

By an easy calculation,
 the degree $d'$ of $\mm_{\mu}$ is $d+\omega_P(\mu)$.
 \end{proof}

We now describe the parabolic structure on $\mm_\mu$ for any $\mu$ as in the above lemma. For any $k=1, \dots , n$, the parabolic structure on
 $\mm_\mu$ is the same as that of $\tilde{\mm}$ at $b_k$. To describe the parabolic structure at $c$,  following \cite{B2}, consider the  subgroup $E_\mu$ of $L$ as the set of all limits
  $$\lim_{z\to 0} \ell_\mu(z)c(z)\ell_\mu(z)^{-1},\  c(z)\in L[[z]]\,\,\,\text{with}\,\,\ c(0)\in B_L.$$
Then, by the equation \eqref{eqn7.3}, it is easy to see that the Lie algebra
\begin{equation}\label{eqn7.3'}\text{Lie}(E_\mu)=\fh\oplus \bigl(\oplus_{\beta\in R_\fl^+:\beta(\mu)=0}\,\fg_\beta\bigr)\oplus \bigl(\oplus_{\beta\in R_\fl^+:\beta(\mu)=1}\,\fg_{-\beta}\bigr).\end{equation}
In particular, $E_\mu$ is a Borel subgroup of $L$ containing $H$. Hence, there exists a unique Weyl group element $w_\mu$ of $L$ such that
$E_\mu=w_\mu B_Lw_\mu^{-1}$.
 Thus, we get a well defined point in $({\mm_\mu})_0/w_\mu B_L w_\mu^{-1}$ by taking $s'(0)$ (mod $w_\mu B_L w_\mu^{-1}$), for any section $s'$ of $\mm_\mu$ over $D$ defined by
the equation \eqref{eqn7.0}. By the definition of $E_\mu$, it is easy to see that  the element $s'(0)$ (mod $w_\mu B_L w_\mu^{-1}$) does not
depend upon the choice of the section $s$ of $\mm$ over $D$ satisfying $s(0)B_L=\bar{l}_{n+1}$.

For any principal $L$-bundle $\mathcal{E}$ over a scheme $Y$, any $y\in Y$, and $w$ in the Weyl group of $L$, we can identify
$$\theta_w: \mathcal{E}_y/(wB_L w^{-1})\to \mathcal{E}_y/B_L$$ induced by  $e\mapsto ew$. This allows us to define an element
$\bar{l}_{n+1}^\mu$ in the fiber $(\mm_\mu)_{c}/B_L$ as the image of $s'(0)$ (mod $w_\mu B_L w_\mu^{-1}$) under $\theta_{w_\mu}$. So, we get the parabolic bundle
$\tilde{\mm}_\mu:=(\mm_\mu; \bar{l}_1, \dots, \bar{l}_n,\bar{l}_{n+1}^\mu)\in \Parbun^{[1]}_L(d)$.

 All in all, we therefore have completed our description of $\tau_\mu:\Parbun^{[1]}_L(d)\to \Parbun^{[1]}_L(d+\omega_P(\mu))$. Taking $\mu=\mu_P$ as in Lemma \ref{specialmu}, we get a map $\tau_{\mu_P}:\Parbun^{[1]}_L(d)\to \Parbun^{[1]}_L(d\pm 1)$.

\begin{lemma} \label{iso} The map $\tau_\mu: \Parbun^{[1]}_L(d)\to \Parbun^{[1]}_L(d+\omega_P(\mu))$, for any $\mu$ satisfying the condition (a) of Lemma \ref{specialmu}, is an isomorphism.
\end{lemma}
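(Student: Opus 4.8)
The plan is to construct an explicit inverse to $\tau_\mu$. The key observation is that the elementary modification operation defining $\tau_\mu$ is, at the level of the underlying $L$-bundle, the classical \emph{Hecke modification} at the point $c$ by the cocharacter $\ell_\mu(z) = z^\mu$, and such modifications are reversible by modifying by $\ell_{-\mu}(z) = z^{-\mu}$. So the first step is to verify that $\mu$ satisfying condition (a) of Lemma \ref{specialmu} is equivalent to $-\mu$ (up to the action of the Weyl group element $w_\mu$ of $L$) satisfying the same condition; indeed $\beta(-\mu) = -\beta(\mu) \in \{-1,0\}$ for $\beta \in R_\fl^+$, and after conjugating by $w_\mu$ the inequality $0 \le \gamma(\cdot) \le 1$ is restored on the set of positive roots of the Borel $E_\mu = w_\mu B_L w_\mu^{-1}$. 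Concretely, I would check that $w_\mu(-\mu)$ satisfies condition (a) for the standard Borel $B_L$ (this is the cocharacter one uses to modify $\tilde{\mm}_\mu$ back), so that $\tau_{w_\mu(-\mu)}$ is defined on $\Parbun^{[1]}_L(d + \omega_P(\mu))$.

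Next I would show the composite $\tau_{w_\mu(-\mu)} \circ \tau_\mu$ is the identity on $\Parbun^{[1]}_L(d)$ (and symmetrically the other composite is the identity). For the underlying bundles: starting from $\tilde{\mm} = (\mm; \bar l_1,\dots,\bar l_{n+1})$ with trivialization $s$ near $c$ satisfying $s(0)B_L = \bar l_{n+1}$, the bundle $\mm_\mu$ has the section $s' = s\ell_\mu(z)^{-1}$, and by construction $s'(0) \bmod w_\mu B_L w_\mu^{-1}$ determines $\bar l_{n+1}^\mu$ via $\theta_{w_\mu}$. Now performing the modification by $w_\mu(-\mu)$ relative to the trivialization $s' w_\mu$ of $\mm_\mu$ (which is the trivialization compatible with the parabolic structure $\bar l_{n+1}^\mu$) produces the meromorphic section $s' w_\mu \cdot (w_\mu^{-1} z^{-\mu} w_\mu) = s' z^{-\mu} w_\mu \cdot w_\mu^{-1}\cdots$; tracking through the definitions, the net twist is $\ell_\mu(z)^{-1} \cdot \ell_{\mu}(z) = 1$, so one recovers $\mm$ together with a section whose value at $0$ is $s(0) \bmod B_L = \bar l_{n+1}$. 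The degree bookkeeping is automatic: $\omega_P(\mu) + \omega_P(w_\mu(-\mu)) = \omega_P(\mu) - \omega_P(\mu) = 0$ since $\omega_P$ is $W_L$-invariant (as $\alpha_P \notin \Delta_P$, the fundamental weight $\omega_P$ is fixed by $W_L$). I would carry this out functorially in families so that it is a morphism of stacks, and note that all these constructions are canonical (independent of the auxiliary choices of trivialization, as already established in the preceding lemmas), hence glue.

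Finally, I would record that the parabolic structures at $b_1, \dots, b_n$ are untouched by either modification, so the only point requiring care is $c$; there the two statements "$s'(0) \bmod w_\mu B_L w_\mu^{-1}$ under $\theta_{w_\mu}$ gives $\bar l_{n+1}^\mu$" and its analogue for the reverse modification are inverse to each other by the explicit matrix computation above. The main obstacle I anticipate is the bookkeeping with the Weyl group element $w_\mu$: one must be careful that the modification $\tau_{w_\mu(-\mu)}$ is performed with respect to the correct Borel (namely $E_\mu$, not $B_L$), and that the identification $\theta_{w_\mu}$ is applied consistently, so that the composite twist genuinely cancels rather than leaving a residual conjugation. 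Once the correct cocharacter and trivialization are pinned down, the verification that $\ell_{w_\mu(-\mu)}$ (read back through $\theta_{w_\mu}$) inverts $\ell_\mu$ is a short computation using \eqref{eqn7.3}. Since the argument produces a two-sided inverse morphism of stacks, it follows that $\tau_\mu$ is an isomorphism.
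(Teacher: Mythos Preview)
Your approach is the same as the paper's: construct an explicit inverse $\tau_\nu$ for a suitable Weyl-conjugate $\nu$ of $-\mu$. However, your choice $\nu = w_\mu(-\mu) = -w_\mu\mu$ is incorrect; the paper uses $\nu = -w_\mu^{-1}\mu$, and these differ whenever $w_\mu$ is not an involution (which does happen, e.g.\ already for $L'$ of type $A_3$ with $\mu$ a fundamental coweight at an end node one finds $w_\mu$ of order $4$).

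Here is why your choice fails and the paper's works. For $\beta\in R_\fl^+$ one has $\beta(-w_\mu^{-1}\mu)=-(w_\mu\beta)(\mu)$; since $w_\mu R_\fl^+$ is exactly the set of roots of $E_\mu$, which by \eqref{eqn7.3'} take the value $0$ or $-1$ on $\mu$, this lies in $\{0,1\}$ and condition~(a) holds. By contrast $\beta(-w_\mu\mu)=-(w_\mu^{-1}\beta)(\mu)$ involves the roots of $w_\mu^{-1}B_Lw_\mu$, about which condition~(a) says nothing, and indeed one can produce $\beta$ with $\beta(-w_\mu\mu)=-1$. The composition check also only closes with the correct $\nu$: starting from the trivialization $t=s'w_\mu$ compatible with $\bar l_{n+1}^\mu$, the new section is
\[
t\,z^{-\nu}=s'w_\mu\,z^{w_\mu^{-1}\mu}=s'\,z^{\mu}\,w_\mu=s\,w_\mu,
\]
using $w z^h=z^{wh}w$ and $s'=s z^{-\mu}$. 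This is $s$ times the constant $w_\mu\in L$, so the underlying bundle is recovered, and since $w_{-w_\mu^{-1}\mu}=w_\mu^{-1}$ the map $\theta_{w_\mu^{-1}}$ sends $s(0)w_\mu\bmod w_\mu^{-1}B_Lw_\mu$ back to $s(0)B_L=\bar l_{n+1}$. With your $\nu$ the analogous computation yields $s\,z^{w_\mu^2\mu-\mu}w_\mu$, which does not return $\mm$ unless $w_\mu^2\mu=\mu$. Your own caveat about ``bookkeeping with $w_\mu$'' was on target: replace $w_\mu$ by $w_\mu^{-1}$ in the definition of $\nu$ and the argument goes through exactly as you outlined.
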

\begin{proof} Let $w_\mu\in W_L$ be the element such that $E_\mu=w_\mu B_Lw_\mu^{-1}$, where Lie ($E_\mu$) is defined by the equation
\eqref{eqn7.3'}. Then, it is easy to see that  $\tau_{-w^{-1}\mu}: \Parbun^{[1]}_L(d+\omega_P(\mu))\to \Parbun^{[1]}_L(d)$ is a well defined morphism of stacks. Moreover, it is the inverse of $\tau_\mu$, proving that $\tau_\mu$ is an isomorphism. (Observe that $w_{-w^{-1}\mu}=w_\mu^{-1}$.)
\end{proof}

 Finally, we need to consider the pull-back of line bundles under the above map $\tau_\mu$.
\begin{lemma} \label{bundleiso} For any $\mu$ satisfying the condition (a) of Lemma \ref{specialmu}, any characters $\vec{\lambda}=(\lambda_1, \dots, \lambda_n)$ of $H$ and any integer $m$,
$$\tau_\mu^*\bigl(\mathcal{N}((\vec{\lambda}, 2g^*m\mu^*),m)\bigr)=\mathcal{N}((\vec{\lambda}, 0),m),$$
where $\mu^*=\kappa^{-1}(\mu)$  ($\kappa$ being defined in Section \ref{section3}).
\end{lemma}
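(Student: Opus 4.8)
The plan is to reduce the assertion to a local statement at the extra marked point $c$ and then apply the elementary modification computations of Section \ref{modification}, in the spirit of the proof of Proposition \ref{pullback}. Since $\tau_\mu$ alters the underlying $L$-bundle only in a formal neighbourhood of $c$ and does not touch the parabolic structures at $b_1,\dots,b_n$, the tensor factors $\bar{\mathcal{L}}_k(\lambda_k)$, $1\le k\le n$, of the two line bundles are canonically identified under $\tau_\mu^*$. Writing $\tau_\mu(\tilde{\mm})=(\mm_\mu;\bar l_1,\dots,\bar l_n,\bar l_{n+1}^{\mu})$ for $\tilde{\mm}=(\mm;\bar l_1,\dots,\bar l_{n+1})\in\Parbun^{[1]}_L(d)$, and noting that $\mu^*=\kappa^{-1}(\mu)\in X(H)$ because $\mu\in Q^\vee$, the claim reduces to constructing a canonical isomorphism, natural in $\tilde{\mm}$,
$$D(\mm_\mu\times^L\frg)^m\otimes F_c\;\cong\;D(\mm\times^L\frg)^m,$$
where $F_c$ denotes the fibre of $(\mm_\mu)_c\times^{B_L}\mathbb{C}_{-2g^*m\mu^*}$ over $\bar l_{n+1}^{\mu}$ (the factor $\bar{\mathcal{L}}_{n+1}(0)$ on the target of $\tau_\mu$ being the trivial line).

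For the determinant-of-cohomology part I would invoke Definition \ref{ratio} and equation \eqref{eqn6.1'} with $G$ replaced by the reductive group $L$, $V=\frg$ the adjoint $L$-module, and $\ell(z)=z^\mu$; this is legitimate since $\mu\in Q^\vee$ forces $\exp(2\pi i\mu)=1$, and condition (a) of Lemma \ref{specialmu} (which underlies the very definition of $\tau_\mu$) guarantees that $z^\mu b z^{-\mu}$ is regular at $z=0$ for all $b\in B_L$, so the $B_L$-action of Section \ref{modification} is available. One obtains
$$D(\mm_\mu\times^L\frg)=D(\mm\times^L\frg)\otimes T,\qquad T:=\bigotimes_{\gamma\in R\cup\{0\}}[L_\gamma:L_\gamma'],$$
with $L_\gamma,L_\gamma'$ built over the formal disc at $c$ from a trivialization $s$ of $\mm$ with $s(0)B_L=\bar l_{n+1}$; the $\gamma=0$ term is trivial. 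By Proposition \ref{ratioweight} the $H$-equivariant structure on $T$ has weight $-2\sum_{\beta\in R^+}\beta(\mu)\beta$, which by Lemma \ref{coxeter} equals $-2g^*\mu^*$; this is the same mechanism that produced $\theta_k=-N\lambda_k/m$ in equation \eqref{eqn6.5}, and it is precisely this appearance of Lemma \ref{coxeter} that forces the factor $2g^*m$ in the statement. Hence $T^{\otimes m}$ carries $H$-weight $-2g^*m\mu^*$ relative to $s$ and $\bar l_{n+1}$.

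It then remains to identify $T^{\otimes m}$ with $F_c^{-1}$ canonically and functorially in $\tilde{\mm}$, which will give the displayed isomorphism and hence $\tau_\mu^*\mathcal{N}((\vec\lambda,2g^*m\mu^*),m)=\mathcal{N}((\vec\lambda,0),m)$. For this I would use the explicit formulae for $[L_\gamma:L_\gamma']$ (the lemma immediately before Proposition \ref{ratioweight}) as top exterior powers of $(\mathcal{O}(D)s/z^{-\gamma(\mu)}\mathcal{O}(D)s)\otimes\frg_\gamma$, together with the description of the new parabolic point $\bar l_{n+1}^{\mu}$ as $\theta_{w_\mu}$ of $s'(0)\bmod E_\mu$, where $s'=s\,\ell_\mu(z)^{-1}$ as in equation \eqref{eqn7.0} and $E_\mu=w_\mu B_L w_\mu^{-1}$ is the Borel of $L$ with Lie algebra \eqref{eqn7.3'}. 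I expect this step to be the main obstacle: the modification does not preserve a $B_L$-reduction at $c$ but swaps $B_L$ for its conjugate $E_\mu$, and the singular shift $\ell_\mu(z)^{-1}$ relates the reference trivialization $s$ (relative to which the weight $-2g^*m\mu^*$ was computed) to $s'(0)$ non-multiplicatively, so one must check carefully that after the rebasing by $w_\mu$ the weight read off against $\bar l_{n+1}^{\mu}$ is exactly $2g^*m\mu^*$—with the right sign, not a Weyl translate—and that the construction is natural in $\tilde{\mm}$ so that the fibrewise isomorphisms glue to one of line bundles on $\Parbun^{[1]}_L(d)$.

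A likely cleaner alternative is to argue through the affine flag variety presentation of $\Parbun^{[1]}_L$, as in the Second Proof of Proposition \ref{pullback}: the identification \eqref{affinestack}, adapted to $L$ and to the $n+1$ points $b_1,\dots,b_n,c$. Under it $\tau_\mu$ acts, on the affine-flag factor attached to $c$, by right translation by $z^{-\mu}$, while the other factors and the determinant factor are unchanged; the pullback of the homogeneous line bundle at level $2g^*m$ with finite part $2g^*m\mu^*$ at $c$ is then computed by the standard formula for the action of the translation element $t_{-\mu}$ of the affine Weyl group of $L$ on affine characters, in which the level $2g^*m$ multiplies $\kappa^{-1}(\mu)$ to give the finite shift $-2g^*m\mu^*$, cancelling the finite part at $c$ and yielding $\mathcal{N}((\vec\lambda,0),m)$.
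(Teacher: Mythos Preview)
Your approach is essentially identical to the paper's: reduce to the extra marked point $c$, invoke Proposition \ref{ratioweight} together with Lemma \ref{coxeter} to compute that the determinant-of-cohomology ratio carries $H$-weight $-2g^*\mu^*$, and then match this against the parabolic factor $\bar{\mathcal L}_{n+1}(2g^*m\mu^*)$. The paper's proof is three lines long and simply asserts the last step, writing the ratio as ``the line $\mm_0\times^L\mathbb{C}_\delta$'' with $\delta=-2g^*\mu^*$ and concluding directly; it does not explicitly address your concern about the $w_\mu$-rebasing of the parabolic structure at $c$. So you are being more scrupulous than the paper here, not less. Your affine flag variety alternative---translating by $z^{-\mu}$ on the factor at $c$ and reading off the shift in the affine character at level $2g^*m$---is exactly the kind of argument that would pin this point down cleanly, and is in the spirit of the paper's own second proof of Proposition \ref{pullback}.
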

\begin{proof} By Proposition \ref{ratioweight}, for any $\tilde{\mm}\in \Parbun^{[1]}_L(d)$,
$D(\mm'\times^L\frg)\otimes D(\mm\times^L\frg)^{-1}$ is the line $\mm_0\times^L\mathbb{C}_\delta$, where
 $\mm'$ is the underlying bundle of $\tau_\mu(\tilde{\mm})$ and $\delta \in \frh^*$ is given by (for any $x\in \frh$)
\begin{align*} \delta(x)&=-2\sum_{\gamma\in R^+}\,\gamma(\mu) \gamma(x),\\
&=-2g^*\langle\mu, x\rangle,\,\,\,\text{by Lemma}\, \ref{coxeter},
\end{align*}
where $R^+$ is the set of positive roots of $\frg$. Thus, $\delta =-2g^*\mu^*$, where $\mu^*=\kappa^{-1}(\mu)$. Thus, the pull-back of $\mathcal{N}((\vec{\lambda}, 2g^*m\mu^*),m)$ under $\tau_\mu$ is given by $\mathcal{N}((\vec{\lambda}, 0),m)$.
This proves the lemma.
\end{proof}

Let $\mu_P\in Q^\vee$ be any element satisfying Lemma \ref{specialmu} and let $d_o$ be the smallest positive integer such that
$d+d_o\omega_P(\mu_P)\equiv 0$ (mod $k_L$), where $k_L$ is defined in Subsection \ref{levisub}. Choose points $c_1, \dots, c_{d_o} \in \pone$ distinct
from the points $b_1, \dots, b_n$. For any $0\leq r\leq d_o$, let $\Parbun_L^{[r]} (d)$ be the moduli stack of quasi-parabolic principal $L$-bundles on $\pone$ of degree $d$ with parabolic structures at  $b_1, \dots, b_n, c_1, \dots, c_r$. Thus, $\Parbun_L^{[0]} (d)=\Parbun_L (d)$. There is a
similar definition of $\Parbun_{L'}^{[r]}$, where we take $n+r$ parabolic points.

Combining Lemmas \ref{iso} and \ref{bundleiso}, we get the following result.
\begin{corollary} \label{cor7.6} For  any $\mu_P$ satisfying Lemma \ref{specialmu}, any characters $\vec{\lambda}=(\lambda_1, \dots, \lambda_n)$ of $H$ and any integer $m\geq 0$ such that the equation \eqref{scale} is satisfied, we have
\begin{equation}\label{eigeniso}
 H^0(\Parbun_L(d),  \mathcal{N}(\vec{\lambda}, m))  \simeq  H^0(\Parbun_{L'}^{[d_o]},\mathcal{N}'((\vec{\lambda}', [2g^*m(\mu_P^*)_{|\frh'}]^{d_o}), m)),
 \end{equation}
where $\vec{\lambda}':=({\lambda'_1}, \dots, {\lambda'_n}), \lambda_k':={\lambda_k}_{|\fh'}$ and $[\lambda]^{d_o}$ denotes $d_o$ copies
of $\lambda$.
\end{corollary}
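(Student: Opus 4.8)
The plan is to chain together the three ingredients that have just been assembled: Lemma \ref{levitosemisimple}, which reduces sections over $\Parbun_L(d)$ to sections over $\Parbun_{L'}$ when $k_L \mid d$; Lemma \ref{iso}, which produces an isomorphism $\tau_{\mu_P}: \Parbun^{[1]}_L(d) \to \Parbun^{[1]}_L(d + \omega_P(\mu_P))$ (and more generally, an isomorphism $\Parbun^{[r]}_L(d) \to \Parbun^{[r+1]}_L(d \pm 1)$ by applying $\tau_{\mu_P}$ at the point $c_{r+1}$); and Lemma \ref{bundleiso}, which tracks how the line bundle $\mathcal{N}$ transforms under $\tau_{\mu_P}$. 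First I would observe that adding a parabolic point with weight $\lambda_{n+1} = 0$ (via the forgetful morphism $\Parbun^{[r]}_L(d) \to \Parbun^{[r-1]}_L(d)$) does not change the space of global sections of the relevant line bundle, as already noted in the text before Lemma \ref{iso}; this lets me freely pass between $\Parbun_L(d)$ and $\Parbun^{[d_o]}_L(d)$ without affecting $H^0$.

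Next I would iterate $\tau_{\mu_P}$. Starting from $\Parbun^{[d_o]}_L(d)$, apply $\tau_{\mu_P}$ (at the point $c_1$) to reach $\Parbun^{[d_o]}_L(d + \omega_P(\mu_P))$, then again (at $c_2$), and so on, a total of $d_o$ times, landing in $\Parbun^{[d_o]}_L(d + d_o\omega_P(\mu_P))$, whose degree is $\equiv 0 \pmod{k_L}$ by the choice of $d_o$. Each application is an isomorphism of stacks by Lemma \ref{iso}, and by Lemma \ref{bundleiso} (applied with the trivial weight at the point being twisted and the appropriate shift $2g^*m\mu_P^*$ introduced), the pullback of $\mathcal{N}((\vec{\lambda}, 0, \dots, 0), m)$ becomes $\mathcal{N}((\vec{\lambda}, 2g^*m(\mu_P^*), 0, \dots, 0), m)$ after one step, and after all $d_o$ steps we pick up $d_o$ copies of the weight $2g^*m\mu_P^*$ at the points $c_1, \dots, c_{d_o}$. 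One needs to check that the hypothesis (a) of Lemma \ref{specialmu} is preserved at each stage so that $\tau_{\mu_P}$ is applicable throughout — but since $\mu_P$ is fixed and the condition is purely on $\mu_P$ (not on the bundle), this is automatic. Then apply Lemma \ref{levitosemisimple} (in its evident extension to the moduli stacks with extra marked points $\Parbun^{[d_o]}_L$ and $\Parbun^{[d_o]}_{L'}$, i.e. $\Parbun_{L'}$ with $n + d_o$ parabolic points) to identify $H^0$ over the degree-$(d + d_o\omega_P(\mu_P))$ stack with $H^0$ over $\Parbun^{[d_o]}_{L'}$, obtaining the line bundle $\mathcal{N}'((\vec{\lambda}', [2g^*m(\mu_P^*)_{|\fh'}]^{d_o}), m)$. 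The restriction of the weight to $\fh'$ appears because the map $\eta^*$ of Lemma \ref{levitosemisimple} restricts weights of $H$ to $\fh' = \fh \cap \fl'$.

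I should also verify that Lemma \ref{levitosemisimple} is actually applicable, i.e. that its scaling hypothesis \eqref{scale} holds for the shifted data. The original data satisfies \eqref{scale} by hypothesis; after the twists, the degree has changed from $d$ to $d + d_o\omega_P(\mu_P)$ and the weight at each new point $c_j$ is $2g^*m\mu_P^*$, whose value on $x_P$ is $2g^*m\langle \mu_P, \kappa(x_P)\rangle$-type quantity; a short computation using \eqref{eq20} and $\omega_P(\mu_P) = \langle \omega_P, \kappa(\mu_P)\rangle \cdot (\text{normalization})$ shows the new contributions on both sides of \eqref{scale} match, so the shifted data again satisfies the condition. (This is the kind of routine bookkeeping I would not grind through in detail but would point to.)

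\textbf{The main obstacle} I anticipate is not any single deep step but rather the careful bookkeeping of weights and the verification that the scaling condition \eqref{scale} is maintained along the chain of $\tau_{\mu_P}$-twists and then survives the passage to $L'$ via $\eta^*$ — getting the factor $2g^*m$ and the restriction to $\fh'$ exactly right, and confirming that $Z^o$ continues to act trivially on the twisted line bundle at each stage so that Lemma \ref{levitosemisimple} genuinely applies at the end. A secondary point requiring care is that Lemmas \ref{levitosemisimple}, \ref{iso}, \ref{bundleiso} are stated for $\Parbun_L(d)$ / $\Parbun^{[1]}_L(d)$, and I must make sure their proofs go through verbatim for the stacks $\Parbun^{[r]}_L(d)$ with several extra marked points — which they do, since the extra points only contribute evaluation-type line bundles $\bar{\mathcal{L}}_k(\lambda_k)$ that behave functorially and play no role in the surjectivity/injectivity arguments.
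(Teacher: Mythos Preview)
Your proposal is correct and follows essentially the same approach as the paper: iterate the forgetful morphism together with the twist $\tau_{\mu_P}$ (using Lemmas \ref{iso} and \ref{bundleiso}) a total of $d_o$ times to reach a degree divisible by $k_L$, then invoke Lemma \ref{levitosemisimple}, checking that the scaling condition \eqref{scale} persists for the shifted data. The only cosmetic difference is that the paper alternates ``add a point, then twist'' step by step (so the $r$-th step goes from $\Parbun_L^{[r-1]}(d+(r-1)\omega_P(\mu_P))$ to $\Parbun_L^{[r]}(d+r\omega_P(\mu_P))$), whereas you first pass to $\Parbun_L^{[d_o]}(d)$ and then twist successively at $c_1,\dots,c_{d_o}$; these are transparently equivalent.
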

\begin{proof} For any $1\leq r\leq d_o$, the forgetful morphism $\Parbun_L^{[r]} (d)\to \Parbun_L^{[r-1]} (d)$ clearly induces an isomorphism
\begin{align*} H^0(\Parbun_{L}^{[r-1]} (d+(r-1)\omega_P(\mu_P)), &\mathcal{N}((\vec{\lambda}, [2g^*m\mu_P^*]^{r-1}), m)) \simeq\\
 &H^0(\Parbun_{L}^{[r]} (d+(r-1)\omega_P(\mu_P)), \mathcal{N}((\vec{\lambda}, [2g^*m\mu_P^*]^{r-1}, 0), m)).
\end{align*}
Further, by Lemma \ref{bundleiso}, the isomorphism $\tau_{\mu_P}: \Parbun_{L}^{[r]} (d+(r-1)\omega_P(\mu_P))\to \Parbun_{L}^{[r]} (d+r\omega_P(\mu_P))$ induces an isomorphism in cohomology:
\begin{align*}H^0(\Parbun_{L}^{[r]} (d+r\omega_P(\mu_P)), &\mathcal{N}((\vec{\lambda}, [2g^*m\mu_P^*]^{r}), m))
\simeq \\
&H^0(\Parbun_{L}^{[r]} (d+(r-1)\omega_P(\mu_P)), \mathcal{N}((\vec{\lambda}, [2g^*m\mu_P^*]^{r-1}, 0), m)).
\end{align*}
Combining the above two isomorphisms over all $1\leq r\leq d_o$,
we get the isomorphism:
$$H^0(\Parbun_{L} (d), \mathcal{N}(\vec{\lambda}, m)) \simeq
H^0(\Parbun_{L}^{[d_o]} (d+ d_o\omega_P(\mu_P)), \mathcal{N}((\vec{\lambda}, [2g^*m\mu_P^*]^{d_o}), m)).
$$
Now, the corollary follows from Lemma \ref{levitosemisimple}. (Observe that, by the identity \eqref{eq20}, the equation \eqref{scale}
is satisfied for $(\vec{\lambda}, [2g^*m\mu_P^*]^{d_o})$ and degree $d+ d_o\omega_P(\mu_P)$.)
\end{proof}

Consider the decomposition $\frh=\frh'\oplus \mathfrak{z}^o$, where $\mathfrak{z}^o$ (Lie algebra of $Z^o$) is the center of $\fl$  and
$\frh'$ is the Cartan subalgebra of $\fl'$.
\begin{lemma} \label{central} For any $\mu\in Q^\vee$ (where $Q^\vee$ is the coroot lattice of $\frg$), write
$$\mu=\mu'+\mu^o, \,\,\mu'\in \frh', \mu^o\in \mathfrak{z}^o.$$
Then, $\Exp(2\pi i \mu')$ lies in the center of $L'$.
\end{lemma}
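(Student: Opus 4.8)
The plan is to show that $\Exp(2\pi i \mu')$ acts trivially in the adjoint representation of $L'$ (equivalently, on $\fl'$); since $L'$ is connected semisimple with trivial center acting nontrivially on $\fl'$ only through nontrivial central elements, this forces $\Exp(2\pi i\mu')$ into the center $Z(L')$. Concretely, I would compute, for any root $\beta \in R_\fl$ of $\fl$, the eigenvalue of $\Ad(\Exp(2\pi i\mu'))$ on the root space $\fg_\beta$, which is $e^{2\pi i \beta(\mu')}$. Because $\mathfrak{z}^o$ is the center of $\fl$, every root $\beta \in R_\fl$ vanishes on $\mathfrak{z}^o$, hence $\beta(\mu) = \beta(\mu')$. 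Therefore the eigenvalue equals $e^{2\pi i\beta(\mu)}$, and since $\mu \in Q^\vee$ lies in the coroot lattice of $\fg$ we have $\beta(\mu) \in \mathbb{Z}$ for every root $\beta$ of $\fg$ (as $\beta(\mu)$ is an integer pairing of a weight-lattice element against a coroot-lattice element). So $e^{2\pi i\beta(\mu)} = 1$ for all $\beta \in R_\fl$.

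Next I would assemble this into the statement that $\Ad(\Exp(2\pi i\mu'))$ is the identity on all of $\fl'$: $\fl'$ decomposes as $\fh' \oplus \bigoplus_{\beta \in R_\fl} \fg_\beta$, and $\Exp(2\pi i\mu')$ acts trivially on $\fh'$ (being a torus element, it acts trivially on the Cartan) and, by the previous paragraph, trivially on each $\fg_\beta$. Hence $\Exp(2\pi i\mu')$ lies in the kernel of the adjoint representation $L' \to \mathrm{GL}(\fl')$. Since $L'$ is connected and semisimple, the kernel of the adjoint representation is exactly the center $Z(L')$ (which is finite, as $L'$ is simply-connected by Subsection~\ref{levisub}). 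Therefore $\Exp(2\pi i\mu') \in Z(L')$, as claimed.

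The only mild subtlety — and what I'd expect to be the point needing the most care — is the claim $\beta(\mu) \in \mathbb{Z}$ for all roots $\beta$ of $\fg$. This is the standard duality between the coroot lattice $Q^\vee$ and the weight lattice: every root $\beta$ of $\fg$ lies in the root lattice, which is contained in the weight lattice, and $\langle \text{weight lattice}, Q^\vee\rangle \subseteq \mathbb{Z}$; equivalently $\beta(\alpha_i^\vee) = \langle \beta, \alpha_i^\vee\rangle$ is a Cartan integer, so $\beta$ takes integer values on all of $Q^\vee$. I would just cite this or spell it out in one line. Everything else is the routine root-space bookkeeping indicated above, together with the already-recorded facts from Subsection~\ref{levisub} that $L'=[L,L]$ is connected and simply-connected (hence has finite center equal to the kernel of $\Ad$) and that $\mathfrak{z}^o$ is central in $\fl$ so all roots of $\fl$ kill it.
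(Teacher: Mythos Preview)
Your argument is correct, but it takes a different route from the paper. The paper's proof is a two-line group-theoretic observation: since $G$ is simply connected and $\mu\in Q^\vee$, one has $\Exp(2\pi i\mu)=1$ in $G$; writing $\Exp(2\pi i\mu)=\Exp(2\pi i\mu')\cdot\Exp(2\pi i\mu^{o})$ and noting $\Exp(2\pi i\mu^{o})\in Z^{o}$, it follows that $\Exp(2\pi i\mu')\in Z^{o}\cap L'$, which is automatically central in $L'$. Your approach instead works infinitesimally via the adjoint action, verifying $e^{2\pi i\beta(\mu')}=1$ on each root space by reducing to $\beta(\mu)\in\mathbb{Z}$. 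The paper's version is slicker and avoids the root-by-root computation (it packages your integrality check into the single fact $\Exp(2\pi i\mu)=1$); your version has the virtue of being entirely Lie-algebraic and not relying on identifying the kernel of the exponential in $H$, but it costs you the extra step of invoking $\ker(\Ad)=Z(L')$ for connected $L'$.
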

\begin{proof} Since $\mu\in Q^\vee$, $\Exp(2\pi i \mu)=1$ in $G$ (and hence in $L$). Moreover,
$$\Exp(2\pi i \mu)=\Exp(2\pi i \mu')\cdot
\Exp(2\pi i \mu^o).$$
 But, $\Exp(2\pi i \mu^o)\in Z^o$ and hence $\Exp(2\pi i \mu')\in Z^o\cap L'$. In particular, $\Exp(2\pi i \mu')$
lies in the center of $L'$.
\end{proof}

\section{Irredundancy of the inequalities in Theorem \ref{main}}
The aim of this section is to prove the following theorem, which is the multiplicative eigen polytope  analogue of Ressayre's result \cite{Re}. The following result for $G=\SL_m$ was proved by Belkale combining the works \cite{BTIFR,BFulton} (see Remark \ref{FultonC}).

\begin{theorem} \label{irredundance} Let $n\geq 2$. The inequalities
$$\mathscr{I}^{P}_{(u_{1},\ldots, u_{n};d)}:\medskip\medskip\medskip\,\,\,\,\,\,\,\,\,\,\sum^{n}_{k=1}
\omega_P(u_k^{-1}\mu_k) \leq d,$$ given by  part (b) of Theorem \ref{main} (as we run through the standard maximal parabolic subgroups $P$, $n$-tuples $(u_1, \dots, u_n)\in (W^P)^n$ and non-negative integers $d$ such that $
\langle \sigma^{P}_{u_{1}},\ldots, \sigma^{P}_{u_{n}} \rangle_{d}^{\qdot_{0}}=1)$  are pairwise distinct (even up to scalar multiples) and
form an irredundant system of inequalities
defining the eigen polytope $\mathscr{C}_{n}$ inside $\mathscr{A}^{n}$, i.e., the hyperplanes given by the equality in
$\mathscr{I}^{P}_{(u_{1},\ldots, u_{n};d)}$ are precisely the (codimension one) facets of the polytope $\mathscr{C}_{n}$ which intersect the interior of $\mathscr{A}^{n}$.
\end{theorem}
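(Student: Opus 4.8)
The plan is to follow Ressayre's strategy \cite{Re}. By Theorem \ref{main} the inequalities $\mathscr{I}^{P}_{(u_1,\dots,u_n;d)}$ with $\langle\sigma^{P}_{u_1},\dots,\sigma^{P}_{u_n}\rangle^{\qdot_0}_d=1$, together with the defining inequalities of $\mathscr{A}^n$, cut out $\mathscr{C}_n$; hence every facet of $\mathscr{C}_n$ is supported by one of these hyperplanes or by an alcove wall, and a facet meeting the interior of $\mathscr{A}^n$ cannot lie on an alcove wall, so its supporting hyperplane is one of the $\{\sum_k\omega_P(u_k^{-1}\mu_k)=d\}$ above. It therefore remains to prove: (i) each $\mathscr{I}^{P}_{(u_1,\dots,u_n;d)}$ with $\langle\sigma^{P}_{u_1},\dots,\sigma^{P}_{u_n}\rangle^{\qdot_0}_d=1$ actually defines a facet of $\mathscr{C}_n$ meeting the interior of $\mathscr{A}^n$; and (ii) these inequalities are pairwise distinct, even up to a nonzero scalar.

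Claim (ii) is a short computation with Weyl orbits. Using $\kappa$ to view the linear part of $\mathscr{I}^{P}_{(u_1,\dots,u_n;d)}$ as $\vec\mu\mapsto\sum_k\langle u_k\omega_P,\mu_k\rangle$ on $\fh^n$, suppose $u_k\omega_P=\rho\,u'_k\omega_{P'}$ for all $k$ and $d=\rho d'$ with $\rho\ne 0$. If $\rho<0$, then since both inequalities hold on $\mathscr{C}_n$ (Theorem \ref{main}), $\mathscr{C}_n$ would lie in the hyperplane $\{\sum_k\langle u_k\omega_P,\mu_k\rangle=d\}$, contradicting full-dimensionality; so $\rho>0$. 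Fixing any $k$, $u_k^{-1}u'_k\omega_{P'}=\rho^{-1}\omega_P$ is a dominant weight in the $W$-orbit of the dominant weight $\omega_{P'}$, hence equals $\omega_{P'}$; as distinct fundamental weights are linearly independent, $\omega_P=\omega_{P'}$, $\rho=1$, $P=P'$, then $u_k^{-1}u'_k\in\mathrm{Stab}_W(\omega_P)=W_P$, so $u'_k=u_k$ (minimal-length representatives), and $d=d'$.

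For (i), fix such a tuple. By Theorem \ref{two} it is quantum Levi-movable, so the section $\theta'$ on $\Parbun_L(d)$ (Subsection \ref{section3.6}) does not vanish identically. Put $\mathcal{F}:=\{\vec\mu\in\mathscr{C}_n:\sum_k\omega_P(u_k^{-1}\mu_k)=d\}$; since $\mathscr{C}_n\subset\fh^n$ is full-dimensional, $\mathcal{F}$ is a face of dimension $\le n\dim\fh-1$, and it suffices to produce $n\dim\fh$ affinely independent points of $\mathcal{F}$, one of them with all $\mu_k$ regular and $\theta(\mu_k)<1$. The points come from the degree-$d$ multiplicative eigencone $\mathscr{C}^L_n(d)$ of the Levi $L$: to a quasi-parabolic $L$-bundle $\tilde{\mathcal{L}}$ of degree $d$ with $\theta'(\tilde{\mathcal{L}})\ne 0$ that is parabolic semistable for the weights $(u_1^{-1}\mu_1,\dots,u_n^{-1}\mu_n)$ (note $u_k^{-1}\mu_k$ lies in the $L$-alcove when $\mu_k$ does, by \eqref{eqn1}), associate the parabolic $G$-bundle $\tilde{\mathcal{E}}$ obtained by extending $\mathcal{L}$ through $L\hookrightarrow P\hookrightarrow G$ and twisting the parabolic structures by the $u_k^{-1}$, so that the weights of $\tilde{\mathcal{E}}$ are $\vec\mu$. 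By the Levification of Subsection \ref{section3.6}, the characterization of canonical reductions (Definition \ref{canonical}, Lemma \ref{reduced}) and Remark \ref{theta}, the $P$-reduction just built is the canonical reduction of $\tilde{\mathcal{E}}$, its parabolic degree is $0$ --- which is precisely the equation $\sum_k\omega_P(u_k^{-1}\mu_k)=d$ --- and, because $\theta'(\tilde{\mathcal{L}})\ne 0$ rigidifies this reduction (here the hypothesis that the multiplicity is exactly $1$ is used to keep the correspondence generically bijective), $\tilde{\mathcal{E}}$ is itself parabolic semistable for $\vec\mu$, so $\vec\mu\in\mathcal{F}$. Letting $\tilde{\mathcal{L}}$ vary and inverting the construction via uniqueness of the canonical reduction, the linear substitution $\mu_k\mapsto u_k^{-1}\mu_k$ identifies a Zariski-dense open subset of $\mathcal{F}$ with one of $\mathscr{C}^L_n(d)$, the defining equation of $\mathcal{F}$ matching the single linear relation (through the central character $\omega_P$ of $L$) that cuts $\mathscr{C}^L_n(d)$ out of $\fh^n$; hence $\dim\mathcal{F}=\dim\mathscr{C}^L_n(d)$.

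It remains to show $\dim\mathscr{C}^L_n(d)=n\dim\fh-1$ (the maximum in that hyperplane) and to exhibit an interior point. For the dimension the Levi-twisting machinery enters: by Lemma \ref{iso} one may slide $\Parbun_L(d)$ to $\Parbun_L(d')$ with $d'\equiv 0\pmod{k_L}$ after adding $d_o$ auxiliary marked points, Lemma \ref{bundleiso} tracks the attached line bundles, and Lemma \ref{levitosemisimple} with Corollary \ref{cor7.6} identifies the relevant spaces of global sections --- and so, via Theorem \ref{evproblem} for the simply-connected semisimple $L'=[L,L]$, the eigencone $\mathscr{C}^L_n(d)$ --- with a slice of the multiplicative eigencone $\mathscr{C}^{L'}_{n+d_o}$ obtained by pinning $d_o$ of the weights to a value prescribed by $\mu_P$ (Lemma \ref{specialmu}); full-dimensionality of the eigencone of a semisimple group (\cite{MW}), combined with the fact that condition (a) of Lemma \ref{specialmu} keeps these pinned weights inside the $L'$-alcove, yields the claim. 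Finally, genericity in the construction above lets us arrange one exhibited point of $\mathcal{F}$ to be in the interior of $\mathscr{A}^n$, so $\mathscr{I}^{P}_{(u_1,\dots,u_n;d)}$ supports a facet meeting the interior; together with the first paragraph this proves the theorem. I expect the main obstacle to be this last paragraph together with the semistability step in the previous one: proving that the $G$-extension of a parabolic semistable $L$-bundle with non-vanishing $\theta'$ is semistable requires controlling \emph{every} reduction of $\tilde{\mathcal{E}}$, not just the canonical $P$-one, and the Levi-twisting bookkeeping must be done carefully enough that the resulting $\mathscr{C}^{L'}$-slice is genuinely full-dimensional (in particular the pinned weights, while in the closed $L$-alcove by Lemma \ref{specialmu}, must give a slice of full dimension) --- these are the ``additional subtleties'' over Ressayre's argument.
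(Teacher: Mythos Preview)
Your argument for (ii) is correct and essentially identical to the paper's (\S8.1).

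For (i), however, there is a genuine gap at exactly the point you flag. You try to land directly in $\mathscr{C}_n$ by asserting that if $\tilde{\mathcal{L}}\in\Parbun_L(d)$ is parabolic semistable (for the $L$-weights $u_k^{-1}\mu_k$) with $\theta'(\tilde{\mathcal{L}})\ne 0$, then the extended quasi-parabolic $G$-bundle $\tilde{\mathcal{E}}$ is parabolic semistable for $\vec\mu$. Your justification conflates two different things: the condition $\theta'(\tilde{\mathcal{L}})\ne 0$ says the \emph{given} $P$-reduction is infinitesimally rigid (Remark \ref{theta}), but semistability of $\tilde{\mathcal{E}}$ requires controlling \emph{every} parabolic reduction to \emph{every} standard $Q$. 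Moreover, your sentence ``the $P$-reduction just built is the canonical reduction of $\tilde{\mathcal{E}}$, its parabolic degree is $0$'' is internally inconsistent: the canonical reduction, by its very characterization (\cite[Proposition 3.1]{BH}, Definition \ref{canonical}), has \emph{strictly positive} parabolic degree in each $\omega_i$, $\alpha_i\in S_P$; a reduction of parabolic degree $0$ is never canonical unless $P=G$. So the step ``hence $\tilde{\mathcal{E}}$ is semistable, so $\vec\mu\in\mathcal{F}$'' is unproved, and there is no reason to expect a direct proof along these lines.

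The paper bypasses this obstacle entirely: it never attempts to show that the extended $G$-bundle is semistable. Instead it works at the level of line bundles via Proposition \ref{propn8.4}. A nonzero section of $\mathcal{N}(\vec\lambda,m)$ on $\Parbun_L(d)$ is transported, through the isomorphism $\pi^o:\mathcal{Z}^o_d\simeq\mathcal{V}$ and Lemma \ref{bundleisomorphism}, to a section of $\mathcal{M}(\vec\lambda_{\vec u},m)^r$ on the \emph{open} substack $\mathcal{V}\subset\Parbun_G$, and then extended to all of $\Parbun_G$ at the price of a pole along an effective divisor $D$ supported in $\Parbun_G\setminus\mathcal{V}$. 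The line bundle $\mathcal{O}(D)$ is itself of the form $\mathcal{M}(\vec\lambda_o,m_o)$ with $m_o\ge 0$ and each $\lambda^o_k$ dominant of level $2g^*m_o$ (Lemma \ref{cohvanishing}), and crucially $i^*\mathcal{O}(D)$ has a nonzero section on $\Parbun_L(d)$, so $(\vec\lambda_o,m_o)$ also satisfies the central relation \eqref{scale}. Translating back via Theorem \ref{evproblem}, one lands not at $\vec\mu_{\vec u}$ itself but at the convex combination
\[
\frac{\vec\mu_{\vec u}+\vec\lambda_o^*}{1+2g^*m_o}
=\frac{1}{1+2g^*m_o}\,\vec\mu_{\vec u}+\frac{2g^*m_o}{1+2g^*m_o}\,\frac{\vec\lambda_o^*}{2g^*m_o}\in\mathscr{C}_n,
\]
and both $\vec\mu_{\vec u}$ and $\vec\lambda_o^*/(2g^*m_o)$ lie on the hyperplane $H$ (the first by \eqref{eqn8.7}, the second by \eqref{scale} for $(\vec\lambda_o,m_o)$), so this point lies in $H\cap\mathscr{C}_n$. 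Because Proposition \ref{propn8.4} allows a \emph{single} $(\vec\lambda_o,m_o)$ to work for any finite collection $F\subset S$, the affine map $\vec\mu_{\vec u}\mapsto(\vec\mu_{\vec u}+\vec\lambda_o^*)/(1+2g^*m_o)$ is injective and the convex span of the images still has dimension $n\dim\fh-1$. This divisor/convex-combination trick is the missing idea in your argument; without it (or some substitute), the semistability step does not go through.

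Your treatment of the Levi-twisting step and the reduction to $\mathscr{C}^{L'}_{n+d_o}$ is essentially correct in spirit; the paper packages it slightly differently via the \emph{twisted} eigen polytope $\mathscr{C}_n(L')_z$ with $z=(\Exp 2\pi i\mu_P')^{-d_o}$ (Lemma \ref{central}), but the content is the same.
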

We divide the proof into several parts.
\subsection{}
First of all, the inequalities $\mathscr{I}^{P}_{(u_{1},\ldots, u_{n};d)}$ are pairwise distinct, even up to scalar multiples:

The stabilizer of $\omega_P$ under the action of $W$  is
precisely equal to the subgroup $W_{P}$.  Let $u_k\omega_P=zv_k\omega_P$, for all $1\leq k\leq n$ and some real number $z$
(independent of $k$) and elements $u_k,v_k\in W^P$. By considering the lengths, we see that  $z=\pm 1$. Further, $z\neq -1$, for otherwise $\mathscr{C}_{n}$ would
 satisfy two inequalities with opposite signs contradicting the fact that $\mathscr{C}_{n}$ has non-empty interior in $\frh^n$. Thus, $z=1$ and each
$u_k=v_k$.

 Now, take two standard maximal parabolic subgroups $P\neq Q$ and assume that
$u_k\omega_P=zv_k\omega_Q$, for all $1\leq k\leq n$ and $ d=zd'$, for  some real number $z$
(independent of $k$) and elements $u_k\in W^P, v_k\in W^Q$ and non-negative integers $d,d'$. For $z< 0$, again
$\mathscr{C}_{n}$ would
 satisfy two inequalities with opposite signs (which is not possible). So $z>0$. Now, the only dominant element in the $W$-orbit of a dominant weight
$\lambda$ is $\lambda$ itself. Hence, we get $\omega_P=z\omega_Q$, which is not possible since $P\neq Q$.

Finally, since none of $u_k\omega_P=0$ and $n\geq 2$, we get that the facet determined by any $\mathscr{I}^{P}_{(u_{1},\ldots, u_{n};d)}$
can not be a facet of the alcove $\mathscr{A}^{n}$.
\subsection{}
Now, we show that none of the inequalities $\mathscr{I}^{P}_{(u_{1},\ldots, u_{n};d)}$ can be dropped. In the rest of the proof, we fix a  standard maximal parabolic
subgroup $P$ of $G$,  $u_{1},\dots,u_{n}\in W^{P}$, and $d\geq
0$ such that
\begin{equation}\label{eqn8.1'}
\langle \sigma^{P}_{u_{1}},\ldots, \sigma^{P}_{u_{n}} \rangle_{d}^{\qdot_{0}}=1.
\end{equation}
We wish to show that the inequality
$\mathscr{I}^{P}_{(u_{1},\ldots, u_{n};d)}$
can not be dropped.
We will produce a collection of points  of
$\mathscr{C}_{n}$ for which  the above inequality is  an equality, and such that their convex span has the dimension of
a facet (i.e., $-1+n\dim \frh$). Before we come to its proof, we need some preparatory material.

As in Subsection \ref{space}, for any  quasi-parabolic principal $G$-bundle $\tilde{\mathcal{E}}=(\mathcal{E};\bar{g}_1,\dots,\bar{g}_n)$ over $\pone$,
 define the subscheme
$$Z_d'(\tilde{\mathcal{E}})=\{f\in Z_d(\mathcal{E}): f(b_k) \,\,\,\text{and}\,\,\bar{g}_k\,\,\,\text{are in relative position}\,\,\,u_k \,\,\forall
1\leq k\leq n\},$$ and its open subscheme:
$$Z_d^o(\tilde{\mathcal{E}})=\{f\in Z'_d(\mathcal{E}): \theta(\tilde{\mathcal{P}}(f))\neq 0\},$$
where
$ Z_d(\mathcal{E})$ is the space of sections of  $\mathcal{E}/P$ of degree $d$, $\tilde{\mathcal{P}}(f)$ is the $P$-subbundle of $\mathcal{E}$ associated to $f$ with its parabolic structures $(\bar{p}_1, \dots, \bar{p}_n)$ as described in Subsection \ref{space} and $\theta\in H^0(\Parbun_P(d), \mathcal{R})$ is the section defined in Subsection \ref{trans}.
This gives rise to a stack
$$\pi':\mathcal{Z}'_d \to \Parbun_G \,\,\,\text{with fiber}\,\,  Z'_d(\tilde{\mathcal{E}})\,\,\text{over}\,\,\tilde{\mathcal{E}} ,$$
and an open substack
$$\pi^o:\mathcal{Z}^o_d \to \Parbun_G \,\,\,\text{with fiber}\,\,  Z^o_d(\tilde{\mathcal{E}})\,\,\text{over}\,\,\tilde{\mathcal{E}}.$$
The image of $\pi^o$ is an open substack $\mathcal{V}$ of $\Parbun_G$. Moreover, due to the equation \eqref{eqn8.1'},
$\pi^o:\mathcal{Z}^o_d \to \mathcal{V}$ is an isomorphism of stacks (following the argument in the proof of Lemma \ref{wash}).

Define a morphism of stacks $i: \Parbun_L(d) \to \Parbun_G$,
taking $\tilde{\mathcal{L}}=(\mathcal{L};{l}_1B_L,\dots,{l}_nB_L) \mapsto (\mathcal{L}\times^LG;{l}_1u_1^{-1}B,\dots, {l}_nu_n^{-1}B)$.  Also, the Levification process Gr $\mathcal{P}$ via the one parameter subgroup $t^{\bar{x}_P}$,  as in Subsection \ref{section3.6},  gives a morphism of stacks
$\xi: \mathcal{Z}'_d\to \Parbun_L(d)$. Similarly, consider the  morphism of stacks $j:\Parbun_L(d) \to \mathcal{Z}'_d$, 
$\tilde{\mathcal{L}}=(\mathcal{L};{l}_1B_L,\dots,{l}_nB_L) \mapsto (\mathcal{L}\times^LP;\bar{l}_1,\dots, \bar{l}_n)$. Then, clearly
$$i=\pi'\circ j.$$
These maps are organized in the following diagram:
\begin{equation}
\xymatrix{
\mathcal{Z}^o_d\ar[d]^{\pi^o}\subseteq &\mathcal{Z}'_d \ar[dr]^\xi\ar[d]^{\pi'}  \\
 \mathcal{V}\subseteq & \Parbun_G &   \Parbun_L(d)\ar@/^/[ul]^{j}\ar[l]^i}
\end{equation}

From the definition, it is easy to see that $i^*(\mathcal{M}(\vec{\lambda}_{\vec{u}},m))\simeq \mathcal{N}(\vec{\lambda},m),$ where
$\vec{\lambda}_{\vec{u}}:=(u_1\lambda_1, \dots, u_n\lambda_n).$
\begin{lemma} \label{bundleisomorphism} For any $(\vec{\lambda}, m)$ such that the identity component $Z^o$ of  the center of $L$ acts trivially on $\mathcal{N}(\vec{\lambda},m)$, the line bundles $\xi^*( \mathcal{N}(\vec{\lambda},m))=\xi^*i^*(\mathcal{M}(\vec{\lambda}_{\vec{u}},m))$ and $(\pi')^*( \mathcal{M}(\vec{\lambda}_{\vec{u}},m))$ are isomorphic over the stack $\mathcal{Z}'_d $.

In particular, the lemma applies to any $(\vec{\lambda}, m) \in S$.
\end{lemma}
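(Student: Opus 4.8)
The plan is to realise $\xi^{*}\mathcal{N}(\vec\lambda,m)$ and $(\pi')^{*}\mathcal{M}(\vec\lambda_{\vec u},m)$ as the two end fibres ($t=0$ and $t=1$) of a single $\mathbb{G}_{m}$-equivariant line bundle on $\mathcal{Z}'_{d}\times\mathbb{A}^{1}$, and then to use the hypothesis that $Z^{o}$ acts trivially on $\mathcal{N}(\vec\lambda,m)$ to show that the $\mathbb{G}_{m}$-weight of this bundle along $t=0$ vanishes, which forces the two fibres to be isomorphic.

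First I would build the relevant $\mathbb{G}_{m}$-action. The conjugations $\phi_{t}\colon P\to P$, $p\mapsto t^{\bar x_{P}}p\,t^{-\bar x_{P}}$ of \S\ref{section3.6} (taken with $x=\bar x_{P}$) satisfy $\phi_{s}\circ\phi_{t}=\phi_{st}$, so the Levification $f\mapsto f_{t}$ (replace the $P$-subbundle $\mathcal{P}(f)\subseteq\mathcal{E}$ by $\mathcal{P}(f)\times^{P,\phi_{t}}P$ and transport the parabolic data by $\bar p_{k}\mapsto {\phi_{t}}_{*}\bar p_{k}$) defines a genuine $\mathbb{G}_{m}$-action on $\mathcal{Z}'_{d}$; one checks, using that $u_{k}\in W^{P}$, that $f_{t}$ again lies in $Z'_{d}$ for the resulting quasi-parabolic $G$-bundle. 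Since $t^{\bar x_{P}}$ is central in $L$, re-Levifying an already Levified $P$-bundle changes nothing, so $\xi$ is constant on $\mathbb{G}_{m}$-orbits, and the $t\to 0$ limit of the action is exactly $j\circ\xi$. As the family $\mathcal{P}(f)\times^{P,\phi_{t}}P$ is algebraic over all of $\mathbb{A}^{1}$, this packages into a morphism of stacks
\[ a\colon \mathcal{Z}'_{d}\times\mathbb{A}^{1}\longrightarrow \mathcal{Z}'_{d},\qquad (f,t)\mapsto f_{t}\ \ (t\neq 0),\quad (f,0)\mapsto j(\xi(f)), \]
and I set $\tilde\Phi:=\pi'\circ a\colon \mathcal{Z}'_{d}\times\mathbb{A}^{1}\to\Parbun_{G}$. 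A short computation with $\phi_{s}\circ\phi_{t}=\phi_{st}$ shows that $\tilde\Phi$ is invariant under the $\mathbb{G}_{m}$-action $s\cdot(f,t)=(f_{s},s^{-1}t)$ on $\mathcal{Z}'_{d}\times\mathbb{A}^{1}$.

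Next I would pass to line bundles. Put $\mathcal{G}:=\tilde\Phi^{*}\mathcal{M}(\vec\lambda_{\vec u},m)$; by the $\mathbb{G}_{m}$-invariance of $\tilde\Phi$ it is $\mathbb{G}_{m}$-equivariant for the above action. Since $\tilde\Phi(\,\cdot\,,1)=\pi'$ we get $\mathcal{G}|_{\mathcal{Z}'_{d}\times\{1\}}=(\pi')^{*}\mathcal{M}(\vec\lambda_{\vec u},m)$, while $\tilde\Phi(\,\cdot\,,0)=i\circ\xi$ together with the already noted isomorphism $i^{*}\mathcal{M}(\vec\lambda_{\vec u},m)\simeq\mathcal{N}(\vec\lambda,m)$ gives $\mathcal{G}|_{\mathcal{Z}'_{d}\times\{0\}}=\xi^{*}\mathcal{N}(\vec\lambda,m)$. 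Over the free locus $\mathcal{Z}'_{d}\times\mathbb{G}_{m}$, the change of variables $(f,t)\mapsto(f_{t},t)$ turns the action into scaling of the second factor only, with quotient $\mathcal{Z}'_{d}$; hence over this locus $\mathcal{G}\cong\pr_{1}^{*}(\pi')^{*}\mathcal{M}(\vec\lambda_{\vec u},m)$ up to twist by a single character of $\mathbb{G}_{m}$, namely the weight $\mu$ by which $t^{\bar x_{P}}$ acts on the fibre of $\mathcal{G}$ along $t=0$.

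The one substantive computation is the value of $\mu$. Along $t=0$, $\mathcal{G}$ is the pullback to a Levified quasi-parabolic bundle of $\mathcal{M}(\vec\lambda_{\vec u},m)=\bD^{m}\otimes(\otimes_{k}\mathcal{L}_{k}(u_{k}\lambda_{k}))$, so $\mu$ is computed exactly as in the proof of `(a)$\Rightarrow$(b)' of Theorem \ref{evproblem} (equations \eqref{eq23}--\eqref{eq23'}), now with parabolic weights $u_{k}\lambda_{k}$ and relative positions $u_{k}$; it equals $2g^{*}md\,\langle\bar x_{P},\alpha_{P}^{\vee}\rangle-\sum_{k=1}^{n}\lambda_{k}(\bar x_{P})$, which by \eqref{eq20} equals $N_{P}\bigl(\tfrac{4g^{*}dm}{\langle\alpha_{P},\alpha_{P}\rangle}-\sum_{k=1}^{n}\lambda_{k}(x_{P})\bigr)$, and this vanishes precisely when \eqref{scale} holds, i.e. precisely when $Z^{o}$ acts trivially on $\mathcal{N}(\vec\lambda,m)$. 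Under that hypothesis the character is trivial, so the canonical $\mathbb{G}_{m}$-invariant trivialisation over $\mathcal{Z}'_{d}\times\mathbb{G}_{m}$ of the line bundle $\mathcal{G}^{-1}\otimes\pr_{1}^{*}(\pi')^{*}\mathcal{M}(\vec\lambda_{\vec u},m)$ restricts on each fibre $\{f\}\times\mathbb{A}^{1}$ to an invariant section of a weight-$0$ bundle on $\mathbb{A}^{1}$, hence extends over $t=0$ without zeros; this is the relative form of Lemma \ref{oldnew}(b). Restricting the resulting isomorphism to $t=0$ gives $\xi^{*}\mathcal{N}(\vec\lambda,m)\simeq(\pi')^{*}\mathcal{M}(\vec\lambda_{\vec u},m)$. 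The hard part is not this weight identity, which merely reprises earlier computations, but the foundational bookkeeping: checking that $a$ and $\tilde\Phi$ are genuine morphisms of stacks (in particular following the parabolic structures $\bar p_{k}(t)$ through the Levification) and phrasing the `extension across $t=0$' step functorially, since $\mathcal{Z}'_{d}$ is only a stack and one cannot directly invoke homotopy invariance of the Picard group.
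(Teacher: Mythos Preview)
Your proposal is correct and follows essentially the same approach as the paper: use the Levification family over $\mathbb{A}^{1}$ to interpolate between the two line bundles, observe that the $\mathbb{G}_{m}$-weight at $t=0$ vanishes because $t^{\bar x_{P}}\subset Z^{o}$ acts trivially on $\mathcal{N}(\vec\lambda,m)$, and conclude that the fibres at $t=0$ and $t=1$ are canonically identified. The paper's proof is a two-line sketch that packages the last step as a separate lemma (Lemma~\ref{cleff}, stated immediately afterwards), whereas you unpack the argument in full and invoke Lemma~\ref{oldnew}(b) instead; note that Lemma~\ref{cleff} is really the statement you want (identification of fibres of a weight-zero equivariant line bundle on $\mathbb{A}^{1}$), not Lemma~\ref{oldnew}(b) (which concerns sections), though the underlying idea is the same. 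Also, you need not recompute $\mu$ via \eqref{eq23}--\eqref{eq23'}: the $t=0$ fibre is already $\xi^{*}\mathcal{N}(\vec\lambda,m)$, so the weight is precisely the $t^{\bar x_{P}}$-action on $\mathcal{N}(\vec\lambda,m)$, trivial by hypothesis.
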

\begin{proof} Since  $t^{\bar{x}_P}\subset Z^o$  acts trivially on $\mathcal{N}(\vec{\lambda},m)$, the lemma follows from the definition of the Levification map $\xi$ and the following simple lemma.
\end{proof}
Let $\mar$ be a  $\gmm$-equivariant line bundle  on $\Bbb{A}^1$ such that the action of $\gmm$ on $\mar_0$ is trivial. Then, using the
$\gmm$-action and taking limit as $t\to 0$, we get the following:
\begin{lemma}\label{cleff}
There is a canonical identification $\mar_t\leto{\sim}\mar_0$,  for any $t\in \Bbb{A}^1$.
\end{lemma}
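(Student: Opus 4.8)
The plan is to make the $\gmm$-equivariant line bundle $\mar$ on $\mathbb{A}^1$ completely explicit and read the claim off. First I would pick a trivializing section $s$ of $\mar$ over all of $\mathbb{A}^1$, which exists since $\operatorname{Pic}(\mathbb{A}^1)=0$. Identifying the total space of $\mar$ with $\mathbb{A}^1\times\mathbb{A}^1$ (base times fibre) via $s$, the given lift to $\mar$ of the multiplication action of $\gmm$ on the base must take the form $\lambda\cdot(z,v)=(\lambda z,\phi(\lambda,z)\,v)$ for a nowhere-vanishing regular function $\phi$ on $\gmm\times\mathbb{A}^1$, and associativity of the action is equivalent to the cocycle identity $\phi(\mu\lambda,z)=\phi(\mu,\lambda z)\,\phi(\lambda,z)$.

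Next I would pin $\phi$ down. Writing $\phi=\sum_{j=0}^{n}g_j(\lambda)z^j$ with $g_n\ne 0$ and $\phi^{-1}=\sum_{j=0}^{m}h_j(\lambda)z^j$ with $h_m\ne 0$, the top-degree term of $\phi\phi^{-1}=1$ is $g_nh_mz^{n+m}$ with $g_nh_m\ne 0$ in the domain $\mathbb{C}[\lambda,\lambda^{-1}]$, forcing $n=m=0$; hence $\phi(\lambda,z)=c\lambda^{a}$ for some $c\in\mathbb{C}^{\times}$ and $a\in\mathbb{Z}$. The cocycle identity then gives $c=c^{2}$, so $c=1$ and $\phi(\lambda,z)=\lambda^{a}$. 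Now $\gmm$ acts on $\mar_0$ by $\lambda\cdot(0,v)=(0,\lambda^{a}v)$, so the hypothesis that this action be trivial forces $a=0$, and in the trivialization $s$ the action is simply $\lambda\cdot(z,v)=(\lambda z,v)$.

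Finally, for $v=(t,\alpha)\in\mar_t$ the orbit map $\lambda\mapsto\lambda\cdot v=(\lambda t,\alpha)$ extends regularly over $\lambda=0$ with value $(0,\alpha)\in\mar_0$, and I would define the identification $\mar_t\leto{\sim}\mar_0$ by $v\mapsto\lim_{\lambda\to 0}\lambda\cdot v=(0,\alpha)$, manifestly an isomorphism of one-dimensional spaces. The only point requiring a word is that this is independent of the choice of $s$: any other trivializing section is $c\,s$ for a constant $c\in\mathbb{C}^{\times}$ (units of $\mathbb{C}[z]$ are constants), and the induced identification of fibres is unchanged. Equivalently, the above computation shows $\operatorname{Pic}^{\gmm}(\mathbb{A}^1)\cong\mathbb{Z}$ via the weight at $0$, so vanishing of that weight means $\mar$ is $\gmm$-equivariantly trivial and the identification is just evaluation of the equivariant trivializing section. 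There is no real obstacle here — the content is entirely the description of the units of $\mathbb{C}[\lambda,\lambda^{-1},z]$ together with the cocycle constraint.
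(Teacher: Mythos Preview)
Your proof is correct and follows the same idea the paper indicates in the sentence preceding the lemma: use the $\gmm$-action to flow $v\in\mar_t$ and take the limit as the parameter goes to $0$. You have simply made explicit what the paper leaves as a one-line remark, by trivializing $\mar$, identifying the cocycle as $\lambda^{a}$, and using the weight-zero hypothesis to see that the orbit map extends over $0$; the resulting map $v\mapsto\lim_{\lambda\to 0}\lambda\cdot v$ is intrinsically defined (it uses only the action on the total space), so your independence-of-trivialization check, while correct, is not strictly needed.
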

Define the subset
\begin{equation} \label{eqn8.1}
S=\{(\vec{\lambda}, m)=(\lambda_1, \dots, \lambda_n;m)\in X(H)^n\times \mathbb{Z}_+: H^0(\Parbun_{L}(d),\mathcal{N}(\vec{\lambda},m)^r)\neq 0,\,\text{ for some}\,\, r>0\}.
\end{equation}
By the equation \eqref{scale}, for any $(\vec{\lambda}, m)\in S$, $\sum_{k=1}^n \lambda_k(x_P)=4g^*dm/\langle \alpha_P, \alpha_P\rangle.$

Define a map $S\to \frh^n$ by
$(\vec{\lambda},m)\mapsto\frac{1}{2g^* m}\vec{\lambda}$.
We will show (in Section \ref{lasts}) that the convex span of the image of this map has dimension $-1+n\dim \frh$.

\begin{proposition} \label{propn8.4} For any $(\vec{\lambda}, m)\in  S$, there exists a divisor $D_{(\vec{\lambda},m)}\subset \Parbun_G$ contained in the complement of
$\mathcal{V}$ such that
$\mathcal{M}(\vec{\lambda}_{\vec{u}},m)^r(D_{(\vec{\lambda}, m)})$ has a nonzero section over $\Parbun_G$ for some $r>0$.

Moreover, the line bundle $\mathcal{O}(D_{(\vec{\lambda},m)})$ over $\Parbun_G$  is of the form  $\mathcal{M}(\vec{\lambda}_o,m_o)$ for
some $m_o\geq 0$ and some  characters
$\vec{\lambda}_o=(\lambda^o_1, \dots, \lambda^o_n)$ with each $\lambda^o_k$ being a $G$-dominant character of level $m_o$.
Further, $i^*(\mathcal{O}(D_{(\vec{\lambda},m)}))$ admits a nonzero section over $\Parbun_L(d)$.

For a finite subcollection $F$ of $(\vec{\lambda}, m)\in  S$, we can choose a line bundle $\mathcal{O}(D_F)$ as above, which works for all $\vec{\lambda}\in  F$.
\end{proposition}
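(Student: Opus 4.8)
The plan is to push a section up from $\Parbun_L(d)$ to $\Parbun_G$ through the Levification diagram and then read off its pole divisor. Fix $(\vec{\lambda},m)\in S$ and a nonzero $\phi'\in H^0(\Parbun_L(d),\mathcal{N}(\vec{\lambda},m)^r)$ for some $r>0$. Since $(\vec{\lambda},m)\in S$, the scale relation \eqref{scale} holds, so $Z^o$ acts trivially on $\mathcal{N}(\vec{\lambda},m)$ and Lemma \ref{bundleisomorphism} applies, giving $\xi^*(\mathcal{N}(\vec{\lambda},m)^r)\cong(\pi')^*(\mathcal{M}(\vec{\lambda}_{\vec{u}},m)^r)$ over $\mathcal{Z}'_d$. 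Thus $\xi^*\phi'$ is a nonzero section of $(\pi')^*(\mathcal{M}(\vec{\lambda}_{\vec{u}},m)^r)$; restricting it to $\mathcal{Z}^o_d$ and transporting along the isomorphism $\pi^o\colon\mathcal{Z}^o_d\to\mathcal{V}$ (which is an isomorphism by \eqref{eqn8.1'}, as recalled above) I obtain a section $\psi\in H^0(\mathcal{V},\mathcal{M}(\vec{\lambda}_{\vec{u}},m)^r)$. To see that $\psi\ne 0$, note $\xi\circ j=\mathrm{id}_{\Parbun_L(d)}$ (Levification of an $L$-bundle induced to $P$ returns the $L$-bundle), so for $\tilde{\mathcal{L}}\in\Parbun_L(d)$ with $\theta'(\tilde{\mathcal{L}})\ne 0$ one has $i(\tilde{\mathcal{L}})\in\mathcal{V}$ (as in the proof of Lemma \ref{wash}, $(c)\Rightarrow(a)$, since $\theta'=\phi^*\theta$) with $(\pi^o)^{-1}(i(\tilde{\mathcal{L}}))=j(\tilde{\mathcal{L}})$, whence $\psi(i(\tilde{\mathcal{L}}))=\phi'(\tilde{\mathcal{L}})$; since $\Parbun_L(d)$ is irreducible and $\{\theta'\ne 0\}$ is a nonempty open substack — this is quantum Levi-movability, which holds by \eqref{eqn8.1'} and Theorem \ref{two} — there is such $\tilde{\mathcal{L}}$ with $\phi'(\tilde{\mathcal{L}})\ne 0$, so $\psi\ne 0$.

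Next I would extend $\psi$: since $\Parbun_G$ is an irreducible smooth stack and $\mathcal{V}$ a nonempty open substack, $\psi$ extends to a rational section $\tilde{\psi}$ of $\mathcal{M}(\vec{\lambda}_{\vec{u}},m)^r$ on $\Parbun_G$; let $D=D_{(\vec{\lambda},m)}$ be its divisor of poles. As $\tilde{\psi}$ is regular on $\mathcal{V}$, $D$ is an effective Cartier divisor contained in the complement $\Parbun_G\setminus\mathcal{V}$, and by construction $\tilde{\psi}$ is a nonzero section of $\mathcal{M}(\vec{\lambda}_{\vec{u}},m)^r(D)$. To pin down $\mathcal{O}(D)$ I would use the affine-flag presentation \eqref{affinestack} of $\Parbun_G$: every line bundle on $\Parbun_G$ is of the form $\mathcal{M}(\vec{\lambda}_o,m_o)$ for some $m_o\in\bz$ and characters $\vec{\lambda}_o=(\lambda^o_1,\dots,\lambda^o_n)$ of $H$; since $\mathcal{O}(D)$ carries the nonzero section cutting out the effective $D$, Lemma \ref{cohvanishing} forces $m_o\ge 0$ and each $\lambda^o_k$ dominant of level $2g^*m_o$.

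For the reduction statement, the only remaining point is that $i(\Parbun_L(d))$ is not contained in the support of $D$ — but this is immediate from $i(\tilde{\mathcal{L}})\in\mathcal{V}$ (shown above) together with $D\subset\Parbun_G\setminus\mathcal{V}$. Hence the scheme-theoretic preimage $i^{-1}(D)$ is an effective Cartier divisor on $\Parbun_L(d)$ and $i^*(\mathcal{O}(D))=\mathcal{O}(i^{-1}(D))$ carries its tautological nonzero section. For a finite $F\subset S$ I would set $D_F=\sum_{(\vec{\lambda},m)\in F}D_{(\vec{\lambda},m)}$; then $D_F\subset\Parbun_G\setminus\mathcal{V}$, $\mathcal{O}(D_F)$ is again of the asserted form, $i^*\mathcal{O}(D_F)$ has a nonzero section (a tensor of the ones above), and for each $(\vec{\lambda},m)\in F$ the line bundle $\mathcal{M}(\vec{\lambda}_{\vec{u}},m)^r(D_F)=\mathcal{M}(\vec{\lambda}_{\vec{u}},m)^r(D_{(\vec{\lambda},m)})\otimes\mathcal{O}(D_F-D_{(\vec{\lambda},m)})$ has a nonzero section, the second factor being effective. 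I expect the main work to be the two stack-theoretic inputs: the Picard-group description of $\Parbun_G$ and the careful level/dominance bookkeeping via Lemma \ref{cohvanishing}, together with making the ``extend $\psi$ rationally and take its pole divisor'' step rigorous on the stack $\Parbun_G$.
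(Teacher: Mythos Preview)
Your argument is essentially the same as the paper's: pull the section on $\Parbun_L(d)$ back along $\xi$, descend through the isomorphism $\pi^o\colon\mathcal{Z}^o_d\to\mathcal{V}$, then extend across $\Parbun_G$ with a pole divisor supported in the complement of $\mathcal{V}$, and finally identify $\mathcal{O}(D)$ using the Picard group of $\Parbun_G$ together with Lemma~\ref{cohvanishing}. Your treatment of the nonvanishing of $\psi$ (via $\xi\circ j=\mathrm{id}$ and quantum Levi-movability) and of the finite-$F$ case (taking $D_F=\sum D_{(\vec\lambda,m)}$) is more explicit than the paper, which leaves these points to the reader.

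One small discrepancy: you assert that \emph{every} line bundle on $\Parbun_G$ is of the form $\mathcal{M}(\vec{\lambda}_o,m_o)$, whereas the paper, citing \cite[Proposition~8.7]{LS}, only claims that some \emph{power} of $\mathcal{O}(D)$ has this form and then implicitly absorbs this power into $r$. This is harmless for the application, but you should either justify the stronger claim or follow the paper and allow passage to a power.
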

\begin{proof} By assumption, $\mathcal{N}(\vec{\lambda},m)^r$ has a nonzero section over $\Parbun_L(d)$. Since $(u_1, \dots, u_n;d)$ is quantum Levi-movable, $j^{-1}(\mathcal{Z}^o_d)$ is a non-empty open subset of $\Parbun_L(d)$ and hence $\xi^*(\mathcal{N}(\vec{\lambda},m)^r)$ has  a nonzero section over $\mathcal{Z}^o_d$. Thus, by Lemma \ref{bundleisomorphism} and the isomorphism $\mathcal{Z}^o_d\simeq \mathcal{V}$, we get that
$\mathcal{M}(\vec{\lambda}_{\vec{u}},m)^r$ has a nonzero section $\sigma$ over $\mathcal{V}$.  Since $\Parbun_G$ is a smooth Artin stack (see \cite{W} and the references therein), there is a divisor $D_{(\vec{\lambda},m)} \subset \Parbun_G\setminus \mathcal{V}$   such that
$\sigma$ extends as a nonzero section of $\mathcal{M}(\vec{\lambda}_{\vec{u}},m)^r(D_{(\vec{\lambda},m)})$ over the whole of $\Parbun_G$ (use a smooth atlas of $\Parbun_G$).

To prove the second part, by \cite[Proposition 8.7]{LS}, some suitable power of $\mathcal{O}(D_{(\vec{\lambda},m)})$ is of the form $\mathcal{M}(\vec{\lambda}_o, m_o)$ for suitable characters  $\vec{\lambda}_o=
(\lambda^o_1, \dots, \lambda^o_n)$ and integer $m_o$. Since $\mathcal{O}(D_{(\vec{\lambda},m)})$ has a nonzero global section,  we get that $m_o\geq 0$ and moreover each $\lambda^o_k$ is dominant of level $2g^*m_o$ (cf. Lemma \ref{cohvanishing}). Further, since the canonical section of
$\mathcal{O}(D_{(\vec{\lambda},m)})$ does not vanish on $\Parbun_G\setminus D_{(\vec{\lambda},m)} \supset \mathcal{V}$, and
$i^{-1} (\mathcal{V})$ is non-empty, we get that $i^*(\mathcal{O}(D_{(\vec{\lambda},m)}))$ admits a nonzero section over $\Parbun_L(d)$.
This proves the proposition.
\end{proof}
\subsection{}\label{lasts}

We are now ready to complete the proof of Theorem \ref{irredundance}.

\noindent
{\it Proof of Theorem \ref{irredundance} (continued):} Let  $\mathscr{C}_{n}(L')$ denote the eigen polytope of $L'$ (which is simply-connected, semisimple group). For any central element $z$ of $L'$, consider the twisted  eigen polytope
$$
\mathscr{C}_{n}(L')_z := \left\{\vec{\mu'}=(\mu'_{1},\ldots,\mu'_{n})\in \mathscr{A}^{n}_{L'}:
z\in C(\mu'_{1})\dots C(\mu'_{n})\right\},
$$
where $
\mathscr{A}_{L'}\subset \fh'$ is the fundamental alcove of $L'$ (which is the product of the fundamental alcoves of  the simple components of $L'$) and
 $C(\mu'_{k})$ denotes the conjugacy class of $\Exp(2\pi i\mu'_k)$ under a maximal compact subgroup of $L'$.

Since $z$ is central, it is easy to see that (just as $\mathscr{C}_{n}(L')$) $\mathscr{C}_{n}(L')_z$ is a rational convex
polytope with nonempty interior in $\mathfrak{h'}^n$.

Let $\mu_P'$ be the $\frh'$-component of $\mu_P\in \frh=\frh'\oplus \mathfrak{z}^o$, where $\mu_P\in Q^\vee$ is any element satisfying Lemma
\ref{specialmu}. Then, by Lemma \ref{central}, $\Exp(2\pi i\mu'_P)$ is central in $L'$. Thus, taking $z=(\Exp(2\pi i\mu'_P))^{-d_o}$, for any
$\vec{\mu'}\in \mathscr{C}_{n}(L')_z$, we get $(\vec{\mu'}, [\mu'_P]^{d_o})\in \mathscr{C}_{n+d_o}(L')$. (Observe that $\mu'_P\in \mathscr{A}_{L'}.$)

Thus, applying Theorem \ref{evproblem} and Remark \ref{remark5.3} for simple $G$ replaced by semisimple $L'$ and $n$ replaced by $n+d_o$,
we get that for any rational point $\vec{\mu'}\in \mathscr{C}_{n}(L')_z$, there exists a large enough positive integer $r$ divisible by $2g^*$ such that
\begin{equation}\label{eqn8.6}
   H^0(\Parbun_{L'}^{[d_o]},\mathcal{N}'((r(\vec{\mu'})^*, [r(\mu'_P)^*)]^{d_o}), r/2g^*)) \neq 0,
 \end{equation}
where $(\vec{\mu'})^*:=(\kappa^{-1}(\mu'_n), \dots, \kappa^{-1}(\mu'_n)).$
Thus, by Corollary \ref{cor7.6}, we see (possibly by taking a multiple of $r$) that
$$ H^0(\Parbun_L(d),  \mathcal{N}(r(\vec{\mu})^*, r/2g^*))  \neq 0,$$
for any rational $\vec{\mu}=(\mu_1, \dots, \mu_n)\in \frh^n$ satisfying the following two conditions:

(a) The  $\frh'$-component of $\mu_k$ coincides with $\mu'_k$, for all $1\leq k\leq n$, and

(b)  $r (\vec{\mu})^*$ satisfies the condition \eqref{scale} for $m=r/2g^*$, i.e.,
\begin{equation} \label{eqn8.7} \sum_{k=1}^n \langle \mu_k , x_P \rangle =2d/\langle \alpha_P, \alpha_P\rangle.
\end{equation}
Thus, for any rational point $\vec{\mu'}\in \mathscr{C}_{n}(L')_z$, we get that there exists a large enough positive integer $r$ such that
$(r (\vec{\mu})^*, r/2g^*)\in S$, for any rational   $\vec{\mu}$ satisfying the above conditions (a) and (b). Take a finite collection $\mathcal{F}$ of such
$\vec{\mu}$  such that their convex span is of dimension $-1+n\dim \frh$. (This is possible since $\mathscr{C}_{n}(L')_z$ is of dimension
$n\dim \frh'$ and, for any $\vec{\mu'}$, the extension of $\vec{\mu'}$ to $\vec{\mu}$ satisfying the above conditions (a) and (b) is a
$(n-1)$-dimensional space.) Thus, we  can find a uniform positive integer $r_o$ such that $(r_o(\vec{\mu})^*, r_o/2g^*)\in S$ for any
$\vec{\mu}\in \mathcal{F}$. Thus, by Proposition \ref{propn8.4}, we get that there exists $(\vec{\lambda}_o, m_o)$ such that (replacing $r_o$ by a positive multiple of $r_o$)
$$H^0(\Parbun_G,  \mathcal{M}(r_o(\vec{\mu})^*_{\vec{u}}+\vec{\lambda}_o, \frac{r_o}{2g^*}+m_o))  \neq 0,$$
and hence so is
$$H^0(\Parbun_G,  \mathcal{M}(r_o(\vec{\mu})^*_{\vec{u}}+r_o\vec{\lambda}_o, \frac{r_o}{2g^*}+r_om_o))  \neq 0.$$
Hence, by Theorem \ref{evproblem} and Lemma \ref{cohvanishing}, $$\frac{\vec{\mu}_{\vec{u}}+\vec{\lambda}_o^*}{1+2g^*m_o}\in \mathscr{C}_{n}$$
 and so is $\vec{\lambda}_o^*/2g^*m_o\in \mathscr{C}_{n}$ (by Proposition \ref{propn8.4}).

Now,
\begin{align*} \sum_{k=1}^n\omega_P(u_k^{-1}(u_k\mu_k))=\sum_{k=1}^n\omega_P(\mu_k)&=\frac{\langle\alpha_P,\alpha_P\rangle}{2}\sum_{k=1}^n
\langle\mu_k, x_P\rangle,\,\,\,\text{by the identity \eqref{eq20}},\\
&=d, \,\,\,\text{by the identity \eqref{eqn8.7}}.
\end{align*}
Thus, for any $\vec{\mu}\in \mathcal{F}$, the element $\vec{\mu}_{\vec{u}}=(u_1\mu_1, \dots, u_n\mu_n)$ lies in the hyperplane $H$ given by the equality of $\mathscr{I}^{P}_{(u_{1},\ldots, u_{n};d)}$. Also, since $i^*( \mathcal{M}(\vec{\lambda}_o,m_o))$ admits a nonzero section over
$\Parbun_L(d)$ (by Proposition \ref{propn8.4}), we get that (by the equation \eqref{scale}),
$$\sum_k(u_k^{-1}\lambda_k^o)(x_P)=\frac{4g^*dm_o}{\langle \alpha_P, \alpha_P\rangle},
$$
i.e., $\vec{\lambda}_o^*/2g^*m_o$ lies in $H$. Hence,
the convex combination
$$\frac{1}{1+2g^*m_o}\vec{\mu}_{\vec{u}} + \frac{2g^*m_o}{1+2g^*m_o}\frac{\vec{\lambda}_o^*}{2g^*m_o}=\frac{\vec{\mu}_{\vec{u}} +\vec{\lambda}_o^*}{1+2g^*m_o}\in H\cap  \mathscr{C}_{n}.$$
From this we see that $ H\cap  \mathscr{C}_{n}$ is of dimension $-1+n\dim \frh$, since, by assumption, convex span of $\mu\in \mathcal{F}$ is of this dimension. This proves Theorem \ref{irredundance} completely.

\begin{remark}\label{FultonC} Let $G=\SL_r$ and
$\vec{\lambda}=(\lambda_1,\dots,\lambda_n)$ be a collection of dominant integral weights each of level
$\leq k$ (which is arbitrary, and not necessarily a multiple of $g^*=r$) such that their sum lies in the root lattice of $\frg$, the Lie algebra of $\SL_r$. Let ${V}_{\frg,\vec{\lambda},k}$  be
the space of conformal blocks for $\vec{\lambda}$ at level $k$ for the marked curve $(\pone,b_1,\dots,b_n)$  (see, e.g., \cite{Beauville} for the definition of conformal blocks). The quantum generalization of Fulton's conjecture (abbreviated as QFC) asserts that for any positive integer $N$,   ${V}_{\frg,\vec{\lambda},k}$ has rank $1$ if and only if ${V}_{\frg,N\vec{\lambda},Nk}$ has rank $1$. As observed in \cite[Page 50]{BTIFR}, QFC for $\SL_r$ is implied by the  irredundance (i.e., Theorem \ref{irredundance}) for $\SL_m$ for each $m>r$. Therefore, one obtains a new proof of QFC. 

The article \cite{BTIFR} shows also that Theorem \ref{irredundance} holds for $\SL_m$ assuming the validity of QFC. In \cite{BFulton}, a geometric proof of the classical Fulton conjecture (proved earlier by Knutson-Tao-Woodward \cite{KTW}) was given. It was noted there
that the proof carries over to the quantum case as well (but full proofs were not given in the quantum case).
\end{remark}

\section{Example: Determination of deformed product in quantum cohomology for rank-$2$ groups}

We determine the deformed product in the quantum cohomology for $G/P$, where $G$ is a rank-$2$ group and $P$ is a maximal parabolic subgroup which is not cominuscule. The maximal parabolic $P_i$ refers the one with $S_{P_i}=\{\alpha_i\}$.
In the following examples, we follow the indexing convention as in Bourbaki \cite[Planche II, IX]{B}, and the classes $a_i,b_i,c_i$ refer to the unique classes $\sigma^P_{u_i}
\in H^{2i}(X^P, \mathbb{Z})$, i.e., corresponding to the Schubert varieties $X^P_{u_i}$  of codimension $i$. As in Definition \ref{qdeformquantum},
the variable $q$ (resp. $\tau$) refers to the quantum (resp. deformed) variable.

{\bf Example 1. $G=B_2, P=P_2:$}
\begin{center}
\begin{tabular} {|c|c|c|c|c|c|}
\hline

$H^*(G/P_2)$ & $a_0$& $a_1$ &  $a_2$ &  $a_3$   \\
\hline $a_0$ &$a_0$& $a_1$ &  $a_2$ &  $a_3$   \\ \hline
 $a_1$ & &$ {\defpar} a_2$ & $ a_3$ & $\tau qa_0$ \\ \hline
 $a_2$ & & &$ qa_0$ & $ qa_1$  \\ \hline
 $a_3$ & & & & $ \tau qa_2$  \\ \hline
\end{tabular}
\end{center}

{\bf Example 3. $G=G_2, P=P_1:$}

\begin{center}
\begin{tabular} {|c|c|c|c|c|c|c|}
\hline

$H^*(G/P_1)$ & $b_0$&  $b_1$ &  $b_2$ &  $b_3$ &  $b_4$ & $b_5$   \\
\hline $b_0$ &$b_0$&  $b_1$ &  $b_2$ &  $b_3$ &  $b_4$ & $b_5$   \\ \hline
 $b_1$ & & $ {\defpar}^2 b_2$ & $5{\defpar} b_3$ & ${\defpar}^2 b_4$ &  $b_5+\tau^2qb_0$
& $\tau^2qb_1$\\ \hline
$b_2$ & & &$5 {\defpar}b_4$ & $b_5+\tau^2qb_0$ &  $2qb_1$ & $\tau^2qb_2$ \\ \hline
$b_3$ & & & &$ \tau qb_1$ & $\tau qb_2$ &  $\tau^2qb_3$  \\ \hline
$b_4$ & & & & &$ 2 qb_3$ & $ \tau^2qb_4$  \\ \hline
$b_5$ & & & & & &$ \tau^4 q^2b_0$   \\ \hline
\end{tabular}
\end{center}

{\bf Example 4. $G=G_2, P=P_2:$}

\begin{center}
\begin{tabular} {|c|c|c|c|c|c|c|c|}
\hline

$H^*(G/P_2)$ &$c_0$&  $c_1$ &  $c_2$ &  $c_3$ &  $c_4$ & $c_5$   \\
\hline $c_0$ &$c_0$&  $c_1$ &  $c_2$ &  $c_3$ &  $c_4$ & $c_5$   \\ \hline
 $c_1$ & & $ 3c_2$ & $2{\defpar} c_3+\tau qc_0$ & $3c_4+ qc_1$ &  $c_5+qc_2$
& $\tau qc_3+2\tau q^2c_0$\\ \hline
$c_2$ & & & $2 {\defpar}c_4+\tau qc_1$ & $c_5+2q c_2$ &  $\tau qc_3+\tau q^2c_0$ & $\tau qc_4+\tau q^2c_1$ \\ \hline
$c_3$ & & & &$ 2 qc_3+2q^2c_0$ & $qc_4+q^2c_1$ &  $2q^2c_2$  \\ \hline
$c_4$ & & & & & $ q^2c_2$ & $ \tau q^2c_3$  \\ \hline
$c_5$ & & & & & &$ 2\tau q^2c_4$   \\ \hline
\end{tabular}
\end{center}

\begin{bibdiv}
\begin{biblist}
\bib{AW}{article}
{
    AUTHOR = {Agnihotri, S.}
    AUTHOR=  {Woodward, C.},
     TITLE = {Eigenvalues of products of unitary matrices and quantum
              {S}chubert calculus},
   JOURNAL = {Math. Res. Lett.},
  FJOURNAL = {Mathematical Research Letters},
    VOLUME = {5},
      YEAR = {1998},
    NUMBER = {6},
     PAGES = {817--836},
      ISSN = {1073-2780},
   MRCLASS = {14N35 (14H60 14N15)},
  review= {\MR{1671192 (2000a:14066)}},
MRREVIEWER = {Andreas Gathmann},
       DOI = {10.4310/MRL.1998.v5.n6.a10},
       URL = {http://dx.doi.org.libproxy.lib.unc.edu/10.4310/MRL.1998.v5.n6.a10},
}
\bib{BS}{article}{
  author={Balaji, V.},
  author={Seshadri, C.S.},

  title={Moduli  of parahoric $\mathcal{G}$-torsors on a compact Riemann surface},

  year={2010},
  note={arXiv:1009.3485 [math.AG]},
}

 \bib{Beauville}{article}{
   author={Beauville, A.},
   title={Conformal blocks, fusion rules and the Verlinde formula},
   conference={
      title={},
      address={Ramat Gan},
      date={1993},
   },
   book={
      series={Israel Math. Conf. Proc.},
      volume={9},
      publisher={Bar-Ilan Univ.},
      place={Ramat Gan},
   },
   date={1996},
   pages={75--96},
   review={\MR{1360497 (97f:17025)}},
}

\bib{BL}{article} {
    AUTHOR = {Beauville, A.}
    AUTHOR= {Laszlo, Y.},
     TITLE = {Conformal blocks and generalized theta functions},
   JOURNAL = {Comm. Math. Phys.},
  FJOURNAL = {Communications in Mathematical Physics},
    VOLUME = {164},
      YEAR = {1994},
    NUMBER = {2},
     PAGES = {385--419},
      ISSN = {0010-3616},
     CODEN = {CMPHAY},
   MRCLASS = {14D20 (14H60 17B67)},
  MRNUMBER = {1289330 (95k:14011)},
MRREVIEWER = {Emma Previato},
       URL = {http://projecteuclid.org.libproxy.lib.unc.edu/getRecord?id=euclid.cmp/1104270837},
 }

\bib{BLS}{article} {
    AUTHOR = {Beauville, A.}
    AUTHOR=  {Laszlo, Y.}
    AUTHOR=  {Sorger, C.},
     TITLE = {The {P}icard group of the moduli of {$G$}-bundles on a curve},
   JOURNAL = {Compositio Math.},
  FJOURNAL = {Compositio Mathematica},
    VOLUME = {112},
      YEAR = {1998},
    NUMBER = {2},
     PAGES = {183--216},
      ISSN = {0010-437X},
     CODEN = {CMPMAF},
   MRCLASS = {14D20 (14C22)},
  MRNUMBER = {1626025 (99i:14011)},
MRREVIEWER = {Jean-Marc Drezet},
       DOI = {10.1023/A:1000477122220},
       URL = {http://dx.doi.org.libproxy.lib.unc.edu/10.1023/A:1000477122220},
}

\bib{BBE}{article}{
    AUTHOR = {Beilinson, A.}
    AUTHOR=  {Bloch, S.}
    AUTHOR=  {Esnault,
              H.},
     TITLE = {{$\epsilon$}-factors for {G}auss-{M}anin determinants},
      NOTE = {Dedicated to Yuri I. Manin on the occasion of his 65th
              birthday},
   JOURNAL = {Mosc. Math. J.},
  FJOURNAL = {Moscow Mathematical Journal},
    VOLUME = {2},
      YEAR = {2002},
    NUMBER = {3},
     PAGES = {477--532},
      ISSN = {1609-3321},
   MRCLASS = {14C40 (14C99 19E20)},
  review= {\MR{1988970 (2004m:14011)}},
MRREVIEWER = {Claudio Pedrini},
}

\bib{B1}{article}
{
    AUTHOR = {Belkale, P.},
     TITLE = {Local systems on {$\Bbb P^1-S$} for {$S$} a finite set},
   JOURNAL = {Compositio Math.},
  FJOURNAL = {Compositio Mathematica},
    VOLUME = {129},
      YEAR = {2001},
    NUMBER = {1},
     PAGES = {67--86},
      ISSN = {0010-437X},
     CODEN = {CMPMAF},
   MRCLASS = {14N35 (14H60 14N15 15A42 22E10)},
  review= {\MR{1856023 (2002k:14085)}},
MRREVIEWER = {Hans U. Boden},
       DOI = {10.1023/A:1013195625868},
       URL = {http://dx.doi.org.libproxy.lib.unc.edu/10.1023/A:1013195625868},
}

\bib{B2}{article}
{

    AUTHOR = {Belkale, P.},
     TITLE = {Transformation formulas in quantum cohomology},
   JOURNAL = {Compos. Math.},
  FJOURNAL = {Compositio Mathematica},
    VOLUME = {140},
      YEAR = {2004},
    NUMBER = {3},
     PAGES = {778--792},
      ISSN = {0010-437X},
   MRCLASS = {14N35 (14M15 14N15)},
  review = {\MR{2041780 (2004m:14119)}},
MRREVIEWER = {Harry Tamvakis},
       DOI = {10.1112/S0010437X03000241},
       URL = {http://dx.doi.org.libproxy.lib.unc.edu/10.1112/S0010437X03000241},
}
	
\bib{PB}{article}{
   author={Belkale, P.},
   title={Invariant theory of ${\rm GL}(n)$ and intersection theory of
   Grassmannians},
   journal={Int. Math. Res. Not.},
   date={2004},
   number={69},
   pages={3709--3721},
   issn={1073-7928},
   note={{(arXiv:math/0208107v2, Section 6)}}
   review={\MR{2099498 (2005h:14117)}},
   doi={10.1155/S107379280414155X},
}
\bib{BTIFR}{book} {
    AUTHOR = {Belkale, P.},
     TITLE = {Extremal unitary local systems on {$\Bbb P^1-\{p_1,\dots,p_s\}$}},
    SERIES = {Algebraic groups and homogeneous spaces, Tata Inst. Fund. Res. Stud. Math., 37--64},

 PUBLISHER = {Tata Inst. Fund. Res.},
   ADDRESS = {Mumbai},
      YEAR = {2007},
   MRCLASS = {14M15 (14N35)},
  MRNUMBER = {2348901 (2008m:14094)},
MRREVIEWER = {Harry Tamvakis},
}
\bib{BFulton}{article} {
    AUTHOR = {Belkale, P.},
     TITLE = {Geometric proof of a conjecture of {F}ulton},
   JOURNAL = {Adv. Math.},
  FJOURNAL = {Advances in Mathematics},
    VOLUME = {216},
      YEAR = {2007},
    NUMBER = {1},
     PAGES = {346--357},
      ISSN = {0001-8708},
     CODEN = {ADMTA4},
   MRCLASS = {20G05 (14M15 14N20)},
  MRNUMBER = {2353260 (2009a:20073)},
MRREVIEWER = {H. H. Andersen},
       DOI = {10.1016/j.aim.2007.05.013},
       URL = {http://dx.doi.org.libproxy.lib.unc.edu/10.1016/j.aim.2007.05.013},
}

\bib{b4}{article}{
    AUTHOR = {Belkale, P.},

     TITLE = {Quantum generalization of the {H}orn conjecture},
   JOURNAL = {J. Amer. Math. Soc.},
  FJOURNAL = {Journal of the American Mathematical Society},
    VOLUME = {21},
      YEAR = {2008},
    NUMBER = {2},
     PAGES = {365--408},
      ISSN = {0894-0347},
     review={\MR{2373354 (2008m:14109)}},
   MRCLASS = {14N35 (14D20 15A42)},
  MRNUMBER = {2373354 (2008m:14109)},
MRREVIEWER = {Harry Tamvakis},
       DOI = {10.1090/S0894-0347-07-00584-X},
       URL = {http://dx.doi.org/10.1090/S0894-0347-07-00584-X},
}	

\bib{Belkale1}{article}{
  author={Belkale, P.},
  title={The strange duality conjecture for generic curves},
  journal={J. Amer. Math. Soc.},
  volume={21},
  date={2008},
  number={1},
  pages={235--258 (electronic)},
  issn={0894-0347},
  review={\MR {2350055 (2009c:14059)}},
  doi={10.1090/S0894-0347-07-00569-3},
}

\bib{Belkale-Kumar}{article}{
    AUTHOR = {Belkale, P.}
    AUTHOR = {Kumar, S.},
     TITLE = {Eigenvalue problem and a new product in cohomology of flag
              varieties},
   JOURNAL = {Invent. Math.},
  FJOURNAL = {Inventiones Mathematicae},
    VOLUME = {166},
      YEAR = {2006},
    NUMBER = {1},
     PAGES = {185--228},
      ISSN = {0020-9910},
     CODEN = {INVMBH},
   MRCLASS = {14M15 (20G05)},
  MRNUMBER = {2242637 (2007k:14097)},
MRREVIEWER = {Harry Tamvakis},
review={\MR{2242637(2007k:14097)}},
       DOI = {10.1007/s00222-006-0516-x},
       URL = {http://dx.doi.org/10.1007/s00222-006-0516-x},
}

\bib{BR}{article} {
    AUTHOR = {Bhosle, U.}
    AUTHOR=  {Ramanathan, A.},
     TITLE = {Moduli of parabolic {$G$}-bundles on curves},
   JOURNAL = {Math. Z.},
  FJOURNAL = {Mathematische Zeitschrift},
    VOLUME = {202},
      YEAR = {1989},
    NUMBER = {2},
     PAGES = {161--180},
      ISSN = {0025-5874},
     CODEN = {MAZEAX},
   MRCLASS = {14D20 (14F05 20G05 32G13 32L05)},
  review= {\MR{1013082 (90h:14018)}},
MRREVIEWER = {P. E. Newstead},
       DOI = {10.1007/BF01215252},
       URL = {http://dx.doi.org.libproxy.lib.unc.edu/10.1007/BF01215252},
}

\bib{Bi}{article} {
    AUTHOR = {Biswas, I.},
     TITLE = {A criterion for the existence of a parabolic stable bundle of
              rank two over the projective line},
   JOURNAL = {Internat. J. Math.},
  FJOURNAL = {International Journal of Mathematics},
    VOLUME = {9},
      YEAR = {1998},
    NUMBER = {5},
     PAGES = {523--533},
      ISSN = {0129-167X},
   MRCLASS = {14H60 (14D20)},
  MRNUMBER = {1644048 (99m:14063)},
MRREVIEWER = {Hans U. Boden},
       DOI = {10.1142/S0129167X98000233},
       URL = {http://dx.doi.org.libproxy.lib.unc.edu/10.1142/S0129167X98000233},
}
\bib{BH}{article}{
    AUTHOR = {Biswas, I.}
    AUTHOR = {Holla, Y.  I.},
     TITLE = {Principal bundles whose restriction to curves are trivial},
   JOURNAL = {Math. Z.},
  FJOURNAL = {Mathematische Zeitschrift},
    VOLUME = {251},
      YEAR = {2005},
    NUMBER = {3},
     PAGES = {607--614},
      ISSN = {0025-5874},
     CODEN = {MAZEAX},
   MRCLASS = {14J60},
  review= {\MR{2190347 (2006j:14058)}},
MRREVIEWER = {Sabin Cautis},
       DOI = {10.1007/s00209-005-0825-6},
       URL = {http://dx.doi.org.libproxy.lib.unc.edu/10.1007/s00209-005-0825-6},
}

\bib{B}{book} {
    AUTHOR = {Bourbaki, N.},
     TITLE = {\'{E}l\'ements de math\'ematique. {F}asc. {XXXIV}. {G}roupes
              et alg\`ebres de {L}ie. {C}hapitre {IV}: {G}roupes de
              {C}oxeter et syst\`emes de {T}its. {C}hapitre {V}: {G}roupes
              engendr\'es par des r\'eflexions. {C}hapitre {VI}: syst\`emes
              de racines},
    SERIES = {Actualit\'es Scientifiques et Industrielles, No. 1337},
 PUBLISHER = {Hermann},
   ADDRESS = {Paris},
      YEAR = {1968},
     PAGES = {288 pp. (loose errata)},
   MRCLASS = {22.50 (17.00)},
  MRNUMBER = {0240238 (39 \#1590)},
MRREVIEWER = {G. B. Seligman},
}

\bib{Fa}{article} {
    AUTHOR = {Faltings, G.},
     TITLE = {Stable {$G$}-bundles and projective connections},
   JOURNAL = {J. Algebraic Geom.},
  FJOURNAL = {Journal of Algebraic Geometry},
    VOLUME = {2},
      YEAR = {1993},
    NUMBER = {3},
     PAGES = {507--568},
      ISSN = {1056-3911},
   MRCLASS = {14D20 (14H60 32G13 58D27)},
  review= {\MR{1211997 (94i:14015)}},
MRREVIEWER = {Xves Laszlo},
}
\bib{Fal1}{article} {
    AUTHOR = {Faltings, G.},
     TITLE = {A proof for the {V}erlinde formula},
   JOURNAL = {J. Algebraic Geom.},
  FJOURNAL = {Journal of Algebraic Geometry},
    VOLUME = {3},
      YEAR = {1994},
    NUMBER = {2},
     PAGES = {347--374},
      ISSN = {1056-3911},
   MRCLASS = {14D20},
  MRNUMBER = {1257326 (95j:14013)},
MRREVIEWER = {Jean-Marc Drezet},
}

\bib{Fulton-P}{article}{

AUTHOR = {Fulton, W.}
    AUTHOR = {Pandharipande, R.},
     TITLE = {Notes on stable maps and quantum cohomology},
 BOOKTITLE = {Algebraic geometry---{S}anta {C}ruz 1995},
    SERIES = {Proc. Sympos. Pure Math.},
    VOLUME = {62},
     PAGES = {45--96},
 PUBLISHER = {Amer. Math. Soc.},
   ADDRESS = {Providence, RI},
      YEAR = {1997},
   MRCLASS = {14H10 (14E99 14N10)},
  review = {\MR{1492534 (98m:14025)}},
}

\bib{Fulton-W}{article}
{

    AUTHOR = {Fulton, W.}
    AUTHOR = {Woodward, C.},
     TITLE = {On the quantum product of {S}chubert classes},
   JOURNAL = {J. Algebraic Geom.},
  FJOURNAL = {Journal of Algebraic Geometry},
    VOLUME = {13},
      YEAR = {2004},
    NUMBER = {4},
     PAGES = {641--661},
      ISSN = {1056-3911},
   MRCLASS = {14N35 (14M15 14N15)},
 review = {\MR{2072765 (2005d:14078)}},
MRREVIEWER = {Harry Tamvakis},
       DOI = {10.1090/S1056-3911-04-00365-0},
       URL = {http://dx.doi.org/10.1090/S1056-3911-04-00365-0},
}

\bib{hartshorne}{book}
{
    AUTHOR = {Hartshorne, R.},
     TITLE = {Algebraic geometry},
      NOTE = {Graduate Texts in Mathematics, No. 52},
 PUBLISHER = {Springer-Verlag},
   ADDRESS = {New York},
      YEAR = {1977},
     PAGES = {xvi+496},
      ISBN = {0-387-90244-9},
   MRCLASS = {14-01},
  review= {\MR{0463157 (57 \#3116)}},
MRREVIEWER = {Robert Speiser},
}

\bib{Hei}{article} {
    AUTHOR = {Heinloth,  J.},
     TITLE = {Bounds for {B}ehrend's conjecture on the canonical reduction},
   JOURNAL = {Int. Math. Res. Not. IMRN},
  FJOURNAL = {International Mathematics Research Notices. IMRN},
      YEAR = {2008},
    NUMBER = {14},
     PAGES = {Art. ID rnn045, 17},
      ISSN = {1073-7928},
   MRCLASS = {14E08 (14L30)},
 review = {\MR{2440327 (2009g:14017)}},
MRREVIEWER = {Ming-chang Kang},
       DOI = {10.1093/imrn/rnn045},
       URL = {http://dx.doi.org.libproxy.lib.unc.edu/10.1093/imrn/rnn045},
}

\bib{HK}{article}{
    AUTHOR = {Heinzner, P.}
    AUTHOR=  {Kutzschebauch, F.},
     TITLE = {An equivariant version of {G}rauert's {O}ka principle},
   JOURNAL = {Invent. Math.},
  FJOURNAL = {Inventiones Mathematicae},
    VOLUME = {119},
      YEAR = {1995},
    NUMBER = {2},
     PAGES = {317--346},
      ISSN = {0020-9910},
     CODEN = {INVMBH},
   MRCLASS = {32M10 (32E10)},
  review = {\MR{1312503 (96c:32034)}},
MRREVIEWER = {B. Gilligan},
       DOI = {10.1007/BF01245185},
       URL = {http://dx.doi.org.libproxy.lib.unc.edu/10.1007/BF01245185},
}

\bib{klyachko}{article}{
     AUTHOR = {Klyachko, Alexander A.},
     TITLE = {Stable bundles, representation theory and {H}ermitian
              operators},
   JOURNAL = {Selecta Math. (N.S.)},
  FJOURNAL = {Selecta Mathematica. New Series},
    VOLUME = {4},
      YEAR = {1998},
    NUMBER = {3},
     PAGES = {419--445},
      ISSN = {1022-1824},
     CODEN = {SMATF6},
     review={\MR{MR1654578 (2000b:14054)}},
   MRCLASS = {14J60 (14D20 14M25 14N15 15A42 20G05 47B15)},
  MRNUMBER = {1654578 (2000b:14054)},
       DOI = {10.1007/s000290050037},
       URL = {http://dx.doi.org/10.1007/s000290050037},
}

\bib{KTW}{article} {
    AUTHOR = {Knutson, A.}
    AUTHOR = {Tao, T.}
    AUTHOR = {Woodward, C.},
     TITLE = {The honeycomb model of {${\rm GL}_n(\Bbb C)$} tensor
              products. {II}. {P}uzzles determine facets of the
              {L}ittlewood-{R}ichardson cone},
   JOURNAL = {J. Amer. Math. Soc.},
  FJOURNAL = {Journal of the American Mathematical Society},
    VOLUME = {17},
      YEAR = {2004},
    NUMBER = {1},
     PAGES = {19--48},
      ISSN = {0894-0347},
   MRCLASS = {14N15 (05E10 15A69 52B12)},
  MRNUMBER = {2015329 (2005f:14105)},
MRREVIEWER = {Anders Skovsted Buch},
       DOI = {10.1090/S0894-0347-03-00441-7},
       URL = {http://dx.doi.org/10.1090/S0894-0347-03-00441-7},
}
	
\bib{kollar}{book}
{
    AUTHOR = {Koll{\'a}r,  J.},
     TITLE = {Rational curves on algebraic varieties},
    SERIES = {Ergebnisse der Mathematik und ihrer Grenzgebiete. 3. Folge. A
              Series of Modern Surveys in Mathematics [Results in
              Mathematics and Related Areas. 3rd Series. A Series of Modern
              Surveys in Mathematics]},
    VOLUME = {32},
 PUBLISHER = {Springer-Verlag},
   ADDRESS = {Berlin},
      YEAR = {1996},
     PAGES = {viii+320},
      ISBN = {3-540-60168-6},
   MRCLASS = {14-02 (14C05 14E05 14F17 14J45)},
  review = {\MR{1440180 (98c:14001)}},
MRREVIEWER = {Xuri G. Prokhorov},
}
\bib{Kontsevich-M}{article}
{
    AUTHOR = {Kontsevich, M.}
    AUTHOR = {Manin, Xu.},
     TITLE = {Gromov-{W}itten classes, quantum cohomology, and enumerative
              geometry},
   JOURNAL = {Comm. Math. Phys.},
  FJOURNAL = {Communications in Mathematical Physics},
    VOLUME = {164},
      YEAR = {1994},
    NUMBER = {3},
     PAGES = {525--562},
      ISSN = {0010-3616},
     CODEN = {CMPHAX},
   MRCLASS = {14N10 (53C15 58D10 58F05)},
 review= {\MR{1291244 (95i:14049)}},
MRREVIEWER = {Dietmar A. Salamon},
       URL = {http://projecteuclid.org/getRecord?id=euclid.cmp/1104270948},
}
\bib{K}{book} {
    AUTHOR = {Kumar, S.},
     TITLE = {Kac-{M}oody groups, their flag varieties and representation
              theory},
    SERIES = {Progress in Mathematics},
    VOLUME = {204},
 PUBLISHER = {Birkh\"auser Boston Inc.},
   ADDRESS = {Boston, MA},
      YEAR = {2002},
     PAGES = {xvi+606},
      ISBN = {0-8176-4227-7},
   MRCLASS = {22E46 (14M15 17B67 22E65)},
  MRNUMBER = {1923198 (2003k:22022)},
MRREVIEWER = {Guy Rousseau},
       DOI = {10.1007/978-1-4612-0105-2},
       URL = {http://dx.doi.org.libproxy.lib.unc.edu/10.1007/978-1-4612-0105-2},
}

\bib{KuLM}{incollection} {
    AUTHOR = {Kumar, S.}
    AUTHOR = {Leeb, B.}
    AUTHOR=  {Millson, J.},
     TITLE = {The generalized triangle inequalities for rank 3 symmetric
              spaces of noncompact type},
 BOOKTITLE = {Explorations in complex and {R}iemannian geometry},
    SERIES = {Contemp. Math.},
    VOLUME = {332},
     PAGES = {171--195},
 PUBLISHER = {Amer. Math. Soc.},
   ADDRESS = {Providence, RI},
      YEAR = {2003},
   MRCLASS = {53C35},
  review = {\MR{2018339 (2004k:53077)}},
       DOI = {10.1090/conm/332/05936},
       URL = {http://dx.doi.org.libproxy.lib.unc.edu/10.1090/conm/332/05936},
}
\bib{KN}{article} {
    AUTHOR = {Kumar, S.}
    AUTHOR = {Narasimhan, M. S.},
     TITLE = {Picard group of the moduli spaces of {$G$}-bundles},
   JOURNAL = {Math. Ann.},
  FJOURNAL = {Mathematische Annalen},
    VOLUME = {308},
      YEAR = {1997},
    NUMBER = {1},
     PAGES = {155--173},
      ISSN = {0025-5831},
     CODEN = {MAANA},
   MRCLASS = {14D20 (14C22 14D22 14D25)},
  MRNUMBER = {1446205 (98d:14014)},
MRREVIEWER = {Alexander E. Polishchuk},
       DOI = {10.1007/s002080050070},
       URL = {http://dx.doi.org.libproxy.lib.unc.edu/10.1007/s002080050070},
}

\bib{KNR}{article}{
    AUTHOR = {Kumar, S.}
    AUTHOR=  {Narasimhan, M. S.}
    AUTHOR ={ Ramanathan, A.},
     TITLE = {Infinite {G}rassmannians and moduli spaces of {$G$}-bundles},
   JOURNAL = {Math. Ann.},
  FJOURNAL = {Mathematische Annalen},
    VOLUME = {300},
      YEAR = {1994},
    NUMBER = {1},
     PAGES = {41--75},
      ISSN = {0025-5831},
     CODEN = {MAANA},
   MRCLASS = {14D20 (14H60 14K25 17B67 22E67 81T40)},
  MRNUMBER = {1289830 (96e:14011)},
  review={\MR{1289830, (96e:14011)}}
MRREVIEWER = {Emma Previato},
       DOI = {10.1007/BF01450475},
       URL = {http://dx.doi.org/10.1007/BF01450475},
}

\bib{LS}{article}
{

    AUTHOR = {Laszlo, Y.}
    AUTHOR=  {Sorger, C.},
     TITLE = {The line bundles on the moduli of parabolic {$G$}-bundles over
              curves and their sections},
   JOURNAL = {Ann. Sci. \'Ecole Norm. Sup. (4)},
  FJOURNAL = {Annales Scientifiques de l'\'Ecole Normale Sup\'erieure.
              Quatri\`eme S\'erie},
    VOLUME = {30},
      YEAR = {1997},
    NUMBER = {4},
     PAGES = {499--525},
      ISSN = {0012-9593},
     CODEN = {ASENAH},
   MRCLASS = {14D20 (14F05)},
  review= {\MR{1456243 (98f:14007)}},
MRREVIEWER = {Nyshadham Raghavendra},
       DOI = {10.1016/S0012-9593(97)89929-6},
       URL = {http://dx.doi.org.libproxy.lib.unc.edu/10.1016/S0012-9593(97)89929-6},
}

 \bib{MS}{article} {
    AUTHOR = {Mehta, V. B.}
    AUTHOR = {Seshadri, C. S.},
     TITLE = {Moduli of vector bundles on curves with parabolic structures},
   JOURNAL = {Math. Ann.},
  FJOURNAL = {Mathematische Annalen},
    VOLUME = {248},
      YEAR = {1980},
    NUMBER = {3},
     PAGES = {205--239},
      ISSN = {0025-5831},
     CODEN = {MAANA3},
   MRCLASS = {14F05 (14D20)},
  review = {\MR{575939 (81i:14010)}},
MRREVIEWER = {Klaus Hulek},
       DOI = {10.1007/BF01420526},
       URL = {http://dx.doi.org.libproxy.lib.unc.edu/10.1007/BF01420526},
}

\bib{MW}{article}{
    AUTHOR = {Meinrenken, E.}
    AUTHOR=  {Woodward, C.},
     TITLE = {Hamiltonian loop group actions and {V}erlinde factorization},
   JOURNAL = {J. Differential Geom.},
  FJOURNAL = {Journal of Differential Geometry},
    VOLUME = {50},
      YEAR = {1998},
    NUMBER = {3},
     PAGES = {417--469},
      ISSN = {0022-040X},
     CODEN = {JDGEAS},
   MRCLASS = {53D50 (53C27 53D20 53D30)},
  review = {\MR{1690736 (2000g:53103)}},
MRREVIEWER = {Johannes Huebschmann},
       URL = {http://projecteuclid.org.libproxy.lib.unc.edu/getRecord?id=euclid.jdg/1214424966},
}
\bib{PR}{book} {
    AUTHOR = {Pappas, G.}
    AUTHOR = {Rapoport, M.},
     TITLE = {Some questions about {$\scr G$}-bundles on curves},
 BOOKTITLE = {Algebraic and arithmetic structures of moduli spaces
              ({S}apporo 2007)},
    SERIES = {Adv. Stud. Pure Math.},
    VOLUME = {58},
     PAGES = {159--171},
 PUBLISHER = {Math. Soc. Japan},
   ADDRESS = {Tokyo},
      YEAR = {2010},
   MRCLASS = {14D23 (14H60)},
  MRNUMBER = {2676160 (2011j:14029)},
MRREVIEWER = {Zhenbo Qin},
}

\bib{Ram}{article} {
    AUTHOR = {Ramanathan, A.},
     TITLE = {Stable principal bundles on a compact {R}iemann surface},
   JOURNAL = {Math. Ann.},
  FJOURNAL = {Mathematische Annalen},
    VOLUME = {213},
      YEAR = {1975},
     PAGES = {129--152},
      ISSN = {0025-5831},
   MRCLASS = {32L05 (14F05)},
  review = {\MR{0369747 (51 \#5979)}},
MRREVIEWER = {P. E. Newstead},
}

\bib{Re}{article} {
    AUTHOR = {Ressayre, N.},
     TITLE = {Geometric invariant theory and the generalized eigenvalue
              problem},
   JOURNAL = {Invent. Math.},
  FJOURNAL = {Inventiones Mathematicae},
    VOLUME = {180},
      YEAR = {2010},
    NUMBER = {2},
     PAGES = {389--441},
      ISSN = {0020-9910},
     CODEN = {INVMBH},
   MRCLASS = {14L24 (17B99)},
  MRNUMBER = {2609246 (2011j:14101)},
MRREVIEWER = {Amnon Neeman},
       DOI = {10.1007/s00222-010-0233-3},
       URL = {http://dx.doi.org.libproxy.lib.unc.edu/10.1007/s00222-010-0233-3},
}

\bib{r2}{article}{
  author={Ressayre, N.},

  title={On the quantum Horn problem},

  year={2013},
  note={arXiv:arXiv:1310.7331 [math.AG]},
}

\bib{Teleman-W} {article}{
    AUTHOR = {Teleman, C.}
    AUTHOR =  {Woodward, C.},
     TITLE = {Parabolic bundles, products of conjugacy classes and
              {G}romov-{W}itten invariants},
   JOURNAL = {Ann. Inst. Fourier (Grenoble)},
  FJOURNAL = {Universit\'e de Grenoble. Annales de l'Institut Fourier},
    VOLUME = {53},
      YEAR = {2003},
    NUMBER = {3},
     PAGES = {713--748},
      ISSN = {0373-0956},
     CODEN = {AIFUA7},
   MRCLASS = {14L30 (14L24 14N35)},
  review = {\MR{2008438 (2004g:14053)}},
MRREVIEWER = {Usha N. Bhosle},
       URL = {http://aif.cedram.org/item?id=AIF_2003__53_3_713_0},
}
\bib{W}{article}{
  author={Wang, J.},
  title={The moduli  stack of ${G}$-bundles},
  year={2011},
  note={arXiv:1104.4828 [math.AG]},
}
\end{biblist}
\end{bibdiv}

\end{document}